\newcommand{\alphaLF}{\alpha}
\newtheorem{expl}{Example}[section]
\crefname{hypothesis}{Hypothesis}{Hypotheses}
\title{Provably Positivity-Preserving Constrained Transport (PPCT) Second-Order Scheme for Ideal Magnetohydrodynamics}
\author{Dongwen Pang\thanks{Department of Mathematics, Southern University of Science and Technology, Shenzhen 518055, China
  (\email{12331010@mail.sustech.edu.cn}).}
\and Kailiang Wu\thanks{Department of Mathematics and Shenzhen International Center
for Mathematics, Southern University of Science and Technology, Shenzhen
518055, China
  (\email{wukl@sustech.edu.cn}).}
}
\begin{document}
	
	\large 

\maketitle

\begin{abstract}
This paper proposes and analyzes a robust and efficient second-order positivity-preserving constrained transport (PPCT) scheme  for ideal magnetohydrodynamics (MHD) on non-staggered Cartesian meshes. The PPCT scheme provably preserves two crucial physical constraints: a globally discrete divergence-free (DDF) constraint on the magnetic field and the positivity of density and pressure. It is motivated by a novel splitting technique proposed in [T.A.~Dao,~M. Nazarov \& I.~Tomas, {\it J. Comput. Phys.}, 508: 113009, 2024], which splits the MHD system into an Euler subsystem with a steady magnetic field and a magnetic subsystem with steady density and internal energy. 
			To achieve the structure-preserving properties, the PPCT scheme couples a PP finite volume method for the Euler subsystem with a finite difference CT method for the magnetic subsystem via Strang splitting. The finite volume method is based on a new PP limiter, which is proven to maintain the second-order accuracy of the reconstruction. The PP limiter enforces the positivity of the reconstructed values for density and pressure, as well as an a priori condition for the PP property of the updated cell averages. Rigorous theoretical proof of the PP property is provided using the geometric quasilinearization (GQL) approach [K.~Wu \& C.-W.~Shu, {\it SIAM Review}, 65:1031--1073, 2023].
			For the magnetic subsystem, we construct an implicit finite difference CT method, which conserves energy and preserves a globally DDF constraint on non-staggered Cartesian meshes. The resulting nonlinear algebraic system is solved efficiently with an iterative algorithm, reducing the residual error to machine precision within a few iterations. Unique solvability and convergence of this algorithm are theoretically proven under a CFL-like condition.  Since the finite difference CT method for the magnetic subsystem is unconditionally energy-stable and preserves steady density and internal energy, the time step for the PP property and stability of the PPCT scheme is restricted only by a mild CFL condition for the finite volume method applied to the Euler subsystem. 
			While the primary focus is on the 2D case for clarity, the extension to 3D is also presented. Several challenging numerical experiments, including highly magnetized MHD jets with extremely high Mach numbers, are presented to validate the accuracy, robustness, and high resolution of the PPCT scheme.
			\vspace{3mm}
\end{abstract}

\begin{keywords}
magnetohydrodynamics (MHD) , positivity-preserving (PP) , discrete divergence-free (DDF) , constrained transport (CT) , geometric quasilinearization (GQL)
\end{keywords}

\begin{MSCcodes}
65M08,  65M12,	65M06,	 76M12, 76W05   
\end{MSCcodes}

\section{Introduction}
The ideal magnetohydrodynamic (MHD) equations are a fundamental model in astrophysics, space physics, and plasma physics. They play a crucial role in studying phenomena such as coronal mass ejections, the evolution of solar coronal magnetic fields, and the dynamics of Earth's magnetosphere. Due to their nonlinear and hyperbolic nature, deriving analytical solutions is often infeasible. Consequently, numerical methods have become the primary tools for advancing research in this field. 
	In MHD simulations, preserving the underlying physical structures is essential for the reliability of numerical methods. Failure to maintain these structures can lead to numerical instabilities, non-physical results, and the eventual breakdown of simulations
	
	One of the most critical constraints in MHD is the divergence-free (DF) condition of the magnetic field. In numerical simulations, failing to satisfy this condition can result in numerical instabilities and introduce nonphysical artifacts \cite{Toth2000, Brackbill1980, Evans1988, Li2005}. Various techniques have been developed over the past few decades to enforce the DF condition or control the errors arising from its violation. The projection method \cite{Brackbill1980} employs a post-processing step based on Hodge decomposition, projecting the magnetic field onto a DF subspace by solving an elliptic Poisson equation. The widely used constrained transport (CT) method \cite{Londrillo2004divergence, Helzel2011unstaggered} preserves a discrete DF constraint on staggered or non-staggered grids, with numerous variants developed for different numerical frameworks \cite{Dai1998, DeVore1991flux, Gardiner2005}. Another approach, the eight-wave scheme \cite{Powell1995, Powell1997approximate}, introduces non-conservative source terms proportional to the divergence of the magnetic field. This method allows divergence errors to be advected with the flow, thereby mitigating their impact on the solution while maintaining Galilean invariance. The hyperbolic divergence cleaning method \cite{Dedner2002} employs a mixed hyperbolic-parabolic equation to dynamically dampen divergence errors. Additionally, locally divergence-free (LDF) methods \cite{Li2005, Yakovlev2013,  DingWu2024SISCMHD} ensure that the divergence of the magnetic field is zero within each computational cell. Globally divergence-free (GDF) methods \cite{Li2011, Li2012, Balsara2009, Fu2018} extend this enforcement across the entire computational domain by ensuring the continuity of the normal component of the magnetic field across cell interfaces, thereby maintaining an exactly DF magnetic field throughout the simulation.
	
	Another essential physical property that is challenging to preserve in MHD simulations is the positivity of density and pressure. In extreme conditions---such as low densities, low pressures, high Mach numbers, or strong magnetic fields---numerical methods may produce negative values for these quantities. Such non-physical results can lead to the loss of hyperbolicity in the system, causing severe numerical instabilities and potentially leading to the breakdown of the simulation. 
	Early efforts to cope with this issue focused on developing robust one-dimensional approximate Riemann solvers \cite{BalsaraSpicer1999a, Janhunen2000, Bouchut2007, Bouchut2010}, which provided reliable solutions in certain cases. The second-order MUSCL--Hancock method proposed by Waagan \cite{Waagan2009} incorporates a relaxation Riemann solver to ensure the positivity of density and pressure, demonstrating robust performance in benchmark tests. More recently, attention has shifted towards designing positivity-preserving (PP) limiters for higher-order schemes \cite{Zhang2010, Zhang2010b, ZHANG2017301}. 
	For instance, Balsara's self-adjusting PP limiter \cite{Balsara2012} dynamically adjusts the reconstructed variables in finite volume schemes to maintain positivity. Cheng \textit{et~al.}\ \cite{Cheng2013} extended the Zhang--Shu PP limiter \cite{Zhang2010b} to discontinuous Galerkin methods for MHD simulations. 
	In high-order finite volume or discontinuous Galerkin methods, the PP property may be lost in two situations: when the reconstructed or DG solution polynomials fail to be positive, or when the cell averages become negative during the updating process \cite{Zhang2010b, Zhang2010}. While the PP limiters \cite{Balsara2012, Cheng2013} can recover positivity in the first case, ensuring the positivity of the cell averages during the updating process is more challenging and critical for obtaining a genuinely PP scheme. The validity of the PP limiters \cite{Balsara2012, Cheng2013} relies on the positivity of the cell averages in the updating process; however, this was not rigorously proven for these high-order schemes. 
	Additionally, high-order finite difference methods employing parameterized flux limiters to maintain positivity have been introduced \cite{Christlieb2015PP, Christlieb2016}. These methods leverage the presumed PP property of certain low-order schemes, such as the Lax--Friedrichs scheme---whose PP property was later rigorously analyzed in \cite{Wu2017SIAMJNAppMHD}---to adjust the high-order numerical flux and ensure positivity. 
	Despite these advances, the PP property of most of these schemes has been justified primarily through numerical experiments, and few schemes have rigorous proofs of their PP property for the MHD equations.

	Recent advances \cite{Wu2017SIAMJNAppMHD, WuShu2018, WuShu2019, WuTangM3AS, WuJiangShu2022, WuShu2020NumMath} have theoretically revealed the key relationship between the PP property---an algebraic structure---and the DF condition---a differential structure---in both ideal and relativistic compressible MHD systems. 
	It has been rigorously proven for conservative numerical schemes that the PP property is closely linked to maintaining a discrete divergence-free (DDF) condition; see the theoretical analyses and findings in \cite{Wu2017SIAMJNAppMHD} for finite volume and discontinuous Galerkin schemes, and \cite{WuJiangShu2022} for central discontinuous Galerkin methods. 
	It was observed that even slight violations of the DDF condition might compromise the PP property during numerical simulations \cite{Wu2017SIAMJNAppMHD}. Unfortunately, the globally coupled nature of the DDF condition makes it incompatible with standard local scaling PP limiters in multidimensional cases. Maintaining one property can inadvertently compromise the other, complicating the development of provably PP numerical methods for MHD. 
	On the other hand, researchers have also discovered the relationship between the PP and DF properties at the continuous level. It has been shown that even the exact smooth solutions of the conservative MHD system can lead to negative pressure when the magnetic field divergence is nonzero \cite{WuShu2018, WuShu2020NumMath}. This finding further confirms the intricate coupling between the positivity constraint and the DF condition, making the simultaneous preservation of both properties in multidimensional MHD simulations a challenging yet highly desirable task.

	To address this challenge, researchers have attempted to reformulate the standard conservation form of the MHD equations into different forms to reduce the intricate coupling between the PP and DF constraints. 
	Notably, it was observed that the symmetrizable MHD system, which incorporates the Godunov--Powell source term \cite{Godunov1972, Powell1995}, always has the PP property at the continuous level, regardless of whether the DF condition is satisfied \cite{WuShu2019}. 
	Motivated by this crucial finding, it was demonstrated that a suitable discretization of the symmetrizable MHD system can result in high-order schemes whose PP property relies only on a locally DDF condition, which is compatible with the local scaling PP limiter. 
	Based on the symmetrizable MHD system, high-order provably PP schemes were successfully designed in \cite{WuShu2018} using LDF discontinuous finite elements, and in \cite{DingWu2024SISCMHD} using a novel discrete LDF projection for finite volume methods. 
	Subsequently, a framework for provably positive high-order finite volume and discontinuous Galerkin schemes was established on general meshes \cite{WuShu2019}. At the heart of these advancements is the geometric quasilinearization (GQL) approach \cite{Wu2017SIAMJNAppMHD,WuShu2021GQL}, which converts complex nonlinear constraints---such as the positivity of pressure---into more manageable linear ones. GQL achieves this by integrating appropriate auxiliary variables and leveraging the geometric characteristics of convex sets. Building on this framework, researchers have systematically developed a range of high-order PP and physical-constraint-preserving numerical methods, such as \cite{Wu2017SIAMJNAppMHD,WuTangM3AS,WuShu2018, WuShu2019,  DingWu2024JCPGQLBased, Wu2021Minimum} and the more recent PP work \cite{LiuWu2024OEDGforMHD} within the oscillation-eliminating framework \cite{PSW2024}. 
	Besides adding the Godunov--Powell source term, another useful approach is operator splitting, i.e., splitting the ideal MHD system into the Euler equations and a magnetic system \cite{Dao2024Structure}. Based on such a novel splitting, a second-order accurate structure-preserving finite element method was proposed by Dao, Nazarov, and Tomas \cite{Dao2024Structure}. Specifically, this method features a non-divergence formulation, treating the magnetic field's impact on momentum and energy as source terms and using the Strang splitting technique to derive a novel splitting form that includes an Euler part and a magnetic part. Using this splitting form, the PP property and the DF condition can be considered independently. Besides preserving the positivity of density and pressure using convex limiting techniques, this method also maintains the conservation properties for total mass and total energy \cite{Dao2024Structure}.

	The aim of this paper is to design and analyze a novel, efficient, and robust second-order scheme (termed PPCT) for ideal MHD, which  simultaneously preserves positivity via the GQL approach and enforces a global DDF constraint via the CT approach on non-staggered Cartesian meshes, with rigorous theoretical proofs. The key contributions, novelty, and significance of this work include:
	\begin{itemize}
		\item  Inspired by \cite{Dao2024Structure}, we split the ideal MHD equations into two subsystems: a compressible Euler system with a steady magnetic field and a magnetic system with steady density and internal energy. We propose a new finite volume–finite difference (FV-FD) hybrid scheme (i.e., PPCT), which couples a novel, provably PP finite volume method for the Euler subsystem with an unconditionally energy-stable finite difference CT method on non-staggered grids for the magnetic subsystem. A second-order Strang splitting approach is utilized to alternate the evolution of these two methods, efficiently achieving both the PP and globally DDF properties. 
		
		\item  The finite volume method for the Euler subsystem is based on second-order reconstruction with a slope limiter. We opt to reconstruct primitive variables, as reconstructing conservative variables tends to introduce overshoots or nonphysical oscillations by numerical evidence. The PP property of the finite volume method is ensured in two key aspects: enforcing the positivity of the reconstructed limiting values for density and pressure, and preserving the positivity of updated cell averages during time-stepping. 
		The first aspect is addressed by a carefully designed PP limiter, which serves as a correction step following the reconstruction. We prove that this PP limiter preserves second-order accuracy. The second aspect is guaranteed by rigorous theoretical analysis and a velocity slope suppression mechanism embedded in the PP limiter, which provides an a priori condition for the PP property of updated cell averages. 
		Since we reconstruct primitive variables, the commonly used cell average decomposition  \cite{Zhang2010,CuiDingWu2022SINUM,CuiDingWu2023JCP} does not hold automatically. Instead, our a priori condition plays a crucial role in proving the PP property. The rigorous proof is based on the GQL approach \cite{WuShu2021GQL}, which equivalently transforms the nonlinear positivity constraint on pressure into a set of linear constraints, thus overcoming the challenges posed by nonlinearity.
		
		\item We develop an implicit finite difference CT scheme for the magnetic subsystem, ensuring the globally DDF condition for the magnetic field on non-staggered grids. Unlike the structure-preserving finite element method in \cite{Dao2024Structure}, our method does not involve matrix computations, making it computationally more efficient. To solve the resulting nonlinear algebraic system, we introduce a simple yet efficient iterative algorithm, which shows exponential convergence. Specifically, the residual error is reduced to machine precision within 5 to 9 iterations on average, and never exceeds 20 steps. Unique solvability and convergence of this algorithm are theoretically proven under a CFL-like condition.  Additionally, we rigorously prove that the CT scheme conserves energy. 
		Since the implicit finite difference method for the magnetic subsystem is unconditionally energy-stable and preserves both density and internal energy, the time step for the PP property and stability of the PPCT scheme is restricted only by a mild CFL condition for the finite volume method applied to the Euler subsystem.
		
		\item We extend the PPCT method to three dimensions, maintaining the same numerical framework as in the two-dimensional (2D) case. All the desirable structure-preserving properties, such as positivity preservation, global DDF enforcement, and energy conservation, are retained in 3D.
		
		\item Finally, we validate the proposed schemes on a variety of benchmark and challenging problems. The numerical results demonstrate the excellent performance of the PPCT method, showing high resolution and robustness.
	\end{itemize}
	In short, the proposed PPCT scheme is an efficient and robust second-order FV-FD hybrid method with only a mild time step restriction. It is straightforward to implement on non-staggered Cartesian meshes. The scheme successfully achieves both positivity preservation and the globally DDF property. Moreover, the PPCT scheme conserves both mass and total energy, the latter of which is often lost in some other CT methods with positivity enforcement \cite{Christlieb2015PP, Christlieb2016}.

	This paper is organized as follows. In \Cref{S.2}, we briefly review the ideal MHD system and the novel splitting method introduced in \cite{Dao2024Structure}. In \Cref{S.3}, we develop the PPCT numerical method with rigorous theoretical proofs for the 2D ideal MHD system. \Cref{S.4} extends the scheme to the 3D case. \Cref{S.5} provides numerical examples to demonstrate the accuracy and robustness of our method. Finally, \Cref{S.6} concludes the paper.
	\section{Compressible Ideal MHD Equations and Operator Splitting}\label{S.2}
	The ideal MHD system can be expressed in divergence form as
	\begin{equation}\label{MHDSystem}
		{\bm U}_t + \nabla \cdot {\bm F}({\bm U}) = \mathbf{0},
	\end{equation}
	where ${\bm U} = \left( \rho, {\bm m}, {\bm B}, E_{tot} \right)^{\top} \in \mathbb{R}^8$ represents the conservative variables. Here, $\rho$ is the density, ${\bm m} = \left(m_1, m_2, m_3\right)^{\top}$ is the momentum vector, ${\bm B} = \left(B_1, B_2, B_3\right)^{\top}$ denotes the magnetic field vector, and $E_{tot}$ stands for the total energy, which includes internal, kinetic, and magnetic energies. 
	In the $d$-dimensional case ($d \in \{1, 2, 3\}$), the flux ${\bm F} = \left( {\bm F}_1, \cdots , {\bm F}_d \right)$ is defined by
	\begin{equation*}
		{\bm F}_i({\bm U}) = \left(m_i, m_i {\bm v} - B_i {\bm B} + p_{tot} \mathbf{e}_i, v_i {\bm B} - B_i {\bm v}, v_i \left(E_{tot} + p_{tot}\right) - B_i({\bm v} \cdot {\bm B})\right)^{\top}, 
	\end{equation*}
	where $\mathbf{e}_i$ is a unit vector of size 3 with the $i$-th element being 1 and others being 0. The fluid velocity is given by ${\bm v} = \left(v_1, v_2, v_3\right)^{\top} = {\bm m}/\rho$, and the total pressure is defined as $p_{tot} = p + \frac{1}{2} \left|{\bm B}\right|^2$, which includes both thermal and magnetic pressure. 
	Denoting $\gamma$ as the ratio of specific heats, the system \eqref{MHDSystem} can be closed by the ideal equation of state (EOS):
	\begin{equation*}
		E_{tot} = \frac{p}{\gamma - 1} + \frac{1}{2} \rho \left|{\bm v}\right|^2 + \frac{1}{2} \left|{\bm B}\right|^2,
	\end{equation*}
	where $\left|\cdot\right|$ denotes the Euclidean norm. If the magnetic field is DF initially, it follows that the exact solution will remain DF at any time, i.e.,
	\begin{equation*}
		\nabla \cdot {\bm B} = 0.
	\end{equation*}
	In physics, the DF property of the magnetic field implies the non-existence of magnetic monopoles; therefore, a physical solution must satisfy this constraint.

	As shown in \cite{Wu2017SIAMJNAppMHD,WuShu2019}, the PP property and the DF property are closely connected for the ideal MHD system \eqref{MHDSystem}.  This intricate connection  makes the simultaneous preservation of both properties for multidimensional MHD schemes  a challenging task. 
	To address this challenge,  
	instead of solving the divergence form \eqref{MHDSystem} directly, we follow  \cite{Dao2024Structure} and reformulate the MHD system \eqref{MHDSystem} into  a non-divergence form as follows:
	\begin{equation}\label{inductionForm}
		\begin{aligned}
			\partial_t \rho + \operatorname{div} \boldsymbol{m} & = 0, \\
			\partial_t \boldsymbol{m} + \operatorname{div}\left(\rho^{-1} \boldsymbol{m} \boldsymbol{m}^{\top} + \mathbf{I} p\right) & = -\boldsymbol{B} \times \operatorname{curl} \boldsymbol{B}, \\
			\partial_t E + \operatorname{div}\left(\frac{\boldsymbol{m}}{\rho}(E + p)\right) & = -\left(\boldsymbol{B} \times \operatorname{curl} \boldsymbol{B}\right) \cdot \frac{\boldsymbol{B}}{\rho}, \\
			\partial_t \boldsymbol{B} - \operatorname{curl}\left(\frac{\boldsymbol{m}}{\rho} \times \boldsymbol{B}\right) & = \mathbf{0},
		\end{aligned}
	\end{equation}
	where $E = \rho e + \frac{1}{2} \rho \left| \boldsymbol{v} \right|^2$ represents the total mechanical energy (excluding contributions from magnetic energy), and $\mathbf{I} \in \mathbb{R}^3$ is the identity matrix. In this formulation, the magnetic forces are treated as source terms influencing the momentum and total mechanical energy, while the governing equations for the magnetic field are represented by the induction equation. For sufficiently smooth solutions, the divergence form \eqref{MHDSystem} and the non-divergence form \eqref{inductionForm} are equivalent. However, for solutions that may exhibit discontinuities or weak differentiability, these two forms should be regarded as distinct mathematical representations.
	
	Utilizing the operator-splitting technique \cite{Dao2024Structure}, we can separate the non-divergence form \eqref{inductionForm} into two subsystems:
	\begin{equation}\label{Operator1}
		\text{System-A}\left\{\begin{aligned}
			\partial_t \rho + \operatorname{div} \boldsymbol{m} & = 0, \\
			\partial_t \boldsymbol{m} + \operatorname{div}\left(\rho^{-1} \boldsymbol{m} \boldsymbol{m}^{\top} + \mathbf{I} p\right) & = \mathbf{0}, \\
			\partial_t E + \operatorname{div}\left(\frac{\boldsymbol{m}}{\rho}(E + p)\right) & = 0, \\
			\partial_t \boldsymbol{B} & = \mathbf{0},
		\end{aligned}\right.
	\end{equation}
	\begin{equation}\label{Operator2}
		\text{System-B}\left\{\begin{aligned}
			\partial_t \rho & = 0, \\
			\partial_t \boldsymbol{m} & = -\boldsymbol{B} \times \operatorname{curl} \boldsymbol{B}, \\
			\partial_t E & = -\left(\boldsymbol{B} \times \operatorname{curl} \boldsymbol{B}\right) \cdot \frac{\boldsymbol{m}}{\rho}, \\
			\partial_t \boldsymbol{B} & = \operatorname{curl}(\boldsymbol{v} \times \boldsymbol{B}).
		\end{aligned}\right.
	\end{equation}
	System-A \eqref{Operator1} corresponds to the compressible Euler equations under a steady magnetic field. A key property of System-B is the invariance of density and internal energy \cite[Lemma 2.4]{Dao2024Structure}. Utilizing this property, System-B can be simplified to:
	\begin{equation}\label{Subsystem}
		\text{System-B}\left\{\begin{aligned}
			\partial_t \rho & = 0, \\
			\partial_t e & = 0, \\
			\rho \partial_t \boldsymbol{v} & = -\boldsymbol{B} \times \operatorname{curl} \boldsymbol{B}, \\
			\partial_t \boldsymbol{B} & = \operatorname{curl}(\boldsymbol{v} \times \boldsymbol{B}).
		\end{aligned}\right.
	\end{equation}
	This implies that the solutions of System-B always preserve positivity for both density and internal energy, allowing us to focus on constructing provably PP schemes only for System-A. Moreover, since the evolution of the magnetic field is confined to System-B, the DF constraint must only be preserved when solving System-B. This innovative splitting strategy \cite{Dao2024Structure} effectively decouples the preservation of positivity from the DF constraint.

	\section{Two-Dimensional PPCT Scheme}\label{S.3}
	In this section, we describe our PPCT scheme for the 2D ideal MHD system based on the above  operator-splitting approach. Let ${\bm U}^n$ denote the numerical solution at time level $n$. The PPCT scheme, based on the second-order Strang splitting method, is expressed as
	\begin{equation}\label{eq:fullscheme}
		{\bm U}^{n+1} = S_{A}^{\frac{\Delta t}{2}} \circ S_{B}^{\Delta t} \circ S_{A}^{\frac{\Delta t}{2}} {\bm U}^n,
	\end{equation}
	where $\Delta t$ is the time step from time level $n$ to time level $n+1$, and $S_{A}^{\Delta t}$ and $S_{B}^{\Delta t}$ represent the forward evolution operators corresponding to the numerical schemes for for System-A \eqref{Operator1} and System-B \eqref{Subsystem}, respectively. 
	
	For System-A \eqref{Operator1}, we will develop a second-order finite volume method, denoted as $S_{A}^{\Delta t}$, which is rigorously proven to preserve the positivity of both density and pressure. 
	For System-B \eqref{Subsystem}, we will introduce a second-order finite difference CT scheme, denoted as $S_{B}^{\Delta t}$. This scheme is designed to automatically maintain the DDF condition on the magnetic field, while also ensuring the conservation of total energy. 
	As a result, the overall PPCT scheme \eqref{eq:fullscheme} achieves second-order accuracy in both time and space. 
	
	\begin{remark}\label{rem:stability}
		Since the CT scheme $S_{B}^{\Delta t}$ for System-B \eqref{Subsystem} employs an implicit time discretization, it demonstrates unconditional stability in our numerical experiments. Therefore, the time step for stability of the PPCT scheme is only restricted by the standard CFL condition of $S_{A}^{\frac{\Delta t}{2}}$ for the Euler part \eqref{Operator1}. This is similar to the structure-preserving finite element method of Dao, Nazarov, and Tomas \cite{Dao2024Structure}. 
	\end{remark}
	
	\begin{remark}
		An alternative second-order Strang splitting strategy is
		\[
		{\bm U}^{n+1} = S_{B}^{\frac{\Delta t}{2}} \circ S_{A}^{\Delta t} \circ S_{B}^{\frac{\Delta t}{2}} {\bm U}^n,
		\]
		which requires two applications of $S_{B}^{\frac{\Delta t}{2}}$. However, since the computational cost of $S_{B}^{\Delta t}$ is generally higher than that of $S_{A}^{\Delta t}$, we opt for the splitting strategy \eqref{eq:fullscheme}, which reduces the overall computational expense.
	\end{remark}

	Assume the computational domain is partitioned into a uniform rectangular mesh, with the cell centered at $(x_i, y_j)$ denoted by $I_{ij} = (x_{i-\frac{1}{2}}, x_{i+\frac{1}{2}}) \times (y_{j-\frac{1}{2}}, y_{j+\frac{1}{2}})$. The center coordinates are $x_i = \frac{1}{2} \left( x_{i-\frac{1}{2}} + x_{i+\frac{1}{2}} \right)$ and $y_j = \frac{1}{2} \left( y_{j-\frac{1}{2}} + y_{j+\frac{1}{2}} \right)$. The constant spatial step sizes in the $x$- and $y$-directions are denoted by $\Delta x$ and $\Delta y$, respectively. 
	We will detail the provably PP finite volume scheme $S_{A}^{{\Delta t}}$ and the CT finite difference scheme $S_{B}^{{\Delta t}}$ in Sections \ref{section:operator1} and \ref{section:operator2}, respectively. Finally, we summarize the overall PPCT operator-splitting scheme for the full MHD system in Section \ref{eq:summary}.

	\subsection{Provably PP Finite Volume Method $S_{A}^{{\Delta t}}$ for System-A}\label{section:operator1}
	
	System-A consists of the Euler equations coupled with a steady magnetic field. We begin by considering the 2D Euler equations:
	\begin{equation}\label{Eulersystem}
		\bm{Q}_t +  \nabla \cdot \bm{G}(\bm{Q}) = \mathbf{0},
	\end{equation}
	where $\bm{Q} = (\rho, \bm{m}, E)^\top$, and the flux $\bm{G} = (\bm{G}_1, \bm{G}_2)$ is defined as
	\[
	\bm{G}_i(\bm{Q}) = \left( m_i, m_i \bm{v} + p \mathbf{e}_i, (E + p) v_i \right)^\top, \quad i = 1, 2,
	\]
	with $\bm{m}, \bm{v} \in \mathbb{R}^3$ to maintain consistency with the MHD system.
	
	\subsubsection{Outline of the Finite Volume Method}
	Denote the approximate cell averages as 
	$\overline{\bm{Q}}_{ij}(t) \approx \frac{1}{\Delta x \Delta y} \int_{I_{ij}} \bm{Q}(x, y, t) \, \mathrm{d} x \, \mathrm{d} y.$ 
	Then the semi-discrete finite volume method for \eqref{Eulersystem} can be written as
	\begin{equation}\label{FVM2DEulerSemi}
		\frac{\mathrm{d}}{\mathrm{d} t} \overline{\bm{Q}}_{ij}(t) = \mathcal{L}_{ij}(\overline{\bm{Q}}(t)),
	\end{equation}
	with
	\[
	\mathcal{L}_{ij}(\overline{\bm{Q}}(t)) := - \frac{1}{\Delta x} \left( \hat{\bm{G}}_{1, i+\frac{1}{2}, j}(t) - \hat{\bm{G}}_{1, i-\frac{1}{2}, j}(t) \right) - \frac{1}{\Delta y} \left( \hat{\bm{G}}_{2, i, j+\frac{1}{2}}(t) - \hat{\bm{G}}_{2, i, j-\frac{1}{2}}(t) \right).
	\]
	Here, $\hat{\bm{G}}_{1,i+\frac{1}{2},j}(t)$ and $\hat{\bm{G}}_{2,i,j+\frac{1}{2}}(t)$ represent numerical flux approximations at the cell interfaces:
	\begin{equation}\label{NumericalFlux}
		\begin{aligned}
			\hat{\bm{G}}_{1,i+\frac{1}{2},j}(t) &= \hat{\bm{G}}_1\left( \bm{Q}^{-}_{i+\frac{1}{2},j}, \bm{Q}^{+}_{i+\frac{1}{2},j} \right) \approx \frac{1}{\Delta y} \int_{y_{j-\frac{1}{2}}}^{y_{j+\frac{1}{2}}} \bm{G}_1\left( \bm{Q}(x_{i+\frac{1}{2}}, y, t) \right) \, \mathrm{d} y, \\
			\hat{\bm{G}}_{2, i, j+\frac{1}{2}}(t) &= \hat{\bm{G}}_2\left( \bm{Q}^{-}_{i, j+\frac{1}{2}}, \bm{Q}^{+}_{i, j+\frac{1}{2}} \right) \approx \frac{1}{\Delta x} \int_{x_{i-\frac{1}{2}}}^{x_{i+\frac{1}{2}}} \bm{G}_2\left( \bm{Q}(x, y_{j+\frac{1}{2}}, t) \right) \, \mathrm{d} x,
		\end{aligned}
	\end{equation}
	where $\bm{Q}^{\pm}_{i+\frac{1}{2},j}$ (resp. $\bm{Q}^{\pm}_{i,j+\frac{1}{2}}$) denotes the left and right limiting values of the approximate solutions at $(x_{i+\frac{1}{2}}, y_j)$ (resp. $(x_i, y_{j+\frac{1}{2}})$) on the cell interfaces. These limiting values are computed using the second-order reconstruction, discussed later. 
	While various numerical fluxes can be used, in this work, we adopt the Lax--Friedrichs flux:
	\begin{equation}\label{LFflux}
		\hat{\bm{G}}_k(\bm{Q}^-, \bm{Q}^+) = \frac{1}{2} \left( \bm{G}_k(\bm{Q}^-) + \bm{G}_k(\bm{Q}^+) - \alphaLF_k (\bm{Q}^+ - \bm{Q}^-) \right), \quad k = 1, 2,
	\end{equation}
	where $\alphaLF_k$ is the numerical viscosity parameter, defined as
	\[
	\alphaLF_k = \left\|\left| v_k \right| + c \right\|_{L^\infty}, \quad k = 1, 2,
	\]
	with $c = \sqrt{\frac{\gamma p}{\rho}}$ denoting the sound speed.

	To achieve a fully discrete second-order scheme, we apply the second-order explicit strong stability-preserving (SSP) Runge--Kutta time discretization to the semi-discrete form \eqref{FVM2DEulerSemi} as follows:
	\begin{equation}\label{SSPRK2}
		\begin{aligned}
			\overline{\bm{Q}}_{ij}^{(1)} &= \overline{\bm{Q}}_{ij}^{n} + \Delta t \, \mathcal{L}_{ij}(\overline{\bm{Q}}^n), \\
			\overline{\bm{Q}}_{ij}^{n+1} &= \frac{1}{2} \overline{\bm{Q}}_{ij}^{n} + \frac{1}{2} \left( \overline{\bm{Q}}_{ij}^{(1)} + \Delta t \, \mathcal{L}_{ij}(\overline{\bm{Q}}^{(1)}) \right),
		\end{aligned}
	\end{equation}
	where $\Delta t$ represents the time step size, and $\overline{\bm{Q}}_{ij}^{n} = \overline{\bm{Q}}_{ij}(t_n)$ denotes the approximate cell average over $I_{ij}$ at time $t_n$.

	\subsubsection{Second-Order Reconstruction}\label{sec:reconstruction}
	As an example, consider the reconstruction procedure at time level $n$. We reconstruct the limiting values from the cell averages $\left\{\overline{\bm{Q}}_{ij}^n\right\}$. In this work, we reconstruct the primitive variables $\bm{W} = (\rho, \bm{v}, p)^\top$. For a given $\overline{\bm{Q}}_{ij}^n = \left( \overline\rho_{ij}^n,\overline{ \bm{m}}_{ij}^n, \overline E_{ij}^n \right)^\top$, the values of the corresponding primitive variables are given by
	\[
	\overline{\bm{W}}_{ij}^n = \left( \overline \rho_{ij}^n,~ \frac{ \overline{  \bm{m}}_{ij}^n}{\overline \rho_{ij}^n},~ (\gamma - 1) \left( \overline E_{ij}^n - \frac{| \overline{\bm{m}}_{ij}^n|^2}{2 \overline \rho_{ij}^n} \right) \right)^\top.
	\]
	
	We reconstruct a piecewise linear function as follows:
	\[
	\bm{W}_{ij}^n(x, y) = \overline{\bm{W}}_{ij}^n + \delta \bm{W}_{ij}^x (x - x_i) + \delta \bm{W}_{ij}^y (y - y_j), \quad (x, y) \in I_{ij},
	\]
	where $\overline{\bm{W}}_{ij}^n$ is computed from the cell average $\overline{\bm{Q}}_{ij}^n$ rather than the actual average of the primitive variables. The slopes $\delta \bm{W}_{ij}^x$ and $\delta \bm{W}_{ij}^y$ are obtained using a slope limiter (e.g., minmod, superbee). In this work, we use the van Albada limiter, where the slopes are defined as
	\begin{equation}\label{VanAlbada}
		\begin{aligned}
			\delta {\bm W}_{ij}^x &= \frac{\left( \left[ \frac{\overline{\bm{W}}_{i+1,j}^n - \overline{\bm{W}}_{ij}^n}{\Delta x} \right]^2 + \epsilon_x \right)\odot \frac{\overline{\bm{W}}_{ij}^n - \overline{\bm{W}}_{i-1,j}^n}{\Delta x} + \left( \left[ \frac{\overline{\bm{W}}_{ij}^n - \overline{\bm{W}}_{i-1,j}^n}{\Delta x} \right]^2 + \epsilon_x \right) \odot \frac{\overline{\bm{W}}_{i+1,j}^n - \overline{\bm{W}}_{ij}^n}{\Delta x}}{\left[ \frac{\overline{\bm{W}}_{ij}^n - \overline{\bm{W}}_{i-1,j}^n}{\Delta x} \right]^2 + \left[ \frac{\overline{\bm{W}}_{i+1,j}^n - \overline{\bm{W}}_{ij}^n}{\Delta x} \right]^2 + 2 \epsilon_x }, \\
			\delta {\bm W}_{ij}^y &= \frac{\left( \left[ \frac{\overline{\bm{W}}_{i,j+1}^n - \overline{\bm{W}}_{ij}^n}{\Delta y} \right]^2 + \epsilon_y \right) \odot \frac{\overline{\bm{W}}_{ij}^n - \overline{\bm{W}}_{i,j-1}^n}{\Delta y} + \left( \left[ \frac{\overline{\bm{W}}_{ij}^n - \overline{\bm{W}}_{i,j-1}^n}{\Delta y} \right]^2 + \epsilon_y \right) \odot \frac{\overline{\bm{W}}_{i,j+1}^n - \overline{\bm{W}}_{ij}^n}{\Delta y}}{\left[ \frac{\overline{\bm{W}}_{ij}^n - \overline{\bm{W}}_{i,j-1}^n}{\Delta y} \right]^2 + \left[ \frac{\overline{\bm{W}}_{i,j+1}^n - \overline{\bm{W}}_{ij}^n}{\Delta y} \right]^2 + 2\epsilon_y}.
		\end{aligned}
	\end{equation}
	Here, ``$\odot$'' denotes the Hadamard product, $\epsilon_x = 3 \Delta x$, and  $\epsilon_y = 3 \Delta y$. Define 
	$$\Delta \bm{W}_{ij}^x = \left( \Delta \rho_{ij}^x, \Delta \bm{v}_{ij}^x, \Delta p_{ij}^x \right)^\top = \frac{\Delta x}{2} \delta \bm{W}_{ij}^x, \qquad \Delta \bm{W}_{ij}^y = \left( \Delta \rho_{ij}^y, \Delta \bm{v}_{ij}^y, \Delta p_{ij}^y \right)^\top = \frac{\Delta y}{2} \delta \bm{W}_{ij}^y.$$
	The limiting values are then expressed as
	\begin{equation}\label{deltaXY}
		\begin{aligned}
			\bm{W}^{n,-}_{i+\frac{1}{2},j} &= \overline{\bm{W}}_{ij}^n + \Delta \bm{W}_{ij}^x, \qquad 
			\bm{W}^{n,+}_{i-\frac{1}{2},j} = \overline{\bm{W}}_{ij}^n - \Delta \bm{W}_{ij}^x, \\
			\bm{W}^{n,-}_{i,j+\frac{1}{2}} &= \overline{\bm{W}}_{ij}^n + \Delta \bm{W}_{ij}^y, \qquad 
			\bm{W}^{n,+}_{i,j-\frac{1}{2}} = \overline{\bm{W}}_{ij}^n - \Delta \bm{W}_{ij}^y.
		\end{aligned}
	\end{equation}
	
	\begin{remark}
		Our reconstruction procedure is applied to the primitive variables for reasons of simplicity, efficiency, and stability. Although direct reconstruction of the conservative variables is straightforward, numerical experiments have shown that it can lead to some nonphysical overshoots and oscillations (see, e.g., \Cref{fig:Ex-Rotor2}). In contrast, reconstructing the primitive variables results in a scheme that exhibits no such spurious structures or oscillations.
	\end{remark}
	
	Using the van Albada limiter ensures the non-oscillatory property of the piecewise linear reconstruction in \eqref{deltaXY}. However, directly using these limiting values does not guarantee a PP scheme. 
	
	\subsubsection{Positivity-Preserving Goals}

	For physical solutions, both the density and internal energy (or pressure) must remain positive. Let $\mathcal{G}$ denote the set of physically admissible states, defined as:
	\begin{equation*}
		\mathcal{G}=\left\{ \bm{Q}=(\rho, {\bm m},  E)^{\top}:~ \rho>0,\quad  \mathcal{E}(\bm{Q}):=E-\frac{|{\bm m}|^2}{2\rho}>0\right\}.
	\end{equation*}
	It is well known that $\mathcal{G}$ is a convex set \cite{ShuChiwang1996PPforEuler}. The second constraint, $\mathcal{E}(\bm{Q})>0$, is nonlinear with respect to $\bm{Q}$, making it difficult to analyze the PP property. Using the GQL representation \cite{WuShu2021GQL}, we transform this nonlinear constraint into equivalent linear constraints. This novel representation, denoted as $\mathcal{G}_*$, is crucial for the proof presented later.
	
	\begin{lemma}[GQL Representation \cite{WuShu2021GQL}]\label{Lemma:GQLrepresentation}
		The admissible state set $\mathcal{G}$ is exactly equivalent to 
		\begin{equation*}
			\mathcal{G}_*=\left\{\bm{Q}=(\rho, {\bm m}, E)^{\top}:~ \bm{Q} \cdot {\bm n}_1>0, \quad \bm{Q} \cdot \boldsymbol{n}_* >0 \quad \forall \boldsymbol{v}_* \in \mathbb{R}^3\right\},
		\end{equation*}
		where ${\bm n}_1=(1,0,0,0,0)^\top$ and $\boldsymbol{n}_* = \left(\frac{ \left| {\bm v}_* \right|^2}{2}, -{\bm v}_*, 1\right)^\top$, with $\boldsymbol{v}_*$ as free auxiliary variables independent of $\bm{Q}$.
	\end{lemma}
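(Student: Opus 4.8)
The plan is to prove the two set inclusions $\mathcal{G}\subseteq\mathcal{G}_*$ and $\mathcal{G}_*\subseteq\mathcal{G}$ separately, and the whole argument will hinge on a single completing-the-square identity. First I would unwind the two inner products. Since $\bm{n}_1=(1,0,0,0,0)^\top$, the first constraint is simply $\bm{Q}\cdot\bm{n}_1=\rho$, so the density-positivity requirement is literally identical in both sets and can be handled trivially. The only substantive work is to connect the \emph{nonlinear} constraint $\mathcal{E}(\bm{Q})=E-\frac{|\bm{m}|^2}{2\rho}>0$ with the \emph{family of linear} constraints $\bm{Q}\cdot\bm{n}_*>0$. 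Writing out the second inner product gives
\[
\bm{Q}\cdot\bm{n}_* = \frac{|\bm{v}_*|^2}{2}\rho - \bm{m}\cdot\bm{v}_* + E,
\]
which I view as a quadratic function of the free variable $\bm{v}_*\in\mathbb{R}^3$.

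The key step is the algebraic identity obtained by completing the square: when $\rho>0$,
\[
\bm{Q}\cdot\bm{n}_* = \frac{\rho}{2}\left|\bm{v}_* - \frac{\bm{m}}{\rho}\right|^2 + \left(E - \frac{|\bm{m}|^2}{2\rho}\right) = \frac{\rho}{2}\left|\bm{v}_* - \frac{\bm{m}}{\rho}\right|^2 + \mathcal{E}(\bm{Q}).
\]
This one line yields both inclusions. For $\mathcal{G}\subseteq\mathcal{G}_*$: if $\bm{Q}\in\mathcal{G}$ then $\rho>0$ (so the $\bm{n}_1$-constraint holds) and the displayed identity is valid; since the first summand is nonnegative, $\bm{Q}\cdot\bm{n}_*\ge\mathcal{E}(\bm{Q})>0$ for \emph{every} $\bm{v}_*$, giving $\bm{Q}\in\mathcal{G}_*$. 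For $\mathcal{G}_*\subseteq\mathcal{G}$: if $\bm{Q}\in\mathcal{G}_*$ then $\rho=\bm{Q}\cdot\bm{n}_1>0$, so I am free to substitute the particular choice $\bm{v}_*=\bm{m}/\rho$; the square vanishes and the constraint $\bm{Q}\cdot\bm{n}_*>0$ collapses exactly to $\mathcal{E}(\bm{Q})>0$, giving $\bm{Q}\in\mathcal{G}$. Equivalently, $\mathcal{E}(\bm{Q})$ is precisely the minimum over $\bm{v}_*$ of the linear functionals $\bm{Q}\cdot\bm{n}_*$, attained at $\bm{v}_*=\bm{m}/\rho$.

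There is no genuinely hard step here, but the point requiring care---and the reason the $\bm{n}_1$-constraint is \emph{not} redundant---is the positivity of $\rho$. The completing-the-square identity and, in particular, the optimal choice $\bm{v}_*=\bm{m}/\rho$ both require $\rho>0$: only then is the quadratic in $\bm{v}_*$ strictly convex with a finite minimizer, so that the infimum of $\bm{Q}\cdot\bm{n}_*$ equals $\mathcal{E}(\bm{Q})$ rather than $-\infty$. Thus the separate constraint $\bm{Q}\cdot\bm{n}_1>0$ is exactly what licenses the minimization argument in both directions, and I would state this explicitly. Geometrically, the proof realizes $\mathcal{G}$ as the intersection of the half-space $\{\rho>0\}$ with the infinite family of half-spaces $\{\bm{Q}\cdot\bm{n}_*>0\}$, which is the supporting-hyperplane description of the convex set $\mathcal{G}$ underlying the GQL approach.
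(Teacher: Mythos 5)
Your proof is correct and complete: the completing-the-square identity $\bm{Q}\cdot\bm{n}_* = \frac{\rho}{2}\left|\bm{v}_*-\frac{\bm{m}}{\rho}\right|^2+\mathcal{E}(\bm{Q})$ (valid once $\rho>0$), together with the optimal choice $\bm{v}_*=\bm{m}/\rho$ for the reverse inclusion, is exactly the standard argument for this GQL representation. Note that the paper itself states the lemma without proof, deferring to the cited reference \cite{WuShu2021GQL}, and your argument coincides with the one given there; your emphasis that the constraint $\bm{Q}\cdot\bm{n}_1>0$ is what makes the quadratic in $\bm{v}_*$ strictly convex (so the infimum is $\mathcal{E}(\bm{Q})$ rather than $-\infty$) is the right point of care.
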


	In the finite volume framework, given that $\overline{\bm{Q}}^{n}_{ij} \in \mathcal{G}$ for all $i$ and $j$, the overall goal of maintaining positivity can be broken down into two sub-goals:
	\begin{enumerate}
		\item Ensure the positivity of the reconstructed limiting values;
		\item Preserve the positivity of the cell averages at the next time step, i.e., $\overline{\bm{Q}}^{n+1}_{ij} \in \mathcal{G}$.
	\end{enumerate}
	
	In the following, we introduce a simple PP limiter to ensure that the limiting values for both density and pressure remain positive, thereby achieving the first sub-goal. 
	Later, we will rigorously prove that the scheme, when equipped with this limiter, is PP for the updated cell averages, addressing the second sub-goal. This, in turn, guarantees the effectiveness of the PP limiter in future time steps as well.

	\subsubsection{Positivity-Preserving Limiter for Point Values}

	Given $\overline{\rho}_{ij}^n > 0$ and $\overline{p}_{ij}^n > 0$, our PP limiter ensures the positivity of point values by applying slope suppression. To achieve this, we introduce parameters $\alpha^{\star}, \beta, \kappa^{\star} \in [0, 1]$ (where "$\star$" can be $x$ or $y$) to control the modification of the slopes. The primitive variables, modified by the PP limiter, are defined as follows:
	\begin{equation}\label{2D_Euler_limiter}
		\begin{aligned}
			\widetilde{{\bm W}}^{n,+}_{i-\frac{1}{2},j} &= \begin{pmatrix}
				\tilde{\rho}_{i-\frac{1}{2}, j}^{n,+} \\
				\tilde{{\bm v}}_{i-\frac{1}{2}, j}^{n,+} \\
				\tilde{p}_{i-\frac{1}{2}, j}^{n,+}
			\end{pmatrix} = 
			\begin{pmatrix}
				\overline{\rho}_{ij}^n - \alpha^x \Delta \rho_{ij}^{x} \\
				\overline{{\bm v}}_{ij}^n - \beta \Delta {\bm v}_{ij}^{x} \\
				\overline{p}_{ij}^n - \kappa^x \Delta p_{ij}^{x}
			\end{pmatrix}, \qquad 
			\widetilde{{\bm W}}^{n,-}_{i+\frac{1}{2},j} = \begin{pmatrix}
				\tilde{\rho}_{i+\frac{1}{2}, j}^{n,-} \\
				\tilde{{\bm v}}_{i+\frac{1}{2}, j}^{n,-} \\
				\tilde{p}_{i+\frac{1}{2}, j}^{n,-}
			\end{pmatrix} = 
			\begin{pmatrix}
				\overline{\rho}_{ij}^n + \alpha^x \Delta \rho_{ij}^{x} \\
				\overline{{\bm v}}_{ij}^n + \beta \Delta {\bm v}_{ij}^{x} \\
				\overline{p}_{ij}^n + \kappa^x \Delta p_{ij}^{x}
			\end{pmatrix}, \\
			\widetilde{{\bm W}}^{n,+}_{i,j-\frac{1}{2}} &= \begin{pmatrix}
				\tilde{\rho}_{i, j-\frac{1}{2}}^{n,+} \\
				\tilde{{\bm v}}_{i, j-\frac{1}{2}}^{n,+} \\
				\tilde{p}_{i, j-\frac{1}{2}}^{n,+}
			\end{pmatrix} = 
			\begin{pmatrix}
				\overline{\rho}_{ij}^n - \alpha^y \Delta \rho_{ij}^{y} \\
				\overline{{\bm v}}_{ij}^n - \beta \Delta {\bm v}_{ij}^{y} \\
				\overline{p}_{ij}^n - \kappa^y \Delta p_{ij}^{y}
			\end{pmatrix}, \qquad 
			\widetilde{{\bm W}}^{n,-}_{i,j+\frac{1}{2}} = \begin{pmatrix}
				\tilde{\rho}_{i, j+\frac{1}{2}}^{n,-} \\
				\tilde{{\bm v}}_{i, j+\frac{1}{2}}^{n,-} \\
				\tilde{p}_{i, j+\frac{1}{2}}^{n,-}
			\end{pmatrix} = 
			\begin{pmatrix}
				\overline{\rho}_{ij}^n + \alpha^y \Delta \rho_{ij}^{y} \\
				\overline{{\bm v}}_{ij}^n + \beta \Delta {\bm v}_{ij}^{y} \\
				\overline{p}_{ij}^n + \kappa^y \Delta p_{ij}^{y}
			\end{pmatrix}.
		\end{aligned}
	\end{equation}
	If all the parameters $\{\alpha^{\star}, \beta, \kappa^{\star}\}$ are set to $1$, the scheme remains the original second-order method without modification. However, if all parameters are set to $0$, the scheme reduces to a first-order Lax--Friedrichs scheme, which is provably PP under a CFL condition for the Euler equations \cite{Zhang2010b}. 
	We aim to determine appropriate parameters such that the scheme is provably PP while maintaining second-order accuracy. 
	
	We first specify a fixed $q > 2$, which is closely related to the PP CFL condition for the updated cell averages. As the value of $q$ decreases, the BP limiter becomes more restrictive, but the corresponding BP CFL number increases. 
	For a given $q > 2$, the PP limiting  parameters in \eqref{2D_Euler_limiter} are computed via the following three steps:  
	
	\noindent
	{\bf Step 1:} Modify the density slopes and ensure density positivity:
		\begin{equation}\label{rho_2D}
			\alpha^x  =\begin{cases}
				\min\left(\frac{\overline{\rho}_{ij}^n}{\left| \Delta \rho_{ij}^x \right|(1+\epsilon)}, 1\right),   &  \mbox{if} ~  \Delta \rho_{ij}^x  \neq 0, \\
				1, & \mbox{otherwise},
			\end{cases} \quad
			\alpha^y  =\begin{cases}
				\min\left(\frac{\overline{\rho}_{ij}^n}{\left| \Delta \rho_{ij}^y \right|(1+\epsilon)}, 1\right),   &  \mbox{if} ~  \Delta \rho_{ij}^y  \neq 0, \\
				1, & \mbox{otherwise},
			\end{cases}
		\end{equation} 
		where $\epsilon$ is a small positive number and can be taken as $10^{-14}$. 
		
			\noindent
	{\bf Step 2:} Modify the pressure slopes and ensure pressure positivity:
		\begin{equation}\label{p_2D}
			\kappa^x   = \begin{cases}
				\min\left(\frac{\overline{p}_{ij}^n}{\left| \Delta p_{ij}^x \right|(1+\epsilon)}, 1\right),  &  \mbox{if} ~  \Delta p_{ij}^x  \neq 0 ,\\
				1, & \mbox{otherwise},
			\end{cases}  \quad
			\kappa^y  =\begin{cases}
				\min\left(\frac{ \overline{p}_{ij}^n}{\left| \Delta p_{ij}^y \right|(1+\epsilon)}, 1\right), & \mbox{if} ~  \Delta p_{ij}^y  \neq 0, \\
				1, & \mbox{otherwise}.
			\end{cases}
		\end{equation}
		
			\noindent
	{\bf Step 3:} Modify the velocity slopes:
		\begin{equation}\label{u_2D}
			\beta = \begin{cases}
				\min\left( \sqrt{\frac{(q - 2)^2  \overline{\rho}_{ij}^n  \overline{p}_{ij}^n}{(\gamma - 1) \left(  2  \left| C_x \alpha^x \Delta \rho_{ij}^x \Delta {\bm v}^x_{ij} + C_y \alpha^y \Delta \rho_{ij}^y \Delta {\bm v}_{ij}^y \right|^2  + (q - 2) (\overline{\rho}_{ij}^n)^2 \left( C_x \left|\Delta {\bm v}_{ij}^x \right|^2 + C_y \left|\Delta {\bm v}_{ij}^y \right|^2 \right) \right)}}, 1\right),  & \mbox{if} ~ \left| \Delta {\bm v}_{ij}^x \right| + \left| \Delta {\bm v}_{ij}^y \right| \neq 0, \\
				1, & \mbox{otherwise},
			\end{cases} 
		\end{equation}
		where the constants $C_x = \frac{\alpha_1 \Delta y}{ \alpha_1 \Delta y + \alpha_2 \Delta x }$ and $C_y = \frac{\alpha_2 \Delta x}{ \alpha_1 \Delta y + \alpha_2 \Delta x }$.

	In the above PP limiter, it is evident that \textbf{Step 1} and \textbf{Step 2} ensure the positivity of the limiting values for density and pressure. 
	\textbf{Step 3} is an additional limiter designed to guarantee condition \eqref{eq:cond2} in Theorem \ref{LimitingValuesPP}. This step, together with \textbf{Step 1} and \textbf{Step 2}, plays a crucial role in rigorously establishing the provably PP property of the updated cell averages at the next time step.
	
	Let $\widetilde{{\bm Q}}^{n,+}_{i-\frac{1}{2},j}, \widetilde{{\bm Q}}^{n,-}_{i+\frac{1}{2},j}, \widetilde{{\bm Q}}^{n,+}_{i,j-\frac{1}{2}}, \widetilde{{\bm Q}}^{n,-}_{i,j+\frac{1}{2}}$ denote the conservative variables corresponding to the modified limiting values of the primitive variables in \eqref{2D_Euler_limiter}. Specifically, we have the following properties. 
	
	\begin{theorem}\label{LimitingValuesPP}
		The conservative variables corresponding to the limiting values \eqref{2D_Euler_limiter} modified by the PP limiter satisfy the following two properties:
		\begin{align}
		& \qquad	\widetilde{{\bm Q}}^{n,+}_{i-\frac{1}{2},j}, \widetilde{{\bm Q}}^{n,-}_{i+\frac{1}{2},j}, \widetilde{{\bm Q}}^{n,+}_{i,j-\frac{1}{2}}, \widetilde{{\bm Q}}^{n,-}_{i,j+\frac{1}{2}} \in {\mathcal{G}}.
		\\
		\label{eq:cond2}
		&	q \overline{{\bm Q}}_{ij}^{n} \cdot \mathbf{n}_{*} \geq C_x \left( \widetilde{{\bm Q}}^{n, -}_{i+\frac{1}{2},j} + \widetilde{{\bm Q}}^{n, +}_{i-\frac{1}{2},j} \right) \cdot \mathbf{n}_{*} 
			+ C_y \left( \widetilde{{\bm Q}}^{n,+}_{i,j-\frac{1}{2}} + \widetilde{{\bm Q}}^{n, -}_{i,j+\frac{1}{2}} \right) \cdot \mathbf{n}_{*} \quad
			\forall \bm{v}_* \in \mathbb{R}^3,
		\end{align}
		where $\mathbf{n}_* = \left(\frac{\left| {\bm v}_* \right|^2}{2}, -{\bm v}_*, 1\right)^\top$, and $q > 2$ is the fixed parameter in {\bf Step 3} of the PP limiter.
	\end{theorem}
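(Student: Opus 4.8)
The plan is to prove the two assertions separately. The membership claim $\widetilde{\bm Q}\in\mathcal G$ follows directly from Steps 1--2 of the limiter, while the inequality \eqref{eq:cond2} is the real content and will be handled by the GQL representation of \Cref{Lemma:GQLrepresentation} together with an elementary quadratic-minimization argument in the free variable $\bm v_*$.

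First I would dispatch the positivity claim. Since a state with primitive variables $(\rho,\bm v,p)$ and $\rho>0$ satisfies $\mathcal E(\bm Q)=E-\frac{|\bm m|^2}{2\rho}=\frac{p}{\gamma-1}$, it suffices to verify positivity of the reconstructed density and pressure. By the definition of $\alpha^x$ in \eqref{rho_2D}, one has $\alpha^x|\Delta\rho_{ij}^x|\le \frac{\overline\rho_{ij}^n}{1+\epsilon}<\overline\rho_{ij}^n$ whether or not the $\min$ is active, so $\overline\rho_{ij}^n\pm\alpha^x\Delta\rho_{ij}^x>0$; the same argument with \eqref{p_2D} gives $\overline p_{ij}^n\pm\kappa^x\Delta p_{ij}^x>0$, and likewise in the $y$-direction. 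Hence every modified limiting state has positive density and pressure, so its conservative form lies in $\mathcal G$.

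For \eqref{eq:cond2}, the key observation I would record first is that, for any state with primitive variables $(\rho,\bm v,p)$, the GQL functional evaluates cleanly to
\begin{equation*}
  \bm Q\cdot\mathbf n_* = \frac{p}{\gamma-1} + \frac{\rho}{2}\,\big|\bm v-\bm v_*\big|^2 .
\end{equation*}
Applying this to the cell average gives $\overline{\bm Q}_{ij}^n\cdot\mathbf n_*=\frac{\overline p}{\gamma-1}+\frac{\overline\rho}{2}|\overline{\bm v}-\bm v_*|^2$ (suppressing the $ij,n$ labels). For each coordinate direction I would then sum the two opposite limiting values: the pressure slopes enter with opposite signs and cancel, while the density/velocity parts combine through the identity $(\overline\rho+c)|\bm a+\bm b|^2+(\overline\rho-c)|\bm a-\bm b|^2 = 2\overline\rho(|\bm a|^2+|\bm b|^2)+4c\,\bm a\cdot\bm b$, taken with $\bm a=\overline{\bm v}-\bm v_*$, $\bm b=\beta\Delta\bm v$ and $c=\alpha\Delta\rho$ in the relevant direction. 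Using $C_x+C_y=1$, the right-hand side of \eqref{eq:cond2} collapses to
\begin{equation*}
  \frac{2\overline p}{\gamma-1} + \overline\rho\,|\bm w|^2 + \overline\rho\beta^2 D + 2\beta\,\bm w\cdot\bm S,
\end{equation*}
where $\bm w:=\overline{\bm v}-\bm v_*$, $\bm S:=C_x\alpha^x\Delta\rho_{ij}^x\Delta\bm v_{ij}^x+C_y\alpha^y\Delta\rho_{ij}^y\Delta\bm v_{ij}^y$, and $D:=C_x|\Delta\bm v_{ij}^x|^2+C_y|\Delta\bm v_{ij}^y|^2$.

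Subtracting this from $q\,\overline{\bm Q}_{ij}^n\cdot\mathbf n_*$, the inequality \eqref{eq:cond2} is seen to be equivalent to requiring
\begin{equation*}
  \frac{(q-2)\overline\rho}{2}\,|\bm w|^2 - 2\beta\,\bm w\cdot\bm S + (q-2)\frac{\overline p}{\gamma-1} - \overline\rho\beta^2 D \ge 0 \qquad \forall\,\bm w\in\mathbb R^3 .
\end{equation*}
Because $q>2$ and $\overline\rho>0$, this is a convex quadratic in $\bm w$; minimizing over $\bm w\in\mathbb R^3$ (minimizer $\bm w=\frac{2\beta\bm S}{(q-2)\overline\rho}$) shows nonnegativity is equivalent to the scalar bound $\beta^2\le \frac{(q-2)^2\,\overline\rho\,\overline p}{(\gamma-1)\big(2|\bm S|^2+(q-2)\overline\rho^2 D\big)}$. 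The final step is to recognize that the right-hand side is precisely the square of the first argument of the $\min$ defining $\beta$ in \eqref{u_2D}, so the chosen $\beta$ always satisfies it (trivially when the velocity slopes vanish), which closes the proof. The hard part will be the careful bookkeeping in the parallelogram-identity step---verifying the exact cancellation of the pressure slopes and the matching of the resulting threshold with \eqref{u_2D}; by contrast, the quadratic minimization is routine once the GQL identity has reduced the nonlinear pressure constraint to a single quadratic in $\bm w$.
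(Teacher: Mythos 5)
Your proposal is correct and follows essentially the same route as the paper's proof: membership in $\mathcal{G}$ from \textbf{Steps 1--2}, then reduction of \eqref{eq:cond2} to exactly the quadratic nonnegativity condition that the definition of $\beta$ in \eqref{u_2D} is built to enforce. The only difference is packaging: the paper substitutes the conservative-variable relations \eqref{limiterrelation} to reach the discriminant condition \eqref{quadratic} and then bounds the left-hand side of \eqref{eq:cond2} below by the scalar quadratic $\Phi(|\bm{v}_*|)$ (implicitly via Cauchy--Schwarz), whereas you stay in primitive variables using the identity $\bm{Q}\cdot\mathbf{n}_* = \frac{p}{\gamma-1}+\frac{\rho}{2}\left|\bm{v}-\bm{v}_*\right|^2$ and minimize the resulting quadratic exactly over $\bm{w}=\overline{\bm{v}}-\bm{v}_*$; these two steps are mathematically equivalent.
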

	
	\begin{proof}
		
		First, we verify that the modified limiting values satisfy:
		\begin{equation*}
			\tilde{\rho}_{i-\frac{1}{2}, j}^{n,+},\ \tilde{\rho}_{i+\frac{1}{2}, j}^{n,-},\ \tilde{\rho}_{i, j-\frac{1}{2}}^{n,+},\ \tilde{\rho}_{i, j+\frac{1}{2}}^{n,-} > 0, \qquad
			\tilde{p}_{i-\frac{1}{2}, j}^{n,+},\ \tilde{p}_{i+\frac{1}{2}, j}^{n,-},\ \tilde{p}_{i, j-\frac{1}{2}}^{n,+},\ \tilde{p}_{i, j+\frac{1}{2}}^{n,-} > 0.
		\end{equation*}
		This follows directly from the definitions in \eqref{2D_Euler_limiter} and the construction of the parameters in \textbf{Steps 1} and \textbf{2} of the PP limiter.
		
		Next, from the definition of $\beta$ in \eqref{u_2D}, we have
		\begin{equation}\label{eq:defd}
			\left( 2 \left| C_x \alpha^x \Delta \rho_{ij}^x \Delta \bm{v}_{ij}^x + C_y \alpha^y \Delta \rho_{ij}^y \Delta \bm{v}_{ij}^y \right|^2 
			+ (q - 2) (\overline{\rho}_{ij}^n)^2 \left( C_x \left| \Delta \bm{v}_{ij}^x \right|^2 
			+ C_y \left| \Delta \bm{v}_{ij}^y \right|^2 \right) \right) \beta^2 \leq \frac{(q - 2)^2 \overline{\rho}_{ij}^n \overline{p}_{ij}^n}{\gamma - 1}.
		\end{equation}
		We use the following relationships:
		\begin{equation}\label{limiterrelation}
			\left\{
			\begin{aligned}
				& \widetilde{\bm{m}}^{n,-}_{i+\frac{1}{2},j} = \tilde{\rho}^{n,-}_{i+\frac{1}{2},j} \, \widetilde{\bm{v}}^{n,-}_{i+\frac{1}{2},j} 
				= \left( \overline{\rho}_{ij}^n + \alpha^x \Delta \rho_{ij}^x \right) \left( \overline{\bm{v}}_{ij}^n + \beta \Delta \bm{v}_{ij}^x \right), \\
				& \widetilde{\bm{m}}^{n,+}_{i-\frac{1}{2},j} = \tilde{\rho}^{n,+}_{i-\frac{1}{2},j} \, \widetilde{\bm{v}}^{n,+}_{i-\frac{1}{2},j} 
				= \left( \overline{\rho}_{ij}^n - \alpha^x \Delta \rho_{ij}^x \right) \left( \overline{\bm{v}}_{ij}^n - \beta \Delta \bm{v}_{ij}^x \right), \\
				& \tilde{E}^{n,-}_{i+\frac{1}{2},j} = \dfrac{\tilde{p}^{n,-}_{i+\frac{1}{2},j}}{\gamma - 1} + \dfrac{1}{2} \tilde{\rho}^{n,-}_{i+\frac{1}{2},j} \left| \widetilde{\bm{v}}^{n,-}_{i+\frac{1}{2},j} \right|^2, \\
				& \tilde{E}^{n,+}_{i-\frac{1}{2},j} = \dfrac{\tilde{p}^{n,+}_{i-\frac{1}{2},j}}{\gamma - 1} + \dfrac{1}{2} \tilde{\rho}^{n,+}_{i-\frac{1}{2},j} \left| \widetilde{\bm{v}}^{n,+}_{i-\frac{1}{2},j} \right|^2.
			\end{aligned}
			\right.
		\end{equation}
		Substituting \eqref{limiterrelation} into the inequality \eqref{eq:defd}, and after simplification, we derive
		\begin{equation}\label{quadratic}
			\begin{aligned}
				& \left| C_x \left( \widetilde{\bm{m}}^{n,-}_{i+\frac{1}{2},j} + \widetilde{\bm{m}}^{n,+}_{i-\frac{1}{2},j} \right)
				+ C_y \left( \widetilde{\bm{m}}^{n,-}_{i,j+\frac{1}{2}} + \widetilde{\bm{m}}^{n,+}_{i,j-\frac{1}{2}} \right)
				- q \overline{\bm{m}}_{ij}^n \right|^2 \\
				& \quad - 2 (q - 2) \overline{\rho}_{ij}^n \left( q \overline{E}_{ij}^n 
				- \left( C_x \left( \tilde{E}^{n,-}_{i+\frac{1}{2},j} + \tilde{E}^{n,+}_{i-\frac{1}{2},j} \right)
				+ C_y \left( \tilde{E}^{n,-}_{i,j+\frac{1}{2}} + \tilde{E}^{n,+}_{i,j-\frac{1}{2}} \right) \right) \right) \leq 0.
			\end{aligned}
		\end{equation}
		This inequality implies that the discriminant of the following quadratic function $\Phi(|\bm{v}_*|)$ is non-positive, and thus $\Phi(|\bm{v}_*|) \geq 0$ for any $\bm{v}_* \in \mathbb{R}^3$, where
		\begin{equation*}
			\begin{aligned}
				\Phi(|\bm{v}_*|) := & \dfrac{1}{2}(q - 2) \overline{\rho}_{ij}^n \left| \bm{v}_* \right|^2 
				-  \left| C_x \left( \widetilde{\bm{m}}^{n,-}_{i+\frac{1}{2},j} + \widetilde{\bm{m}}^{n,+}_{i-\frac{1}{2},j} \right)
				+ C_y \left( \widetilde{\bm{m}}^{n,-}_{i,j+\frac{1}{2}} + \widetilde{\bm{m}}^{n,+}_{i,j-\frac{1}{2}} \right)
				- q \overline{\bm{m}}_{ij}^n \right| \left| \bm{v}_* \right|  \\
				& + q \overline{E}_{ij}^n 
				- \left( C_x \left( \tilde{E}^{n,-}_{i+\frac{1}{2},j} + \tilde{E}^{n,+}_{i-\frac{1}{2},j} \right)
				+ C_y \left( \tilde{E}^{n,-}_{i,j+\frac{1}{2}} + \tilde{E}^{n,+}_{i,j-\frac{1}{2}} \right) \right).
			\end{aligned}
		\end{equation*}
		Therefore, for any $\bm{v}_* \in \mathbb{R}^3$, we have
		\begin{equation*}
			\begin{aligned}
				& q \overline{\bm{Q}}_{ij}^n \cdot \mathbf{n}_* - C_x \left( \widetilde{\bm{Q}}^{n,-}_{i+\frac{1}{2},j} + \widetilde{\bm{Q}}^{n,+}_{i-\frac{1}{2},j} \right) \cdot \mathbf{n}_* 
				- C_y \left( \widetilde{\bm{Q}}^{n,+}_{i,j-\frac{1}{2}} + \widetilde{\bm{Q}}^{n,-}_{i,j+\frac{1}{2}} \right) \cdot \mathbf{n}_* \\
				& = \dfrac{1}{2}(q - 2) \overline{\rho}_{ij}^n \left| \bm{v}_* \right|^2 
				+ \left( C_x \left( \widetilde{\bm{m}}^{n,-}_{i+\frac{1}{2},j} + \widetilde{\bm{m}}^{n,+}_{i-\frac{1}{2},j} \right)
				+ C_y \left( \widetilde{\bm{m}}^{n,-}_{i,j+\frac{1}{2}} + \widetilde{\bm{m}}^{n,+}_{i,j-\frac{1}{2}} \right)
				- q \overline{\bm{m}}_{ij}^n \right) \cdot \bm{v}_* \\
				& \quad + q \overline{E}_{ij}^n 
				- \left( C_x \left( \tilde{E}^{n,-}_{i+\frac{1}{2},j} + \tilde{E}^{n,+}_{i-\frac{1}{2},j} \right)
				+ C_y \left( \tilde{E}^{n,-}_{i,j+\frac{1}{2}} + \tilde{E}^{n,+}_{i,j-\frac{1}{2}} \right) \right) \\
				& \geq \Phi\left( \left| \bm{v}_* \right| \right) \geq 0.
			\end{aligned}
		\end{equation*}
		This establishes the inequality \eqref{eq:cond2}, completing the proof.
	\end{proof}

	\begin{remark}
		The initial idea of introducing slope parameters to ensure positive limiting values was inspired by Berthon’s work in \cite{Berthon2005}. However, unlike Berthon, we do not depend on a fixed decomposition of cell average values. Instead, our analysis is grounded in the GQL approach \cite{WuShu2021GQL}, which offers a more flexible and generalized framework. This approach allows us to explicitly establish the relationship between parameter selection, the PP property, and the CFL condition.
	\end{remark}
	
	\begin{remark}
		The property \eqref{eq:cond2} in Theorem \ref{LimitingValuesPP} implies that there exists a state $\widehat{\bm{Q}}_{ij} \in \overline{\mathcal{G}}$ such that
		\begin{equation}\label{decompositionform}
			\overline{\bm{Q}}_{ij}^n = \dfrac{C_x}{q} \left( \widetilde{\bm{Q}}^{n,-}_{i+\frac{1}{2},j} + \widetilde{\bm{Q}}^{n,+}_{i-\frac{1}{2},j} \right) 
			+ \dfrac{C_y}{q} \left( \widetilde{\bm{Q}}^{n,-}_{i,j+\frac{1}{2}} + \widetilde{\bm{Q}}^{n,+}_{i,j-\frac{1}{2}} \right) 
			+ \dfrac{q - 2}{q} \widehat{\bm{Q}}_{ij}.
		\end{equation}
	\end{remark}

	If the exact solution of density and pressure has a uniform positive lower bound, we can prove that the PP limiter does not degrade the second-order accuracy of the scheme, as established in the following theorem.

	\begin{theorem}\label{accuracy}
		Assume the exact density $\rho$, pressure $p$, and velocity $\bm{v} = \left(v_1, v_2, v_3\right)^\top$ are $C^2$ functions in the computational domain $\Omega$, and that there exists a constant $C > 0$ such that
		$$\rho(x,y,t) > C>0, \quad p(x,y,t) > C>0, \qquad \forall (x,y)\in \Omega,~~ \forall t > 0.$$
		Let the reconstructed point values be second-order accurate approximations to the exact solution, i.e.,
		\begin{align*}
			\left|\rho^{n,\mp}_{i \pm \frac{1}{2},j} - \rho(x_{i \pm \frac{1}{2}}, y_j, t_n) \right| &\le \mathcal{O}(h^2), \qquad \left|\rho^{n,\mp}_{i,j \pm \frac{1}{2}} - \rho(x_{i}, y_{j\pm \frac{1}{2}}, t_n)\right| \le \mathcal{O}(h^2), \\
			\left|\bm{v}^{n,\mp}_{i\pm\frac{1}{2},j} - \bm{v}(x_{i \pm \frac{1}{2}}, y_j, t_n)\right| &\le \mathcal{O}(h^2), \qquad \left|\bm{v}^{n,\mp}_{i,j\pm\frac{1}{2}} - \bm{v}(x_{i}, y_{ j \pm \frac{1}{2}}, t_n)\right| \le \mathcal{O}(h^2), \\
			\left | p^{n,\mp}_{i \pm \frac{1}{2},j} - p(x_{i \pm \frac{1}{2}}, y_j, t_n) \right| & \le \mathcal{O}(h^2), \qquad \left| p^{n,\mp}_{i,j \pm \frac{1}{2}} - p(x_{i}, y_{j\pm \frac{1}{2}}, t_n) \right| \le \mathcal{O}(h^2),
		\end{align*}
		where $h = \max(\Delta x, \Delta y)$. Under this assumption, the point values modified by the PP limiter \eqref{rho_2D}--\eqref{u_2D} will still be second-order accurate approximations of the exact solution. Specifically, we have
		\begin{align}\label{ddd}
			\left| \tilde{\rho}^{n,\mp}_{i \pm \frac{1}{2},j} - \rho(x_{i \pm \frac{1}{2}}, y_j, t_n) \right| & \le \mathcal{O}(h^2), \qquad \left| \tilde{\rho}^{n,\mp}_{i,j \pm \frac{1}{2}} - \rho(x_{i}, y_{j\pm \frac{1}{2}}, t_n) \right| \le \mathcal{O}(h^2), \\ \label{vvv}
			\left|\tilde{\bm{v}}^{n,\mp}_{i\pm\frac{1}{2},j} - \bm{v}(x_{i \pm \frac{1}{2}}, y_j, t_n)\right| & \le \mathcal{O}(h^2), \qquad \left|\tilde{\bm{v}}^{n,\mp}_{i,j\pm\frac{1}{2}} - \bm{v}(x_{i}, y_{ j \pm \frac{1}{2}}, t_n)\right| \le \mathcal{O}(h^2), \\ \label{ppp}
			\left| \tilde{p}^{n,\mp}_{i \pm \frac{1}{2},j} - p(x_{i \pm \frac{1}{2}}, y_j, t_n) \right| & \le \mathcal{O}(h^2), \qquad \left| \tilde{p}^{n,\mp}_{i,j \pm \frac{1}{2}} - p(x_{i}, y_{j\pm \frac{1}{2}}, t_n) \right| \le \mathcal{O}(h^2).
		\end{align}
		This confirms that the PP limiter maintains the second-order accuracy of the reconstruction.
	\end{theorem}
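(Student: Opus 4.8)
The plan is to prove that on sufficiently fine meshes the PP limiter is \emph{inactive}, i.e.\ all parameters $\alpha^\star,\beta,\kappa^\star$ equal $1$, so that the modified limiting values \eqref{2D_Euler_limiter} coincide with the unlimited reconstruction \eqref{deltaXY}; the desired bounds \eqref{ddd}--\eqref{ppp} then follow verbatim from the hypothesis. Since every parameter is defined through a $\min(\cdot,1)$, it suffices to show that the first argument of each such minimum exceeds $1$ once $h$ is small enough.

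Two ingredients drive the argument. First, because the unlimited interface values are second-order accurate and the cell quantities satisfy $\overline\rho_{ij}^n=\tfrac12(\rho^{n,-}_{i+\frac12,j}+\rho^{n,+}_{i-\frac12,j})$ and $\overline p_{ij}^n=\tfrac12(p^{n,-}_{i+\frac12,j}+p^{n,+}_{i-\frac12,j})$, the uniform lower bound $C$ gives $\overline\rho_{ij}^n,\overline p_{ij}^n\ge C-\mathcal{O}(h^2)\ge C/2$ for $h$ small. Second, writing each slope as a half-difference of interface values, e.g.\ $\Delta\rho_{ij}^x=\tfrac12(\rho^{n,-}_{i+\frac12,j}-\rho^{n,+}_{i-\frac12,j})$, the $C^1$ regularity of the exact solution yields $|\Delta\rho_{ij}^\star|,|\Delta\bm v_{ij}^\star|,|\Delta p_{ij}^\star|=\mathcal{O}(h)$. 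For \textbf{Steps 1} and \textbf{2}, these immediately give $\frac{\overline\rho_{ij}^n}{|\Delta\rho_{ij}^\star|(1+\epsilon)}\ge\frac{C/2}{(1+\epsilon)\mathcal{O}(h)}\to\infty$ (and identically for pressure), so $\alpha^\star=\kappa^\star=1$ for $h$ small, while a vanishing slope returns the parameter $1$ by definition.

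For \textbf{Step 3} I would bound the radicand defining $\beta$: its numerator $(q-2)^2\overline\rho_{ij}^n\overline p_{ij}^n\ge(q-2)^2(C/2)^2$ is bounded below by a positive constant, whereas its denominator is $\mathcal{O}(h^2)$, since $C_x,C_y\in[0,1]$, the cross term $|C_x\alpha^x\Delta\rho_{ij}^x\Delta\bm v_{ij}^x+C_y\alpha^y\Delta\rho_{ij}^y\Delta\bm v_{ij}^y|^2=\mathcal{O}(h^4)$, and $(q-2)(\overline\rho_{ij}^n)^2(C_x|\Delta\bm v_{ij}^x|^2+C_y|\Delta\bm v_{ij}^y|^2)=\mathcal{O}(h^2)$. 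Hence the radicand tends to $+\infty$ and $\beta=1$ for $h$ small (again $\beta=1$ trivially when the velocity slopes vanish). Collecting these estimates produces an $h_0>0$ with all parameters equal to $1$ for $h<h_0$, so $\widetilde{\bm W}^{n,\pm}=\bm W^{n,\pm}$ and \eqref{ddd}--\eqref{ppp} reduce to the assumed accuracy of the unlimited reconstruction. The only delicate point is \textbf{Step 3}, where the denominator mixes $\mathcal{O}(h^2)$ and $\mathcal{O}(h^4)$ contributions; the crux is that the dominant $\mathcal{O}(h^2)$ term still vanishes, so it is precisely the uniform positivity bound $C$ in the numerator that prevents the limiter from degrading the second-order accuracy.
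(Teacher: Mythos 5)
Your proposal is correct, but it takes a genuinely different route from the paper's proof. You establish the stronger fact that under the theorem's hypotheses the PP limiter is asymptotically \emph{inactive}: from $\overline{\rho}_{ij}^n,\,\overline{p}_{ij}^n \ge C - \mathcal{O}(h^2)$ and the uniform $\mathcal{O}(h)$ bound on all limited slopes (both of which follow, as you note, from the half-sum/half-difference identities behind \eqref{deltaXY}, the assumed second-order accuracy, and the $C^2$ regularity), every first argument of the $\min(\cdot,1)$ in \eqref{rho_2D}--\eqref{u_2D} exceeds $1$ once $h$ is below a cell-independent threshold $h_0$, so $\alpha^{\star}=\kappa^{\star}=\beta=1$ and the limited values coincide with the unlimited ones. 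The paper never asserts inactivity; it instead bounds the size of the modification in the case the limiter does fire: for $\rho$ and $p$ it uses the triangle inequality \eqref{Therm3.6Eq1} and shows $(1-\alpha^x)\left|\Delta\rho^x_{ij}\right| \le \mathcal{O}(h^2)$, and for the velocity it introduces the auxiliary state $\check{\bm{Q}}_{ij}$ of \eqref{decomposition_beta1}, derives the closed form $\beta=\sqrt{\overline{p}^n_{ij}/(\overline{p}^n_{ij}-\check{p}_{ij})}$ when Step 3 activates, and Taylor-expands using $\check{p}_{ij}=p^e_{ij}+\mathcal{O}(h^2)$. Both arguments rest on the same two ingredients (the uniform positivity bound $C$ and the $\mathcal{O}(h)$ slopes), and both need the hidden constants---in the hypothesized $\mathcal{O}(h^2)$ bounds and in the first-derivative bounds of the exact solution---to be uniform in $(i,j)$; you should state this uniformity explicitly, though it is exactly the implicit assumption the paper also makes. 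What your route buys is brevity, a single mechanism covering all three steps, and the sharper qualitative conclusion that in smooth, bounded-away-from-vacuum regions the scheme eventually reduces to the unlimited reconstruction. What the paper's route buys is a quantitative estimate of the perturbation that remains informative whenever the limiter genuinely activates, and a $\beta$-analysis that reuses the decomposition \eqref{decomposition_beta1} central to the positivity proof of Theorem \ref{PPTheorem}; note that in the paper's Case 2 the condition $\check{p}_{ij}<0$ is precisely the condition that Step 3 activates ($\beta<1$), so your computation shows that case is in fact vacuous for sufficiently small $h$.
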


	\begin{proof}
		We will focus on the proof for the limiting values at $(x_{i \pm \frac{1}{2}}, y_j, t_n)$, while the proof for the limiting values at $(x_{i}, y_{j\pm \frac{1}{2}}, t_n)$ is similar and thus omitted. Using the triangle inequality gives 
		\begin{equation}\label{Therm3.6Eq1}
			\begin{aligned}
				\left| \tilde{\rho}^{n,\mp}_{i \pm \frac{1}{2},j} - \rho(x_{i \pm \frac{1}{2}}, y_j, t_n) \right| 
				&= \left|\overline{\rho}_{ij}^n \pm \alpha^x \Delta \rho^x_{ij} -  \rho(x_{i \pm \frac{1}{2}}, y_j, t_n) \right| \\
				&= \left|\overline{\rho}_{ij}^n \pm  \Delta \rho^x_{ij} - \rho(x_{i \pm \frac{1}{2}}, y_j, t_n) \mp \Delta \rho^x_{ij} \pm \alpha^x   \Delta \rho^x_{ij} \right| \\
				&\leq \left| \rho^{n,\mp}_{i \pm \frac{1}{2},j} - \rho(x_{i \pm \frac{1}{2}}, y_j, t_n) \right| + (1 - \alpha^x) \left|\Delta \rho^x_{ij} \right|
				\\
				& \le {\cal O}(h^2) + (1 - \alpha^x) \left|\Delta \rho^x_{ij} \right|,
			\end{aligned}
		\end{equation}
		There exists a constant $C_1 > 0$ such that
		\begin{equation*}
			\overline{\rho}_{ij}^n - \left|\Delta \rho^x_{ij} \right| 
			=\min \left( \rho^{n,-}_{i + \frac{1}{2},j}, \rho^{n,+}_{i - \frac{1}{2},j}  \right)
			\ge m - C_1 h^2,
		\end{equation*}
		where $m = \min\left( \rho(x_{i - \frac{1}{2}}, y_j, t_n), \rho(x_{i + \frac{1}{2}}, y_j, t_n) \right)>0$. 
		This along with $\epsilon=10^{-14} \ll h^2$ implies: if $\alpha^x = \frac{\overline{\rho}_{ij}^n}{\left|\Delta \rho^x_{ij}\right| (1+\epsilon)  } < 1$ then 
		\begin{equation*}
			0\le (1 - \alpha^x) \left|\Delta \rho^x_{ij} \right| = \left|\Delta \rho^x_{ij}\right| - \overline{\rho}_{ij}^n 
			+  \overline{\rho}_{ij}^n \frac{\epsilon}{1+\epsilon} \le C_1 h^2 - m + \overline{\rho}_{ij}^n \frac{\epsilon}{1+\epsilon} < C_2 h^2.
		\end{equation*}
		Substituting this estimate into \eqref{Therm3.6Eq1}, we obtain 
		\begin{equation*}
			\left| \tilde{\rho}^{n,\mp}_{i \pm \frac{1}{2},j} - \rho(x_{i \pm \frac{1}{2}}, y_j, t_n) \right| \leq \mathcal{O}(h^2).
		\end{equation*}
		Similarly, we have 
		\begin{equation*}
			\left| \tilde{p}^{n,\mp}_{i \pm \frac{1}{2},j} - p(x_{i \pm \frac{1}{2}}, y_j, t_n) \right| \leq \mathcal{O}(h^2).
		\end{equation*}
		This completes the accuracy proof for density and pressure in \eqref{ddd} and \eqref{ppp}. 
		For velocity, similar to \eqref{Therm3.6Eq1}, we only need to show:
		\begin{equation*}
			1 - \beta \leq \mathcal{O}(h^2). 
		\end{equation*}
		Denote the modified conservative values after Step 1 and Step 2 of the PP limiter by $\check{\bm{Q}}^{n,-}_{i+\frac{1}{2},j}, \check{\bm{Q}}^{n,+}_{i-\frac{1}{2},j}, \check{\bm{Q}}^{n,-}_{i,j+\frac{1}{2}}, \check{\bm{Q}}^{n,+}_{i,j-\frac{1}{2}}$. 
		Define $\check{\bm{Q}}_{ij}$ by 
		\begin{equation}\label{decomposition_beta1}
			\dfrac{q - 2}{q} \check{\bm{Q}}_{ij}	: = \dfrac{C_x}{q} \left( \check{\bm{Q}}^{n,-}_{i+\frac{1}{2},j} + \check{\bm{Q}}^{n,+}_{i-\frac{1}{2},j} \right) 
			+ \dfrac{C_y}{q} \left( \check{\bm{Q}}^{n,-}_{i,j+\frac{1}{2}} + \check{\bm{Q}}^{n,+}_{i,j-\frac{1}{2}} \right) - \overline{\bm{Q}}_{ij}^n.
		\end{equation}
		According to the second-order accuracy of the modified density and pressure proven above,  we have
		\begin{equation*}
			\check{\bm{Q}}^{n,-}_{i+\frac{1}{2},j} = \bm{Q}(x_{i+\frac{1}{2}}, y_{j}, t_n) + \mathcal{O}(h^2), \qquad \check{\bm{Q}}^{n,+}_{i-\frac{1}{2},j} = \bm{Q}(x_{i-\frac{1}{2}}, y_{j}, t_n) + \mathcal{O}(h^2).
		\end{equation*}
		This implies 
		\begin{equation*}
			\frac{1}{2} \left( \check{\bm{Q}}^{n,-}_{i+\frac{1}{2},j} + \check{\bm{Q}}^{n,+}_{i-\frac{1}{2},j} \right) = \frac{1}{2} \left( \bm{Q}(x_{i+\frac{1}{2}}, y_{j}, t_n) + \bm{Q}(x_{i-\frac{1}{2}}, y_{j}, t_n) \right) + \mathcal{O}(h^2) 
			= {\bm{Q}}^e_{ij} + \mathcal{O}(h^2),
		\end{equation*}
		where ${\bm{Q}}^e_{ij} =  \bm{Q}(x_i, y_j, t_n)$ denotes the exact solution at the cell center.  Similarly, 
		\begin{equation*}
			\frac{1}{2} \left( \check{\bm{Q}}^{n,-}_{i,j+\frac{1}{2}} + \check{\bm{Q}}^{n,+}_{i,j-\frac{1}{2}} \right) = {\bm{Q}}^e_{ij} + \mathcal{O}(h^2).
		\end{equation*}
		According to \eqref{decomposition_beta1}, we obtain
		\begin{equation*}
			\frac{q - 2}{q } \check{\bm{Q}}_{ij} =  \left( 1 - \frac{2 (C_x + C_y) }{q} \right) {\bm{Q}}^e_{ij} + \mathcal{O}(h^2).
		\end{equation*}
		This gives the following approximation relation
		\begin{equation*}
			\check{\bm{Q}}_{ij} = {\bm{Q}}^e_{ij} + \mathcal{O}(h^2).
		\end{equation*}
		Let $\check{p}_{ij}$ be the pressure of $\check{\bm{Q}}_{ij}$ and ${p}^e_{ij}$ represent the pressure of $\overline{\bm{Q}}^e_{ij}$, then we have 
		\begin{equation*}
			\check{p}_{ij} = {p}^e_{ij}+ \mathcal{O}(h^2).
		\end{equation*}
		\textbf{Case 1:} If $\check{p}_{ij} \ge 0$, which means $\check{\bm{Q}}_{ij} \in \overline{\mathcal{G}}$, then Step 3 of the PP limiter is not activated, i.e., $\beta = 1$. 
		\\
		\textbf{Case 2:} If $\check{p}_{ij} < 0$, combined with the fact that ${p}^e_{ij}>0$, we conclude that there exists a constant $C_3 > 0$ such that
		\begin{equation*} -C_3 h^2 \le \check{p}_{ij} < 0.
		\end{equation*}	
		Substituting the relationships from \eqref{limiterrelation} into \eqref{decomposition_beta1} gives
		\begin{equation}\label{convex_equation}
			\begin{aligned}
				2 \left| C_x \alpha^x \Delta \rho_{ij}^x \Delta \bm{v}^x_{ij} + C_y \alpha^y \Delta \rho_{ij}^y \Delta \bm{v}_{ij}^y \right|^2 + (q - 2) (\overline{\rho}_{ij}^n)^2 \left( C_x \left|\Delta \bm{v}_{ij}^x \right|^2 + C_y \left|\Delta \bm{v}_{ij}^y \right|^2 \right) 
				= \frac{(q - 2)^2 \overline{\rho}_{ij}^n (\overline{p}_{ij}^n - \check{p}_{ij})}{\gamma - 1}.
			\end{aligned}
		\end{equation}
		By the expression for $\beta$ in \eqref{u_2D}, we have
		\begin{equation}\label{beta_equation}
			\begin{aligned}
				\left( 2 \left| C_x \alpha^x \Delta \rho_{ij}^x \Delta \bm{v}^x_{ij} + C_y \alpha^y \Delta \rho_{ij}^y \Delta \bm{v}_{ij}^y \right|^2 + (q - 2) (\overline{\rho}_{ij}^n)^2 \left( C_x \left|\Delta \bm{v}_{ij}^x \right|^2 + C_y \left|\Delta \bm{v}_{ij}^y \right|^2 \right) \right) \beta^2 
				= \frac{(q - 2)^2 \overline{\rho}_{ij}^n \overline{p}_{ij}^n}{\gamma - 1}.
			\end{aligned}
		\end{equation}
		Combining \eqref{beta_equation} and \eqref{convex_equation}, we obtain 
		\begin{equation*}
			\beta = \sqrt{ \dfrac{\overline{p}^n_{ij}}{\overline{p}^n_{ij} - \check{p}_{ij}} } = \sqrt{ \dfrac{1}{1 - \frac{\check{p}_{ij}}{\overline{p}^n_{ij} } } }.
		\end{equation*}
		Note that $\overline{p}_{ij}^n>0$ and 
		\begin{equation*}
			\overline{p}_{ij}^n  = p(x_i, y_j, t_n) +  \mathcal{O}(h^2) \ge C + \mathcal{O}(h^2). 
		\end{equation*}
		Thus, we have
		\begin{equation*}
			\beta  = \sqrt{ \dfrac{1}{1 - \frac{\check{p}_{ij}}{\overline{p}^n_{ij} } } } \ge \sqrt{\dfrac{1}{1 + \frac{C_3 h^2}{C + \mathcal{O}(h^2)}}} \ge 1 - \mathcal{O}(h^2), 
		\end{equation*}
		where the last inequality follows from the Taylor expansion.  
		By considering both \textbf{Case 1} and \textbf{Case 2}, we have completed the proof.
	\end{proof}

	\subsubsection{Proof of Positivity-Preserving Property for Updated Cell Averages}
	We now employ the GQL approach \cite{WuShu2021GQL} to theoretically prove that our scheme, with the proposed PP limiter, ensures that the updated cell averages $\overline{{\bm Q}}_{ij}^{n+1} \in \mathcal{G}$ for all $i$ and $j$. This guarantees the PP property at the next time step.
	
	For simplicity, we focus on the PP finite volume method with forward Euler time discretization. However, the PP analysis and conclusions remain valid for higher-order SSP Runge--Kutta methods, which are convex combinations of forward Euler steps. Replacing the limiting values in \eqref{NumericalFlux} with the PP-modified values \eqref{2D_Euler_limiter}, the resulting finite volume method is written as
	\begin{equation}\label{PPForwardEuler}
\begin{aligned}
			\overline{{\bm Q}}_{ij}^{n+1} = & \overline{{\bm Q}}_{ij}^{n} -  \lambda_1 \left( \mathbf{\hat{G}}_{1} \left( \widetilde{{\bm Q}}^{n,-}_{i+\frac{1}{2}, j}, \widetilde{{\bm Q}}^{n,+}_{i+\frac{1}{2}, j}\right)  - \mathbf{\hat{G}}_{1}\left(\widetilde{{\bm Q}}^{n,-}_{i-\frac{1}{2}, j}, \widetilde{{\bm Q}}^{n,+}_{i-\frac{1}{2}, j}\right)  \right) 
			\\
		& - \lambda_2 \left(\mathbf{\hat{G}}_{2}\left( \widetilde{{\bm Q}}^{n,-}_{i, j+\frac{1}{2}}, \widetilde{{\bm Q}}^{n,+}_{i, j+\frac{1}{2}} \right)  - \mathbf{\hat{G}}_{2}\left( \widetilde{{\bm Q}}^{n,-}_{i, j-\frac{1}{2}}, \widetilde{{\bm Q}}^{n,+}_{i, j-\frac{1}{2}} \right)  \right),
\end{aligned}
	\end{equation}
	where $\lambda_1 = \frac{\Delta t}{\Delta x}$ and $\lambda_2 = \frac{\Delta t}{\Delta y}$. The quantities $\widetilde{{\bm Q}}^{n, \pm}$ are the conservative variables associated with the PP-limited primitive variables $\widetilde{{\bm W}}^{n, \pm}$, and $\mathbf{G}_1$ and $\mathbf{G}_2$ represent the Lax--Friedrichs numerical fluxes in the $x$ and $y$ directions, respectively.
	
	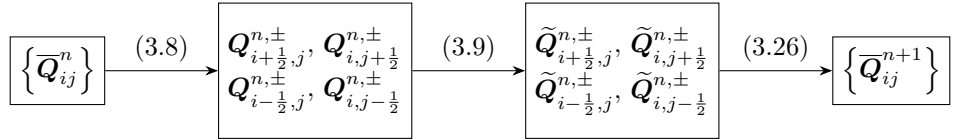
\begin{figure}[ht]
		\centering
		\begin{tikzpicture}[
			->, 
			>=Stealth, 
			node distance=1.5cm, 
			every node/.style={align=center}, 
			]
			\node (A) [draw, rectangle, minimum width=0.9cm, minimum height=0.9cm] {$\left\{\overline{{\bm Q}}_{ij}^{n}\right\}$};
			\node (B) [right=of A, draw, rectangle, minimum width=2.5cm, minimum height=1.8cm] {${\bm Q}_{i+\frac{1}{2},j}^{n,\pm}$, ${\bm Q}_{i,j+\frac{1}{2}}^{n,\pm}$\\
				${\bm Q}_{i-\frac{1}{2},j}^{n,\pm}$, ${\bm Q}_{i,j-\frac{1}{2}}^{n,\pm}$};
			\node (C) [right=1.5 of B, draw, rectangle, minimum width=2.5cm, minimum height=1.8cm] {$\widetilde{{\bm Q}}^{n,\pm}_{i+\frac{1}{2},j}$, $\widetilde{{\bm Q}}^{n,\pm}_{i,j+\frac{1}{2}}$\\
				$\widetilde{{\bm Q}}^{n,\pm}_{i-\frac{1}{2},j}$, $\widetilde{{\bm Q}}^{n,\pm}_{i,j-\frac{1}{2}}$};
			\node (D) [right=1.5 of C, draw, rectangle, minimum width=0.9cm, minimum height=0.9cm] {$\left\{\overline{{\bm Q}}_{ij}^{n+1}\right\}$};    
			\draw[->] (A) -- (B) node[midway, above] {\eqref{deltaXY}}; 
			\draw[->] (B) -- (C) node[midway, above] {\eqref{2D_Euler_limiter}}; 
			\draw[->] (C) -- (D) node[midway, above] {\eqref{PPForwardEuler}}; 
		\end{tikzpicture}
		\caption{Illustration of the provably PP finite volume method with forward Euler time discretization.}
		\label{fig:flowchartFVM}
	\end{figure}
	
	The overall procedure of the PP finite volume method \eqref{PPForwardEuler} is illustrated in \Cref{fig:flowchartFVM}. We now present the theoretical proof of its PP property.
	
	\begin{lemma}\label{1DLFscheme}
		For any $\bm{Q}_1, \bm{Q}_2, \bm{Q}_3, \bm{Q}_4 \in \mathcal{G}$ and for ${\bm n} = {\bm n}_1$ or ${\bm n}_*$ with any auxiliary variable ${\bm v}_* \in \mathbb{R}^3$, the following inequality holds:
		\begin{equation}\label{eq:3ddfd}
			- \left( \hat{\bm{G}}_\ell(\bm{Q}_3, \bm{Q}_4) - \hat{\bm{G}}_\ell(\bm{Q}_1, \bm{Q}_2) \right) \cdot {\bm n} > - \alphaLF_\ell \left( \bm{Q}_2 + \bm{Q}_3 \right) \cdot {\bm n}, \qquad \ell = 1, 2,
		\end{equation}
		where $\hat{\bm{G}}_\ell$ represents the Lax--Friedrichs flux, and $\alphaLF_\ell$ is the numerical viscosity coefficient.
	\end{lemma}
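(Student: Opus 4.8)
The plan is to reduce the four-state inequality to a one-state positivity property of the Lax--Friedrichs flux. First I would substitute the definition \eqref{LFflux} of $\hat{\bm{G}}_\ell$ into the left-hand side and rearrange. Writing out $\hat{\bm{G}}_\ell(\bm{Q}_3,\bm{Q}_4)-\hat{\bm{G}}_\ell(\bm{Q}_1,\bm{Q}_2)$ and collecting the viscous contributions together with the right-hand side $-\alpha_\ell(\bm{Q}_2+\bm{Q}_3)\cdot{\bm n}$, the cross terms telescope and the difference of the two sides of \eqref{eq:3ddfd} collapses to
\begin{equation*}
\frac{1}{2}\Big[\big(\alpha_\ell\bm{Q}_1+\bm{G}_\ell(\bm{Q}_1)\big)+\big(\alpha_\ell\bm{Q}_2+\bm{G}_\ell(\bm{Q}_2)\big)+\big(\alpha_\ell\bm{Q}_3-\bm{G}_\ell(\bm{Q}_3)\big)+\big(\alpha_\ell\bm{Q}_4-\bm{G}_\ell(\bm{Q}_4)\big)\Big]\cdot{\bm n}.
\end{equation*}
Thus the lemma reduces to showing that each of the four summands is strictly positive, i.e.\ to the single-state claim that $\big(\alpha_\ell\bm{Q}\pm\bm{G}_\ell(\bm{Q})\big)\cdot{\bm n}>0$ for every $\bm{Q}\in\mathcal{G}$ and ${\bm n}\in\{{\bm n}_1,{\bm n}_*\}$.

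Next I would establish this single-state claim by proving the stronger statement $\bm{Q}^{\pm}:=\alpha_\ell\bm{Q}\pm\bm{G}_\ell(\bm{Q})\in\mathcal{G}$, after which the desired inner-product inequalities follow immediately from the GQL representation in \Cref{Lemma:GQLrepresentation}, since membership in $\mathcal{G}=\mathcal{G}_*$ is exactly equivalent to positive inner products against ${\bm n}_1$ and all ${\bm n}_*$. Setting $a^{\pm}:=\alpha_\ell\pm v_\ell$ and using $\bm{G}_\ell(\bm{Q})=(m_\ell,\,m_\ell\bm{v}+p\mathbf{e}_\ell,\,(E+p)v_\ell)^\top$, the density component of $\bm{Q}^{\pm}$ is $\rho a^{\pm}$, which is positive because $\alpha_\ell\ge\big\|\,|v_\ell|+c\,\big\|_{L^\infty}>|v_\ell|$ gives $a^{\pm}>0$; this simultaneously settles the constraint $\bm{Q}^{\pm}\cdot{\bm n}_1>0$. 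The momentum and energy components are $\rho\bm{v}\,a^{\pm}\pm p\mathbf{e}_\ell$ and $E a^{\pm}\pm p v_\ell$, respectively.

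The main work is then verifying the nonlinear constraint $\mathcal{E}(\bm{Q}^{\pm})>0$. I would insert the EOS $E=\frac{p}{\gamma-1}+\frac12\rho|\bm{v}|^2$ into $\mathcal{E}(\bm{Q}^{\pm})=E a^{\pm}\pm p v_\ell-\frac{|\rho\bm{v}a^{\pm}\pm p\mathbf{e}_\ell|^2}{2\rho a^{\pm}}$; the kinetic and the $\pm p v_\ell$ contributions cancel identically, leaving the clean expression $\mathcal{E}(\bm{Q}^{\pm})=p\big(\frac{a^{\pm}}{\gamma-1}-\frac{p}{2\rho a^{\pm}}\big)$, which is positive precisely when $(a^{\pm})^2>\frac{(\gamma-1)p}{2\rho}$. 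Since $a^{\pm}=\alpha_\ell\pm v_\ell\ge|v_\ell|+c\pm v_\ell\ge c=\sqrt{\gamma p/\rho}$, we obtain $(a^{\pm})^2\ge\gamma p/\rho>\frac{(\gamma-1)p}{2\rho}$ because $\gamma>\frac{\gamma-1}{2}$, so $\mathcal{E}(\bm{Q}^{\pm})>0$ holds strictly. This yields $\bm{Q}^{\pm}\in\mathcal{G}$ and hence, via \Cref{Lemma:GQLrepresentation}, the lemma. I expect this energy computation to be the only genuinely nontrivial step: the cancellation that collapses $\mathcal{E}(\bm{Q}^{\pm})$ to a single term in $p$, $\rho$, and $a^{\pm}$ is where care is needed, and the sound-speed bound built into the definition of $\alpha_\ell$ is exactly what delivers the final inequality with strict margin.
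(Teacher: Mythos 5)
Your proof is correct, but it takes a genuinely different route from the paper's. The paper treats the first-order Lax--Friedrichs scheme as a black box: it invokes the known result of \cite{ShuChiwang1996PPforEuler} that $\bm{Q}_2 - \lambda_\ell \bigl( \hat{\bm{G}}_\ell(\bm{Q}_2, \bm{Q}_3) - \hat{\bm{G}}_\ell(\bm{Q}_1, \bm{Q}_2) \bigr) \in \mathcal{G}$ under the CFL condition $\lambda_\ell \alphaLF_\ell \le 1$, converts this to the linear inequality $-\bigl( \hat{\bm{G}}_\ell(\bm{Q}_2, \bm{Q}_3) - \hat{\bm{G}}_\ell(\bm{Q}_1, \bm{Q}_2) \bigr) \cdot {\bm n} > -\alphaLF_\ell\, \bm{Q}_2 \cdot {\bm n}$ via \Cref{Lemma:GQLrepresentation}, writes the analogous inequality centered at $\bm{Q}_3$, and adds the two so that the common flux $\hat{\bm{G}}_\ell(\bm{Q}_2,\bm{Q}_3)$ cancels. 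You instead expand the flux definition \eqref{LFflux} directly, which collapses the difference of the two sides of \eqref{eq:3ddfd} into the four single-state terms $\frac12\bigl(\alphaLF_\ell \bm{Q} \pm \bm{G}_\ell(\bm{Q})\bigr)\cdot{\bm n}$, and then you prove from scratch the flux-splitting admissibility $\alphaLF_\ell \bm{Q} \pm \bm{G}_\ell(\bm{Q}) \in \mathcal{G}$ --- your algebra here is right: the cross terms in $\mathcal{E}$ do cancel, leaving $p\bigl(\frac{a^\pm}{\gamma-1} - \frac{p}{2\rho a^\pm}\bigr)$ with $a^\pm = \alphaLF_\ell \pm v_\ell \ge c$, and $\gamma > \frac{\gamma-1}{2}$ gives the strict margin. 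That single-state lemma is in essence the classical ingredient underlying the result the paper cites, so you have re-derived the cited machinery rather than used it. What your version buys is self-containedness and the elimination of the artificial time step $\lambda_\ell$ (which appears in the paper's proof only to be bounded away by the CFL condition); what the paper's version buys is brevity and a scheme-level interpretation. Both arguments use GQL in the same way, and both implicitly require the viscosity $\alphaLF_\ell$ to dominate $|v_\ell| + c$ for the particular states $\bm{Q}_1,\dots,\bm{Q}_4$ involved, which is how the $L^\infty$ definition should be read.
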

	
	\begin{proof}
		It was shown in \cite{ShuChiwang1996PPforEuler} that the first-order finite volume scheme using the Lax--Friedrichs flux is PP under the CFL condition $0 < \lambda_\ell \alphaLF_\ell \leq 1$. Specifically, for $\bm{Q}_1, \bm{Q}_2, \bm{Q}_3 \in \mathcal{G}$, the scheme ensures
		\[
		\bm{Q}_2 - \lambda_\ell \left( \hat{\bm{G}}_\ell(\bm{Q}_2, \bm{Q}_3) - \hat{\bm{G}}_\ell(\bm{Q}_1, \bm{Q}_2) \right) \in \mathcal{G}, \qquad \ell = 1, 2.
		\]
		According to Lemma \ref{Lemma:GQLrepresentation}, we know that
		\[
		\bm{Q}_2 \cdot {\bm n} - \lambda_\ell \left( \hat{\bm{G}}_\ell(\bm{Q}_2, \bm{Q}_3) - \hat{\bm{G}}_\ell(\bm{Q}_1, \bm{Q}_2) \right) \cdot {\bm n} > 0,
		\]
		which implies
		\begin{equation}\label{LFeq1}
			- \left( \hat{\bm{G}}_\ell(\bm{Q}_2, \bm{Q}_3) - \hat{\bm{G}}_\ell(\bm{Q}_1, \bm{Q}_2) \right) \cdot {\bm n} > - \frac{1}{\lambda_\ell} \bm{Q}_2 \cdot {\bm n} \geq -\alphaLF_\ell \bm{Q}_2 \cdot {\bm n}.
		\end{equation}
		Similarly, we can show
		\begin{equation}\label{LFeq2}
			- \left( \hat{\bm{G}}_\ell(\bm{Q}_3, \bm{Q}_4) - \hat{\bm{G}}_\ell(\bm{Q}_2, \bm{Q}_3) \right) \cdot {\bm n} > - \frac{1}{\lambda_\ell} \bm{Q}_3 \cdot {\bm n} \geq -\alphaLF_\ell \bm{Q}_3 \cdot {\bm n}.
		\end{equation}
		Combining \eqref{LFeq1} and \eqref{LFeq2} yields
		\[
		- \left( \hat{\bm{G}}_\ell(\bm{Q}_3, \bm{Q}_4) - \hat{\bm{G}}_\ell(\bm{Q}_1, \bm{Q}_2) \right) \cdot {\bm n} > -\alphaLF_\ell \left( \bm{Q}_2 + \bm{Q}_3 \right) \cdot {\bm n}.
		\]
		This completes the proof of \eqref{eq:3ddfd}.
	\end{proof}
	
	\begin{theorem}\label{PPTheorem}
		If $\overline{{\bm Q}}^n_{ij} \in \mathcal{G}$ for all $i$ and $j$, then $\overline{{\bm Q}}^{n+1}_{ij}$ obtained by \eqref{PPForwardEuler} always belongs to $\mathcal{G}$ under the CFL condition:
		\begin{equation}\label{CFLcondtion}
			\Delta t \left( \frac{\alpha_1}{\Delta x} + \frac{\alpha_2}{\Delta y}  \right)  \le \frac{1}{q},
		\end{equation}
		where $q>2$ is a fixed constant used in the PP limiter and can be arbitrarily specified in $(2, +\infty)$. 
	\end{theorem}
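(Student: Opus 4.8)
The plan is to convert the nonlinear admissibility $\overline{\bm{Q}}^{n+1}_{ij}\in\mathcal{G}$ into a family of linear inequalities through the GQL representation of Lemma~\ref{Lemma:GQLrepresentation}: it suffices to prove $\overline{\bm{Q}}^{n+1}_{ij}\cdot\bm{n}>0$ for $\bm{n}=\bm{n}_1$ and for $\bm{n}=\bm{n}_*$ with every $\bm{v}_*\in\mathbb{R}^3$. Fixing such an $\bm{n}$, I would take the inner product of the update \eqref{PPForwardEuler} with $\bm{n}$, separating the right-hand side into the $x$- and $y$-directional flux differences.

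To each directional flux difference I would apply Lemma~\ref{1DLFscheme}. Its hypothesis that the four relevant interface states belong to $\mathcal{G}$ is supplied precisely by the first conclusion of Theorem~\ref{LimitingValuesPP}, while the auxiliary CFL requirement $\lambda_\ell \alpha_\ell\le 1$ used in its proof follows from \eqref{CFLcondtion}, since $\lambda_1\alpha_1+\lambda_2\alpha_2\le 1/q<1/2$. This yields the strict lower bound
\[
\overline{\bm{Q}}^{n+1}_{ij}\cdot\bm{n} > \overline{\bm{Q}}^{n}_{ij}\cdot\bm{n} - \lambda_1\alpha_1\big(\widetilde{\bm{Q}}^{n,+}_{i-\frac{1}{2},j}+\widetilde{\bm{Q}}^{n,-}_{i+\frac{1}{2},j}\big)\cdot\bm{n} - \lambda_2\alpha_2\big(\widetilde{\bm{Q}}^{n,+}_{i,j-\frac{1}{2}}+\widetilde{\bm{Q}}^{n,-}_{i,j+\frac{1}{2}}\big)\cdot\bm{n}.
\]

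The crux is a bookkeeping identity tying the CFL numbers to the convex weights $C_x,C_y$ appearing in \eqref{eq:cond2}. Writing $\mu:=\lambda_1\alpha_1+\lambda_2\alpha_2=\Delta t\big(\alpha_1/\Delta x+\alpha_2/\Delta y\big)$, the definitions $C_x=\frac{\alpha_1\Delta y}{\alpha_1\Delta y+\alpha_2\Delta x}$ and $C_y=\frac{\alpha_2\Delta x}{\alpha_1\Delta y+\alpha_2\Delta x}$ give, after a one-line computation, $\lambda_1\alpha_1=C_x\mu$ and $\lambda_2\alpha_2=C_y\mu$. Substituting these recombines the two bracketed flux terms into exactly the convex quantity controlled by \eqref{eq:cond2}, so that
\[
\overline{\bm{Q}}^{n+1}_{ij}\cdot\bm{n} > \overline{\bm{Q}}^{n}_{ij}\cdot\bm{n} - \mu\Big[C_x\big(\widetilde{\bm{Q}}^{n,+}_{i-\frac{1}{2},j}+\widetilde{\bm{Q}}^{n,-}_{i+\frac{1}{2},j}\big)+C_y\big(\widetilde{\bm{Q}}^{n,+}_{i,j-\frac{1}{2}}+\widetilde{\bm{Q}}^{n,-}_{i,j+\frac{1}{2}}\big)\Big]\cdot\bm{n}.
\]

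For $\bm{n}=\bm{n}_*$ the bracketed term is bounded by $q\,\overline{\bm{Q}}^{n}_{ij}\cdot\bm{n}_*$ directly from \eqref{eq:cond2}; for $\bm{n}=\bm{n}_1$ the density slopes cancel in pairs, so the bracket reduces to $2(C_x+C_y)\overline{\rho}^n_{ij}=2\overline{\rho}^n_{ij}\le q\,\overline{\bm{Q}}^{n}_{ij}\cdot\bm{n}_1$ because $q>2$. In either case one obtains $\overline{\bm{Q}}^{n+1}_{ij}\cdot\bm{n}>(1-\mu q)\,\overline{\bm{Q}}^{n}_{ij}\cdot\bm{n}$, which is nonnegative since \eqref{CFLcondtion} gives $\mu q\le 1$ while $\overline{\bm{Q}}^{n}_{ij}\cdot\bm{n}>0$; the strictness inherited from Lemma~\ref{1DLFscheme} then upgrades this to $\overline{\bm{Q}}^{n+1}_{ij}\cdot\bm{n}>0$, and Lemma~\ref{Lemma:GQLrepresentation} concludes. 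The main obstacle is recognizing that $C_x,C_y$ are calibrated so that, after invoking Lemma~\ref{1DLFscheme} in each direction, the two contributions merge into the single convex term bounded by \eqref{eq:cond2}; the remaining steps are routine.
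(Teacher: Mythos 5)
Your proposal is correct and follows essentially the same route as the paper's proof: reduce to the linear GQL constraints of Lemma~\ref{Lemma:GQLrepresentation}, bound each directional flux difference via Lemma~\ref{1DLFscheme} (valid since Theorem~\ref{LimitingValuesPP} puts the limited interface states in $\mathcal{G}$), use the calibration $\lambda_1\alpha_1 = C_x\mu$, $\lambda_2\alpha_2 = C_y\mu$ with $\mu q\le 1$ from \eqref{CFLcondtion}, and close with \eqref{eq:cond2}. The only cosmetic differences are that the paper multiplies through by $q$ rather than factoring out $\mu$, and for $\bm{n}_1$ it computes the sharper bound $\frac{q-2}{q}\overline{\rho}^n_{ij}$ directly instead of your estimate $2\overline{\rho}^n_{ij}\le q\,\overline{\rho}^n_{ij}$; also note that Lemma~\ref{1DLFscheme} as stated is unconditional (its CFL restriction is internal to its proof), so no auxiliary hypothesis needs to be checked there.
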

	
	\begin{proof}
		By the scheme \eqref{PPForwardEuler} and \Cref{1DLFscheme}, we have
		\begin{equation*}
			\begin{aligned}
				q \overline{{\bm Q}}_{ij}^{n+1} \cdot \bm{n}_* &= 
				q \overline{\bm{Q}}^n_{ij} \cdot \bm{n}_* -  q \lambda_1 \left( \mathbf{\hat{G}}_{1} \left( \widetilde{{\bm Q}}^{n,-}_{i+\frac{1}{2}, j}, \widetilde{{\bm Q}}^{n,+}_{i+\frac{1}{2}, j}\right)  - \mathbf{\hat{G}}_{1}\left(\widetilde{{\bm Q}}^{n,-}_{i-\frac{1}{2}, j}, \widetilde{{\bm Q}}^{n,+}_{i-\frac{1}{2}, j}\right)  \right) \cdot \bm{n}_* \\ 
				&\quad - q \lambda_2 \left(\mathbf{\hat{G}}_{2}\left( \widetilde{{\bm Q}}^{n,-}_{i, j+\frac{1}{2}}, \widetilde{{\bm Q}}^{n,+}_{i, j+\frac{1}{2}} \right)  - \mathbf{\hat{G}}_{2}\left( \widetilde{{\bm Q}}^{n,-}_{i, j-\frac{1}{2}}, \widetilde{{\bm Q}}^{n,+}_{i, j-\frac{1}{2}} \right)  \right) \cdot \bm{n}_* \\
				& > q \overline{\bm{Q}}^n_{ij} \cdot \bm{n}_* - q \lambda_1 \alpha_1 \left(\widetilde{{\bm Q}}^{n,-}_{i+\frac{1}{2}, j}+ \widetilde{{\bm Q}}^{n,+}_{i-\frac{1}{2}, j}  \right) \cdot \bm{n}_* \\
				&\quad - q \lambda_2 \alpha_2 \left( \widetilde{{\bm Q}}^{n,-}_{i, j+\frac{1}{2}} + \widetilde{{\bm Q}}^{n,+}_{i, j-\frac{1}{2}}  \right) \cdot \bm{n}_* \\
				& \geq q \overline{\bm{Q}}^n_{ij} \cdot \bm{n}_* -   C_x \left( \widetilde{{\bm Q}}^{n, -}_{i+\frac{1}{2},j} + \widetilde{{\bm Q}}^{n, +}_{i-\frac{1}{2},j} \right) \cdot \mathbf{n}_{*} \\
				&\quad - C_y \left( \widetilde{{\bm Q}}^{n,+}_{i,j-\frac{1}{2}} + \widetilde{{\bm Q}}^{n, -}_{i,j+\frac{1}{2}} \right) \cdot \mathbf{n}_{*},
			\end{aligned}
		\end{equation*}
		where the last inequality follows from the CFL condition \eqref{CFLcondtion}. Using the result in \Cref{LimitingValuesPP}, we obtain 
		\begin{equation*}
			\overline{{\bm Q}}_{ij}^{n+1} \cdot \bm{n}_* > 0.
		\end{equation*}
		Similarly,  
		\begin{equation*}
			\begin{aligned}
				\overline{\bm{Q}}_{ij}^{n+1} \cdot \bm{n}_1 &> \overline{\bm{Q}}^n_{ij} \cdot \bm{n}_1 -   \frac{C_x}{q} \left( \widetilde{{\bm Q}}^{n, -}_{i+\frac{1}{2},j} + \widetilde{{\bm Q}}^{n, +}_{i-\frac{1}{2},j} \right) \cdot \mathbf{n}_{1}-  \frac{C_y}{q} \left( \widetilde{{\bm Q}}^{n,-}_{i, j+\frac{1}{2}} + \widetilde{{\bm Q}}^{n,+}_{i, j-\frac{1}{2}}  \right) \cdot \bm{n}_1 \\ 
				&= \frac{q - 2}{q} \overline{\rho}^n_{ij} > 0.
			\end{aligned}
		\end{equation*}
		Thanks to the GQL representation, we have $\overline{\bm{Q}}_{ij}^{n+1} \in  {\mathcal G}_* = {\mathcal G}$. 
		The proof is completed.
	\end{proof}
	
	Denote the PP finite volume method \eqref{PPForwardEuler} with the forward Euler time discretization as 
	\begin{equation}\label{ForwardEuler2}
		\overline{\bm{Q}}_{ij}^{n+1} = \overline{\bm{Q}}_{ij}^{n} + \Delta t \, \widetilde{ \mathcal{L}}_{ij}(\overline{\bm{Q}}^n). 
	\end{equation}
	Then the PP finite volume method \eqref{PPForwardEuler} with the second-order SSP Runge--Kutta time stepping can be written as 
	\begin{equation}\label{SSPRK2new}
		\begin{aligned}
			\overline{\bm{Q}}_{ij}^{(1)} &= \overline{\bm{Q}}_{ij}^{n} + \Delta t \, \widetilde{ \mathcal{L}}_{ij}(\overline{\bm{Q}}^n), \\
			\overline{\bm{Q}}_{ij}^{n+1} &= \frac{1}{2} \overline{\bm{Q}}_{ij}^{n} + \frac{1}{2} \left( \overline{\bm{Q}}_{ij}^{(1)} + \Delta t \, \widetilde{ \mathcal{L}}_{ij}(\overline{\bm{Q}}^{(1)}) \right), 
		\end{aligned}
	\end{equation}
	which is a convex combination of two forward Euler steps and thus retains the PP property.

	\subsection{Finite Difference CT Method $S_{B}^{\Delta t}$ for System-B}\label{section:operator2}
	
	In this subsection, we present our second-order finite difference CT method for System-B \eqref{Subsystem} on non-staggered Cartesian grids. Since both the density and internal energy remain unchanged in System-B, the main task is to solve the equations:
	\begin{equation}\label{FDequations}
		\left\{
		\begin{aligned}
			\rho \partial_t {\bm v} &= - {\bm B} \times \operatorname{curl} {\bm B}, \\
			\partial_t {\bm B} &= \operatorname{curl}({\bm v} \times {\bm B}),
		\end{aligned}
		\right.
	\end{equation}
	and then update the mechanical energy $E$.
	
	\subsubsection{Finite Difference CT Method and Its Properties}

	To maintain the discrete divergence-free (DDF) condition for the magnetic field, a key feature of the CT method is the introduction of the electric field when discretizing the induction equation. For clarity, we denote ${\bm B} = (B^x, B^y, B^z)^\top$, ${\bm v} = (v^x, v^y, v^z)^\top$, and define the electric field $\mathbf{\Omega} = (\Omega^x, \Omega^y, \Omega^z)^\top$ by
	\begin{equation}\label{def:Omega}
		\mathbf{\Omega}  =	{\bm B} \times {\bm v} = 
		\begin{pmatrix}
			B^y v^z - B^z v^y \\
			B^z v^x - B^x v^z \\
			B^x v^y - B^y v^x
		\end{pmatrix}.
	\end{equation}
	In the 2D case, the induction equations \eqref{FDequations} can be reformulated as
	\begin{equation*}
		\frac{\partial }{\partial t}  \begin{pmatrix}
			B^x \\
			B^y \\
			B^z
		\end{pmatrix} = 
		\begin{pmatrix}
			-\frac{\partial \Omega^z}{\partial y} \\
			\frac{\partial \Omega^z}{\partial x} \\
			\frac{\partial \Omega^x}{\partial y} - \frac{\partial \Omega^y}{\partial x}
		\end{pmatrix}.
	\end{equation*}

	In traditional CT methods, either staggered meshes are used, or more sophisticated shock-capturing techniques are applied to compute the electric field. However, in this paper, we design an implicit method for time discretization, which significantly enhances stability while avoiding complex electric field calculations. 
	A remarkable advantage of using such an implicit method is that it enables us to automatically preserve the DDF property on non-staggered meshes and establish the energy conservation properties (see Theorems \ref{theorem:energyconservation} and \ref{thm:totalenergy}). Though this implicit time stepping introduces some additional computational cost due to iteratively solving a nonlinear system, our numerical experiments show that this increase is manageable because the iteration typically converges within a few steps (see Section~\ref{S.5} for the numerical evidence).
	
	Let $\bm{B}^{n + \frac{1}{2}}_{ij}$ denote the arithmetic average of $\bm{B}^{n}_{ij}$ and $\bm{B}^{n+1}_{ij}$:
	\begin{equation*}
		\bm{B}^{n + \frac{1}{2}}_{ij} = 
		\begin{pmatrix}
			B^{x, n + \frac{1}{2}}_{ij}, B^{y, n + \frac{1}{2}}_{ij}, B^{z, n + \frac{1}{2}}_{ij}
		\end{pmatrix}^\top 
		= \frac{\bm{B}^{n}_{ij} + \bm{B}^{n+1}_{ij}}{2}.
	\end{equation*} 
	The second-order finite difference CT scheme for \eqref{FDequations} is given by
	\begin{equation}\label{discretization2}
		\left\{
		\begin{aligned}
			B^{x, n+1}_{ij} &= B^{x,n}_{ij} - \Delta t \frac{ \Omega^{z, n + \frac{1}{2}}_{i, j +1} - \Omega^{z, n + \frac{1}{2}}_{i, j -1} }{2 \Delta y}, \\
			B^{y, n+1}_{ij} &= B^{y,n}_{ij} + \Delta t \frac{ \Omega^{z, n + \frac{1}{2}}_{i + 1, j} - \Omega^{z, n + \frac{1}{2}}_{i - 1, j} }{2 \Delta x}, \\
			B^{z, n + 1}_{ij} &= B^{z,n}_{ij} - \Delta t \frac{ \Omega^{y, n + \frac{1}{2}}_{i+1, j} - \Omega^{y, n + \frac{1}{2}}_{i-1, j } }{2 \Delta x} + \Delta t \frac{ \Omega^{x, n + \frac{1}{2}}_{i, j +1} - \Omega^{x, n + \frac{1}{2}}_{i, j -1} }{2 \Delta y}, \\
			\rho^n_{ij} {\bm v}^{n+1}_{ij} &= \rho^n_{ij} {\bm v}^n_{ij} - \Delta t {\bm B}^{n+\frac{1}{2}}_{ij} \times \operatorname{curl} {\bm B}^{n + \frac{1}{2}}_{ij},
		\end{aligned}
		\right.
	\end{equation}
	where $\mathbf{\Omega}^{n + \frac{1}{2}}_{ij} = {\bm B}^{n + \frac{1}{2}}_{ij} \times {\bm v}^{n + \frac{1}{2}}_{ij}$, ${\bm v}^{n + \frac{1}{2}}_{ij} =\dfrac{1}{2}( {\bm v}^{n}_{ij} + {\bm v}^{n + 1}_{ij})$, and the discrete curl $\operatorname{curl} {\bm B}^{n + \frac{1}{2}}_{ij}$ is defined as
	\begin{equation}\label{curlB_ij}
		\operatorname{curl} {\bm B}^{n + \frac{1}{2}}_{ij} := 
		\begin{pmatrix}
			\frac{B^{z, n + \frac{1}{2}}_{i,j+1} - B^{z, n + \frac{1}{2}}_{i,j-1}}{2 \Delta y} \\
			-\frac{B^{z, n + \frac{1}{2}}_{i+1,j} - B^{z, n + \frac{1}{2}}_{i-1,j}}{2 \Delta x} \\
			\frac{B^{y, n + \frac{1}{2}}_{i+1,j} - B^{y, n + \frac{1}{2}}_{i-1,j}}{2 \Delta x} - \frac{B^{x, n + \frac{1}{2}}_{i,j+1} - B^{x, n + \frac{1}{2}}_{i,j-1}}{2 \Delta y}
		\end{pmatrix}.
	\end{equation}

	Define $\bm{R}_{ij} := \left( \bm{B}_{ij}, \sqrt{\rho_{ij}} \bm{v}_{ij} \right)^\top$, and denote ${\bf R} =  \{\bm{R}_{ij}\}$ as the collection of all $\bm{R}_{ij}$ for all $i$ and $j$. Then scheme \eqref{discretization2} can be rewritten in abstract form:
	\begin{equation}\label{eq:nonlinears}
		{\bf R}^{n+1} = {\bf R}^n - \Delta t  {\bf \Psi} \left( \frac{{\bf R}^n + {\bf R}^{n+1} }{2}  \right).
	\end{equation}
	Given ${\bf R}^n$, we propose the following iterative algorithm to solve this nonlinear system:
	\begin{equation}\label{iter}
		\begin{aligned}
			{\bf R}^{(0)} &= {\bf R}^{n}, \\
			{\bf R}^{(k+1)} &= {\bf R}^{n} - \Delta t {\bf \Psi} \left( \frac{{\bf R}^n + {\bf R}^{(k)} } 2  \right), \quad k=0,1,\dots
		\end{aligned}
	\end{equation}
	The stopping criterion is set as follows:
	\begin{equation}\label{eq:stopping}
		{\mathcal E}_{k} := \max \left\{ \left\| {\bm B}^{(k+1)} - {\bm B}^{(k)} \right\|_{\infty}, \left\| {\bm v}^{(k+1)} - {\bm v}^{(k)} \right\|_{\infty} \right\} < \varepsilon_{tol},
	\end{equation}
	where $\varepsilon_{\text{tol}}$ is the iteration error tolerance typically set to $10^{-10}$ in our numerical experiments. We numerically observe that this iterative algorithm always converges in our tests. Moreover, it exhibits exponential convergence with respect to $k$. Typically, the error ${\mathcal E}_{k}$ is reduced to machine precision within 5 to 9 iterations on average (requiring fewer steps on finer meshes), and never exceeds 20 steps. This demonstrates the efficiency of the algorithm.
	
	After solving the nonlinear system \eqref{eq:nonlinears} to obtain $\bm{B}_{ij}^{n+1}$ and $\bm{v}_{ij}^{n+1}$, we update the mechanical energy $E$ using the invariance of internal energy:
	\begin{equation}\label{updatingsource}
		E^{n+1}_{ij} = E^{n}_{ij} - \frac{1}{2} \rho^n_{ij} \left|{\bm v}^n_{ij}\right|^2 + \frac{1}{2} \rho^{n+1}_{ij} \left|{\bm v}^{n+1}_{ij}\right|^2.
	\end{equation}
	Note that the density remains unchanged in System-B, i.e., $\rho^{n+1}_{ij} = \rho^n_{ij}$.
	
	The discrete divergence of $\bm{B}$ at time $t_n$ at the point $(x_i, y_j)$ is defined as
	\begin{equation}\label{definition:DDF}
		\left( \nabla \cdot {\bm B} \right)_{ij}^{n} := \frac{B^{x,n}_{i+1, j} - B^{x,n}_{i-1,j}}{2 \Delta x} + \frac{B^{y,n}_{i, j+1} - B^{y,n}_{i,j-1}}{2 \Delta y}.
	\end{equation}
	We now demonstrate that the DDF condition of the magnetic field, $\left( \nabla \cdot {\bm B} \right)_{ij}^{n} =0$, is naturally preserved by the CT discretization in \eqref{discretization2}. Additionally, an energy identity holds under periodic boundary conditions or when the solution has compact support.
	
	\begin{theorem}[Discrete Divergence-Free (DDF) Property]\label{theorem:DDF2D}
		Assume $\left( \nabla \cdot {\bm B} \right)_{ij}^{n} = 0$ for all $i$ and $j$. Then $\left( \nabla \cdot {\bm B} \right)_{ij}^{n+1} = 0$ for all $i$ and $j$.
	\end{theorem}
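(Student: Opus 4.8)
The plan is a direct substitution followed by the recognition of an exact discrete cancellation. Since the discrete divergence in \eqref{definition:DDF} involves only $B^x$ and $B^y$, only the first two update equations of the CT scheme \eqref{discretization2} are needed; the $B^z$ and velocity updates play no role in the argument. First I would write out $\left(\nabla \cdot {\bm B}\right)^{n+1}_{ij}$ from \eqref{definition:DDF}, then substitute the update formula for $B^{x,n+1}$ evaluated at the horizontal neighbors $(i\pm 1, j)$ and the update formula for $B^{y,n+1}$ evaluated at the vertical neighbors $(i, j\pm 1)$. Grouping the resulting terms, the $B^{\cdot,n}$ pieces reassemble exactly into $\left(\nabla \cdot {\bm B}\right)^{n}_{ij}$, which vanishes by hypothesis, so the whole task reduces to showing that the terms carrying the electric field $\Omega^{z,n+\frac12}$ sum to zero.

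Suppressing the superscript $n+\frac12$ for readability, the $x$-difference of the $B^x$-update contributes
\begin{equation*}
	-\frac{\Delta t}{4\Delta x \Delta y}\Big( \Omega^{z}_{i+1,j+1} - \Omega^{z}_{i+1,j-1} - \Omega^{z}_{i-1,j+1} + \Omega^{z}_{i-1,j-1} \Big),
\end{equation*}
while the $y$-difference of the $B^y$-update contributes
\begin{equation*}
	+\frac{\Delta t}{4\Delta x \Delta y}\Big( \Omega^{z}_{i+1,j+1} - \Omega^{z}_{i-1,j+1} - \Omega^{z}_{i+1,j-1} + \Omega^{z}_{i-1,j-1} \Big).
\end{equation*}
These two mixed second-difference stencils of $\Omega^z$ consist of the same four nodal values up to reordering, so they enter with equal magnitude and opposite sign and cancel identically. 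Combining this with the $B^{\cdot,n}$ contribution yields $\left(\nabla \cdot {\bm B}\right)^{n+1}_{ij} = \left(\nabla \cdot {\bm B}\right)^{n}_{ij} = 0$, which is the claim.

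I do not expect any genuine analytical obstacle here; the entire difficulty is careful index bookkeeping. The one point that must be verified explicitly is that the central-difference operators in $x$ and $y$ \emph{commute at the discrete level}, so that the two mixed differences of $\Omega^z$ match term by term and annihilate. This commutation of discrete derivatives is the structural reason the identity holds, and it makes the result completely independent of the specific value taken by $\Omega^{z,n+\frac12}$. In particular, the argument is unaffected by the implicit definition of ${\bm B}^{n+\frac12}$ and ${\bm v}^{n+\frac12}$ and by the iterative solver in \eqref{iter}: the DDF property is preserved for whatever electric field the nonlinear system produces.
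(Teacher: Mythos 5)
Your proposal is correct and follows essentially the same route as the paper: substitute the first two update equations of \eqref{discretization2} into the discrete divergence \eqref{definition:DDF} at the neighboring nodes and observe that the two mixed central-difference stencils of $\Omega^{z,n+\frac12}$ cancel identically, independently of what value the implicit solve assigns to $\Omega^{z,n+\frac12}$. Your bookkeeping (including the $4\Delta x\,\Delta y$ denominators) is accurate, and your remark that the cancellation is just discrete commutation of the $x$- and $y$-central differences is exactly the structural content of the paper's one-line computation.
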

	
	\begin{proof}
		By direct computation, we have 
		\begin{equation*}
			\begin{aligned}
				\left( \nabla \cdot {\bm B} \right)_{ij}^{n+1} - \left( \nabla \cdot {\bm B} \right)_{ij}^{n} &= -\Delta t \frac{\Omega^{z, n + \frac{1}{2}}_{ i+1,j+1 } - \Omega^{z, n + \frac{1}{2}}_{ i+1,j-1 }}{2 \Delta y \Delta x } + \Delta t \frac{\Omega^{z, n + \frac{1}{2}}_{ i-1,j+1 } - \Omega^{z, n + \frac{1}{2}}_{ i-1,j-1 }}{2 \Delta x \Delta y} \\
				& \quad + \Delta t \frac{\Omega^{z, n + \frac{1}{2}}_{ i+1,j+1 } - \Omega^{z, n + \frac{1}{2}}_{ i-1,j+1 }}{2 \Delta x \Delta y} - \Delta t \frac{\Omega^{z, n + \frac{1}{2}}_{ i+1,j-1 } - \Omega^{z, n + \frac{1}{2}}_{ i-1,j-1 }}{2 \Delta x \Delta y} = 0.
			\end{aligned}
		\end{equation*}
		The proof is completed.
	\end{proof}

	\begin{theorem}[Energy Identity]\label{theorem:energyconservation}
		If the boundary condition is periodic or the solution has compact support, the CT scheme \eqref{discretization2} satisfies the following energy conservation identity:
		\begin{equation}\label{theorem:energyconservationequation}
			\sum_{i,j} \left( \frac{1}{2} \rho^{n}_{ij} \left| {\bm v}^{n}_{ij} \right|^2  + \frac{1}{2} \left| {\bm B}^{n}_{ij} \right|^2 \right) = \sum_{i,j} \left( \frac{1}{2} \rho^{n+1}_{ij} \left| {\bm v}^{n+1}_{ij} \right|^2  + \frac{1}{2} \left| {\bm B}^{n+1}_{ij} \right|^2 \right).
		\end{equation}
	\end{theorem}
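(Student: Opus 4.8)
The plan is to track the per-cell change of the discrete kinetic-plus-magnetic energy, sum over all cells, and show that the kinetic and magnetic contributions are exact negatives of one another. Two ingredients drive this: the midpoint (Crank--Nicolson) structure of \eqref{discretization2}, which lets the squared-norm differences factor through the half-step quantities, and a discrete summation-by-parts identity expressing the self-adjointness of the central-difference curl under periodic or compactly supported data.

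First I would treat the kinetic energy. Since the density is frozen in System-B, $\rho^{n+1}_{ij}=\rho^n_{ij}$, the elementary identity $|{\bm a}|^2-|{\bm b}|^2=({\bm a}-{\bm b})\cdot({\bm a}+{\bm b})$ together with ${\bm v}^{n+\frac12}_{ij}=\frac12({\bm v}^n_{ij}+{\bm v}^{n+1}_{ij})$ gives
\begin{equation*}
\tfrac12\rho^{n+1}_{ij}|{\bm v}^{n+1}_{ij}|^2-\tfrac12\rho^n_{ij}|{\bm v}^n_{ij}|^2=\rho^n_{ij}\big({\bm v}^{n+1}_{ij}-{\bm v}^n_{ij}\big)\cdot{\bm v}^{n+\frac12}_{ij}.
\end{equation*}
Substituting the momentum update from \eqref{discretization2} and applying the scalar triple product identity $({\bm B}\times\operatorname{curl}{\bm B})\cdot{\bm v}=-\operatorname{curl}{\bm B}\cdot({\bm B}\times{\bm v})=-\operatorname{curl}{\bm B}\cdot\mathbf{\Omega}$, with $\mathbf{\Omega}={\bm B}\times{\bm v}$ as in \eqref{def:Omega}, then summing over all cells yields
\begin{equation*}
\sum_{i,j}\Big(\tfrac12\rho^{n+1}_{ij}|{\bm v}^{n+1}_{ij}|^2-\tfrac12\rho^n_{ij}|{\bm v}^n_{ij}|^2\Big)=\Delta t\sum_{i,j}\operatorname{curl}{\bm B}^{n+\frac12}_{ij}\cdot\mathbf{\Omega}^{n+\frac12}_{ij}.
\end{equation*}
An analogous computation for the magnetic energy, using $|{\bm B}^{n+1}|^2-|{\bm B}^n|^2=2({\bm B}^{n+1}-{\bm B}^n)\cdot{\bm B}^{n+\frac12}$ and noting that the right-hand side of the induction update in \eqref{discretization2} is exactly $-\Delta t$ times the central-difference curl of $\mathbf{\Omega}^{n+\frac12}$, produces
\begin{equation*}
\sum_{i,j}\Big(\tfrac12|{\bm B}^{n+1}_{ij}|^2-\tfrac12|{\bm B}^n_{ij}|^2\Big)=-\Delta t\sum_{i,j}\big(\operatorname{curl}\mathbf{\Omega}^{n+\frac12}\big)_{ij}\cdot{\bm B}^{n+\frac12}_{ij}.
\end{equation*}

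The crux is then the discrete summation-by-parts step. Under periodic boundaries or compact support, each central-difference operator $D_x f_{ij}=\frac{f_{i+1,j}-f_{i-1,j}}{2\Delta x}$, and likewise $D_y$, is skew-adjoint, $\sum_{i,j}(D_x f)\,g=-\sum_{i,j}f\,(D_x g)$, since shifting the summation index telescopes with no boundary contribution. Expanding $\operatorname{curl}\mathbf{\Omega}$ and $\operatorname{curl}{\bm B}$ componentwise through the stencil in \eqref{curlB_ij} and transferring each $D_x,D_y$ off $\mathbf{\Omega}$ onto ${\bm B}$ reassembles precisely $\mathbf{\Omega}\cdot\operatorname{curl}{\bm B}$, so that
\begin{equation*}
\sum_{i,j}\big(\operatorname{curl}\mathbf{\Omega}^{n+\frac12}\big)_{ij}\cdot{\bm B}^{n+\frac12}_{ij}=\sum_{i,j}\mathbf{\Omega}^{n+\frac12}_{ij}\cdot\operatorname{curl}{\bm B}^{n+\frac12}_{ij}.
\end{equation*}
Adding the kinetic and magnetic contributions, the two sums cancel exactly, which establishes \eqref{theorem:energyconservationequation}.

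The main obstacle is this summation-by-parts identity: one must check that the specific curl stencil in \eqref{curlB_ij} is assembled entirely from skew-adjoint central differences, and that the two sign sources, the triple product and $\partial_t{\bm B}=-\operatorname{curl}\mathbf{\Omega}$, align so that the kinetic term $+\Delta t\sum\operatorname{curl}{\bm B}^{n+\frac12}\cdot\mathbf{\Omega}^{n+\frac12}$ is the exact negative of the magnetic term. The midpoint evaluation of $\mathbf{\Omega}^{n+\frac12}$ and ${\bm B}^{n+\frac12}$ is what makes this work, since it is precisely what lets $|{\bm v}^{n+1}|^2-|{\bm v}^n|^2$ and $|{\bm B}^{n+1}|^2-|{\bm B}^n|^2$ factor through ${\bm v}^{n+\frac12}$ and ${\bm B}^{n+\frac12}$; a non-midpoint evaluation would leave an uncancelled remainder and destroy exact conservation.
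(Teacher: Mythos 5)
Your proof is correct and follows essentially the same route as the paper's: both factor the kinetic and magnetic energy differences through the midpoint values, apply the scalar triple-product identity to turn the Lorentz-force work into $\mathbf{\Omega}^{n+\frac12}\cdot\operatorname{curl}{\bm B}^{n+\frac12}$, and then cancel the two contributions by a discrete summation-by-parts argument under periodic or compact-support conditions. The only difference is presentational: the paper expands the curl stencil componentwise and observes that the paired terms $f_{ij}\,\delta g_{ij}+g_{ij}\,\delta f_{ij}$ telescope to zero, which is precisely the skew-adjointness of the central-difference operators (equivalently, self-adjointness of the discrete curl) that you invoke at the operator level.
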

	
	\begin{proof}
		In \eqref{discretization2}, we multiply both sides of each equation by $B^{x,n+\frac{1}{2}}_{ij}$, $B^{y,n+\frac{1}{2}}_{ij}$, $B^{z,n+\frac{1}{2}}_{ij}$, and ${\bm v}^{n+\frac{1}{2}}_{ij}$, respectively:
		\begin{equation}\label{multiplyDiscretizaion}
			\left\{
			\begin{aligned}
				\frac{1}{2} \left( \left| B^{x, n+1}_{ij} \right|^2 - \left| B^{x,n}_{ij} \right|^2 \right) &=  - \Delta t \frac{ \Omega^{z, n + \frac{1}{2}}_{i, j +1} - \Omega^{z, n + \frac{1}{2}}_{i, j -1} }{2 \Delta y} B^{x, n+\frac{1}{2}}_{ij}, \\
				\frac{1}{2} \left( \left| B^{y, n+1}_{ij} \right|^2 - \left| B^{y,n}_{ij} \right|^2 \right) &=   \Delta t \frac{ \Omega^{z, n + \frac{1}{2}}_{i + 1, j} - \Omega^{z, n + \frac{1}{2}}_{i - 1, j} }{2 \Delta x} B^{y, n+\frac{1}{2}}_{ij}, \\
				\frac{1}{2} \left( \left| B^{z, n + 1}_{ij} \right|^2 - \left| B^{z,n}_{ij} \right|^2 \right) &=  - \Delta t \frac{ \Omega^{y, n + \frac{1}{2}}_{i+1, j} - \Omega^{y, n + \frac{1}{2}}_{i-1, j } }{2 \Delta x} B^{z, n + \frac{1}{2}}_{ij} + \Delta t \frac{ \Omega^{x, n + \frac{1}{2}}_{i, j +1} - \Omega^{x, n + \frac{1}{2}}_{i, j -1} }{2 \Delta y} B^{z, n + \frac{1}{2}}_{ij}, \\
				\frac{1}{2} \rho^n_{ij} \left( \left| {\bm v}^{n+1}_{ij} \right|^2  - \left| {\bm v}^n_{ij} \right|^2 \right) &=    \Delta t \mathbf{\Omega}^{n+\frac{1}{2}}_{ij} \cdot \operatorname{curl} {\bm B}^{n + \frac{1}{2}}_{ij}, 
			\end{aligned}
			\right.
		\end{equation}
		where the last equation follows from the identity
		\begin{equation*}
			- \left( {\bm B}^{n+\frac{1}{2}}_{ij} \times \operatorname{curl} {\bm B}^{n + \frac{1}{2}}_{ij} \right) \cdot {\bm v}^{n+\frac{1}{2}}_{ij} = \mathbf{\Omega}^{n+\frac{1}{2}}_{ij} \cdot \operatorname{curl} {\bm B}^{n + \frac{1}{2}}_{ij}.
		\end{equation*}
		Plugging \eqref{curlB_ij} into \eqref{multiplyDiscretizaion} and summing both sides give
		\begin{equation}\label{energyequation}
			\begin{aligned}
				&\frac{1}{2} \rho^{n+1}_{ij} \left| {\bm v}^{n+1}_{ij} \right|^2  + \frac{1}{2} \left| {\bm B}^{n+1}_{ij} \right|^2 - \left( \frac{1}{2} \rho^{n}_{ij} \left| {\bm v}^{n}_{ij} \right|^2  + \frac{1}{2} \left| {\bm B}^{n}_{ij} \right|^2  \right) = \\
				&\quad \frac{\Delta t}{2 \Delta y} \left[  \Omega^{x,n+\frac{1}{2}}_{ij} \left( B^{z, n + \frac{1}{2}}_{i,j+1} - B^{z, n + \frac{1}{2}}_{i,j-1} \right) + B^{z, n + \frac{1}{2}}_{ij} \left( \Omega^{x,n+\frac{1}{2}}_{i,j+1} - \Omega^{x,n+\frac{1}{2}}_{i,j-1} \right) \right] \\
				& \quad - \frac{\Delta t}{2 \Delta x} \left[  \Omega^{y,n+\frac{1}{2}}_{ij} \left( B^{z, n + \frac{1}{2}}_{i+1,j} - B^{z, n + \frac{1}{2}}_{i-1,j} \right) + B^{z, n + \frac{1}{2}}_{ij} \left( \Omega^{y,n+\frac{1}{2}}_{i+1,j} - \Omega^{y,n+\frac{1}{2}}_{i-1,j} \right) \right] \\
				& \quad + \frac{\Delta t}{2 \Delta x} \left[  \Omega^{z,n+\frac{1}{2}}_{ij} \left( B^{y, n + \frac{1}{2}}_{i+1,j} - B^{y, n + \frac{1}{2}}_{i-1,j} \right) + B^{y, n + \frac{1}{2}}_{ij} \left( \Omega^{z,n+\frac{1}{2}}_{i+1,j} - \Omega^{z,n+\frac{1}{2}}_{i-1,j} \right) \right] \\
				& \quad - \frac{\Delta t}{2 \Delta y} \left[  \Omega^{z,n+\frac{1}{2}}_{ij} \left( B^{x, n + \frac{1}{2}}_{i,j+1} - B^{x, n + \frac{1}{2}}_{i,j-1} \right) + B^{x, n + \frac{1}{2}}_{ij} \left( \Omega^{z,n+\frac{1}{2}}_{i,j+1} - \Omega^{z,n+\frac{1}{2}}_{i,j-1} \right) \right].
			\end{aligned} 
		\end{equation}
		Based on periodic boundary conditions or compact support, the right-hand side cancels, completing the proof.
	\end{proof}
	
	\begin{theorem}[Total Energy Conservation]\label{thm:totalenergy}
		If the boundary condition is periodic or the solution has compact support, then the scheme \eqref{discretization2} with \eqref{updatingsource} conserves the total energy:
		\begin{equation*}
			\sum_{i,j} \left( E_{ij}^n + \frac{1}{2} \left| {\bm B}^{n}_{ij} \right|^2 \right) = \sum_{i,j} \left( E^{n+1}_{ij} + \frac{1}{2} \left| {\bm B}^{n+1}_{ij} \right|^2 \right).
		\end{equation*}
	\end{theorem}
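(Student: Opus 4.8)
The plan is to reduce the identity directly to the Energy Identity of Theorem~\ref{theorem:energyconservation}, which has already carried out all the nontrivial work. Writing the total energy in each cell as the mechanical energy $E_{ij}$ plus the magnetic energy $\tfrac12|{\bm B}_{ij}|^2$, I would first split the one-step change as
\[
\sum_{i,j}\Big(E_{ij}^{n+1}+\tfrac12\big|{\bm B}_{ij}^{n+1}\big|^2\Big)-\sum_{i,j}\Big(E_{ij}^{n}+\tfrac12\big|{\bm B}_{ij}^{n}\big|^2\Big)=\sum_{i,j}\big(E_{ij}^{n+1}-E_{ij}^{n}\big)+\frac12\sum_{i,j}\Big(\big|{\bm B}_{ij}^{n+1}\big|^2-\big|{\bm B}_{ij}^{n}\big|^2\Big).
\]
The decisive algebraic step is then to substitute the mechanical-energy update \eqref{updatingsource} into the first sum on the right.

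Next I would invoke \eqref{updatingsource}, which gives $E_{ij}^{n+1}-E_{ij}^{n}=\tfrac12\rho_{ij}^{n+1}|{\bm v}_{ij}^{n+1}|^2-\tfrac12\rho_{ij}^{n}|{\bm v}_{ij}^{n}|^2$ in every cell. This encodes the fact that the update modifies only the kinetic part of the mechanical energy, the internal energy $\rho e$ being invariant in System-B. Collecting the kinetic and magnetic contributions at levels $n+1$ and $n$ separately, the right-hand side becomes exactly
\[
\sum_{i,j}\Big(\tfrac12\rho_{ij}^{n+1}\big|{\bm v}_{ij}^{n+1}\big|^2+\tfrac12\big|{\bm B}_{ij}^{n+1}\big|^2\Big)-\sum_{i,j}\Big(\tfrac12\rho_{ij}^{n}\big|{\bm v}_{ij}^{n}\big|^2+\tfrac12\big|{\bm B}_{ij}^{n}\big|^2\Big),
\]
which is precisely the difference of the two sides of the Energy Identity \eqref{theorem:energyconservationequation}. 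By Theorem~\ref{theorem:energyconservation}, this difference vanishes under periodic boundary conditions or compact support, so the total energy is conserved.

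I do not expect any genuine obstacle at this stage. The hard part---the discrete summation-by-parts cancellation of all the electric-field flux terms, which is where the boundary hypothesis is actually used---was already handled in the proof of Theorem~\ref{theorem:energyconservation}. What remains is a purely algebraic recombination, and the only point requiring care is recognizing that the update \eqref{updatingsource} was constructed precisely so that the mechanical-energy increment coincides with the kinetic-energy increment. Once that observation is in place, the claim follows immediately from the previously established identity.
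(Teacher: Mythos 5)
Your proposal is correct and follows essentially the same route as the paper's proof: both use the mechanical-energy update \eqref{updatingsource} (i.e., invariance of the internal energy $E-\tfrac12\rho|\bm{v}|^2$) to reduce the claim to the Energy Identity of Theorem~\ref{theorem:energyconservation}. The only cosmetic difference is that you phrase the argument as a telescoping of one-step differences, while the paper adds the two identities directly; the content is identical.
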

	
	\begin{proof}
		Using \eqref{updatingsource}, we have
		\begin{equation*}
			\sum_{i,j} \left( E^{n+1}_{ij} - \frac{1}{2} \rho^{n+1}_{ij} \left|{\bm v}^{n+1}_{ij}\right|^2 \right) = \sum_{i,j} \left( E^{n}_{ij} - \frac{1}{2} \rho^n_{ij} \left| {\bm v}^n_{ij} \right|^2 \right).
		\end{equation*}
		Adding this and \eqref{theorem:energyconservationequation} completes the proof.
	\end{proof}
	
\subsubsection{Unique Solvability of System \eqref{eq:nonlinears} and Convergence Analysis of Algorithm \eqref{iter}}

We now analyze the uniqueness of the solution to the nonlinear algebraic system \eqref{eq:nonlinears} and the convergence of the iterative algorithm \eqref{iter}. Extensive numerical experiments have demonstrated that this algorithm consistently exhibits robust convergence, suggesting that it may be unconditionally convergent in practice. Yet, currently, we are only able to establish a conditional convergence theorem under a more restrictive CFL-like condition, which is not strictly required for practical computations.

Denote $\delta_x \mathbf{R}_{ij} := \frac{1}{2}\left( \mathbf{R}_{i+1,j} -\mathbf{R}_{i-1,j} \right)$ 
and $\delta_y \mathbf{R}_{ij} := \frac{1}{2}\left( \mathbf{R}_{i,j+1} - \mathbf{R}_{i,j-1} \right)$. 
Then, we have the following theorem.

\begin{theorem}
	If there exists a constant $\eta \in (0, 1)$ such that 
	$
	\Delta t \left(  \frac{\alpha_x}{\Delta x} + \frac{\alpha_y}{\Delta y}  \right) \le 2 \eta,
	$ 
	where $\alpha_x$ and $\alpha_y$ are defined as follows:
	\begin{equation*}
		\begin{aligned}
			\alpha_x &= \left\| \left\| \bm{v}  \right\|_1 + \frac{ \left\| \bm{B} \right\|_1 + \left|  \delta_x B^y \right| + \left| \delta_x B^z \right| }{\sqrt{\rho}}  \right\|_{L^{\infty}}, \\ 
			\alpha_y &=  \left\| \left\| \bm{v}  \right\|_1 + \frac{ \left\| \bm{B} \right\|_1 + \left|  \delta_y B^x \right| + \left| \delta_y B^z \right| }{\sqrt{\rho}}  \right\|_{L^{\infty}},
		\end{aligned}
	\end{equation*}
	then the algebraic system \eqref{eq:nonlinears} has a unique solution, and the iterative algorithm \eqref{iter} converges to this solution.
\end{theorem}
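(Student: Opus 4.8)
The plan is to recast the implicit update \eqref{eq:nonlinears} as a fixed-point problem and apply the Banach fixed-point theorem. Introduce the map
\[
T(\mathbf{R}) := \mathbf{R}^n - \Delta t\, {\bf \Psi}\Big( \tfrac{\mathbf{R}^n + \mathbf{R}}{2} \Big),
\]
so that the iteration \eqref{iter} is precisely the Picard iteration $\mathbf{R}^{(k+1)} = T(\mathbf{R}^{(k)})$ and a solution of \eqref{eq:nonlinears} is exactly a fixed point of $T$. The key structural observation, which I would record first, is that upon writing each block of $\mathbf{R}_{ij}$ as $(\bm{B}_{ij}, \bm{w}_{ij})$ with $\bm{w}_{ij} := \sqrt{\rho_{ij}}\,\bm{v}_{ij}$, the map ${\bf \Psi}$ is a homogeneous \emph{quadratic} in $\mathbf{R}$: indeed $\mathbf{\Omega} = \bm{B}\times\bm{v} = \rho^{-1/2}\,\bm{B}\times\bm{w}$ is bilinear in $(\bm{B},\bm{w})$, while $\bm{B}\times\operatorname{curl}\bm{B}$ is quadratic in $\bm{B}$, and both enter ${\bf \Psi}$ only through central differences that contribute the mesh factors $1/\Delta x$ and $1/\Delta y$. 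This makes the whole analysis a Lipschitz estimate for a quadratic map.

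Next I would establish a discrete Lipschitz estimate for ${\bf \Psi}$ in the $\ell^\infty$ norm, consistent with the stopping criterion \eqref{eq:stopping} and with the $L^\infty$ norms defining $\alpha_x,\alpha_y$. Setting $\mathbf{a} = \tfrac{\mathbf{R}^n + \mathbf{R}}{2}$ and $\mathbf{b} = \tfrac{\mathbf{R}^n + \tilde{\mathbf{R}}}{2}$, so that $\mathbf{a}-\mathbf{b} = \tfrac12(\mathbf{R} - \tilde{\mathbf{R}})$, one has
\[
T(\mathbf{R}) - T(\tilde{\mathbf{R}}) = -\Delta t\Big[\, {\bf \Psi}(\mathbf{a}) - {\bf \Psi}(\mathbf{b}) \Big].
\]
Since ${\bf \Psi}$ is quadratic, polarization writes ${\bf \Psi}(\mathbf{a}) - {\bf \Psi}(\mathbf{b})$ as a symmetric bilinear form evaluated at the increment $\mathbf{a}-\mathbf{b}$ and at the averaged argument $\mathbf{a}+\mathbf{b}$. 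Bounding each component of this bilinear form through the cross-product and $\operatorname{curl}$ structure — the velocity contributions from $\mathbf{\Omega}=\bm{B}\times\bm{v}$, and the magnetic contributions from $\mathbf{\Omega}$ and from $\bm{B}\times\operatorname{curl}\bm{B}$ (the latter producing the central differences $\delta_x B^y,\delta_x B^z,\delta_y B^x,\delta_y B^z$ divided by $\sqrt{\rho}$) — and collecting the mesh factors yields the estimate
\[
\big\| {\bf \Psi}(\mathbf{a}) - {\bf \Psi}(\mathbf{b}) \big\|_\infty \le \Big( \tfrac{\alpha_x}{\Delta x} + \tfrac{\alpha_y}{\Delta y} \Big) \big\| \mathbf{a} - \mathbf{b} \big\|_\infty .
\]
Combining with $\mathbf{a}-\mathbf{b} = \tfrac12(\mathbf{R} - \tilde{\mathbf{R}})$ gives
\[
\big\| T(\mathbf{R}) - T(\tilde{\mathbf{R}}) \big\|_\infty \le \tfrac{\Delta t}{2}\Big( \tfrac{\alpha_x}{\Delta x} + \tfrac{\alpha_y}{\Delta y} \Big) \big\| \mathbf{R} - \tilde{\mathbf{R}} \big\|_\infty \le \eta\, \big\| \mathbf{R} - \tilde{\mathbf{R}} \big\|_\infty,
\]
where the factor $\tfrac12$ is exactly the midpoint weight in the argument of ${\bf \Psi}$ and the last inequality is the hypothesis $\Delta t(\alpha_x/\Delta x + \alpha_y/\Delta y)\le 2\eta$. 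With $\eta<1$, $T$ is a contraction, so Banach's theorem delivers a unique fixed point (hence unique solvability of \eqref{eq:nonlinears}) and geometric convergence of \eqref{iter} at rate $\eta$.

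The hard part will be that ${\bf \Psi}$, being genuinely quadratic, is \emph{not} globally Lipschitz: its bilinear coefficients grow with the size of the argument $\mathbf{a}+\mathbf{b}$, so the constants $\alpha_x,\alpha_y$ must be controlled along the whole iteration rather than only at $\mathbf{R}^n$. I would close this gap by first showing, via the same componentwise bound applied to $\|T(\mathbf{R})-\mathbf{R}^n\|_\infty = \Delta t\,\|{\bf \Psi}(\mathbf{a})\|_\infty$ together with the CFL-like smallness of $\Delta t$, that $T$ maps a fixed closed $\ell^\infty$-ball centered at $\mathbf{R}^n$ into itself, and then applying the contraction estimate on that ball, where the suprema of $\bm{v}$, $\bm{B}$ and of the differences entering $\alpha_x,\alpha_y$ remain within a controlled multiple of their values at $\mathbf{R}^n$. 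The bookkeeping of the $\rho^{-1/2}$ weights (converting the $\bm{w}$-block back to velocities and accounting for the $\sqrt{\rho}$ in $\alpha_x,\alpha_y$) is the technical but routine part; the conservatism forced by requiring an invariant ball and by taking global suprema is precisely why the proven condition is stronger than what the numerical experiments indicate to be necessary in practice.
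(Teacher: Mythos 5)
Your proposal is correct, and its backbone is the same as the paper's: recast \eqref{eq:nonlinears} as a fixed-point problem for $T(\mathbf{R}) = \mathbf{R}^n - \Delta t\,\mathbf{\Psi}\bigl(\tfrac{\mathbf{R}^n+\mathbf{R}}{2}\bigr)$, establish an $\ell^\infty$ Lipschitz bound for $\mathbf{\Psi}$ with constant $\tfrac{\alpha_x}{\Delta x}+\tfrac{\alpha_y}{\Delta y}$, pick up the factor $\tfrac{1}{2}$ from the midpoint argument, and invoke the Banach fixed-point theorem under $\tfrac{\Delta t}{2}\bigl(\tfrac{\alpha_x}{\Delta x}+\tfrac{\alpha_y}{\Delta y}\bigr)\le\eta<1$. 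The differences are in how the Lipschitz constant is obtained and policed. The paper computes the five sparse $6\times 6$ Jacobian blocks $\partial\mathbf{\Psi}_{ij}/\partial\mathbf{R}_{i\pm1,j}$, $\partial\mathbf{\Psi}_{ij}/\partial\mathbf{R}_{i,j\pm1}$, $\partial\mathbf{\Psi}_{ij}/\partial\mathbf{R}_{ij}$ explicitly and bounds each row sum $A^{(\ell)}_{ij}$; you instead exploit the fact that $\mathbf{\Psi}$ is a homogeneous quadratic in $\mathbf{R}=(\bm{B},\sqrt{\rho}\,\bm{v})$ and use polarization, $\mathbf{\Psi}(\mathbf{a})-\mathbf{\Psi}(\mathbf{b})=D\mathbf{\Psi}\bigl(\tfrac{\mathbf{a}+\mathbf{b}}{2}\bigr)[\mathbf{a}-\mathbf{b}]$, which is exact for quadratic maps (no mean-value inequality needed) and reduces the bookkeeping to bounding a single bilinear form — a leaner route to the same $\alpha_x,\alpha_y$. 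Your closing paragraph also confronts a point the paper's proof leaves tacit: because $\mathbf{\Psi}$ is quadratic, the ``Lipschitz constant'' depends on the states at which the derivative is evaluated, hence on the iterates $\mathbf{R}^{(k)}$ and on the unknown $\mathbf{R}^{n+1}$, not only on the data $\mathbf{R}^n$; the paper absorbs this silently into the unsubscripted $L^{\infty}$ norms defining $\alpha_x,\alpha_y$, whereas your invariant-ball (self-mapping plus contraction on the ball) argument makes the control explicit, at the price of a somewhat more conservative constant. Both routes prove the theorem as stated; yours is the more self-contained reading, the paper's the more computational one.
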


\begin{proof}
	Noting the sparsity of the Jacobian matrix $\frac{\partial \mathbf{\Psi}}{\partial \mathbf{R}}$, we conduct the estimate based on the infinity norm. By \eqref{eq:nonlinears} and \eqref{iter}, we have 
	\begin{equation}\label{differenceInequality}
		\begin{aligned}
			\left\| \mathbf{R}^{(k+1)} - \mathbf{R}^{n+1} \right\|_{\infty} &=\Delta t \left\| \mathbf{\Psi}\left(\frac{\mathbf{R}^n + \mathbf{R}^{(k)}}{2}\right)  - \mathbf{\Psi}\left(\frac{\mathbf{R}^n + \mathbf{R}^{n+1}}{2}\right) \right\|_{\infty}\\
			&\le \Delta t \left \|  \frac{\partial \mathbf{\Psi}}{\partial \mathbf{R}} \right \|_{\infty} \left \| \frac{\mathbf{R}^{(k)} - \mathbf{R}^{n+1}}{2} \right\|_{\infty}\\
			& = \frac{\Delta t}{2} \max_{i,j} \left\| \frac{\partial \mathbf{\Psi}_{ij}}{\partial \hat{\mathbf{R}}_{ij}} \right\|_{\infty}  \left \| \mathbf{R}^{(k)} - \mathbf{R}^{n+1} \right\|_{\infty},
		\end{aligned}
	\end{equation}
	where 
	\begin{align*}
		\hat{\mathbf{R}}_{ij} &= \left( \mathbf{R}_{i-1,j}; \mathbf{R}_{i,j-1} ; \mathbf{R}_{ij} ; \mathbf{R}_{i+1,j} ; \mathbf{R}_{i,j+1}  \right),
		\\
		\frac{\partial \mathbf{\Psi}_{ij}}{\partial \hat{\mathbf{R}}_{ij}} &= \left(  \frac{\partial \mathbf{\Psi}_{ij}}{\partial \mathbf{R}_{i-1,j}} , 
		\frac{\partial \mathbf{\Psi}_{ij}}{\partial \mathbf{R}_{i,j-1}}, 
		\frac{\partial \mathbf{\Psi}_{ij}}{\partial \mathbf{R}_{ij}}, 
		\frac{\partial \mathbf{\Psi}_{ij}}{\partial \mathbf{R}_{i+1,j}}, 
		\frac{\partial \mathbf{\Psi}_{ij}}{\partial \mathbf{R}_{i,j+1}} \right).
	\end{align*}
	After some calculations, we obtain these Jacobian matrices:		
	\[
	\frac{\partial \mathbf{\Psi}_{ij}}{\partial \mathbf{R}_{i-1,j}} = \begin{bmatrix}
		0 & 0 & 0 & 0 & 0 & 0 \\
		\frac{v^y_{i-1,j}}{2 \Delta x} & -\frac{v^x_{i-1,j}}{2 \Delta x} & 0 & -\frac{B^{y}_{i-1,j}}{2 \sqrt{\rho_{i-1,j}} \Delta x} & \frac{B^x_{i-1,j}}{2\sqrt{\rho_{i-1,j}} \Delta x } & 0 \\
		\frac{v^z_{i-1,j}}{2 \Delta x} & 0 & -\frac{v^x_{i-1,j}}{2 \Delta x}  &  -\frac{B^z_{i-1,j}}{2\sqrt{\rho_{i-1,j}} \Delta x} & 0 & \frac{B^x_{i-1,j}}{2\sqrt{\rho_{i-1,j}} \Delta x} \\
		0 & -\frac{B^y_{ij}}{2\sqrt{\rho_{i,j}} \Delta x } & -\frac{B^z_{ij}}{2\sqrt{\rho_{i,j}} \Delta x} & 0 & 0 & 0 \\
		0 & \frac{B^x_{ij}}{2\sqrt{\rho_{i,j}} \Delta x} & 0 & 0 & 0 & 0 \\
		0 & 0 & \frac{B^x_{ij}}{2\sqrt{\rho_{i,j}} \Delta x} & 0 & 0 & 0
	\end{bmatrix},
	\]
	\[
	\frac{\partial \mathbf{\Psi}_{ij}}{\partial \mathbf{R}_{i,j-1}} = \begin{bmatrix}
		-\frac{v^y_{i,j-1}}{2 \Delta y} & \frac{v^x_{i,j-1}}{2 \Delta y} & 0 & \frac{B^y_{i,j-1}}{2\sqrt{\rho_{i,j-1}} \Delta y} & -\frac{B^x_{i,j-1}}{2\sqrt{\rho_{i,j-1}} \Delta y} & 0 \\
		0 & 0 & 0 & 0 & 0 & 0 \\
		0 & \frac{v^z_{i,j-1}}{2 \Delta y} & -\frac{v^y_{i,j-1}}{2 \Delta y} & 0 & -\frac{B^z_{i,j-1}}{2\sqrt{\rho_{i,j-1}} \Delta y} & \frac{B^y_{i,j-1}}{2\sqrt{\rho_{i,j-1}} \Delta y} \\
		\frac{B^y_{ij}}{2\sqrt{\rho_{i,j}} \Delta y} & 0 & 0 & 0 & 0 & 0 \\
		-\frac{B^x_{ij}}{2\sqrt{\rho_{i,j}} \Delta y} & 0 & -\frac{B^z_{ij}}{2\sqrt{\rho_{i,j}} \Delta y} & 0 & 0 & 0 \\
		0 & 0 & \frac{B^y_{ij}}{2\sqrt{\rho_{i,j}} \Delta y} & 0 & 0 & 0
	\end{bmatrix},
	\]
	\[
	\frac{\partial \mathbf{\Psi}_{ij}}{\partial \mathbf{R}_{ij}} = \begin{bmatrix}
		0 & 0 & 0 & 0 & 0 & 0 \\
		0 & 0 & 0 & 0 & 0 & 0 \\
		0 & 0 & 0 & 0 & 0 & 0 \\
		0 & \frac{\delta_x B^y_{ij}}{\sqrt{\rho_{i,j}} \Delta x} - \frac{\delta_y B^x_{ij}}{\sqrt{\rho_{i,j}} \Delta y} & \frac{\delta_x B^z_{ij} }{\sqrt{\rho_{i,j}} \Delta x} & 0 & 0 & 0 \\
		-\frac{\delta_x B^y_{ij}}{\sqrt{\rho_{i,j}} \Delta x} + \frac{\delta_y B^x_{ij}}{\sqrt{\rho_{i,j}} \Delta y} & 0 & \frac{\delta_y B^z_{ij} }{\sqrt{\rho_{i,j}} \Delta y} & 0 & 0 & 0 \\
		-\frac{\delta_x B^z_{ij} }{\sqrt{\rho_{i,j}} \Delta x} & -\frac{\delta_y B^z_{ij} }{\sqrt{\rho_{i,j}} \Delta y} & 0 & 0 & 0 & 0
	\end{bmatrix},
	\]
	\[
	\frac{\partial \mathbf{\Psi}_{ij}}{\partial \mathbf{R}_{i+1,j}} = \begin{bmatrix}
		0 & 0 & 0 & 0 & 0 & 0 \\
		-\frac{v^y_{i+1,j}}{2 \Delta x} & \frac{v^x_{i+1,j}}{2 \Delta x} & 0 & \frac{B^y_{i+1,j}}{2 \sqrt{\rho_{i+1,j}} \Delta x } & -\frac{B^x_{i+1,j}}{2 \sqrt{\rho_{i+1,j}} \Delta x} & 0 \\
		-\frac{v^z_{i+1,j}}{2 \Delta x} & 0 & \frac{v^x_{i+1,j}}{2 \Delta x} & \frac{B^z_{i+1,j}}{2 \sqrt{\rho_{i+1,j}} \Delta x} & 0 & -\frac{B^x_{i+1,j}}{2 \sqrt{\rho_{i+1,j}} \Delta x} \\
		0 & \frac{B^y_{ij}}{2 \sqrt{\rho_{i,j}} \Delta x} & \frac{B^z_{ij}}{2 \sqrt{\rho_{i,j}} \Delta x} & 0 & 0 & 0 \\
		0 & -\frac{B^x_{ij}}{2 \sqrt{\rho_{i,j}} \Delta x} & 0 & 0 & 0 & 0 \\
		0 & 0 & -\frac{B^x_{ij}}{2 \sqrt{\rho_{i,j}} \Delta x} & 0 & 0 & 0
	\end{bmatrix},
	\]
	\[
	\frac{\partial \mathbf{\Psi}_{ij}}{\partial \mathbf{R}_{i,j+1}} = \begin{bmatrix}
		\frac{v^y_{i,j+1}}{2 \Delta y} & -\frac{v^x_{i,j+1}}{2 \Delta y} & 0 & -\frac{B^y_{i,j+1}}{2 \sqrt{\rho_{i,j+1}} \Delta y} & \frac{B^x_{i,j+1}}{2 \sqrt{\rho_{i,j+1}} \Delta y} & 0 \\
		0 & 0 & 0 & 0 & 0 & 0 \\
		0 & -\frac{v^z_{i,j+1}}{2 \Delta y} & \frac{v^y_{i,j+1}}{2 \Delta y} & 0 & \frac{B^z_{i,j+1}}{2 \sqrt{\rho_{i,j+1}} \Delta y} & -\frac{B^y_{i,j+1}}{2 \sqrt{\rho_{i,j+1}} \Delta y} \\
		-\frac{B^y_{ij}}{2 \sqrt{\rho_{i,j}} \Delta y} & 0 & 0 & 0 & 0 & 0 \\
		\frac{B^x_{ij}}{2 \sqrt{\rho_{i,j}} \Delta y} & 0 & \frac{B^z_{ij}}{2 \sqrt{\rho_{i,j}} \Delta y} & 0 & 0 & 0 \\
		0 & 0 & -\frac{B^2_{ij}}{2 \sqrt{\rho_{i,j}} \Delta y} & 0 & 0 & 0
	\end{bmatrix}.
	\]
	Denote the element of $\frac{\partial \mathbf{\Psi}_{ij}}{\partial \hat{\mathbf{R}}_{ij}}$ at position $(\ell,m)$ as $a_{ij}^{\ell,m}$, and define $A_{ij}^{(\ell)} = \sum\limits_{m} \left| a_{ij}^{\ell,m} \right|$. Then, we have the following estimates:
	\begin{equation}\label{estimate_a1}
		\begin{aligned}
			A_{ij}^{(1)} &\le	\frac{\left| v^y_{i,j+1} \right| + \left| v^x_{i,j+1} \right| +\left| v^y_{i,j-1} \right|+ \left| v^x_{i,j-1} \right|}{2 \Delta y} + 
			\frac{1}{2 \Delta y } \left( \frac{\left| B^y_{i,j+1} \right|}{\sqrt{\rho_{i,j+1}}} +\frac{\left| B^x_{i,j+1} \right|}{\sqrt{\rho_{i,j+1}}} +\frac{\left| B^y_{i,j-1} \right|}{\sqrt{\rho_{i,j-1}}} +\frac{\left| B^x_{i,j-1} \right|}{\sqrt{\rho_{i,j-1}}} \right) \\
			&\le \frac{1}{\Delta y} \left\|  \left| v^x \right| + \left| v^y \right| + \frac{\left| B^x \right|}{\sqrt{\rho}} + \frac{\left| B^y \right|}{\sqrt{\rho}}  \right\|_{L^{\infty}}, 
		\end{aligned}
	\end{equation}
	\begin{equation}
		\begin{aligned}
			A_{ij}^{(2)} &\le	\frac{\left| v^y_{i+1,j} \right| + \left| v^x_{i+1,j} \right| +\left| v^y_{i-1,j} \right|+ \left| v^x_{i-1,j} \right|}{2 \Delta x} + 
			\frac{1}{2 \Delta x } \left( \frac{\left| B^y_{i+1,j} \right|}{\sqrt{\rho_{i+1,j}}} +\frac{\left| B^x_{i+1,j} \right|}{\sqrt{\rho_{i+1,j}}} +\frac{\left| B^y_{i-1,j} \right|}{\sqrt{\rho_{i-1,j}}} +\frac{\left| B^x_{i-1,j} \right|}{\sqrt{\rho_{i-1,j}}} \right) \\
			&\le \frac{1}{\Delta x} \left\|  \left| v^x \right| + \left| v^y \right| + \frac{\left| B^x \right|}{\sqrt{\rho}} + \frac{\left| B^y \right|}{\sqrt{\rho}}  \right\|_{L^{\infty}}, 
		\end{aligned}
	\end{equation}
	\begin{equation}
		\begin{aligned}
			A_{ij}^{(3)} &\le \frac{1}{2 \Delta y} \left( \left| v^z_{i,j+1} \right|  + \left| v^y_{i,j+1} \right| + \left| v^z_{i,j-1} \right|+ \left| v^y_{i,j-1} \right|  + \frac{\left| B^z_{i,j+1} \right| + \left| B^y_{i,j+1} \right|}{\sqrt{\rho_{i,j+1}}} + \frac{\left| B^z_{i,j-1} \right| + \left| B^y_{i,j-1} \right|}{\sqrt{\rho_{i,j-1}}} \right)	 \\
			&+ \frac{1}{2 \Delta x} \left( \left| v^z_{i+1,j} \right|  + \left| v^x_{i+1,j} \right| + \left| v^z_{i-1,j} \right|+ \left| v^x_{i,j-1} \right|  + \frac{\left| B^z_{i+1,j} \right| + \left| B^x_{i+1,j} \right|}{\sqrt{\rho_{i+1,j}}} + \frac{\left| B^z_{i-1,j} \right| + \left| B^x_{i-1,j} \right|}{\sqrt{\rho_{i-1,j}}} \right) \\
			&\le \frac{1}{\Delta y} \left\|  \left| v^y \right| + \left| v^z \right| + \frac{\left| B^y \right| + \left| B^z \right|}{\sqrt{\rho}}  \right\|_{L^{\infty}} + \frac{1}{\Delta x} \left\|  \left| v^x \right| + \left| v^z \right| + \frac{\left| B^x \right| + \left| B^z \right|}{\sqrt{\rho}}  \right\|_{L^{\infty}}, 
		\end{aligned}
	\end{equation}
	\begin{equation}
		\begin{aligned}
			A_{ij}^{(4)} &\le \frac{\left| \delta_x B^y_{ij} \right|}{\Delta x \sqrt{\rho_{ij}}} + \frac{\left| \delta_y B^x_{ij} \right|}{\Delta y \sqrt{\rho_{ij}}} +  \frac{\left| \delta_x B^z_{ij} \right|}{\Delta x \sqrt{\rho_{ij}}} + \frac{\left|B^y_{ij}\right|}{\Delta y \sqrt{\rho_{ij}}} + \frac{\left|B^y_{ij}\right|}{\Delta x \sqrt{\rho_{ij}}} + \frac{\left|B^z_{ij}\right|}{\Delta x \sqrt{\rho_{ij}}}\\
			&\le \left\| \frac{ \frac{1}{\Delta x} \left|\delta_x B^y\right|+ \frac{1}{\Delta x} \left|\delta_x B^z\right|+ \frac{1}{\Delta y}\left|\delta_y B^x\right|}{\sqrt{\rho}} \right\|_{L^{\infty}} + \left\| \frac{\left| B^y \right| + \left| B^z \right|}{\Delta x \sqrt{\rho}} + \frac{B^y}{\Delta y \sqrt{\rho}} \right\|_{L^\infty},
		\end{aligned}
	\end{equation}
	\begin{equation}
		\begin{aligned}
			A_{ij}^{(5)} &\le  \frac{\left|\delta_y B^x_{ij}\right|}{\Delta y \sqrt{\rho_{ij}}} + \frac{\left|\delta_x B^y_{ij}\right|}{\Delta x \sqrt{\rho_{ij}}} + \frac{\left|\delta_y B^z_{ij}\right|}{\Delta y \sqrt{\rho_{ij}}} + \frac{\left|B^x_{ij}\right|}{\Delta x \sqrt{\rho_{ij}}}  + \frac{\left|B^x_{ij}\right|}{\Delta y \sqrt{\rho_{ij}}}  +\frac{\left|B^z_{ij}\right|}{\Delta y \sqrt{\rho_{ij}}} \\
			&\le \left\| \frac{ \frac{1}{\Delta x} \left|\delta_x B^y\right|+ \frac{1}{\Delta y}\left|\delta_y B^z\right|+\frac{1}{\Delta y}\left|\delta_y B^x\right|}{\sqrt{\rho}} \right\|_{L^{\infty}} + \left\| \frac{\left| B^x \right| + \left| B^z \right|}{\Delta y \sqrt{\rho}} + \frac{B^x}{\Delta x \sqrt{\rho}} \right\|_{L^\infty},		
		\end{aligned}
	\end{equation}
	\begin{equation}\label{estimate_a6}
		\begin{aligned}
			A_{ij}^{(6)} &\le \frac{\left|\delta_x B^z_{ij}\right|}{\Delta x \sqrt{\rho_{ij}}} + \frac{\left|\delta y B^z_{ij}\right|}{ \Delta y \sqrt{\rho_{ij}} } + \frac{\left| B^y_{ij}\right|}{\Delta y\sqrt{\rho_{ij}} } + \frac{\left| B^x_{ij}\right|}{\Delta x\sqrt{\rho_{ij}} } \\
			& \le \left\| \frac{\frac{1}{\Delta x} \left| \delta_x B^z \right| + \frac{1}{\Delta y} \left| \delta_y B^z \right|}{\sqrt{\rho}} \right\|_{L^{\infty}} + \left\| \frac{\left| B^x \right| }{\Delta x \sqrt{\rho}} + \frac{\left| B^y\right|}{\Delta y \sqrt{\rho}} \right\|_{L^{\infty}}.
		\end{aligned}
	\end{equation}
	Combining \eqref{estimate_a1}--\eqref{estimate_a6} gives 
	\begin{equation*}
		\begin{aligned}
			\max_{1\le \ell \le 3} \left\{ A_{ij}^{(\ell)} \right\} &\le \left( \frac{1}{\Delta x} + \frac{1}{\Delta y}  \right) \left\| \left\| \bm{v}\right\|_1 + \frac{\left\| \bm{B}\right\|_1}{\sqrt{\rho}} \right\|_{L^{\infty}},\\
			\max_{4\le \ell \le 6} \left\{ A_{ij}^{(\ell)} \right\} &\le \left( \frac{1}{\Delta x} + \frac{1}{\Delta y}  \right) \left\|  \frac{\left\| \bm{B}\right\|_1}{\sqrt{\rho}} \right\|_{L^{\infty}} +  \frac{1}{\Delta x \sqrt{\rho_{ij}}} \left(\left| \delta_x B^y_{ij}\right|+ \left| \delta_x B^z_{ij}\right|\right) 
			\\
			& \qquad +  \frac{1}{\Delta y \sqrt{\rho_{ij}}} \left(\left| \delta_y B^x_{ij}\right|+ \left| \delta_y B^z_{ij}\right|\right),
		\end{aligned}
	\end{equation*} 
	which yield 
	\begin{equation}\label{JacobianEstiamte}
		\max_{i,j}	\left \|  \frac{\partial \mathbf{\Psi}_{ij}}{\partial \hat{\mathbf{R}}_{ij}} \right \|_{\infty} = \max_{i,j,\ell} \max \left\{ A_{ij}^{(\ell)} \right\} \le  \frac{\alpha_x}{\Delta x} + \frac{\alpha_y}{\Delta y}.
	\end{equation}
	Combining \eqref{JacobianEstiamte} with \eqref{differenceInequality}, we obtain  
	\begin{equation*}
		\begin{aligned}
			\left\| \mathbf{R}^{(k+1)} - \mathbf{R}^{n+1} \right\|_{\infty} &\le \frac{\Delta t}{2} \left( \frac{\alpha_x}{\Delta x} + \frac{\alpha_y}{\Delta y}\right) \left\| \mathbf{R}^{(k)} - \mathbf{R}^{n+1} \right\|_{\infty}\\
			&\le \eta \left\| \mathbf{R}^{(k)} - \mathbf{R}^{n+1} \right\|_{\infty}.
		\end{aligned}
	\end{equation*}
	Since $0<\eta <1$, by the Banach fixed-point theorem, the system \eqref{eq:nonlinears} has a unique solution, and the iterative algorithm \eqref{iter} converges to this solution.
	This concludes the proof.
\end{proof}

	\subsection{Algorithmic Flow of PPCT Scheme}\label{eq:summary}
	
	In this section, we provide a brief summary of the complete PPCT scheme for the full MHD system. Since the midpoint integration formula is second-order accurate, all cell average values can be treated as point values at the cell centers in the sense of second-order accuracy. To simplify the presentation, the scheme does not distinguish between cell center point values and cell average values.
	
	Let $t^{n+1} = t^{n} + \Delta t$. The PPCT scheme consists of three main steps:
	\begin{description}
		\item[Step 1] Solve System-A \eqref{Operator1} with a time step of $\frac{\Delta t}{2}$:
		\begin{equation*}
			\left(\rho^n, {\bm m}^n, E^n\right)^{\top} \xlongrightarrow{(\ref{SSPRK2new})} \left(\rho^{(1)}, {\bm m}^{(1)}, E^{(1)}\right)^{\top}.
		\end{equation*}   	
		\item[Step 2] Solve System-B \eqref{Subsystem} with a time step of $\Delta t$:
		\begin{equation*}
			\left({\bm m}^{(1)}, {\bm B}^n, E^{(1)}\right)^{\top} \xlongrightarrow{(\ref{discretization2}) ~{\rm and}~ (\ref{updatingsource})} \left({\bm m}^{(2)}, {\bm B}^{n+1}, E^{(2)}\right)^{\top}.
		\end{equation*}
		\item[Step 3] Solve System-A \eqref{Operator1} again with a time step of $\frac{\Delta t}{2}$:
		\begin{equation*}
			\left(\rho^{(1)}, {\bm m}^{(2)}, E^{(2)}\right)^{\top} \xlongrightarrow{(\ref{SSPRK2new})} \left(\rho^{n+1}, {\bm m}^{n+1}, E^{n+1}\right)^{\top}.
		\end{equation*} 	
	\end{description}
	
	It can be seen that \textbf{Step 2} enforces the DDF condition for the magnetic field, while the other steps do not involve $\bm{B}$. Therefore, the overall process preserves the DDF condition. Additionally, total energy conservation and positivity are maintained due to the conservation and PP properties of the finite volume method used in System-A.
	
	\begin{corollary}
		If the boundary conditions are periodic or the solution has compact support, the PPCT scheme for solving the MHD system preserves total energy conservation: 
		\begin{equation*}
			\sum_{i,j} \left( E_{ij}^n  + \frac{1}{2} \left| {\bm B}^{n}_{ij} \right|^2 \right) = \sum_{i,j} \left( E^{n+1}_{ij}  + \frac{1}{2} \left| {\bm B}^{n+1}_{ij} \right|^2 \right).
		\end{equation*}
	\end{corollary}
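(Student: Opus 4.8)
The plan is to show that the functional $\mathcal{H}^n := \sum_{i,j}\bigl(E_{ij}^n + \tfrac12 |{\bm B}_{ij}^n|^2\bigr)$ is left invariant by each of the three operators that compose the Strang-split scheme \eqref{eq:fullscheme}, and then to chain these invariances. Since the scheme is $S_A^{\Delta t/2}\circ S_B^{\Delta t}\circ S_A^{\Delta t/2}$, establishing $S_A^{\Delta t/2}(\mathcal{H})=\mathcal{H}$ and $S_B^{\Delta t}(\mathcal{H})=\mathcal{H}$ immediately yields $\mathcal{H}^{n+1}=\mathcal{H}^n$.

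First I would treat the two applications of $S_A^{\Delta t/2}$ in \textbf{Step 1} and \textbf{Step 3}. In System-A the magnetic field is frozen ($\partial_t{\bm B}=\mathbf{0}$), so the discretization leaves every ${\bm B}_{ij}$ unchanged and hence $\sum_{i,j}\tfrac12|{\bm B}_{ij}|^2$ is a spectator. It then remains to verify that the finite volume update \eqref{SSPRK2new} conserves $\sum_{i,j}E_{ij}$. This follows from conservativity: each forward-Euler stage \eqref{ForwardEuler2} is in flux-difference form, so under periodic boundary conditions or compact support the flux differences telescope and $\sum_{i,j}\widetilde{\mathcal{L}}_{ij}(\overline{\bm{Q}})=\mathbf{0}$; note that the PP limiter only modifies the reconstructed interface values feeding a shared numerical flux, so conservativity is untouched. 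Because the SSP-RK2 step is a convex combination of such forward-Euler stages, $\sum_{i,j}\overline{\bm{Q}}_{ij}$---and in particular its energy component---is preserved, so each $S_A^{\Delta t/2}$ leaves $\mathcal{H}$ invariant.

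For the middle operator $S_B^{\Delta t}$ I would invoke Theorem~\ref{thm:totalenergy} directly: under the same boundary assumptions, the CT scheme \eqref{discretization2} together with the energy update \eqref{updatingsource} conserves exactly $\sum_{i,j}\bigl(E_{ij}+\tfrac12|{\bm B}_{ij}|^2\bigr)$, so $S_B^{\Delta t}$ also preserves $\mathcal{H}$. Composing the three operators then gives the claimed identity. There is no genuine obstacle here; the only point requiring care is bookkeeping of \emph{which} energy is conserved at each stage---in System-A the magnetic energy is a frozen spectator while the mechanical energy $\sum_{i,j}E_{ij}$ is conserved by flux conservativity, whereas in System-B the coupling \eqref{updatingsource} exchanges kinetic and magnetic energy so that only their sum is invariant. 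Recognizing that all three operators share the single invariant $\mathcal{H}$ is precisely what makes the telescoping across the Strang splitting valid.
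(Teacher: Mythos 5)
Your proposal is correct and follows essentially the same route as the paper: the paper's proof is a one-line appeal to the fact that each of \textbf{Step 1}--\textbf{Step 3} individually conserves the total energy, which is exactly your decomposition (with $S_B^{\Delta t}$ handled by Theorem~\ref{thm:totalenergy}). Your write-up simply makes explicit the details the paper leaves implicit---the frozen magnetic field and flux telescoping under the conservative form \eqref{ForwardEuler2} for the $S_A^{\Delta t/2}$ steps, and the convex-combination structure of SSP-RK2---all of which are accurate.
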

	
	\begin{proof}
		This result follows directly from the fact that each step (\textbf{Step 1}--\textbf{Step 3}) in the PPCT scheme individually conserves total energy.
	\end{proof}
	
	\begin{remark}\label{rem:stability2}
		As discussed in \Cref{rem:stability}, the CT scheme for System-B \eqref{Subsystem} demonstrates unconditional stability in our numerical experiments. Therefore, the time step size $\Delta t$ for both stability and the PP property of the PPCT scheme is restricted only by the scheme $S_A^{\frac{\Delta t}2}$, with $\frac{\Delta t}2$ satisfying the CFL condition \eqref{CFLcondtion}. 
		Specifically, the CFL condition for the entire PPCT scheme \eqref{eq:fullscheme} is given by  
		\begin{equation}\label{CFLcondtion2}
			\Delta t  \left( \frac{\alpha_1}{\Delta x} + \frac{\alpha_2}{\Delta y} \right)  \le \frac{2}{q}, 
		\end{equation}
		where $q>2$ is a fixed constant used in the PP limiter and can be specified arbitrarily in the range $(2, +\infty)$. Here, $\alpha_i$ represents the maximum wave speeds of the Euler equations rather than the MHD equations. If $q$ is chosen to be close to $2$, the maximum allowable CFL number would approach $1$, making this a very mild CFL constraint.
	\end{remark}

	\section{Three-Dimensional PPCT Scheme}\label{S.4}
	In this section, we present our PPCT method in the 3D case. To maintain consistency and clarity, we will use the same notations as in \Cref{S.3}. Where necessary, the definitions will be restated for completeness. 
	
	Assume the computational domain is divided into a set of cubic cells. The cell centered in $\left( x_i, y_j, z_k \right)$ is denoted by $I_{ijk} = \left(x_{i-\frac{1}{2}}, x_{i + \frac{1}{2}}\right) \times \left(y_{j - \frac{1}{2}}, y_{j + \frac{1}{2}}\right) \times \left(z_{k - \frac{1}{2}}, z_{k + \frac{1}{2}}\right)$. Then we  have $x_i = \frac{1}{2} \left(x_{i - \frac{1}{2}} + x_{i + \frac{1}{2}} \right)$, $y_j = \frac{1}{2} \left(y_{j - \frac{1}{2}} + y_{j + \frac{1}{2}} \right)$, and $z_k = \frac{1}{2} \left(z_{k - \frac{1}{2}} + z_{k + \frac{1}{2}} \right)$. 
	Next, we introduce the PP finite volume method $S^{\Delta t}_{A}$ for System-A \eqref{Operator1}. Following that,  we present the finite difference CT method $S^{\Delta t}_{B}$ for System-B \eqref{Operator2}. Based on the second-order Strang splitting, the whole PPCT scheme is given by
	\begin{equation*}
		\bm{U}^{n+1} = S^{\frac{\Delta t}{2}}_{A} \circ S^{\Delta t}_{B} \circ S^{\frac{\Delta t}{2}}_{A} \bm{U}^n.
	\end{equation*}
	
	\subsection{3D Provably PP Finite Volume Method $S_{A}^{{\Delta t}}$ for System-A}
	Let $\overline{\bm{Q}}^n_{ijk} = \left( \overline{\rho}^n_{ijk}, \overline{\bm{m}}^n_{ijk}, \overline{E}^n_{ijk} \right)^\top$ represent the given conservative variables. The corresponding primitive variables are defined as:
	$$\overline{\bm{W}}^n_{ijk} = \left(\overline{\rho}^n_{ijk}, \overline{\bm{v}}^n_{ijk}, \overline{p}^n_{ijk}  \right)^\top= \left( \overline{\rho}^n_{ijk}, \frac{\overline{\bm{m}}^n_{ijk}}{\overline{\rho}^n_{ijk}} , (\gamma - 1) \left( \overline E_{ij}^n - \frac{| \overline{\bm{m}}_{ij}^n|^2}{2 \overline \rho_{ij}^n} \right) \right)^\top.$$
	For second-order accuracy, we reconstruct a piecewise linear function as
	\begin{equation*}\label{reconstruction3D}
		{\bm W}_{ijk}^n(x,y) = \overline{{\bm W}}_{ijk}^n + \delta {\bm W}_{ijk}^x (x - x_i) + \delta {\bm W}_{ijk}^y (y - y_j) + \delta {\bm W}_{ijk}^z (z - z_k), \qquad (x,y, z) \in I_{ijk},
	\end{equation*}
	where $\overline{{\bm W}}_{ijk}^n$ is computed from the cell averages $\overline{\bm{Q}}^n_{ijk}$. The local slopes are determined using a slope limiter. As in the 2D case, we use the van Albada limiter. The definitions of $\delta {\bm W}_{ijk}^x$ and $\delta {\bm W}_{ijk}^y$ are  similar to \eqref{VanAlbada} for the 2D case, while the slope in the $z$-direction is given by
	\begin{equation*}
		\delta {\bm W}_{ijk}^z = \frac{\left(\left[\frac{\overline{ {\bm W}}_{i, j, k + 1}^n- \overline{{\bm W}}_{i j k}^n}{\Delta z}\right]^2+\epsilon_z \right) \odot \frac{\overline{{\bm W}}_{i jk}^n-\overline{{\bm W}}_{i, j, k - 1}^n}{\Delta z}+\left(\left[\frac{\overline{{\bm W}}_{i j k }^n-\overline{{\bm W}}_{i, j, k-1}^n}{\Delta z}\right]^2+\epsilon_z\right) \odot \frac{\overline{{\bm W}}_{i, j, k+1}^n-\overline{{\bm W}}_{i jk}^n}{\Delta z}}{\left[\frac{\overline{{\bm W}}_{i jk}^n-\overline{{\bm W}}_{i, j,k-1}^n}{\Delta z}\right]^2+\left[\frac{\overline{{\bm W}}_{i, j, k +1}^n-\overline{{\bm W}}_{i j k}^n}{\Delta z}\right]^2+2\epsilon_z},
	\end{equation*}
	where $\epsilon_z = 3 \Delta z$ and ``$\odot$" denotes the Hadamard product. Define
	\begin{equation*}
		\begin{aligned}
			\Delta \bm{W}_{ijk}^x &= \left( \Delta \rho_{ijk}^x, \Delta \bm{v}_{ijk}^x, \Delta p_{ijk}^x \right)^\top = \frac{\Delta x}{2} \delta \bm{W}_{ijk}^x, \quad \Delta \bm{W}_{ijk}^y = \left( \Delta \rho_{ijk}^y, \Delta \bm{v}_{ijk}^y, \Delta p_{ijk}^y \right)^\top = \frac{\Delta y}{2} \delta \bm{W}_{ijk}^y, \\
			\Delta \bm{W}_{ijk}^z &= \left( \Delta \rho_{ijk}^z, \Delta \bm{v}_{ijk}^z, \Delta p_{ijk}^z \right)^\top = \frac{\Delta z}{2} \delta \bm{W}_{ijk}^z.
		\end{aligned}
	\end{equation*}
	The limiting values can be written as 
	\begin{equation*}
		\begin{aligned}
			\bm{W}^{n,-}_{i+\frac{1}{2},j, k} &= \overline{\bm{W}}_{ijk}^n + \Delta \bm{W}_{ijk}^x, \qquad 
			\bm{W}^{n,+}_{i-\frac{1}{2},j, k} = \overline{\bm{W}}_{ijk}^n - \Delta \bm{W}_{ijk}^x, \\
			\bm{W}^{n,-}_{i,j+\frac{1}{2},k} &= \overline{\bm{W}}_{ijk}^n + \Delta \bm{W}_{ijk}^y, \qquad 
			\bm{W}^{n,+}_{i,j-\frac{1}{2},k} = \overline{\bm{W}}_{ijk}^n - \Delta \bm{W}_{ijk}^y, \\
			\bm{W}^{n,-}_{i,j,k+\frac{1}{2}} &= \overline{\bm{W}}_{ijk}^n + \Delta \bm{W}_{ijk}^z, \qquad 
			\bm{W}^{n,+}_{i,j,k-\frac{1}{2}} = \overline{\bm{W}}_{ijk}^n - \Delta \bm{W}_{ijk}^z.
		\end{aligned}
	\end{equation*}
	Now we impose a PP limiter on these limiting values. For a fixed value $q > 2$, we calculate the PP limiting parameters by the following three steps:
	
	\noindent
	{\bf Step 1:} Modify the density slopes and ensure density positivity:
		\begin{equation}\label{rho_3D}
			\begin{aligned}
				&\alpha^x  =\begin{cases}
					\min\left(\frac{\overline{\rho}_{ijk}^n}{\left|  \Delta \rho_{ijk}^x \right|(1+\epsilon)}, 1\right),  \quad &  \mbox{if} ~~~  \Delta \rho_{ijk}^x  \neq 0, \\
					1, & \mbox{otherwise},
				\end{cases} \qquad
				\alpha^y  =\begin{cases}
					\min\left(\frac{\overline{\rho}_{ijk}^n}{\left| \Delta \rho_{ijk}^y \right|(1+\epsilon)}, 1\right), \quad  &  \mbox{if} ~~~  \Delta \rho_{ijk}^y  \neq 0, \\
					1, & \mbox{otherwise},
				\end{cases} \\
				&\alpha^z  =\begin{cases}
					\min\left(\frac{\overline{\rho}_{ijk}^n}{\left| \Delta \rho_{ijk}^z \right|(1+\epsilon)}, 1\right),  \quad &  \mbox{if} ~~~  \Delta \rho_{ijk}^z  \neq 0, \\
					1, & \mbox{otherwise},
				\end{cases} 
			\end{aligned}
		\end{equation} 
		where $\epsilon$ is a small positive number and can be taken as $10^{-14}$.

	\noindent
{\bf Step 2:} Modify the pressure slopes and ensure pressure positivity:
		\begin{equation}\label{p_3D}
			\begin{aligned}
				&\kappa^x  =\begin{cases}
					\min\left(\frac{\overline{p}_{ijk}^n}{\left| \Delta p_{ijk}^x \right|(1+\epsilon)}, 1\right),  \quad &  \mbox{if} ~~~  \Delta \rho_{ijk}^x  \neq 0, \\
					1, & \mbox{otherwise},
				\end{cases} \qquad
				\kappa^y  =\begin{cases}
					\min\left(\frac{\overline{p}_{ijk}^n}{\left| \Delta p_{ijk}^y \right|(1+\epsilon)}, 1\right), \quad  &  \mbox{if} ~~~  \Delta  p_{ijk}^y \neq 0, \\
					1, & \mbox{otherwise},
				\end{cases} \\
				&\kappa^z =\begin{cases}
					\min\left(\frac{\overline{p}_{ijk}^n}{\left| \Delta p_{ijk}^z \right|(1+\epsilon)}, 1\right),  \quad &  \mbox{if} ~~~  \Delta p_{ijk}^z   \neq 0, \\
					1, & \mbox{otherwise}.
				\end{cases} 
			\end{aligned}
		\end{equation} 
		
	\noindent
{\bf Step 3:} Modify the velocity slopes:
		\begin{equation}\label{u_3D}
			\begin{aligned}
				\beta &= \begin{cases}
					\min\left( \sqrt{\frac{(q - 2)^2  \overline{\rho}_{ijk}  \overline{p}_{ijk}}{(\gamma - 1) \left(  2 A_1   + (q - 2) \overline{\rho}_{ijk}^2 A_2 \right)}}, 1\right), \quad & \mbox{if} ~~~ \left| \Delta {\bm v}_{ijk}^x \right| + \left| \Delta {\bm v}_{ijk}^y \right| + \left| \Delta {\bm v}_{ijk}^z \right| \neq 0, \\
					1, & \mbox{otherwise},
				\end{cases} \\
				A_ 1 &= \left| C_y \alpha^x \Delta \rho_{ijk}^x \Delta {\bm v}^x_{ijk} + C_y \alpha^y \Delta \rho_{ijk}^y \Delta {\bm v}_{ijk}^y +C_z \alpha^z \Delta \rho_{ijk}^z \Delta {\bm v}_{ijk}^z \right|^2,\\
				A_2 &=  C_x \left|\Delta {\bm v}_{ijk}^x \right|^2 + C_y \left|\Delta {\bm v}_{ijk}^y \right|^2 + C_z \left|\Delta {\bm v}_{ijk}^z \right|^2, 
			\end{aligned}
		\end{equation}
		where the constants 
		$C_y = \frac{\alpha_1 \Delta y \Delta z}{ \alpha_1 \Delta y \Delta z + \alpha_2 \Delta x \Delta z + \alphaLF_3 \Delta x \Delta y}, C_y = \frac{\alpha_2 \Delta x \Delta z}{ \alpha_1 \Delta y \Delta z + \alpha_2 \Delta x \Delta z + \alphaLF_3 \Delta x \Delta y}, C_z = \frac{\alphaLF_3 \Delta x \Delta y}{ \alpha_1 \Delta y \Delta z + \alpha_2 \Delta x \Delta z + \alphaLF_3 \Delta x \Delta y}$, and $\alphaLF_3$ is the numerical viscosity parameter in the Lax--Friedrichs flux in the $z$-direction.

	The limiting values of the primitive variables, after applying the PP limiter, are defined as follows:
	\begin{equation}\label{3D_Euler_limiter}
		\begin{aligned}
			\widetilde{{\bm W}}^{n,+}_{i-\frac{1}{2},j,k} = \begin{pmatrix}
				\tilde{\rho}_{i-\frac{1}{2}, j, k}^{n,+}	 \\
				\tilde{{\bm v}}_{i-\frac{1}{2}, j, k}^{n,+}	 \\
				\tilde{p}_{i-\frac{1}{2}, j, k}^{n,+}	
			\end{pmatrix} = 
			\begin{pmatrix}
				\overline{\rho}_{ijk}^n - \alpha^x \Delta \rho_{ijk}^{x}	 \\
				\overline{{\bm v}}_{ijk}^n - \beta \Delta {\bm v}_{ijk}^{x}	 \\
				\overline{p}_{ijk}^n - \kappa^x \Delta p_{ijk}^{x}	
			\end{pmatrix}, \quad 
			\widetilde{{\bm W}}^{n,-}_{i+\frac{1}{2},j,k} = \begin{pmatrix}
				\tilde{\rho}_{i+\frac{1}{2}, j, k}^{n,-}	 \\
				\tilde{{\bm v}}_{i+\frac{1}{2}, j, k}^{n,-}	 \\
				\tilde{p}_{i+\frac{1}{2}, j, k}^{n,-}	
			\end{pmatrix} = 
			\begin{pmatrix}
				\overline{\rho}_{ijk}^n + \alpha^x \Delta \rho_{ijk}^{x}	 \\
				\overline{{\bm v}}_{ijk}^n + \beta \Delta {\bm v}_{ijk}^{x}	 \\
				\overline{p}_{ijk}^n + \kappa^x \Delta p_{ijk}^{x}	
			\end{pmatrix}, \\
			\widetilde{{\bm W}}^{n,+}_{i,j-\frac{1}{2},k} = \begin{pmatrix}
				\tilde{\rho}_{i, j-\frac{1}{2},k}^{n,+}	 \\
				\tilde{{\bm v}}_{i, j-\frac{1}{2},k}^{n,+}	 \\
				\tilde{p}_{i, j-\frac{1}{2},k}^{n,+}	
			\end{pmatrix} = 
			\begin{pmatrix}
				\overline{\rho}_{ijk}^n - \alpha^y \Delta \rho_{ijk}^{y}	 \\
				\overline{{\bm v}}_{ijk}^n - \beta \Delta {\bm v}_{ijk}^{y}	 \\
				\overline{p}_{ijk}^n - \kappa^y \Delta p_{ijk}^{y}	
			\end{pmatrix}, \quad
			\widetilde{{\bm W}}^{n,-}_{i,j+\frac{1}{2},k} = \begin{pmatrix}
				\tilde{\rho}_{i, j+\frac{1}{2},k}^{n,-}	 \\
				\tilde{{\bm v}}_{i, j+\frac{1}{2},k}^{n,-}	 \\
				\tilde{p}_{i, j+\frac{1}{2},k}^{n,-}	
			\end{pmatrix} = 
			\begin{pmatrix}
				\overline{\rho}_{ijk}^n + \alpha^y \Delta \rho_{ijk}^{y}	 \\
				\overline{{\bm v}}_{ijk}^n + \beta \Delta {\bm v}_{ijk}^{y}	 \\
				\overline{p}_{ijk}^n + \kappa^y \Delta p_{ijk}^{y}	
			\end{pmatrix},\\
			\widetilde{{\bm W}}^{n,+}_{i,j,k-\frac{1}{2}} = \begin{pmatrix}
				\tilde{\rho}_{i, j, k-\frac{1}{2}}^{n,+}	 \\
				\tilde{{\bm v}}_{i, j, k-\frac{1}{2}}^{n,+}	 \\
				\tilde{p}_{i, j, k-\frac{1}{2}}^{n,+}	
			\end{pmatrix} = 
			\begin{pmatrix}
				\overline{\rho}_{ijk}^n - \alpha^z \Delta \rho_{ijk}^{z}	 \\
				\overline{{\bm v}}_{ijk}^n - \beta \Delta {\bm v}_{ijk}^{z}	 \\
				\overline{p}_{ijk}^n - \kappa^z \Delta p_{ijk}^{z}	
			\end{pmatrix}, \quad
			\widetilde{{\bm W}}^{n,-}_{i,j, k+\frac{1}{2}} = \begin{pmatrix}
				\tilde{\rho}_{i, j,k+\frac{1}{2}}^{n,-}	 \\
				\tilde{{\bm v}}_{i, j, k+\frac{1}{2}}^{n,-}	 \\
				\tilde{p}_{i, j, k+\frac{1}{2}}^{n,-}	
			\end{pmatrix} = 
			\begin{pmatrix}
				\overline{\rho}_{ijk}^n + \alpha^z \Delta \rho_{ijk}^{z}	 \\
				\overline{{\bm v}}_{ijk}^n + \beta \Delta {\bm v}_{ijk}^{z}	 \\
				\overline{p}_{ijk}^n + \kappa^z \Delta p_{ijk}^{z}	
			\end{pmatrix}.
		\end{aligned}
	\end{equation}
	Then, the PP finite volume method in the 3D case, with the forward Euler time discretization as example, is given by
	\begin{equation}\label{FVM3D}
		\begin{aligned}
			\overline{{\bm Q}}_{ijk}^{n+1} = & \overline{{\bm Q}}_{ijk}^{n} -  \lambda_1 \left( \mathbf{\hat{G}}_{1} \left( \widetilde{{\bm Q}}^{n,-}_{i+\frac{1}{2}, j,k}, \widetilde{{\bm Q}}^{n,+}_{i+\frac{1}{2}, j, k}\right)  - \mathbf{\hat{G}}_{1}\left(\widetilde{{\bm Q}}^{n,-}_{i-\frac{1}{2}, j, k}, \widetilde{{\bm Q}}^{n,+}_{i-\frac{1}{2}, j, k}\right)  \right) \\ &- \lambda_2 \left(\mathbf{\hat{G}}_{2}\left( \widetilde{{\bm Q}}^{n,-}_{i, j+\frac{1}{2},k}, \widetilde{{\bm Q}}^{n,+}_{i, j+\frac{1}{2},k} \right)  - \mathbf{\hat{G}}_{2}\left( \widetilde{{\bm Q}}^{n,-}_{i, j-\frac{1}{2}, k}, \widetilde{{\bm Q}}^{n,+}_{i, j-\frac{1}{2}, k} \right)  \right) \\
			&- \lambda_3 \left(\mathbf{\hat{G}}_{3}\left( \widetilde{{\bm Q}}^{n,-}_{i, j,k+\frac{1}{2}}, \widetilde{{\bm Q}}^{n,+}_{i, j,k+\frac{1}{2}} \right)  - \mathbf{\hat{G}}_{3}\left( \widetilde{{\bm Q}}^{n,-}_{i, j, k-\frac{1}{2}}, \widetilde{{\bm Q}}^{n,+}_{i, j, k-\frac{1}{2}} \right)  \right),
		\end{aligned} 
	\end{equation} 
	where $\lambda_3 = \frac{\Delta t}{\Delta z}$, the limiting values $\widetilde{{\bm Q}}$ are derived from the PP modified values $\widetilde{{\bm W}}$, 
	and $\mathbf{\hat{G}}_{3}$ denotes the Lax--Friedrichs flux in the $z$-direction  similar to  \eqref{LFflux}. This scheme can be proven PP, following similar steps as in \Cref{PPTheorem}. For brevity, the details are omitted.

	\begin{theorem}
		If $\overline{\bm{Q}}^n_{ijk} \in \mathcal{G}$ for all $i$, $j$ and $k$, then $\overline{\bm{Q}}^{n+1}_{ijk}$ obtained by \eqref{FVM3D} always belongs to $\mathcal{G}$ under the CFL condition:
		\begin{equation}\label{CFL_3d}
			\Delta t \left( \frac{\alpha_1}{\Delta x} + \frac{\alpha_2}{\Delta y}  +\frac{\alphaLF_3}{\Delta z} \right)  \le \frac{1}{q},
		\end{equation} 
		where $q > 2$ is a fixed constant used in the PP limiter and can be arbitrarily specified in $(2, +\infty)$. 
	\end{theorem}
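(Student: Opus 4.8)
The plan is to mirror the argument of \Cref{PPTheorem} verbatim, since the three-dimensional scheme \eqref{FVM3D} has exactly the same algebraic structure as its two-dimensional counterpart \eqref{PPForwardEuler}, only with an additional flux difference in the $z$-direction. By the GQL representation in \Cref{Lemma:GQLrepresentation}, it suffices to verify the two linear constraints $\overline{\bm{Q}}^{n+1}_{ijk}\cdot\bm{n}_1>0$ and $\overline{\bm{Q}}^{n+1}_{ijk}\cdot\bm{n}_*>0$ for every auxiliary variable $\bm{v}_*\in\mathbb{R}^3$; establishing these then yields $\overline{\bm{Q}}^{n+1}_{ijk}\in\mathcal{G}_*=\mathcal{G}$.

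First I would establish the three-dimensional analogue of \Cref{LimitingValuesPP}. \textbf{Steps 1} and \textbf{2} of the PP limiter, namely \eqref{rho_3D} and \eqref{p_3D}, guarantee directly that the six PP-limited interface states in \eqref{3D_Euler_limiter} have positive density and pressure, hence all lie in $\mathcal{G}$. The crucial ingredient is the three-directional convex-combination inequality
\begin{equation*}
\begin{aligned}
q\,\overline{\bm{Q}}^n_{ijk}\cdot\bm{n}_* &\ge C_x\big(\widetilde{\bm{Q}}^{n,-}_{i+\frac12,j,k}+\widetilde{\bm{Q}}^{n,+}_{i-\frac12,j,k}\big)\cdot\bm{n}_* + C_y\big(\widetilde{\bm{Q}}^{n,-}_{i,j+\frac12,k}+\widetilde{\bm{Q}}^{n,+}_{i,j-\frac12,k}\big)\cdot\bm{n}_* \\
&\quad + C_z\big(\widetilde{\bm{Q}}^{n,-}_{i,j,k+\frac12}+\widetilde{\bm{Q}}^{n,+}_{i,j,k-\frac12}\big)\cdot\bm{n}_* \qquad \forall\,\bm{v}_*\in\mathbb{R}^3,
\end{aligned}
\end{equation*}
which is the exact counterpart of \eqref{eq:cond2}. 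The definition of $\beta$ in \eqref{u_3D} is engineered precisely so that, after substituting the limiter relations for the modified momentum and energy, this reduces to the statement that a quadratic function $\Phi(|\bm{v}_*|)$ in $|\bm{v}_*|$ has nonpositive discriminant and is therefore nonnegative. I would reproduce the discriminant computation leading to \eqref{quadratic} almost verbatim, the only change being that the two-term cross product is replaced by the three-term quantity $A_1$ and the diagonal coefficient carries the three-direction sum $A_2$ defined in \eqref{u_3D}.

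Next I would translate the CFL condition \eqref{CFL_3d} into bounds on the scaled Lax--Friedrichs viscosity coefficients. Since each weight $C_\bullet$ equals its corresponding term divided by $\tfrac{\alpha_1}{\Delta x}+\tfrac{\alpha_2}{\Delta y}+\tfrac{\alpha_3}{\Delta z}$, the condition \eqref{CFL_3d} is equivalent to
\begin{equation*}
q\lambda_1\alpha_1\le C_x,\qquad q\lambda_2\alpha_2\le C_y,\qquad q\lambda_3\alpha_3\le C_z,
\end{equation*}
with $\lambda_1=\Delta t/\Delta x$, $\lambda_2=\Delta t/\Delta y$, $\lambda_3=\Delta t/\Delta z$. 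Applying \Cref{1DLFscheme} once along each coordinate axis (the lemma is one-dimensional and so applies verbatim to each direction) bounds the three flux differences in \eqref{FVM3D}; combining these bounds with the three inequalities above and using $\widetilde{\bm{Q}}\cdot\bm{n}_*>0$ gives
\begin{equation*}
\begin{aligned}
q\,\overline{\bm{Q}}^{n+1}_{ijk}\cdot\bm{n}_* &> q\,\overline{\bm{Q}}^n_{ijk}\cdot\bm{n}_* - C_x\big(\widetilde{\bm{Q}}^{n,-}_{i+\frac12,j,k}+\widetilde{\bm{Q}}^{n,+}_{i-\frac12,j,k}\big)\cdot\bm{n}_* \\
&\quad - C_y\big(\widetilde{\bm{Q}}^{n,-}_{i,j+\frac12,k}+\widetilde{\bm{Q}}^{n,+}_{i,j-\frac12,k}\big)\cdot\bm{n}_* - C_z\big(\widetilde{\bm{Q}}^{n,-}_{i,j,k+\frac12}+\widetilde{\bm{Q}}^{n,+}_{i,j,k-\frac12}\big)\cdot\bm{n}_* \ge 0,
\end{aligned}
\end{equation*}
where the final inequality is exactly the convex-combination bound established above. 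The constraint with $\bm{n}_1$ is handled identically, using $C_x+C_y+C_z=1$ to collapse the density contributions to $\tfrac{q-2}{q}\overline{\rho}^n_{ijk}>0$.

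The main obstacle is the bookkeeping in the discriminant step: one must confirm that the three-direction cross term $A_1$ and diagonal term $A_2$ in \eqref{u_3D} combine so that the discriminant of $\Phi$ remains nonpositive, exactly as the two-direction quantities did in producing \eqref{quadratic}. Because $C_x+C_y+C_z=1$ and the relevant algebraic identities are linear in the directional contributions, no genuinely new difficulty arises beyond careful indexing, and the remainder of the argument is a direct transcription of \Cref{PPTheorem}.
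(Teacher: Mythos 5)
Your proposal is correct and follows exactly the route the paper intends: the paper itself omits the details, stating only that the result ``can be proven PP, following similar steps as in \Cref{PPTheorem},'' and your reconstruction—establishing the three-directional analogue of \Cref{LimitingValuesPP} via the discriminant argument built into \eqref{u_3D}, translating \eqref{CFL_3d} into $q\lambda_\ell\alphaLF_\ell \le C_\bullet$ per direction, and applying \Cref{1DLFscheme} along each axis before invoking the GQL representation—is precisely that argument, carried out correctly (including your implicit correction of the $C_x$/$C_y$ typos in the paper's definition of $A_1$ and the weights).
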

	
	\subsection{3D Finite Difference CT Method $S^{\Delta t}_{B}$ for System-B}
	We now detail the CT discretization of \eqref{FDequations} in the 3D case:
	\begin{equation}\label{discretization2_3D}
		\left\{
		\begin{aligned}
			B^{x, n+1}_{ijk} &= B^{x,n}_{ijk} - \Delta t \frac{ \Omega^{z, n + \frac{1}{2}}_{i, j +1, k} - \Omega^{z, n + \frac{1}{2}}_{i, j -1, k} }{2 \Delta y} + \Delta t \frac{ \Omega^{y, n + \frac{1}{2}}_{i, j, k+1} - \Omega^{y, n + \frac{1}{2}}_{i, j, k-1} }{2 \Delta z}, \\
			B^{y, n+1}_{ijk} &= B^{y,n}_{ijk} + \Delta t \frac{ \Omega^{z, n + \frac{1}{2}}_{i + 1, j,k} - \Omega^{z, n + \frac{1}{2}}_{i - 1, j, k} }{2 \Delta x} - \Delta t \frac{ \Omega^{x, n + \frac{1}{2}}_{i, j, k+1} - \Omega^{x, n + \frac{1}{2}}_{i, j, k-1} }{2 \Delta z} , \\
			B^{z, n + 1}_{ijk} &= B^{z,n}_{ijk} - \Delta t \frac{ \Omega^{y, n + \frac{1}{2}}_{i+1, j,k} - \Omega^{y, n + \frac{1}{2}}_{i-1, j, k } }{2 \Delta x} + \Delta t \frac{ \Omega^{x, n + \frac{1}{2}}_{i, j +1, k} - \Omega^{x, n + \frac{1}{2}}_{i, j -1, k} }{2 \Delta y}, \\
			\rho^n_{ijk} {\bm v}^{n+1}_{ijk} &= \rho^n_{ijk} {\bm v}^n_{ijk} - \Delta t {\bm B}^{n+\frac{1}{2}}_{ijk} \times \operatorname{curl} {\bm B}^{n + \frac{1}{2}}_{ijk},
		\end{aligned}
		\right.
	\end{equation}
	where the electric filed $\bm{\Omega}$ is defined in \eqref{def:Omega} and the discrete curl $\operatorname{curl} {\bm B}^{n + \frac{1}{2}}_{ijk}$ is given by
	\begin{equation*}
		\operatorname{curl} {\bm B}^{n + \frac{1}{2}}_{ijk} := \begin{pmatrix}
			\displaystyle 
			\frac{B^{z, n + \frac{1}{2}}_{i,j+1,k} - B^{z, n + \frac{1}{2}}_{i,j-1,k}}{2 \Delta y } - \frac{B^{y, n + \frac{1}{2}}_{i,j,k+1} - B^{y, n + \frac{1}{2}}_{i,j,k-1}}{2 \Delta z } \\
			\displaystyle 
			\frac{B^{x, n + \frac{1}{2}}_{i,j,k+1} - B^{x, n + \frac{1}{2}}_{i,j,k-1}}{2 \Delta z } -\frac{B^{z, n + \frac{1}{2}}_{i+1,j,k} - B^{z, n + \frac{1}{2}}_{i-1,j,k}}{2 \Delta x } \\
			\displaystyle 
			\frac{B^{y, n + \frac{1}{2}}_{i+1,j,k} - B^{y, n + \frac{1}{2}}_{i-1,j,k}}{2 \Delta x } - \frac{B^{x, n + \frac{1}{2}}_{i,j+1,k} - B^{x, n + \frac{1}{2}}_{i,j-1,k}}{2 \Delta y } 
		\end{pmatrix}.
	\end{equation*}
	After solving the nonlinear system \eqref{discretization2_3D}, we use the invariance of internal energy to update the mechanical energy $E$:
	\begin{equation}\label{updatingsource3D}
		E^{n+1}_{ijk} = E^{n}_{ijk} - \frac{1}{2} \rho^n_{ijk} \left|{\bm v}^n_{ijk}\right|^2 + \frac{1}{2} \rho^{n+1}_{ijk} \left|{\bm v}^{n+1}_{ijk}\right|^2.
	\end{equation}
	Note that the density remains unchanged in System-B, that is, $\rho^{n+1}_{ijk} = \rho^n_{ijk}$.\par 
	We define the discrete divergence of $\bm{B}$ at time $t_n$ at point $(x_i, y_j, z_k)$ as
	\begin{equation*}\label{definition:DDF3D}
		\left( \nabla \cdot {\bm B} \right)_{ijk}^{n} = \frac{B^{x,n}_{i+1, j, k} - B^{x,n}_{i-1,j,k}}{2 \Delta x} + \frac{B^{y,n}_{i, j+1, k} - B^{y,n}_{i,j-1,k}}{2 \Delta y}
		+ \frac{B^{z,n}_{i, j, k+1} - B^{z,n}_{i,j,k-1}}{2 \Delta z}.
	\end{equation*}
	The DDF condition of the magnetic field is exactly preserved by the CT method \eqref{discretization2_3D}. Additionally, we can rigorously prove that energy conservation is maintained under periodic boundary conditions or when the solution has compact support. The proofs are similar with those in \Cref{theorem:DDF2D,theorem:energyconservation}. 
	\begin{theorem}[DDF Property in 3D]\label{therorem:DDF3D}
		Assume $\bm{B}^n_{ijk}$ satisfies the DDF condition
		for all $i,j$ and $k$ then $\bm{B}^{n+1}_{ijk}$ also satisfies the DDF condition.
	\end{theorem}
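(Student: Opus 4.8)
The plan is to mirror the direct-computation proof of the 2D result (\Cref{theorem:DDF2D}), but to organize it around the discrete vector-calculus identity $\operatorname{div}_h\circ\operatorname{curl}_h=0$, which makes the many cancellations transparent. First I would introduce the divided centered-difference operators
\[
D_x f_{ijk}=\tfrac{f_{i+1,j,k}-f_{i-1,j,k}}{2\Delta x},\quad D_y f_{ijk}=\tfrac{f_{i,j+1,k}-f_{i,j-1,k}}{2\Delta y},\quad D_z f_{ijk}=\tfrac{f_{i,j,k+1}-f_{i,j,k-1}}{2\Delta z},
\]
so that the discrete divergence reads $(\nabla\cdot\bm B)^{n}_{ijk}=\big(D_xB^{x}+D_yB^{y}+D_zB^{z}\big)^{n}_{ijk}$, and the three induction updates in \eqref{discretization2_3D} can be recast compactly as $\bm B^{n+1}=\bm B^{n}-\Delta t\,\operatorname{curl}_h\bm\Omega^{n+\frac12}$, where $\operatorname{curl}_h$ is the centered-difference curl built from $D_x,D_y,D_z$ (its $x$-component is $D_y\Omega^z-D_z\Omega^y$, which one checks matches the $B^x$ equation, and likewise for the $y$- and $z$-components).

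Second, applying $\operatorname{div}_h$ to this update and using its linearity gives
\[
(\nabla\cdot\bm B)^{n+1}_{ijk}-(\nabla\cdot\bm B)^{n}_{ijk}=-\Delta t\,\big(\operatorname{div}_h\operatorname{curl}_h\bm\Omega^{n+\frac12}\big)_{ijk},
\]
and expanding the right-hand side groups the twelve mixed-difference terms into
\[
\big(\operatorname{div}_h\operatorname{curl}_h\bm\Omega\big)_{ijk}=(D_xD_y-D_yD_x)\Omega^{z}+(D_yD_z-D_zD_y)\Omega^{x}+(D_zD_x-D_xD_z)\Omega^{y},
\]
all evaluated at $(i,j,k)$. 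The key step is then the commutativity of the divided centered differences on a uniform grid: a one-line expansion shows
\[
(D_xD_y f)_{ijk}=\frac{f_{i+1,j+1,k}-f_{i+1,j-1,k}-f_{i-1,j+1,k}+f_{i-1,j-1,k}}{4\,\Delta x\,\Delta y}=(D_yD_x f)_{ijk},
\]
and identically $D_yD_z=D_zD_y$ and $D_zD_x=D_xD_z$. Hence each parenthesized operator above vanishes identically, so the difference is zero and $(\nabla\cdot\bm B)^{n+1}_{ijk}=(\nabla\cdot\bm B)^{n}_{ijk}=0$, which is the claim.

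There is no genuine obstacle here: the cancellation is purely algebraic and holds for an \emph{arbitrary} field $\bm\Omega^{n+\frac12}$, so the argument never needs to resolve the implicitly defined half-level electric field, and the same reasoning that proves \Cref{theorem:DDF2D} extends verbatim once all three coordinate planes are included. The only point demanding care is the bookkeeping of signs when recasting the three $B$-updates of \eqref{discretization2_3D} into the operator form $\bm B^{n+1}=\bm B^{n}-\Delta t\,\operatorname{curl}_h\bm\Omega^{n+\frac12}$; once that matching is verified, the proof reduces entirely to the commutativity of the centered-difference operators, i.e.\ to the discrete analogue of $\operatorname{div}(\operatorname{curl}\,\cdot)=0$.
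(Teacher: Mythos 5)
Your proposal is correct and takes essentially the same route as the paper: the paper proves the 2D case (\Cref{theorem:DDF2D}) by exactly this direct cancellation of centered differences---which is precisely the commutativity $D_xD_y=D_yD_x$ you isolate---and then states that the 3D proof is analogous. Recasting the update as $\bm B^{n+1}=\bm B^{n}-\Delta t\,\operatorname{curl}_h\bm\Omega^{n+\frac12}$ and invoking $\operatorname{div}_h\circ\operatorname{curl}_h=0$ is just a cleaner bookkeeping of the identical computation, valid for arbitrary $\bm\Omega^{n+\frac12}$ exactly as in the paper's argument.
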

	\begin{theorem}[Energy Identity in 3D]
		If the boundary condition is periodic or the solution has compact support then the CT scheme (\ref{discretization2_3D}) satisfies the following energy conservative identity:
		\begin{equation*}
			\sum_{i,j,k} \frac{1}{2} \rho^{n}_{ijk} \left| {\bm v}^{n}_{ijk} \right|^2  + \frac{1}{2} \left| {\bm B}^{n}_{ijk} \right|^2 = \sum_{i,j,k} \frac{1}{2} \rho^{n+1}_{ijk} \left| {\bm v}^{n+1}_{ijk} \right|^2  + \frac{1}{2} \left| {\bm B}^{n+1}_{ijk} \right|^2.
		\end{equation*}
	\end{theorem}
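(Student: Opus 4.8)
The plan is to mirror the 2D argument of Theorem~\ref{theorem:energyconservation}, carrying out a discrete energy estimate followed by summation by parts. First I would multiply the four equations of the 3D scheme \eqref{discretization2_3D} by the midpoint quantities $B^{x,n+\frac12}_{ijk}$, $B^{y,n+\frac12}_{ijk}$, $B^{z,n+\frac12}_{ijk}$, and ${\bm v}^{n+\frac12}_{ijk}$, respectively. Since $B^{x,n+\frac12}_{ijk}=\tfrac12(B^{x,n+1}_{ijk}+B^{x,n}_{ijk})$, the telescoping identity $B^{x,n+\frac12}_{ijk}(B^{x,n+1}_{ijk}-B^{x,n}_{ijk})=\tfrac12(|B^{x,n+1}_{ijk}|^2-|B^{x,n}_{ijk}|^2)$ holds, and analogously for the remaining magnetic components and for the velocity equation weighted by $\rho^n_{ijk}$. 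Thus the left-hand sides collapse into the pointwise changes of magnetic and kinetic energy.

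For the velocity equation I would use the vector identity $-({\bm B}^{n+\frac12}_{ijk}\times\operatorname{curl}{\bm B}^{n+\frac12}_{ijk})\cdot{\bm v}^{n+\frac12}_{ijk}=\mathbf{\Omega}^{n+\frac12}_{ijk}\cdot\operatorname{curl}{\bm B}^{n+\frac12}_{ijk}$, exactly as in the 2D proof, where $\mathbf{\Omega}^{n+\frac12}_{ijk}={\bm B}^{n+\frac12}_{ijk}\times{\bm v}^{n+\frac12}_{ijk}$. Summing the four resulting identities at each node and then inserting the 3D discrete curl together with the componentwise definition \eqref{def:Omega} of $\mathbf{\Omega}$ converts the total change of $\tfrac12\rho^n|{\bm v}|^2+\tfrac12|{\bm B}|^2$ into a sum of discrete flux-type terms in the $x$-, $y$-, and $z$-directions.

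The crucial step is to organize every right-hand-side term into a discrete product-rule pair: along a fixed coordinate direction with index $m$, each group has the form $a_m(b_{m+1}-b_{m-1})+b_m(a_{m+1}-a_{m-1})$, where $a$ is a component of $\mathbf{\Omega}$ and $b$ a component of ${\bm B}$ (or vice versa). Each such group sums to zero over the whole grid by the discrete summation-by-parts identity $\sum_m[a_m(b_{m+1}-b_{m-1})+b_m(a_{m+1}-a_{m-1})]=0$, which holds under periodic boundary conditions or compact support after an index shift. This is the same cancellation mechanism displayed in \eqref{energyequation} for the 2D case, but the bookkeeping is heavier here: in 3D each of the three induction equations carries two curl contributions rather than one, giving six flux groups to pair off instead of four.

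The main obstacle is precisely this combinatorial bookkeeping --- verifying that after substituting the full 3D curl and all three components of $\mathbf{\Omega}={\bm B}\times{\bm v}$, every term cancels against a partner of opposite sign in the matching coordinate direction, with no residual left over. Once the pairing is established, summing over $(i,j,k)$ and invoking the boundary hypothesis makes the right-hand side vanish, yielding the claimed identity. Since the pairing structure is inherited directly from the antisymmetry of the CT discretization, the argument is structurally identical to the 2D one, so I would display only the representative pairs (for instance those arising from the $B^x$-equation) and remark that the remaining groups are treated in the same way.
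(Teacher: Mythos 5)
Your proposal is correct and is essentially the paper's own argument: the paper omits the 3D proof, stating only that it is "similar" to the 2D case (Theorem~\ref{theorem:energyconservation}), and your plan --- multiplying by the midpoint quantities $B^{x,n+\frac12}_{ijk}$, $B^{y,n+\frac12}_{ijk}$, $B^{z,n+\frac12}_{ijk}$, ${\bm v}^{n+\frac12}_{ijk}$, telescoping, invoking the identity $-({\bm B}\times\operatorname{curl}{\bm B})\cdot{\bm v}=\mathbf{\Omega}\cdot\operatorname{curl}{\bm B}$, and cancelling the six discrete product-rule pairs by summation by parts under periodic or compact-support boundary conditions --- is exactly that argument carried out in full. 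Your count of six flux groups (each induction equation contributing two curl terms, matched against the six terms of $\mathbf{\Omega}\cdot\operatorname{curl}{\bm B}$) and the pairing mechanism are both accurate.
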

	\begin{corollary}[Total Energy Conservation in 3D]\label{thm:totalenergy3D}
		If the boundary condition is periodic or the solution has compact support, then the scheme \eqref{discretization2_3D} with \eqref{updatingsource3D} conserves the total energy:
		\begin{equation*}
			\sum_{i,j,k} \left( E_{ijk}^n + \frac{1}{2} \left| {\bm B}^{n}_{ijk} \right|^2 \right) = \sum_{i,j,k} \left( E^{n+1}_{ijk} + \frac{1}{2} \left| {\bm B}^{n+1}_{ijk} \right|^2 \right).
		\end{equation*}
	\end{corollary}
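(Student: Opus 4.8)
The plan is to mirror the two-step argument behind the 2D result in \Cref{thm:totalenergy}, combining the Energy Identity in 3D stated immediately above (which conserves the sum of discrete kinetic and magnetic energies) with the mechanical-energy update \eqref{updatingsource3D} (which encodes the invariance of internal energy in System-B). Both ingredients are already in hand, so the corollary reduces to a short algebraic addition of two summed identities.

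First I would rewrite the update \eqref{updatingsource3D}, using the fact that the density is unchanged in System-B (so $\rho^{n+1}_{ijk}=\rho^{n}_{ijk}$), in the equivalent nodal form
\begin{equation*}
E^{n+1}_{ijk} - \frac12 \rho^{n+1}_{ijk}\left|{\bm v}^{n+1}_{ijk}\right|^2 = E^{n}_{ijk} - \frac12 \rho^{n}_{ijk}\left|{\bm v}^{n}_{ijk}\right|^2,
\end{equation*}
which simply states that the internal energy at each grid point is preserved. Summing over all $(i,j,k)$ yields a first global identity. Next I would invoke the Energy Identity in 3D, namely
\begin{equation*}
\sum_{i,j,k}\left( \frac12 \rho^{n}_{ijk}\left|{\bm v}^{n}_{ijk}\right|^2 + \frac12 \left|{\bm B}^{n}_{ijk}\right|^2 \right) = \sum_{i,j,k}\left( \frac12 \rho^{n+1}_{ijk}\left|{\bm v}^{n+1}_{ijk}\right|^2 + \frac12 \left|{\bm B}^{n+1}_{ijk}\right|^2 \right).
\end{equation*}
Adding these two summed identities, the discrete kinetic-energy sums cancel at each time level, leaving exactly $\sum_{i,j,k}(E^{n}_{ijk}+\frac12|{\bm B}^{n}_{ijk}|^2)=\sum_{i,j,k}(E^{n+1}_{ijk}+\frac12|{\bm B}^{n+1}_{ijk}|^2)$, which is the claimed total-energy conservation.

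No genuinely new difficulty appears at the level of this corollary: it is a one-line consequence of the two preceding facts, exactly as in the proof of \Cref{thm:totalenergy}. The substantive work lies instead in the Energy Identity in 3D, whose proof parallels that of \Cref{theorem:energyconservation}: one multiplies each discrete induction equation in \eqref{discretization2_3D} by the corresponding midpoint magnetic component $B^{\bullet,n+\frac12}_{ijk}$, multiplies the velocity update by ${\bm v}^{n+\frac12}_{ijk}$, uses the identity $-({\bm B}^{n+\frac12}_{ijk}\times\operatorname{curl}{\bm B}^{n+\frac12}_{ijk})\cdot{\bm v}^{n+\frac12}_{ijk} = \mathbf{\Omega}^{n+\frac12}_{ijk}\cdot\operatorname{curl}{\bm B}^{n+\frac12}_{ijk}$, and sums over all nodes. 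The place where I expect the main obstacle---and where 3D genuinely differs from 2D---is verifying that, after inserting the 3D discrete curl, every product of an $\mathbf{\Omega}$-difference with a $B$-value is matched by a product of a $B$-difference with the corresponding $\mathbf{\Omega}$-value of opposite sign, so that a discrete summation-by-parts makes the right-hand side telescope to zero under periodic boundary conditions or compact support. There are simply more such cross terms in 3D than in 2D, but they organize into the same antisymmetric pattern, so the cancellation goes through as before.
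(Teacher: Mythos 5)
Your proposal matches the paper's own argument exactly: the paper proves the 2D analogue (\Cref{thm:totalenergy}) by summing the internal-energy invariance encoded in the mechanical-energy update and adding it to the kinetic-plus-magnetic Energy Identity, and it states that the 3D case follows in the same way, which is precisely your two-step addition using \eqref{updatingsource3D} and the 3D Energy Identity. Your further remarks on how the 3D Energy Identity itself is proved (midpoint multiplication, the vector identity for the Lorentz force term, and summation-by-parts cancellation under periodic or compactly supported data) also parallel the paper's 2D proof of \Cref{theorem:energyconservation}, so there is no gap.
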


	\section{Numerical Experiments}\label{S.5}
In this section, we demonstrate the performance of the proposed PPCT method on several benchmark or challenging examples in the 2D case. Unless otherwise specified, the error tolerance $\varepsilon_{tol}$ in \eqref{eq:stopping} is set to $10^{-10}$, the adiabatic index $\gamma = \frac{5}{3}$, the value of $q$ is chosen as $3$ in the PP limiter, and the CFL number $C_{\tt CFL} = \frac2{q}=\frac{2}{3}$. 
	The SSP second-order Runge--Kutta method is used for time discretization of System-A. As discussed in \Cref{rem:stability2}, the time step-size $\Delta t$ for stability and the PP property of the PPCT scheme is only restricted by the CFL condition \eqref{CFLcondtion} for System-A. Therefore, we take  
	\begin{equation*}\label{CFL}
		\Delta t = \frac{C_{\tt CFL}}{\frac{\alpha_1 }{\Delta x} + \frac{ \alpha_2}{\Delta y}}.
	\end{equation*}

	\begin{expl}[Smooth Isentropic Vortex]\label{Ex:vortex} \rm 
		We start with the smooth isentropic vortex to test the accuracy. The initial conditions are defined as:
		\begin{equation*}\label{initial:vortex}
			\left( \rho, {\bm v}, {\bm B}, p \right) = \left( 1, 1 + \delta v_1, 1 + \delta v_2 , 0, \delta B_1, \delta B_2, 0, 1 + \delta p \right),
		\end{equation*}
		with $\left( \delta v_1, \delta v_2 \right) = \frac{\mu}{\sqrt{2} \pi}e^{0.5(1 - r^2)}\left(-y, x\right)$, $\left(\delta B_1, \delta B_2\right)=\frac{\mu}{2 \pi} e^{0.5\left(1-r^2\right)}(-y, x)$, and  $\delta p=-\frac{\mu^2\left(1+r^2\right)}{8 \pi^2} e^{1-r^2}$, where $r^2 = x^2 + y^2$. 
		First, we set the vortex strength to $\mu = 1$, which results in a mild vortex problem. By selecting a value of $q$ close to $2$ in the PP limiter, we allow the theoretical CFL number to approach $1$. As shown in \Cref{tab:Ex-Vortex-0.5}, second-order accuracy is observed under these conditions. 
		Next, we increase the vortex strength to $\mu = 5.389489439$, which significantly lowers the pressure at the vortex center to $5.3 \times 10^{-12}$. The errors and convergence rates of velocity and magnetic field are presented in \Cref{tab:Ex-Vortex-0.3} computed on different meshes of $N \times N$ cells. Here, we observe optimal convergence rates for $l^1$ and $l^2$ error, although the $l^{\infty}$ error reduces noticeably. This reduction does not imply that the PP limiter destroys accuracy. It is well known that when using a MUSCL type scheme with slope limiting, the order of $l^{\infty}$ error often reduces to first-order near extremum points. One effective method to mitigate this issue is to relax the slope limitation by selecting a larger value for $q$.  
		When we set $q = 5$ (resulting in $C_{\tt CFL} = \frac{2}{q} = 0.4$), the order of the $l^{\infty}$ error recovers, as shown in \Cref{tab:Ex-Vortex-0.2}. The last column of the table records the average iteration count per time step for the iterative algorithm \eqref{iter} used to solve system (\ref{eq:nonlinears}). \Cref{fig:iteration error} illustrates how the iteration error ${\mathcal E}_{k}$ decreases with the number of iterations in a typical single time step, where we set $\varepsilon_{tol}=10^{-14}$ to show the convergence behavior. We observe exponential convergence. In general, the average iteration count is about $5$ to $9$, with ${\mathcal E}_k$ decreasing below $10^{-14}$ (near machine precision). Moreover, in all our following experiments, the iteration count typically stays below $10$ and never exceeds $20$, demonstrating the efficiency of the iterative process.

		\begin{table}[!htb] 
			\centering
			\belowrulesep=0pt
			\aboverulesep=0pt
			\caption{Errors in ${\bm B}_h$ and ${\bm v}_h$ at $t=0.05$ for the mild vortex with $\mu = 1.0$, computed by PPCT method on a $N\times N$ mesh with $q=2.01$ and $C_{\tt CFL} = \frac{2}{q} \approx 1$. 
			}
			\label{tab:Ex-Vortex-0.5}
			\setlength{\tabcolsep}{3mm}{
				\begin{tabular}{c|ccccccccc}
					\toprule[1.5pt]
					\multirow{2}{*}{ } &
					\multirow{2}{*}{$ N$} &
					\multicolumn{2}{c}{$ l^{1} $ norm} &
					\multicolumn{2}{c}{$ l^{2} $ norm} &
					\multicolumn{2}{c}{$ l^{\infty} $ norm} &
					\multirow{2}{*}{ite \#} \\
					\cmidrule(r){3-4} \cmidrule(r){5-6} \cmidrule(l){7-8}
					& & error & order &  error & order &  error & order \\	
					\midrule[1.5pt]
					\multirow{5}{*}{${\bm B}_h$} & $64$ & 3.35e-05 & {-} &  1.27e-04 & {-} &  1.23e-03 & {-} &   9.0  \\
					& $128$ &   5.56e-06& 2.56&  2.11e-05& 2.59&  2.11e-04& 2.55&     7.0\\
					& $256$ &  1.10e-06& 2.34&  4.08e-06& 2.37&  4.17e-05& 2.34  &  5.5\\
					& $512$ &  2.49e-07& 2.14&  9.20e-07& 2.15&  1.02e-05& 2.03  &  4.9\\
					& $1024$ &  6.03e-08& 2.05&  2.23e-07& 2.05&  2.56e-06& 2.00 &  4.9 \\
					\midrule
					\multirow{5}{*}{${\bm v}_h$} & $64$ &  2.05e-05& {-}&  7.16e-05& {-}&  6.11e-04& {-}  & 9.0  \\
					& $128$ &  5.30e-06& 1.96&  1.86e-05& 1.95&  1.64e-04& 1.90 &7.0\\
					& $256$ &  1.34e-06& 1.98&  4.71e-06& 1.98&  4.19e-05& 1.97  &5.5 \\
					& $512$ &  3.36e-07& 2.00&  1.18e-06& 2.00&  1.05e-05& 2.00  &4.9 \\
					& $1024$ &  8.41e-08& 2.00&  2.96e-07& 2.00&  2.64e-06& 2.00 &4.9 \\
					\bottomrule[1.5pt]
				\end{tabular}
			}
		\end{table}
		\begin{table}[!htb] 
			\centering
			\belowrulesep=0pt
			\aboverulesep=0pt
			\caption{Errors in ${\bm B}_h$ and ${\bm v}_h$ at $t=0.05$ for the extreme vortex with $\mu = 5.389489439$, computed by PPCT method on a $N\times N$ mesh with $q=3$ and $C_{\tt CFL} =\frac{2}{3}$. 
			}
			\label{tab:Ex-Vortex-0.3}
			\setlength{\tabcolsep}{3mm}{
				\begin{tabular}{c|ccccccccc}
					\toprule[1.5pt]
					\multirow{2}{*}{ } &
					\multirow{2}{*}{$ N$} &
					\multicolumn{2}{c}{$ l^{1} $ norm} &
					\multicolumn{2}{c}{$ l^{2} $ norm} &
					\multicolumn{2}{c}{$ l^{\infty} $ norm} &
					\multirow{2}{*}{ite \#} \\
					\cmidrule(r){3-4} \cmidrule(r){5-6} \cmidrule(l){7-8}
					& & error & order &  error & order &  error & order \\	
					\midrule[1.5pt]
					\multirow{5}{*}{${\bm B}_h$}& $64$ &  1.10e-04& {-}&  3.82e-04& {-}&  3.49e-03& {-}& 8.5\\
					& $128$ &  2.82e-05& 1.96&  9.92e-05& 1.94&  1.06e-03& 1.72 &    8.0\\
					& $256$ &  7.10e-06& 1.99&  2.51e-05& 1.98&  3.64e-04& 1.54 &   7.0\\
					& $512$ &  1.78e-06& 1.99&  6.31e-06& 1.99&  1.18e-04& 1.62 &  5.9 \\
					& $1024$ &  4.47e-07& 2.00&  1.58e-06& 2.00&  4.07e-05& 1.54 & 5.7\\
					\midrule
					\multirow{5}{*}{${\bm v}_h$}& $64$&  3.04e-04& {-}&  1.41e-03& {-}&  2.02e-02& {-} & 8.5\\
					& $128$ &  6.75e-05& 2.17&  4.33e-04& 1.71&  1.12e-02& 0.85 &    8.0 \\
					& $256$ &  1.51e-05& 2.16&  1.33e-04& 1.71&  5.78e-03& 0.95  &   7.0\\
					& $512$&  3.28e-06& 2.20&  3.61e-05& 1.88&  2.36e-03& 1.29 & 5.9\\
					& $1024$&  7.20e-07& 2.19&  9.42e-06& 1.94&  9.18e-04& 1.37 & 5.7\\
					\bottomrule[1.5pt]
				\end{tabular}
			}
		\end{table}
		\begin{table}[!htb] 
			\centering
			\belowrulesep=0pt
			\aboverulesep=0pt
			\caption{Errors in ${\bm B}_h$ and ${\bm v}_h$ at $t=0.05$ for the extreme vortex with $\mu = 5.389489439$, computed by PPCT method on a $N\times N$ mesh with $q=5$ and $C_{\tt CFL} =\frac{2}{5}$. 
			}
			\label{tab:Ex-Vortex-0.2}
			\setlength{\tabcolsep}{2.66mm}{
				\begin{tabular}{c|ccccccccc}
					\toprule[1.5pt]
					\multirow{2}{*}{ } &
					\multirow{2}{*}{$N$} &
					\multicolumn{2}{c}{$ l^{1} $ norm} &
					\multicolumn{2}{c}{$ l^{2} $ norm} &
					\multicolumn{2}{c}{$ l^{\infty} $ norm} &
					\multirow{2}{*}{ite \#} \\
					\cmidrule(r){3-4} \cmidrule(r){5-6} \cmidrule(l){7-8}
					& & error & order &  error & order &  error & order \\	
					\midrule[1.5pt]
					\multirow{5}{*}{${\bm B}_h$} & $64$  & 1.09e-04 & {-} &  3.80e-04 & {-} & 3.36e-03 & {-} & 7.3\\
					& $128$  & 2.81e-05 & 1.96 & 9.84e-05 & 1.95 & 9.18e-04 & 1.87 &  6.8\\
					& $256$ &  7.07e-06& 1.99&  2.48e-05& 1.99&  2.35e-04& 1.97 & 5.9\\
					& $512$ &  1.77e-06& 1.99&  6.23e-06& 1.99&  5.94e-05& 1.98 & 5.0\\
					& $1024$&  4.44e-07& 2.00&  1.56e-06& 2.00&  1.50e-05& 1.99 & 4.0\\
					\midrule
					\multirow{5}{*}{${\bm v}_h$} 
					& $64$ &  2.64e-04& {-}&  1.14e-03& {-}&  1.53e-02& {-} & 7.3\\
					& $128$&  5.27e-05& 2.36&  2.59e-04& 2.14&  4.35e-03& 1.81 & 6.8\\
					& $256$&  1.11e-05& 2.24&  5.76e-05& 2.17&  1.16e-03& 1.91 & 5.9\\
					& $512$ &  2.47e-06& 2.17&  1.28e-05& 2.17&  3.00e-04& 1.95 & 5.0\\
					& $1024$ &  5.68e-07& 2.12&  2.87e-06& 2.15&  7.61e-05& 1.98 & 4.0\\
					\bottomrule[1.5pt]
				\end{tabular}
			}
		\end{table}
		\begin{figure}[htbp]
			\centering
			\includegraphics[width=0.618\textwidth]{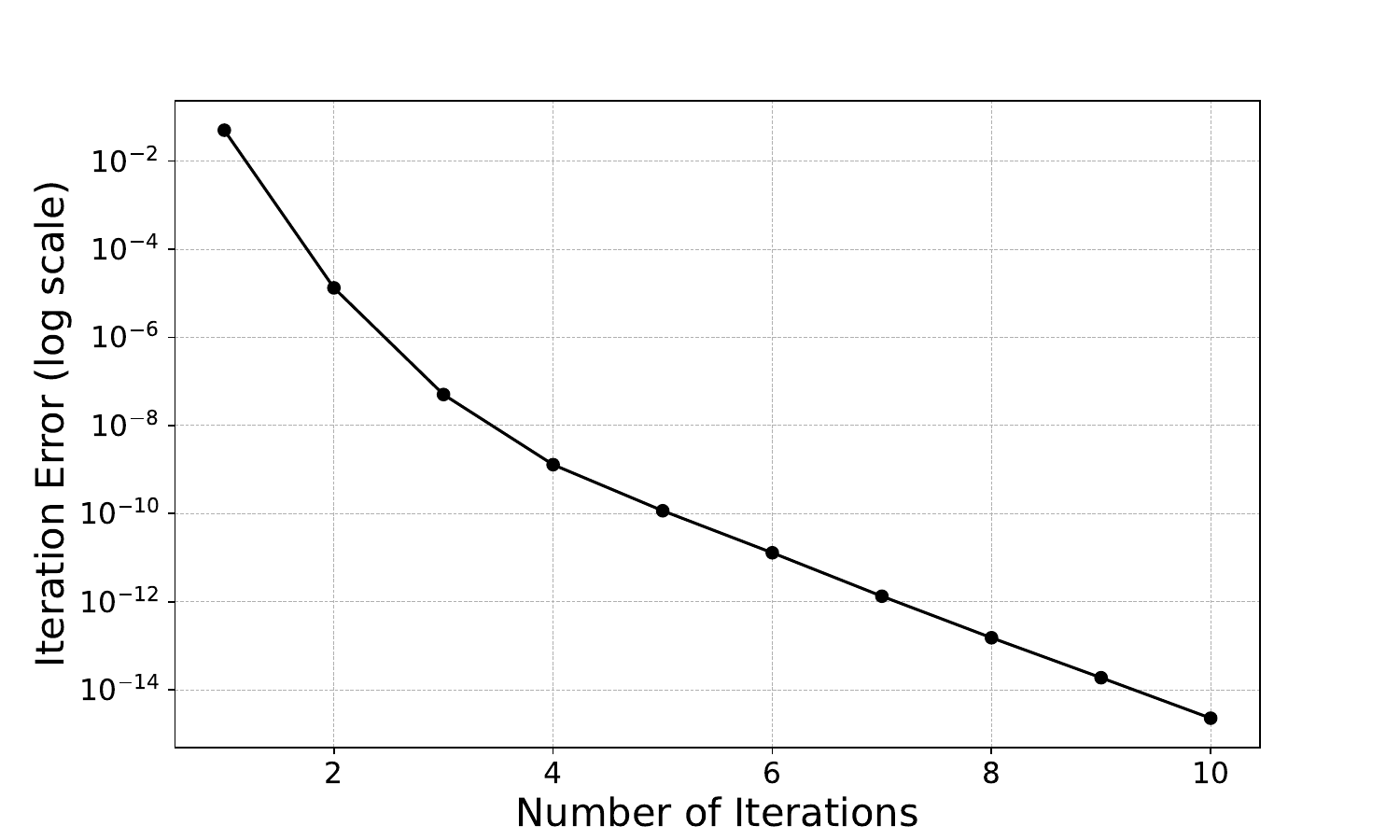} 
			\caption{\Cref{Ex:vortex} : Iteration error varies with the number of iterations in the case of $C_{\tt CFL} = \frac{2}{3}$ on the $1024 \times 1024$ mesh.} 
			\label{fig:iteration error}
		\end{figure} 
	\end{expl}

	\begin{expl}[Orszag--Tang Problem]\label{Ex:OT} \rm  
		The Orszag--Tang problem is a well-known benchmark for testing the shock-capturing capability of numerical methods for MHD. Although the initial conditions are smooth, as time evolves, multiple shocks form and interact with each other, creating a complex flow structure. The initial conditions are set as follows:
		\begin{equation*}
			(\rho, {\bm v}, {\bm B}, p) = \left(\gamma^2, -\sin y, \sin x, 0, -\sin y, \sin (2x), 0, \gamma \right).
		\end{equation*}
		The computational domain $\left[0, 2\pi \right]^2$ is divided into $400 \times 400$ uniform rectangular cells, and periodic boundary conditions are applied. \Cref{fig:Ex-OT-density} shows the density contours at times $t = 2$ and $t = 4$. Our PPCT scheme demonstrates good performance in capturing both shocks and smooth flow regions. The results are consistent with those found in \cite{DingWu2024SISCMHD, Li2011, Guillet2019}. As the solution progresses from $t=2$ to $t=4$, the flow becomes increasingly intricate. 
		It has been reported in \cite{jiang1999high} that negative numerical pressure could easily occur around $t \approx 3.9$ in this test. However, our scheme avoids this issue due to its provably PP property. \Cref{fig:Ex-OT-cut_off} presents the density and thermal pressure profiles along the line $y = 0.625\pi$ at $t = 3$. The captured  discontinuities are in agreement with findings reported in \cite{LiuWu2024OEDGforMHD}, where high-resolution simulations revealed that shock profiles are typically sharper in these regions.

		\begin{figure}[!htb]
			\centering
			\begin{subfigure}{0.48\textwidth}
				\includegraphics[width=\textwidth]{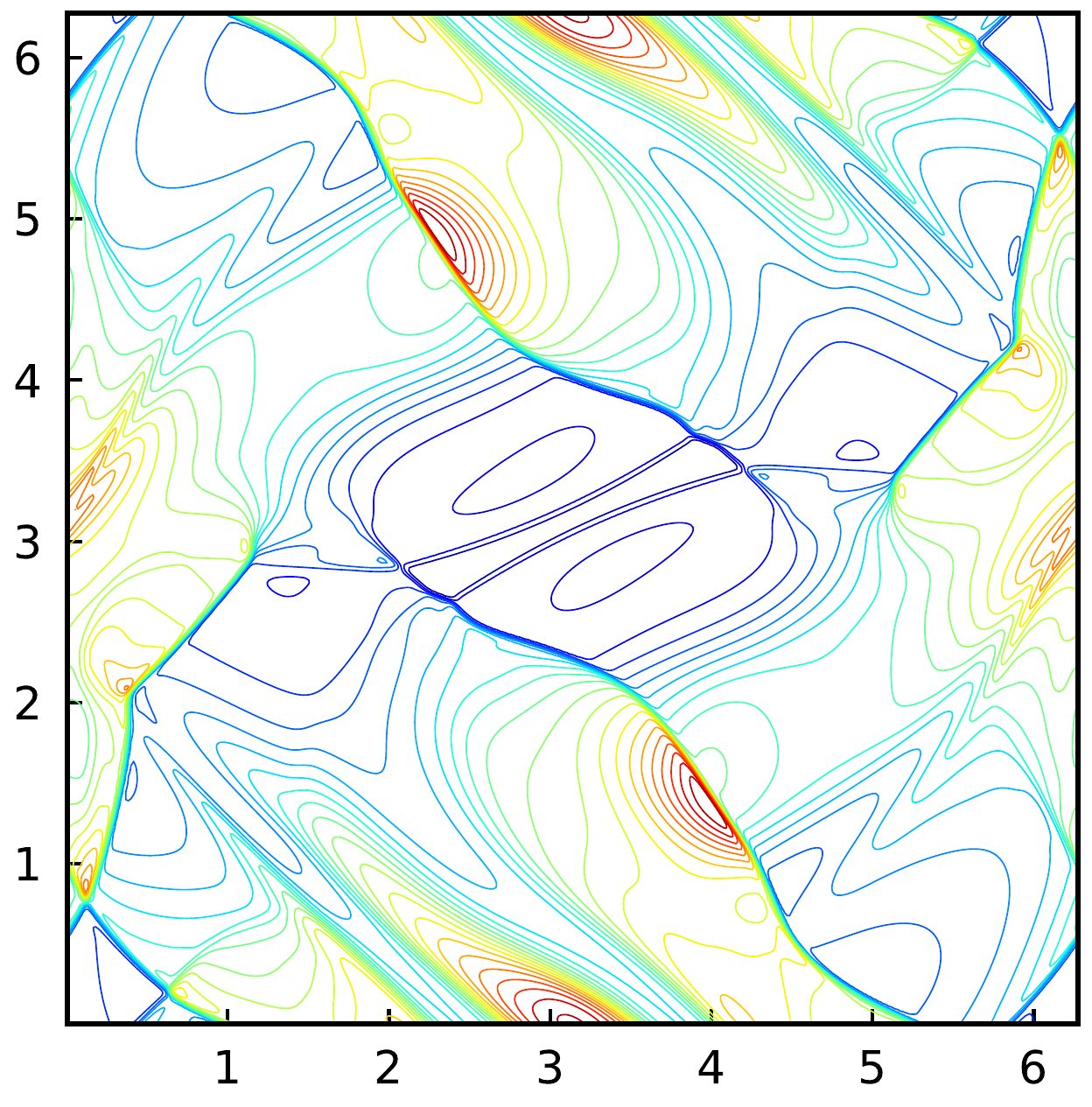}
			\end{subfigure}
			\hfill
			\begin{subfigure}{0.48\textwidth}
				\includegraphics[width=\textwidth]{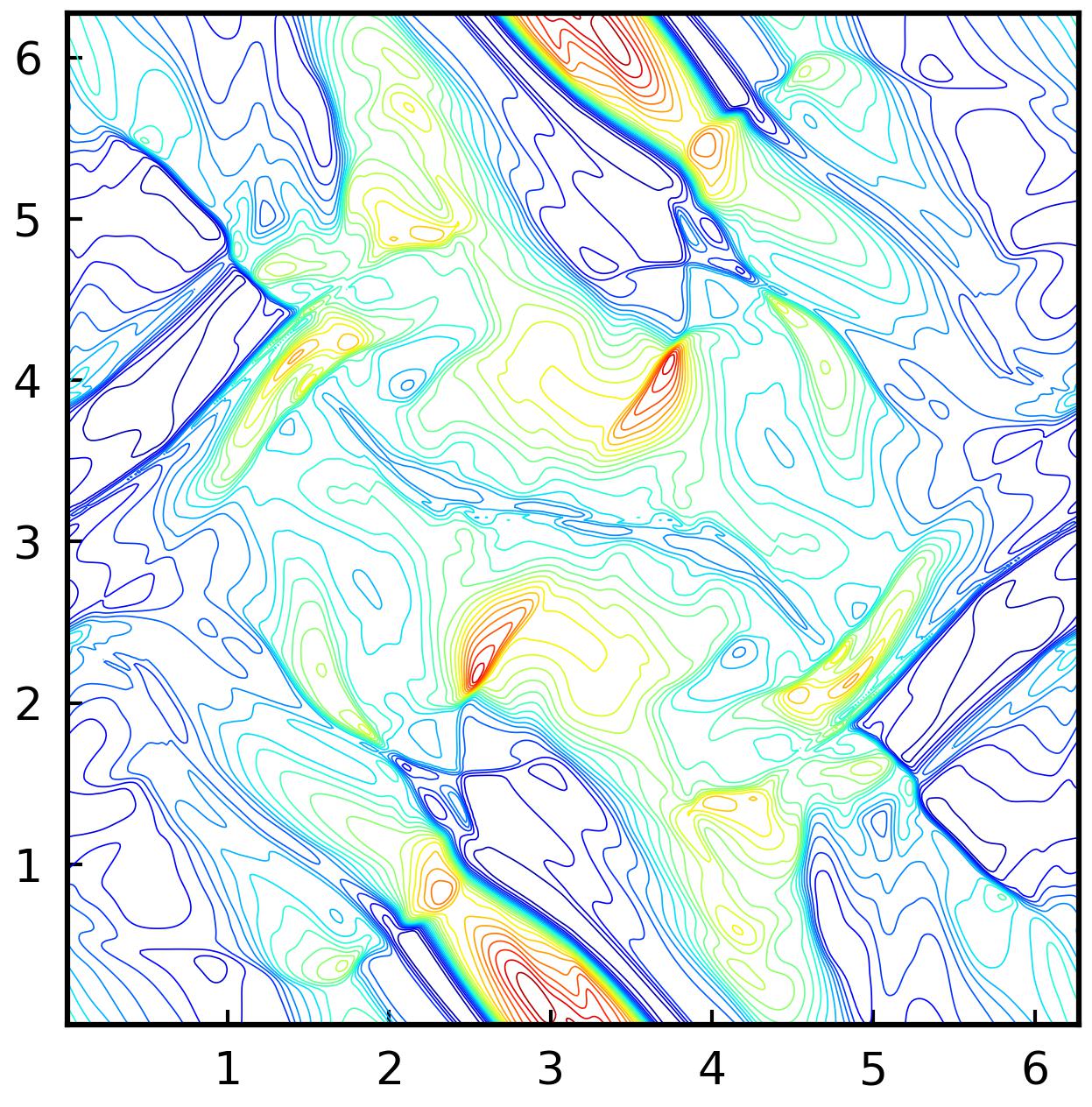}
			\end{subfigure}
			\caption{\Cref{Ex:OT}: Density of Orszag-Tang turbulence problem at $t = 2$ (left) and $t = 4$(right). Twenty-four contour lines are displayed. 
			}
			\label{fig:Ex-OT-density}
		\end{figure}
		\begin{figure}[!htb]
			\centering
			\begin{subfigure}{0.48\textwidth}
				\includegraphics[width=\textwidth]{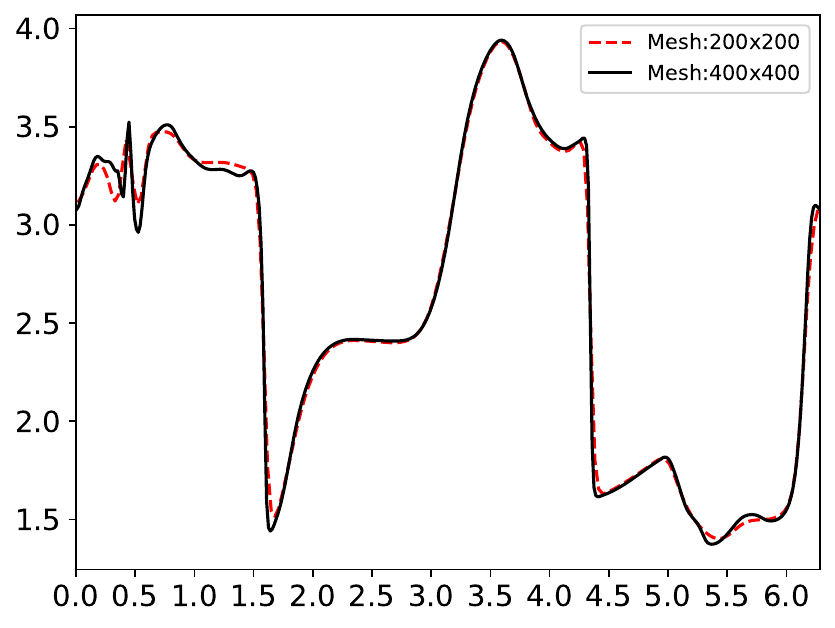}
			\end{subfigure}
			\hfill
			\begin{subfigure}{0.48\textwidth}
				\includegraphics[width=\textwidth]{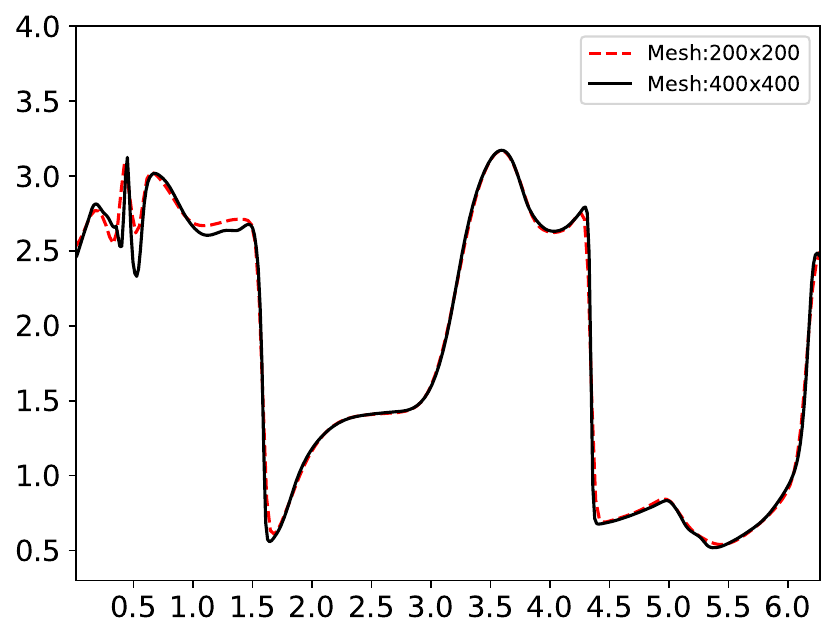}
			\end{subfigure}
			\caption{\Cref{Ex:OT}:Profiles of density (left) and thermal pressure (right) along line $y=0.625 \pi$ at $t = 3$. 
			}
			\label{fig:Ex-OT-cut_off}
		\end{figure} 
	\end{expl}

	\begin{expl}[Rotor Problem]\label{Ex:Rotor}\rm
		Next, we consider the rotor problem, which serves as another benchmark for the MHD system. This test simulates a rapidly rotating disk embedded in a uniform-density plasma. The initial conditions are given by
		\begin{equation*}
			(\rho, {\bm v}, {\bm B}, p)= \begin{cases}
				\left(10,-(y-0.5) / r_1,(x-0.5) / r_1, 0,2.5 / \sqrt{4 \pi}, 0,0,0.5\right), & r \leq r_1, \\
				(1+9 \phi,-\phi(y-0.5) / r, \phi(x-0.5) / r, 0,2.5 / \sqrt{4 \pi}, 0,0,0.5), & r_1<r \leq r_2, \\
				(1,0,0,0,2.5 / \sqrt{4 \pi}, 0,0,0.5), & r_2<r,
			\end{cases}
		\end{equation*}
		where $\phi=(r_2-r)/(r_2-r_1)$, $r=\sqrt{(x - 0.5)^2+(y-0.5)^2}$, $r_1=0.1$, and $r_2=0.115$.
		
		The computational domain is $[0, 1]^2$, discretized into a $400 \times 400$ uniform rectangular mesh with outflow boundary conditions. The simulation is run until $t=0.295$. 
		\Cref{fig:Ex-Rotor1} presents the contour plots of density $\rho$, thermal pressure $p$, magnetic pressure $\frac{\left| {\bm B} \right|^2}{2}$, and the Mach number $\frac{{\bm v}}{c}$. Our numerical results effectively preserve the circular rotation pattern, a feature that T\'oth identified as challenging for certain MHD numerical schemes \cite{Toth2000}. Typically, large divergence errors in the magnetic field can cause distortions in the numerical solution, particularly around the central region of the Mach number, as noted in \cite{Li2005}. However, our simulation results exhibit no such distortions in the central almond-shaped disk region, indicating that our PPCT schemes effectively control divergence errors. 
		\Cref{fig:Ex-Rotor2} shows a zoomed-in view of the Mach number for a comparison of reconstructions using conservative variables versus primitive variables. The reconstruction using conservative variables reveals some nonphysical structures, which are alleviated by reconstructing the primitive variables. 
		A further comparison on a refined mesh is shown in \Cref{fig:Ex-Rotor33}, once again highlighting the better performance of reconstruction using  primitive variables. 
		Due to the rapid changes in the solution, spurious oscillations and negative pressure can easily form in the central region. However, our results exhibit high resolution, and no negative pressure is observed. 
		For clarity, we also present in \Cref{fig:Ex-Rotor-cut_off} the profiles of the Mach number and $B_2$, sliced at $t = 0.295$ along the horizontal axes of the simulation domain. The scheme using reconstruction with conservative variables exhibits significant overshoots and spurious oscillations. In contrast, the PPCT scheme with reconstruction using primitive variables captures all the waves accurately. Our results align well with those reported in \cite{DingWu2024SISCMHD,LiuWu2024OEDGforMHD}.

		\begin{figure}[!htb]
			\centering		
			\begin{subfigure}{0.48\textwidth}
				\includegraphics[width=\textwidth]{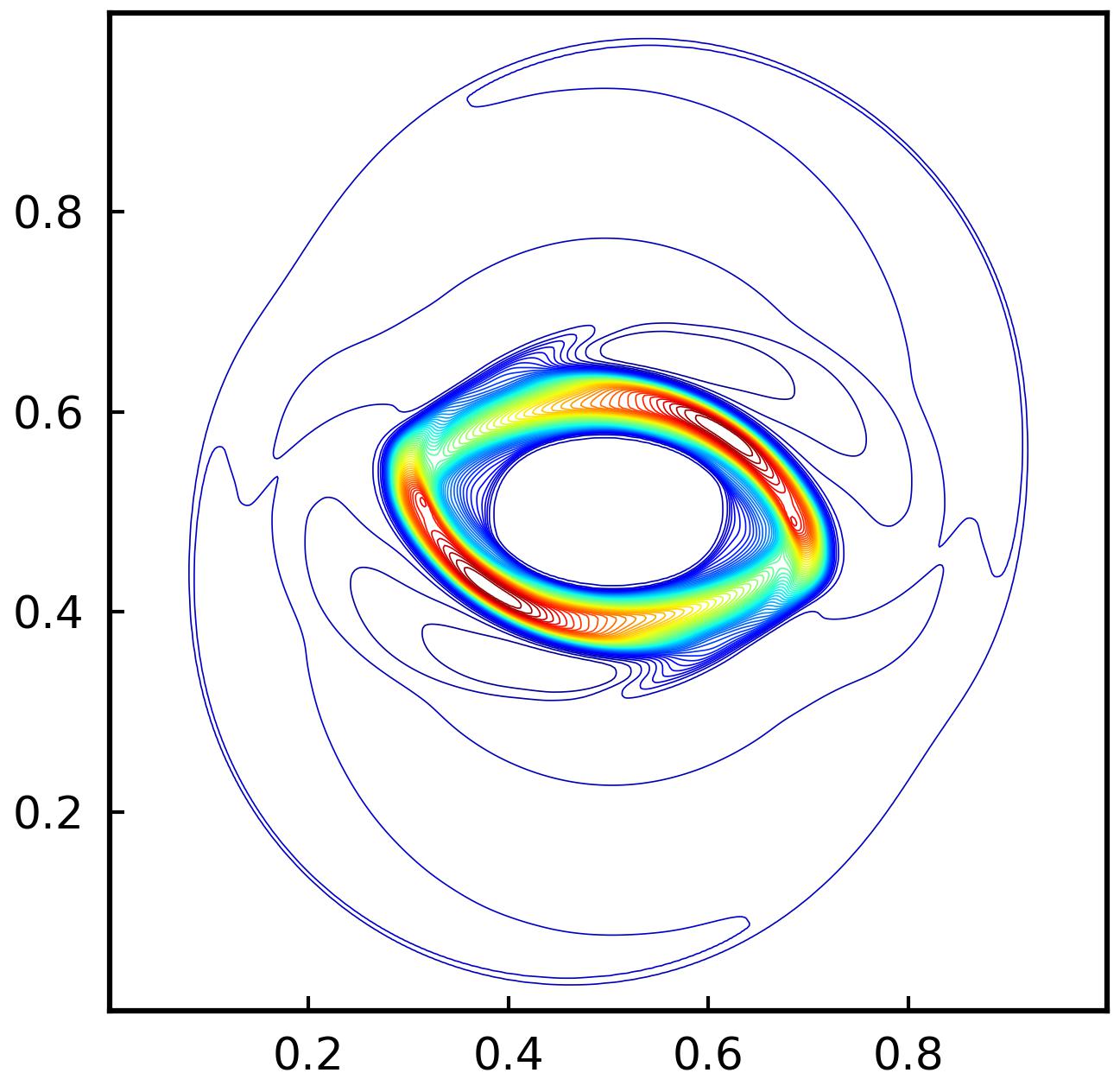}
			\end{subfigure}
			\hfill
			\begin{subfigure}{0.48\textwidth}
				\includegraphics[width=\textwidth]{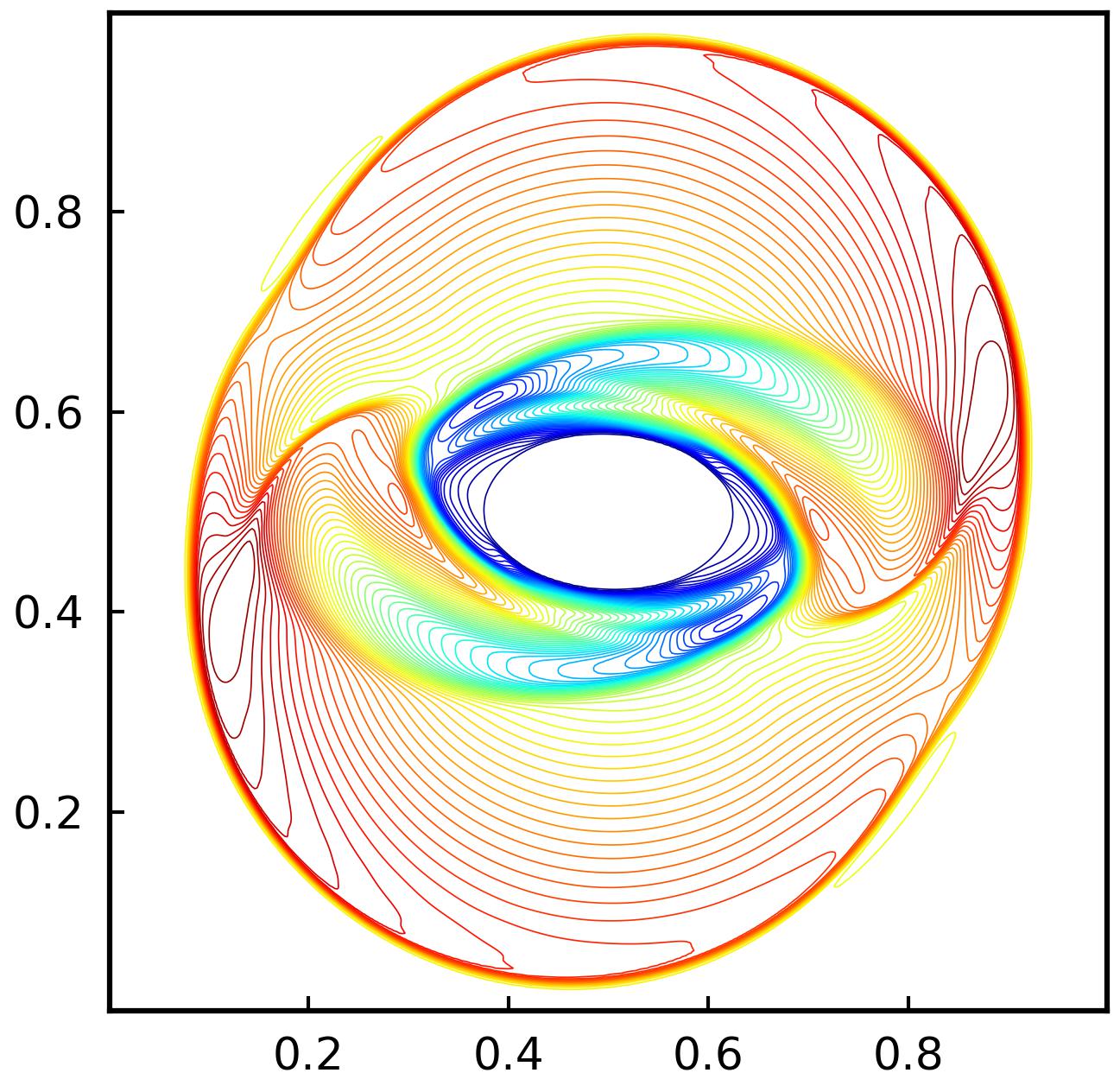}
			\end{subfigure}
			
			\begin{subfigure}{0.48\textwidth}
				\includegraphics[width=\textwidth]{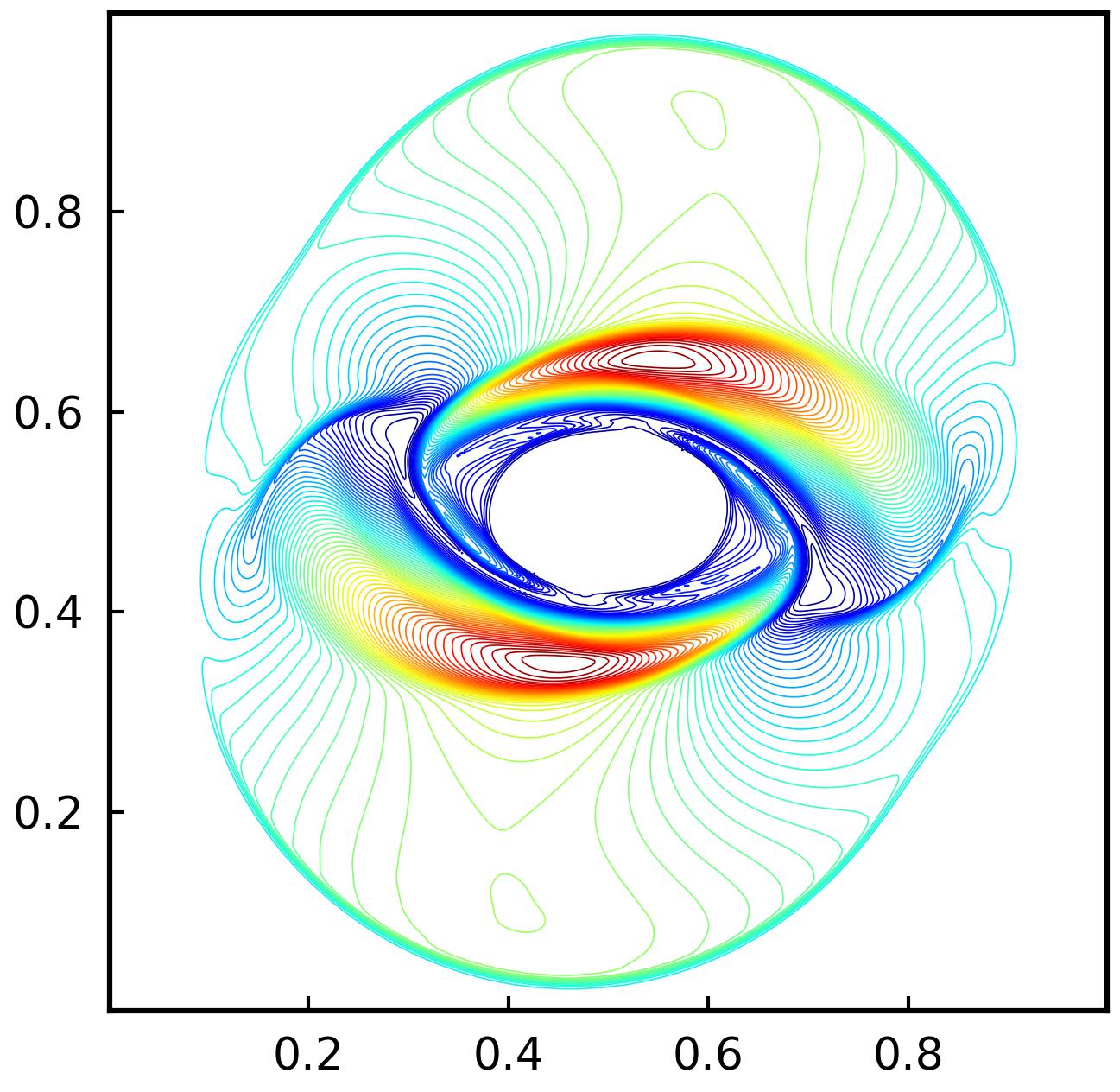}
			\end{subfigure}
			\hfill
			\begin{subfigure}{0.48\textwidth}
				\includegraphics[width=\textwidth]{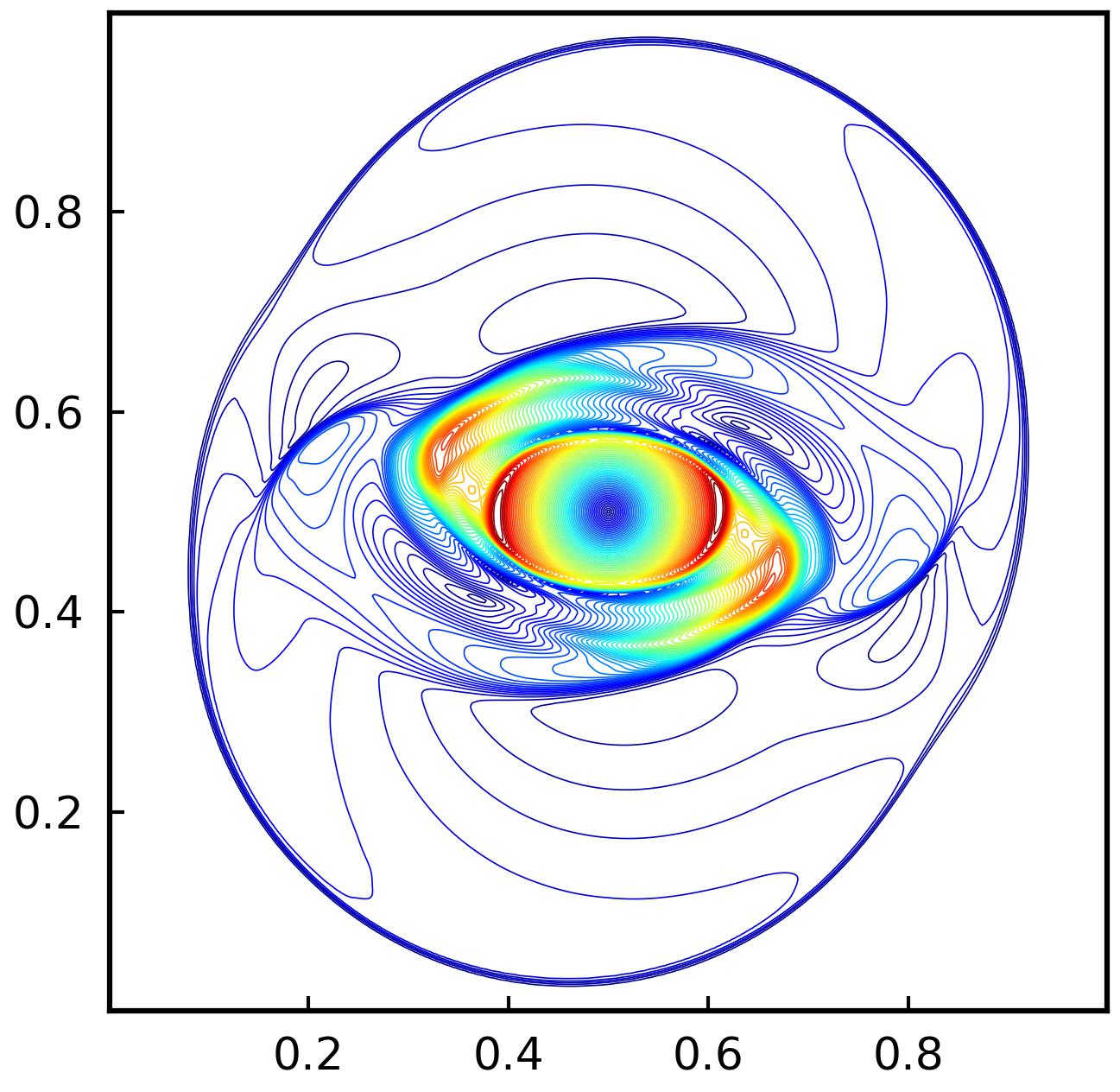}
			\end{subfigure}
			\caption{\Cref{Ex:Rotor}: The density (top-left), thermal pressure (top-right), magnetic pressure (bottom-left), and Mach number (bottom-right) for the rotor problem at $t = 0.295$. 
			}
			\label{fig:Ex-Rotor1}
		\end{figure}
		
		\begin{figure}[!htb]
			\centering
			\begin{subfigure}{0.48\textwidth}
				\includegraphics[width=\textwidth]{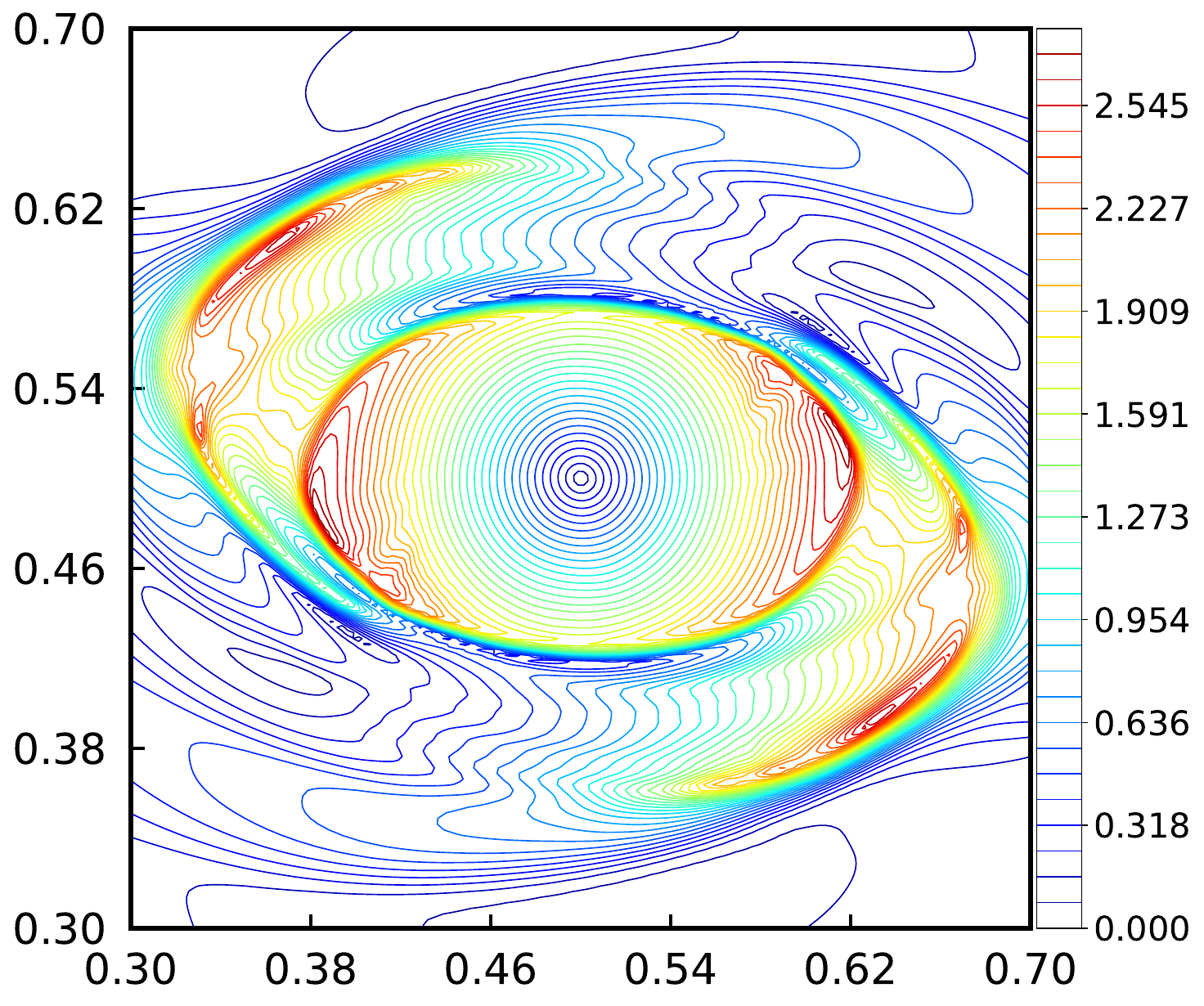}
			\end{subfigure}
			\hfill
			\begin{subfigure}{0.48\textwidth}
				\includegraphics[width=\textwidth]{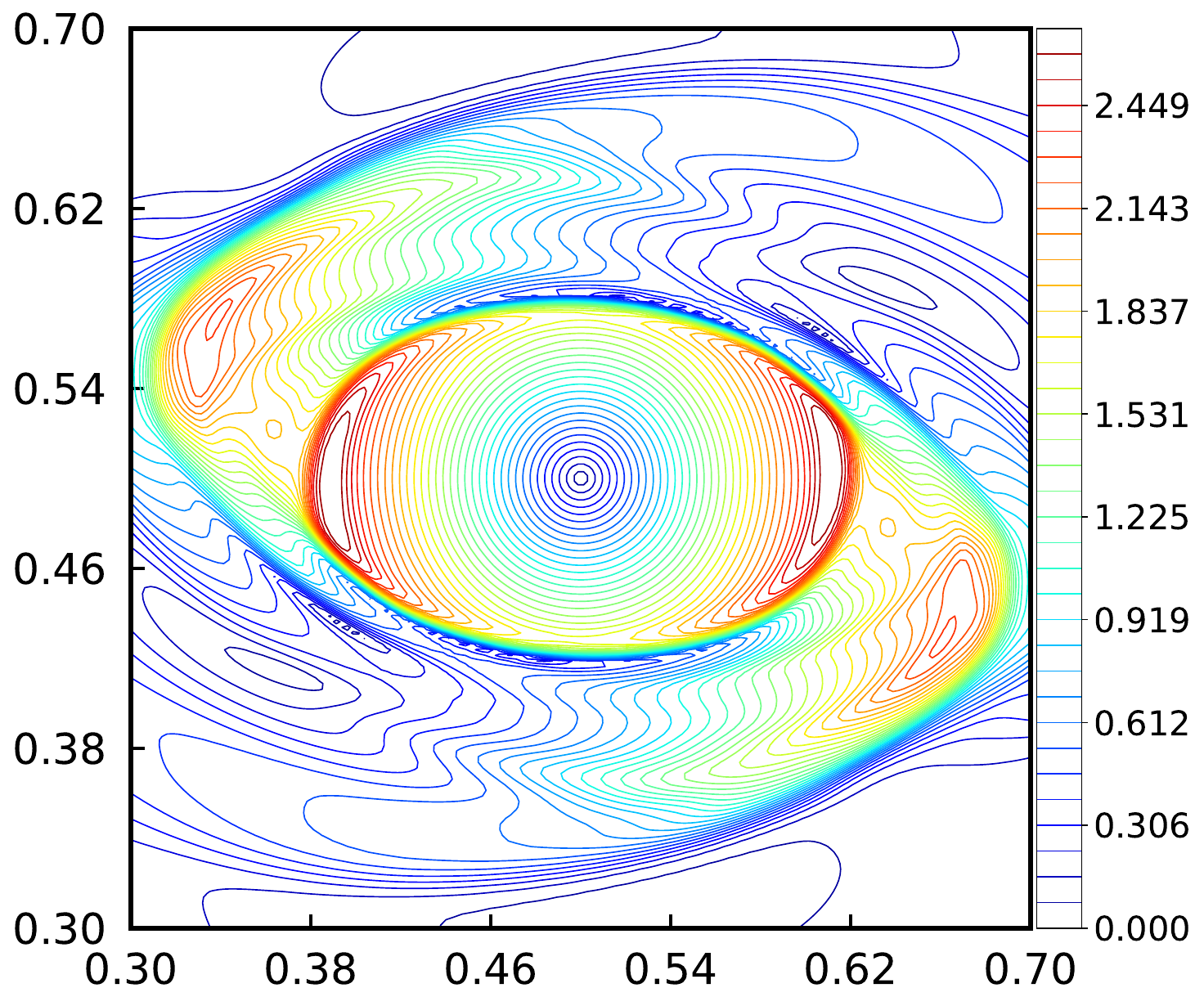}
			\end{subfigure}
			\caption{\Cref{Ex:Rotor}: Comparison of reconstructions using conservative variables (left) vs.~primitive variables (right) for the Mach number computed by the PPCT scheme with $400 \times 400$ cells. 
			}
			\label{fig:Ex-Rotor2}
		\end{figure}

		\begin{figure}[!htb]
			\centering
			\begin{subfigure}{0.48\textwidth}
				\includegraphics[width=\textwidth]{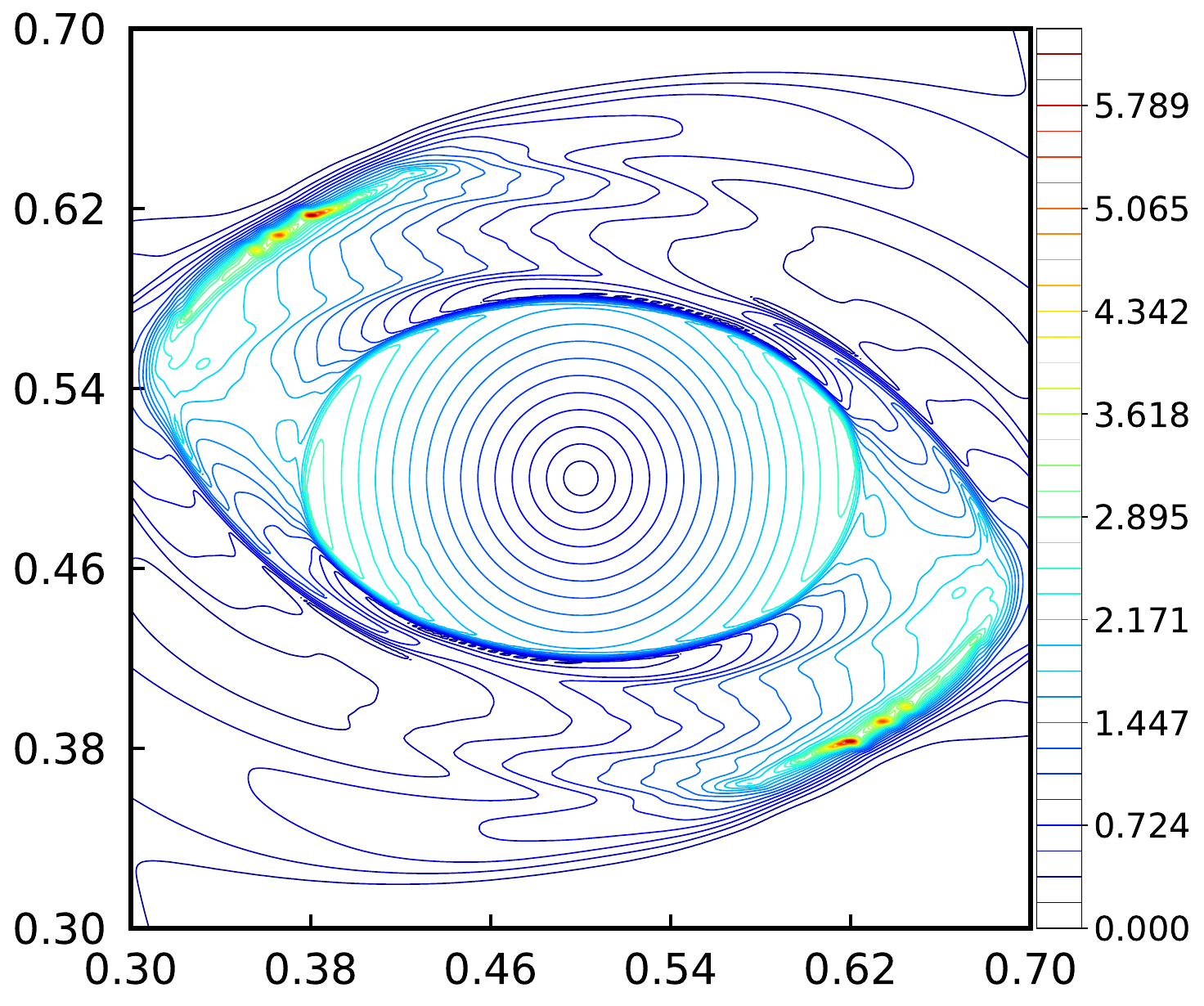}
			\end{subfigure}
			\hfill
			\begin{subfigure}{0.48\textwidth}
				\includegraphics[width=\textwidth]{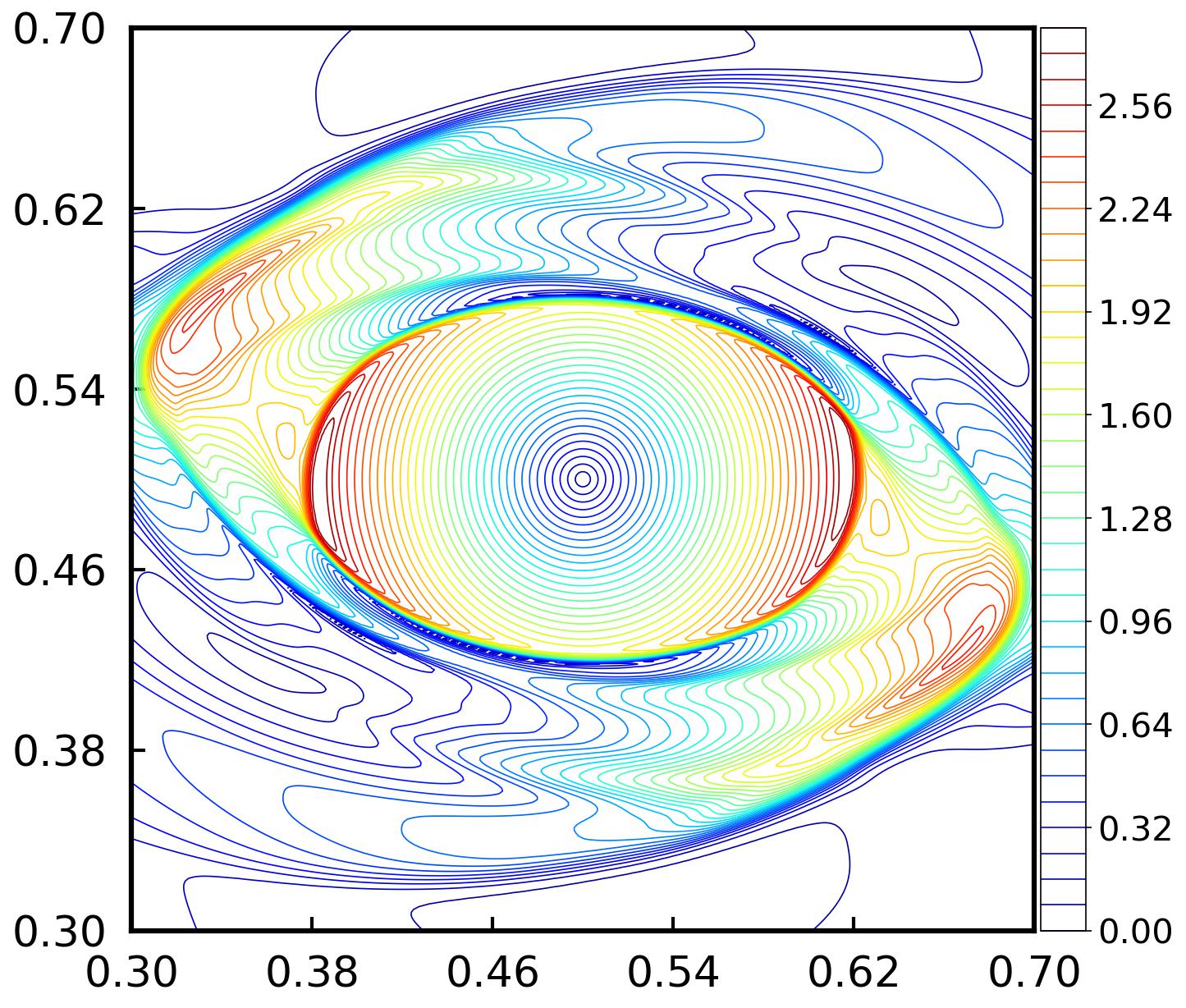}
			\end{subfigure}
			\caption{\Cref{Ex:Rotor}: Comparison of reconstructions using conservative variables (left) vs.~primitive variables (right) for the Mach number computed by the PPCT scheme with $800 \times 800$ cells. 
			}
			\label{fig:Ex-Rotor33}
		\end{figure}
		
		\begin{figure}[!htb]
			\centering
			\begin{subfigure}{0.48\textwidth}
				\includegraphics[width=\textwidth]{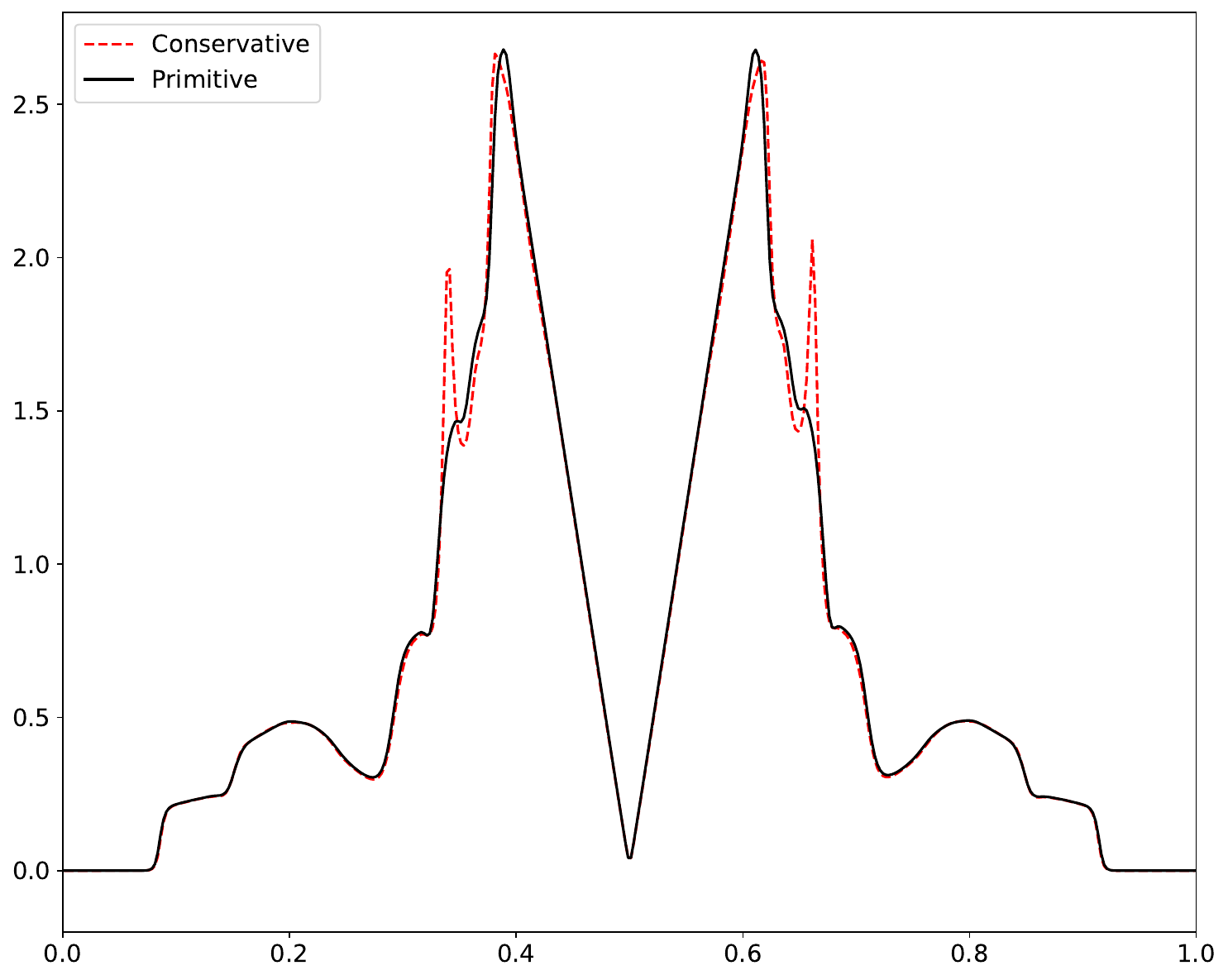}
			\end{subfigure}
			\hfill
			\begin{subfigure}{0.48\textwidth}
				\includegraphics[width=\textwidth]{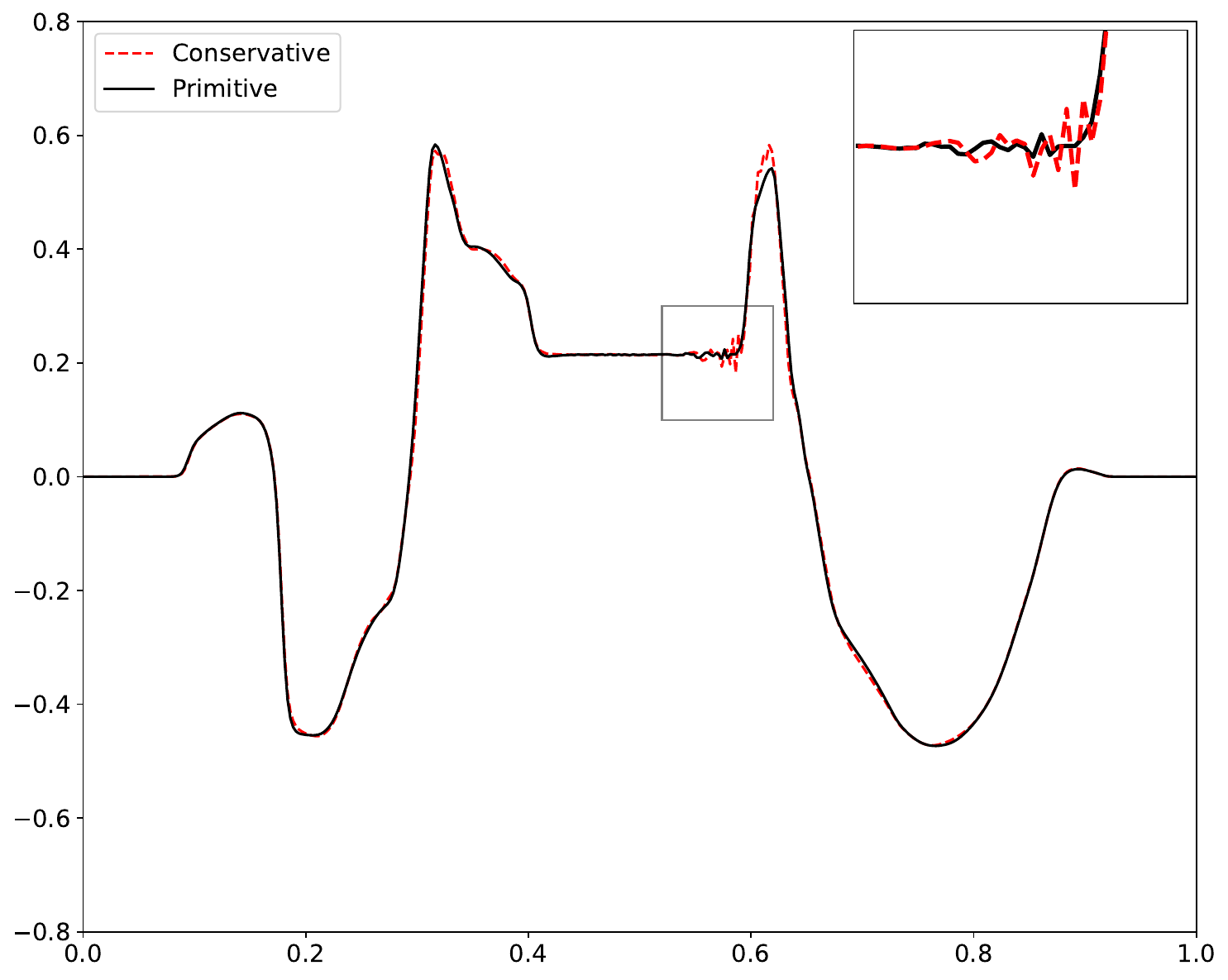}
			\end{subfigure}
			\caption{\Cref{Ex:Rotor}: Mach number slice along the line $y = 0.5$ (left) and $B_2$ slice along the line $y=0.55$ (right) at $ t= 0.295$, computed by the PPCT scheme with $400 \times 400$ cells. Comparison of reconstructions using conservative variables (dotted lines) vs.~primitive variables (solid lines).}
			\label{fig:Ex-Rotor-cut_off}
		\end{figure}

	\end{expl}

	\begin{expl}[Blast Problem]\label{Ex:Blast}\rm
		A challenging MHD blast problem was first introduced by Balsara and Spicer in \cite{BalsaraSpicer1999}. As highlighted in references \cite{BalsaraSpicer1999,Li2011,Li2012,WuShu2018,WuShu2019}, solving this problem is particularly difficult. Many numerical methods for MHD are prone to producing negative pressures near shock fronts, especially in regions with sharp pressure increases, where numerical oscillations may push pressures below zero. This example is often used to test the robustness of MHD schemes. 
		The computational domain is set as $[-0.5, 0.5]^2$, with a mesh size of $400 \times 400$. Outflow boundary conditions are applied on all four boundaries.
		
		For the classical MHD blast problem \cite{BalsaraSpicer1999}, the initial conditions are
		\begin{equation*}
			(\rho, {\bm v}, {\bm B}, p) = \begin{cases} 
				\left( 1, 0, 0, 0, B_0, 0, 0, p_0 \right), & \text{if } \sqrt{x^2 + y^2} \leq 0.1, \\ 
				\left( 1, 0, 0, 0, B_0, 0, 0, 0.1 \right), & \text{otherwise.} 
			\end{cases}
		\end{equation*}
		Here, $\gamma = 1.4$, $p_0 = 10^3$, and $B_0 = \frac{100}{\sqrt{4\pi}}$ (plasma-beta $\beta \approx 2.51 \times 10^{-4}$). 
		\Cref{fig:Ex-Blast} shows the contour plots of the solutions at $t = 0.01$. The results agree well with those in \cite{BalsaraSpicer1999, Christlieb2015PP, Li2011, WuShu2018, WuShu2019}. The visualizations in \Cref{fig:Ex-Blast} illustrate the outward movement of a circular blast wave and the inward trajectory of a rarefaction wave. The shocks are well captured, and no negative numerical pressure is observed, indicating the robustness and reliability of our PPCT method in dealing with highly magnetized shock configurations.

		\begin{figure}[!htb]
			\centering		
			\begin{subfigure}{0.48\textwidth}
				\includegraphics[width=\textwidth]{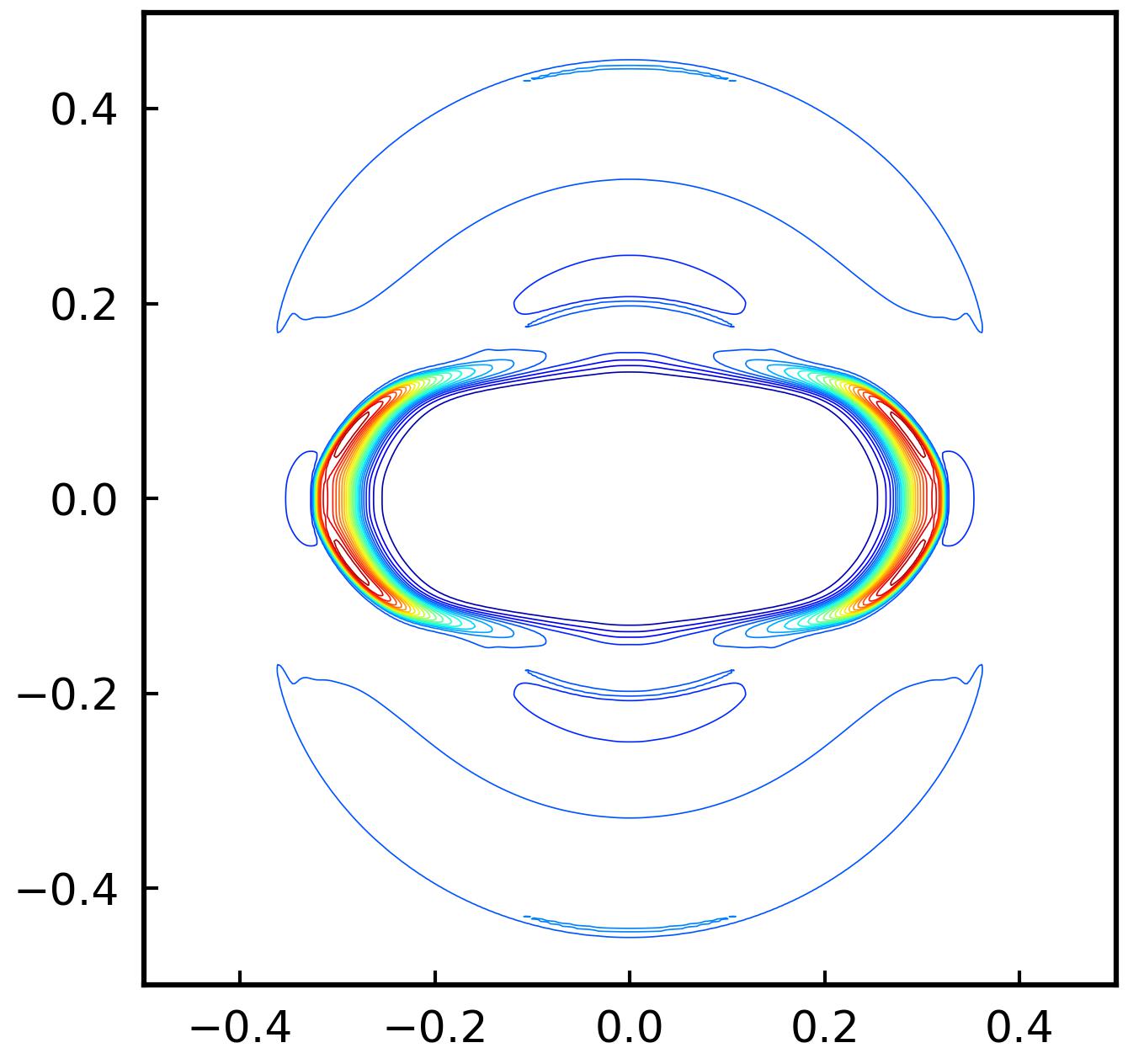}
			\end{subfigure}
			\hfill
			\begin{subfigure}{0.48\textwidth}
				\includegraphics[width=\textwidth]{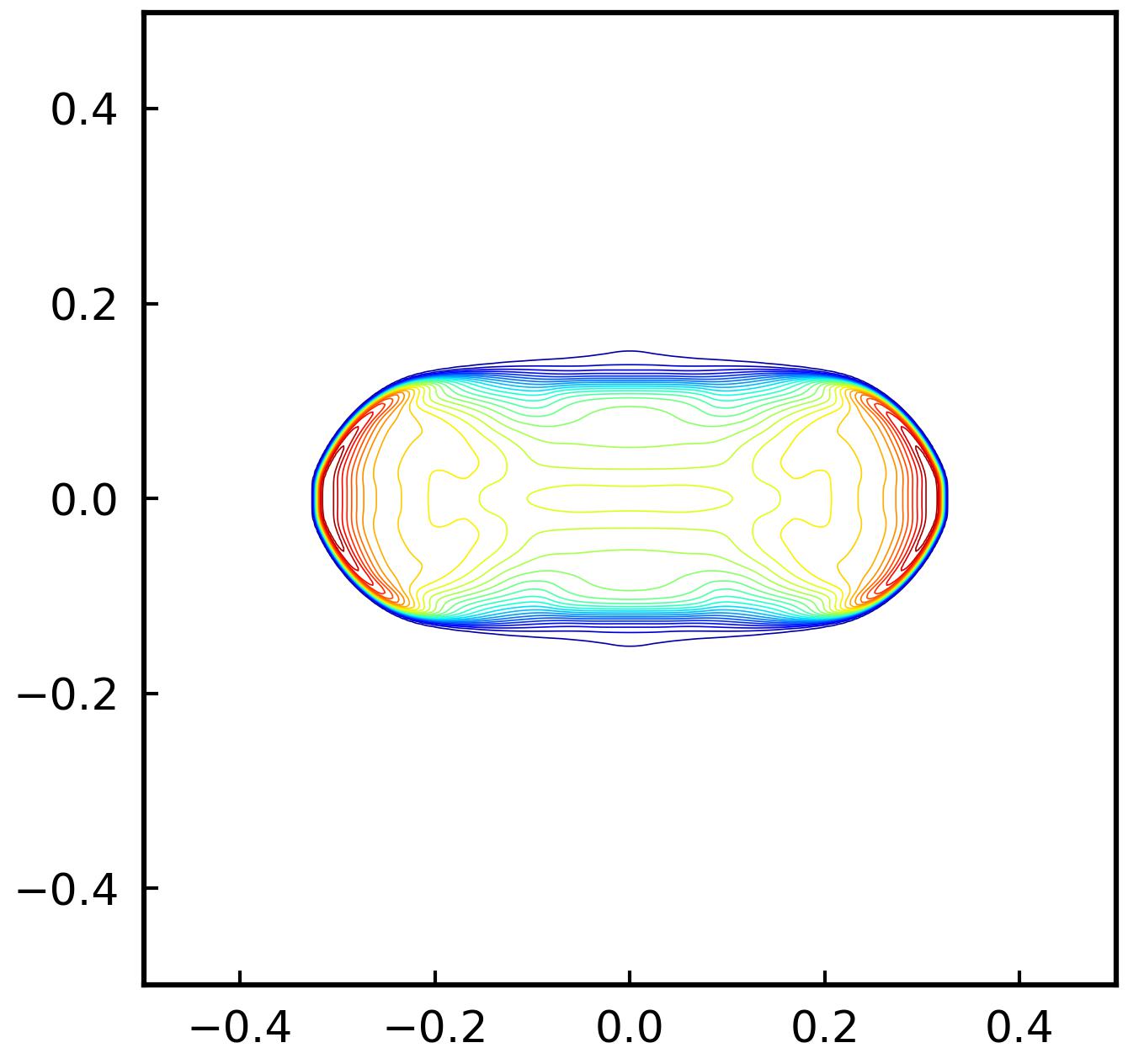}
			\end{subfigure}
			
			\begin{subfigure}{0.48\textwidth}
				\includegraphics[width=\textwidth]{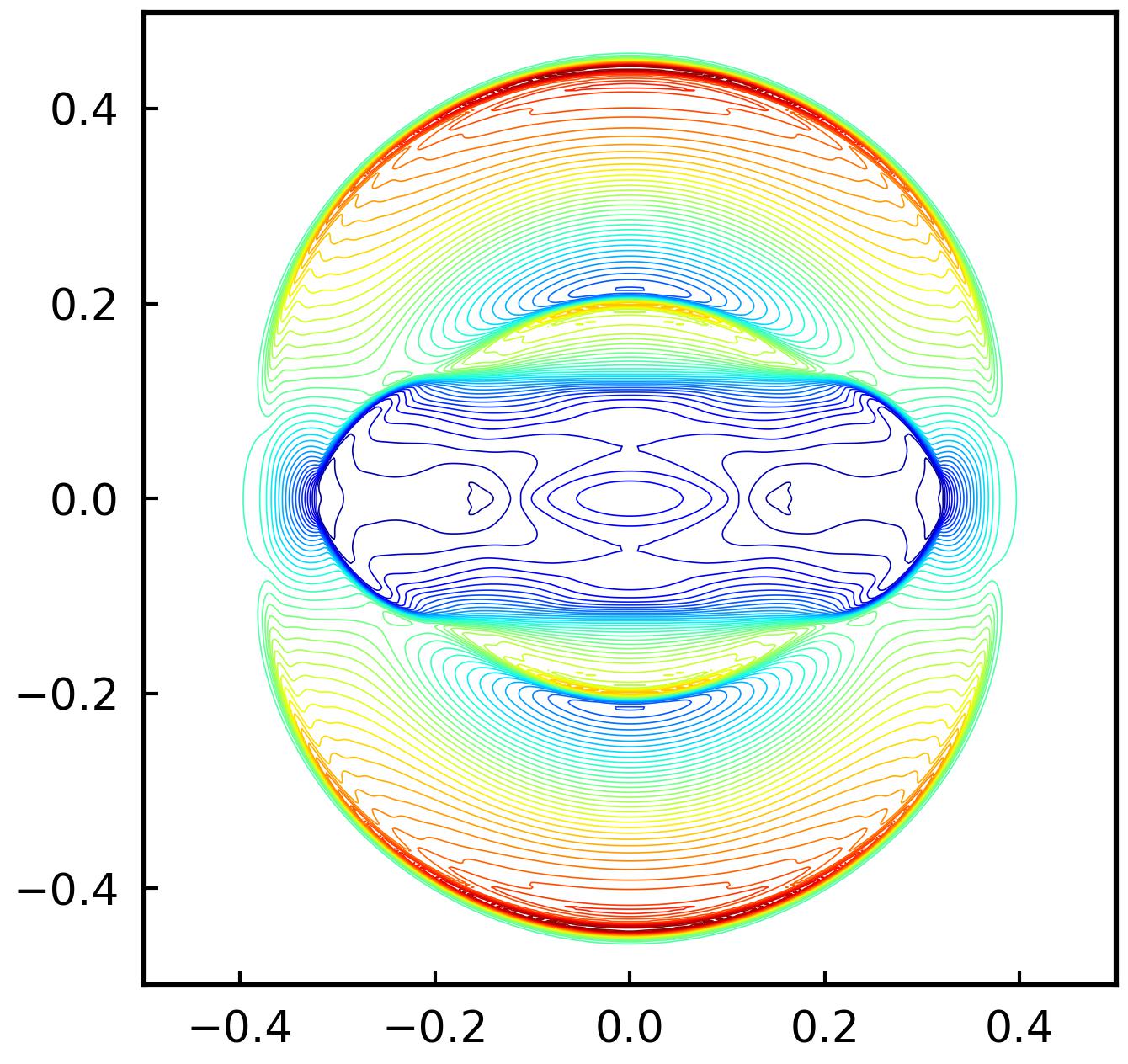}
			\end{subfigure}
			\hfill
			\begin{subfigure}{0.48\textwidth}
				\includegraphics[width=\textwidth]{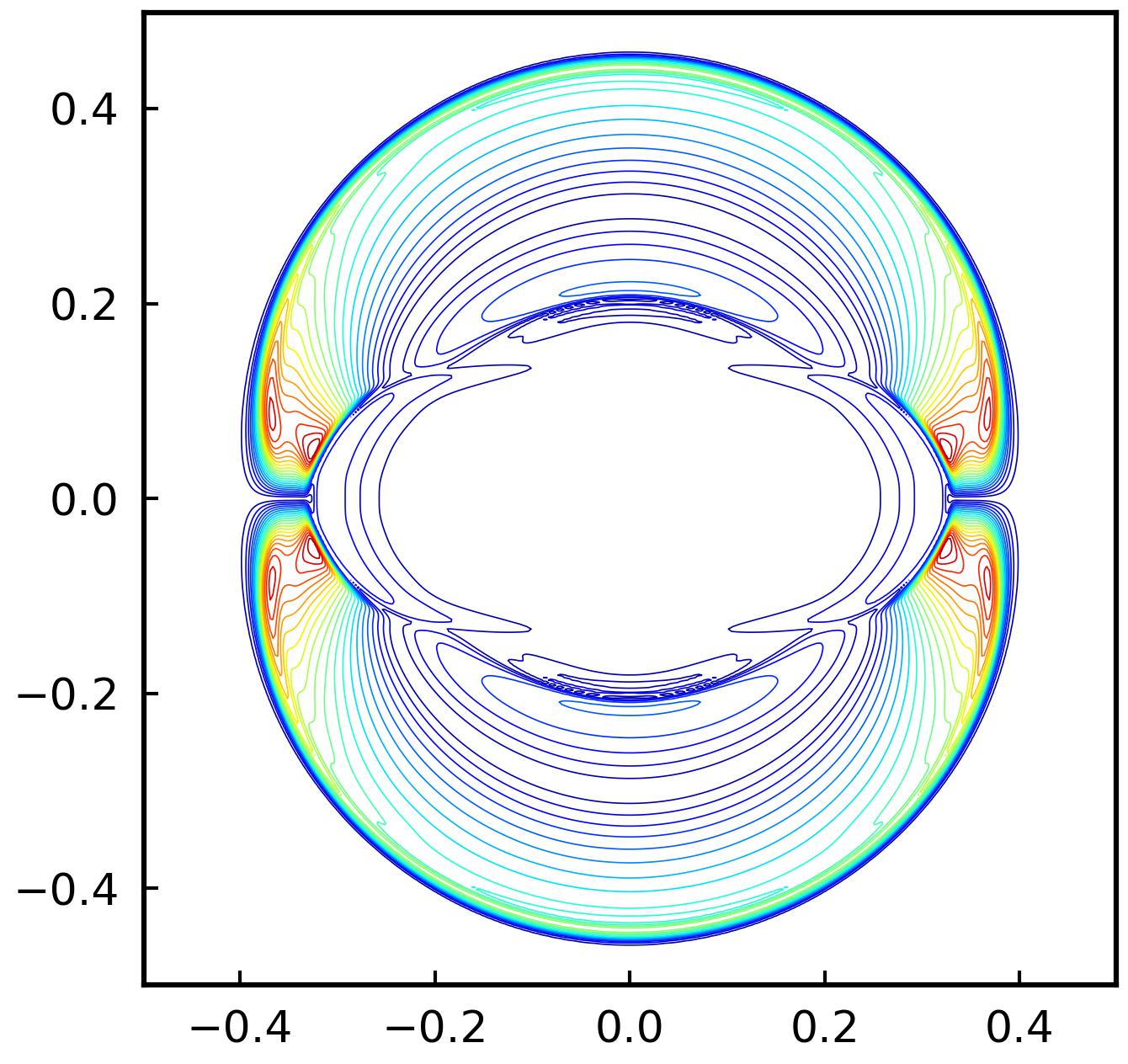}
			\end{subfigure}
			\caption{\Cref{Ex:Blast}: Contour plots of density (top-left), thermal pressure (top-right), magnetic pressure (bottom-left), and Mach number (bottom-right) for the blast problem at $t = 0.01$.}
			\label{fig:Ex-Blast}
		\end{figure}	
	\end{expl}

	\begin{expl}[Shock-Cloud Interaction Problem]\label{Ex:ShockCloud}\rm
		
		The shock-cloud interaction problem describes how a strong shock wave disrupts a high-density cloud, leading to discontinuities and small-scale flow instabilities. We follow the same setup as in \cite{Dai1998,jiang1998nonoscillatory,WuShu2018,WuShu2019}. The initial conditions are given by
		\begin{equation*}
			(\rho, {\bm v}, {\bm B}, p) = \begin{cases}
				(3.86859, 0, 0, 0, 0, 2.1826182, -2.1826182, 167.345), & x < 0.6, \\
				(1, -11.2536, 0, 0, 0, 0.56418958, 0.56418958, 1), & x > 0.6,
			\end{cases}
		\end{equation*}
		separated by a discontinuity at $x = 0.6$. On the right side of the discontinuity, there is a circular region defined by $(x - 0.8)^2 + (y - 0.5)^2 < 0.15^2$, where the cloud has a higher density of $10$. The computational domain is $[0, 1]^2$, partitioned into a uniform $400 \times 400$ rectangular mesh. Except for the right boundary, which is specified as an inflow condition, all other boundaries are treated as outflow boundary conditions. 
		The numerical results are shown in \Cref{fig:Ex-Shockcloud}, which display the plots for density, thermal pressure,  magnetic pressure, and velocity magnitude at $t = 0.06$. The results successfully capture the complex flow structures and interactions with high resolution, aligning well with those reported in the literature, e.g., \cite{Toth2000, jiang1998nonoscillatory, Balbas2006,WuShu2019}. Thanks to the PP property of our scheme, no negative density or pressure is observed throughout the entire simulation.

		\begin{figure}[!htb]
			\centering		
			\begin{subfigure}{0.48\textwidth}
				\includegraphics[width=\textwidth]{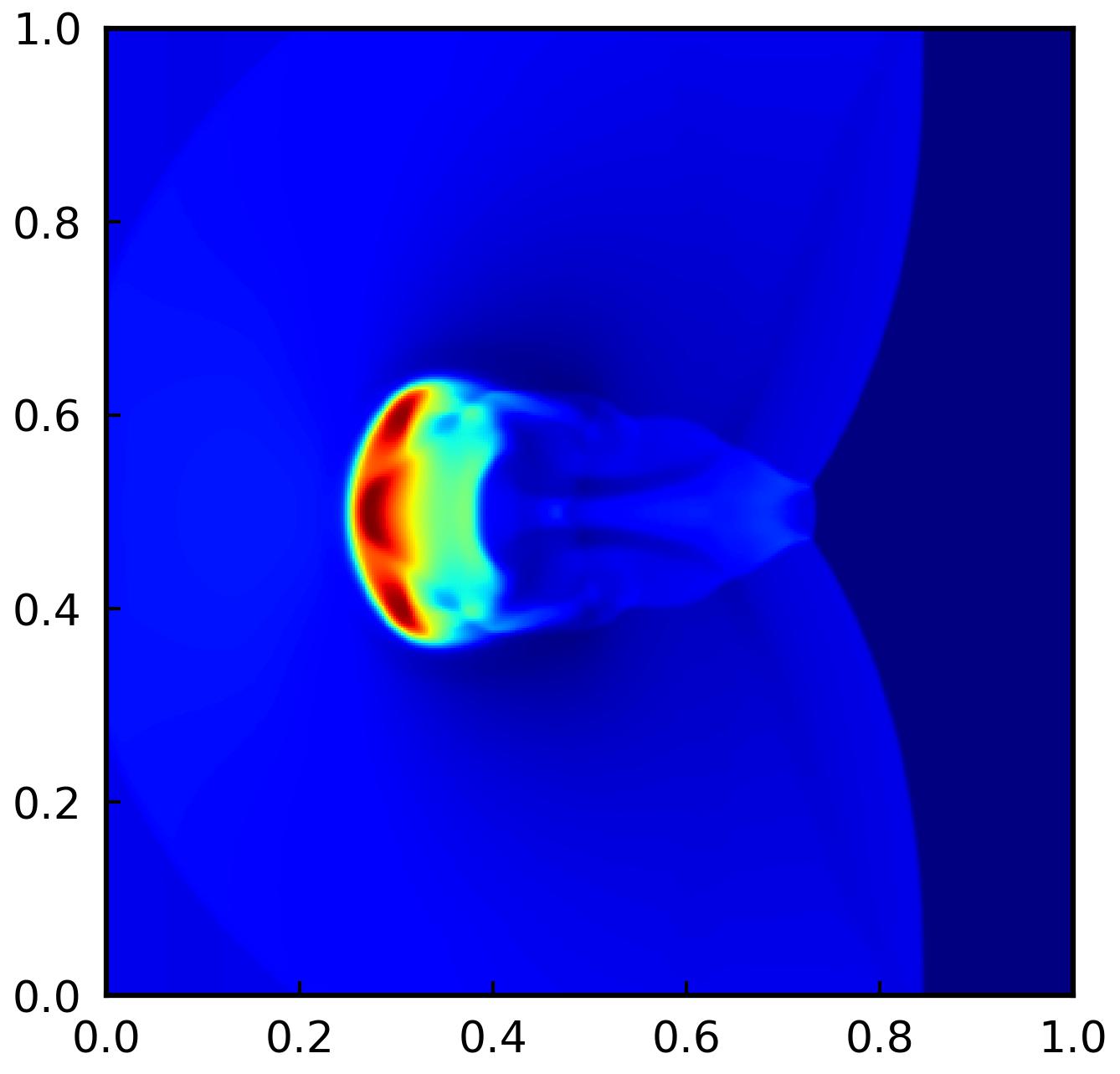}
			\end{subfigure}
			\hfill
			\begin{subfigure}{0.48\textwidth}
				\includegraphics[width=\textwidth]{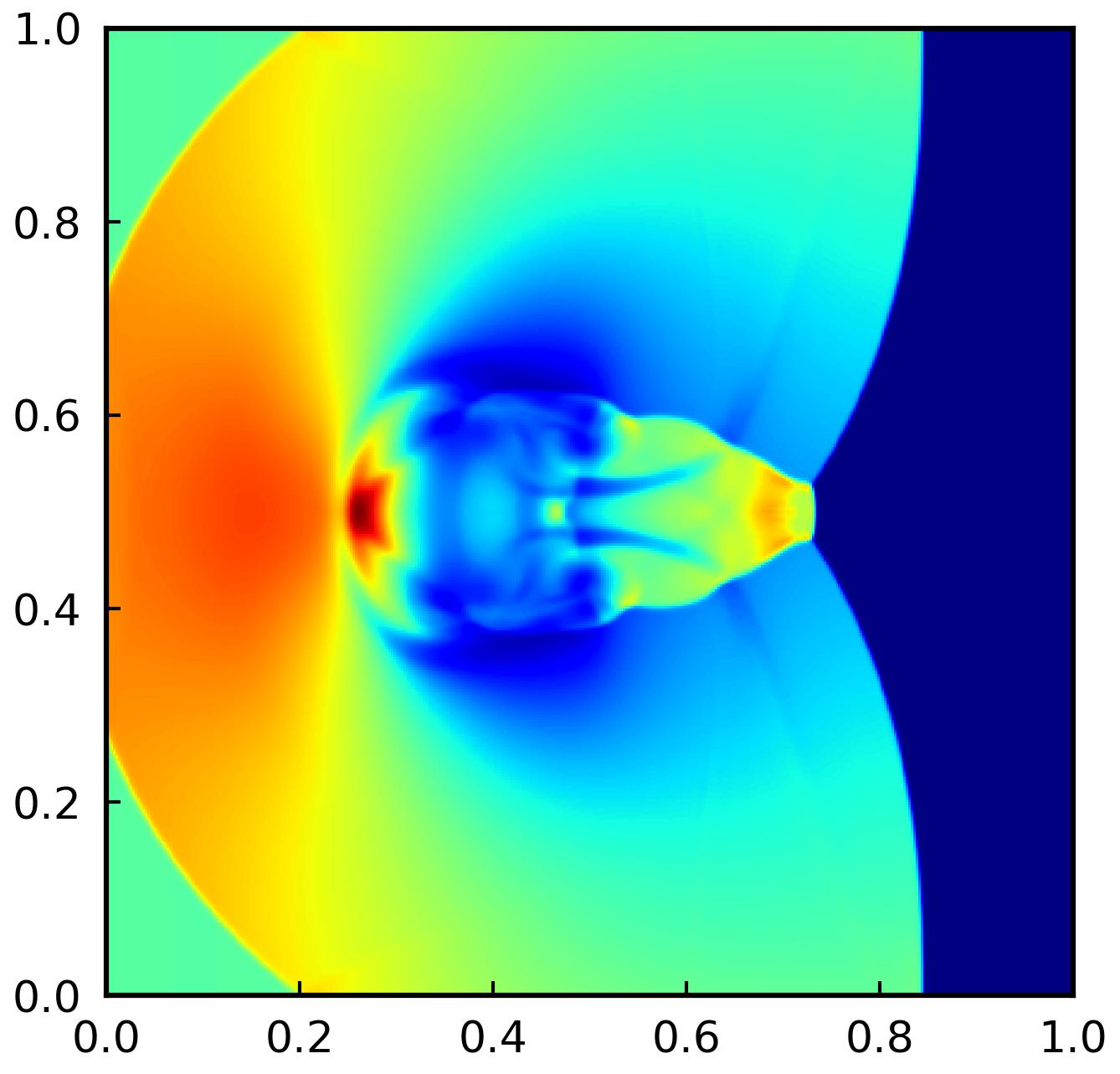}
			\end{subfigure}
			
			\begin{subfigure}{0.48\textwidth}
				\includegraphics[width=\textwidth]{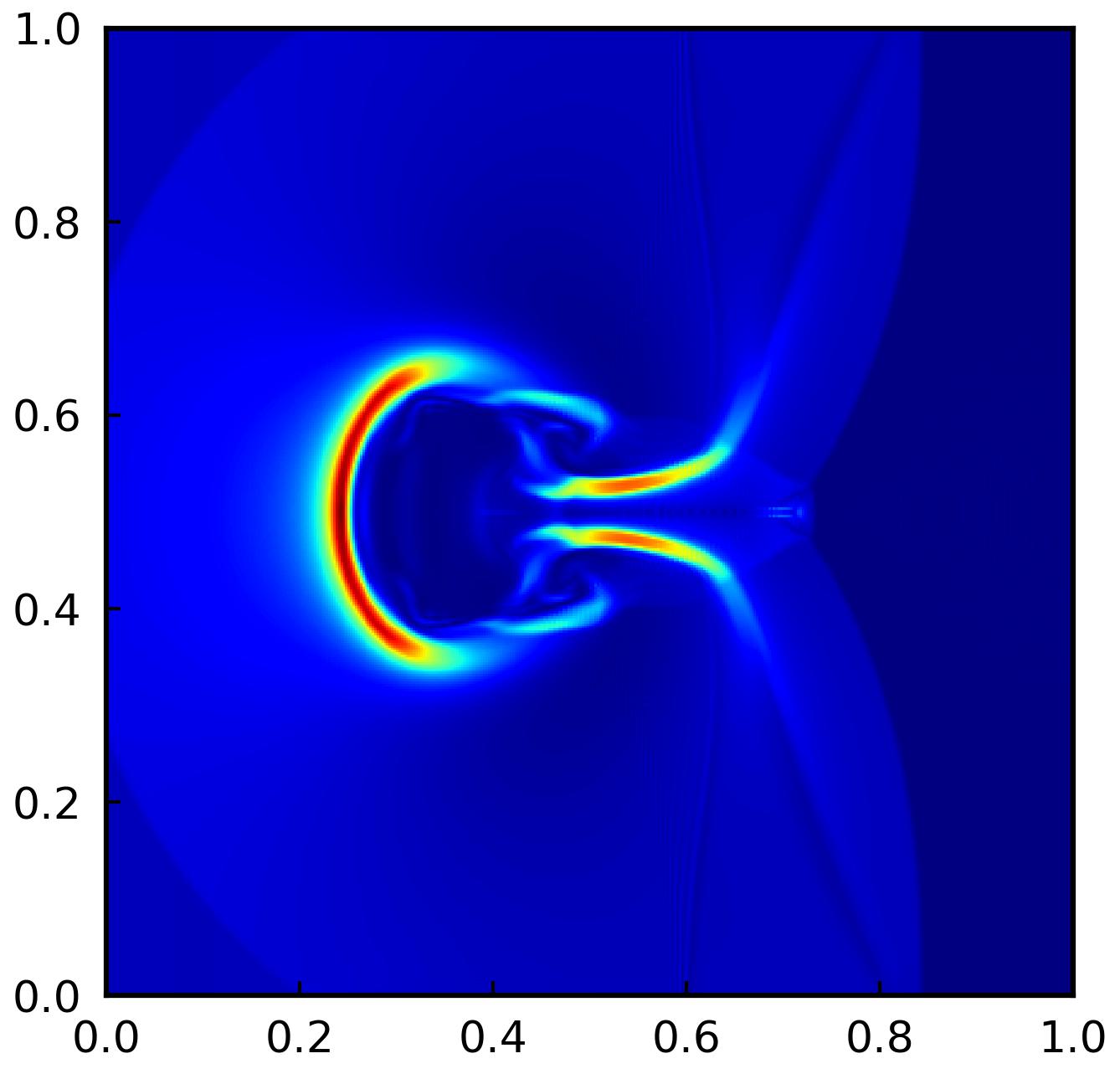}
			\end{subfigure}
			\hfill
			\begin{subfigure}{0.48\textwidth}
				\includegraphics[width=\textwidth]{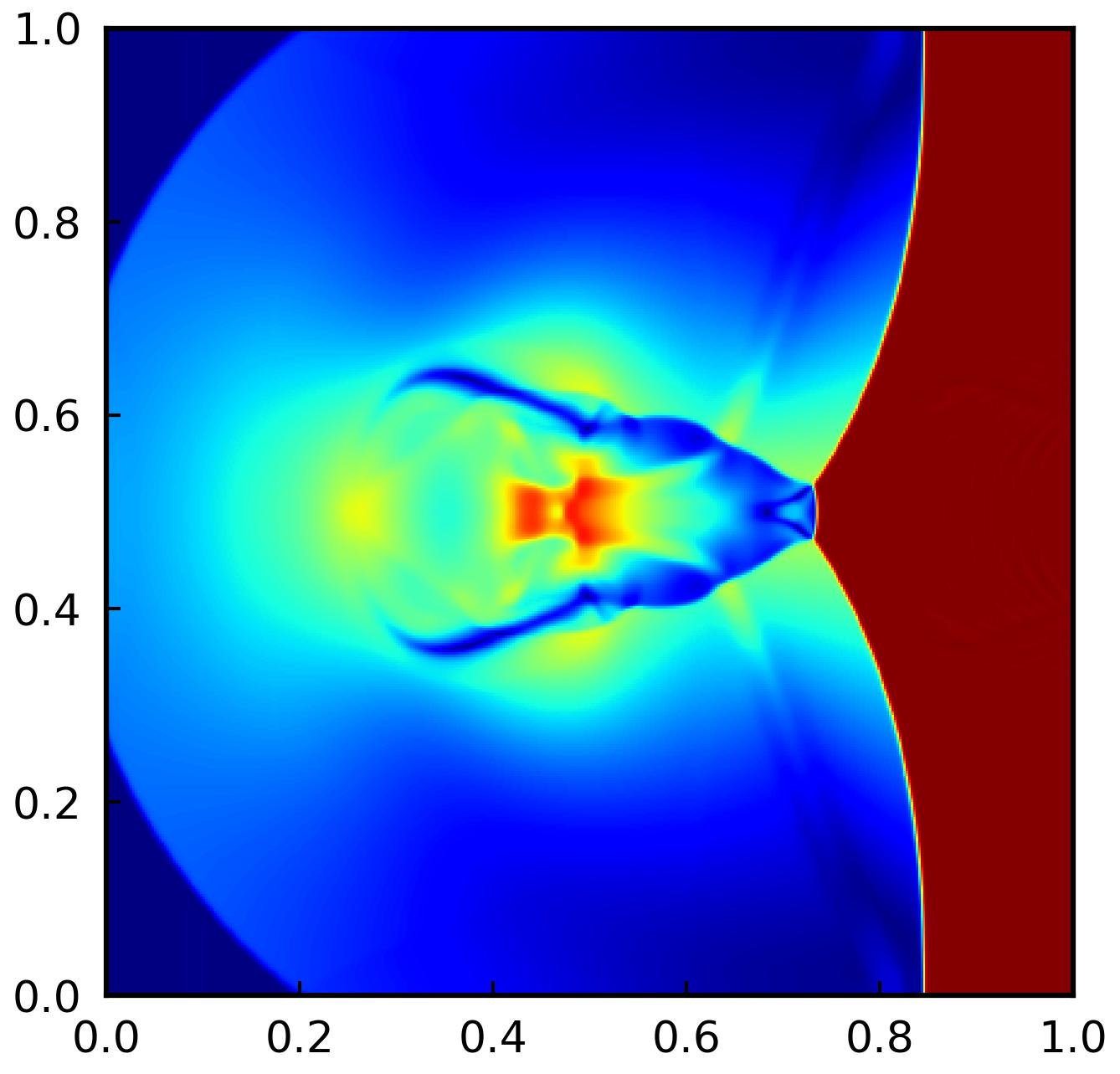}
			\end{subfigure}
			\caption{\Cref{Ex:ShockCloud}: Contour plots of density (top-left), thermal pressure (top-right), magnetic pressure (bottom-left), and velocity magnitude (bottom-right) for the shock-cloud problem at $t = 0.06$.}
			\label{fig:Ex-Shockcloud}
		\end{figure} 
	\end{expl}
	
	\begin{expl}[MHD Sedov Problem]\label{Ex:Sedov}\rm
		In this example, we extend the classical Sedov blast wave problem \cite{Zhang2010b} from the compressible Euler equations to the MHD context. It simulates an expanding blast wave from a central explosion, illustrating typical low-density conditions in shock dynamics. The initial setup models the blast wave originating at the point $(0, 0)$. The initial conditions are specified as
		\begin{equation*}
			(\rho, {\bm v}, {\bm B})= (1, 0, 0, 0, 1, 1, 0),
		\end{equation*}
		where $\gamma = 1.4$. The total mechanical energy is $2.5 \times 10^{-5}$ everywhere, except in the cell containing the origin, where it is set to $\frac{0.244816}{\Delta x \Delta y}$. The simulation is conducted in the domain $[-1, 1]^2$ with outflow boundary conditions on all sides, using a uniform mesh of $400 \times 400$ cells. The simulation runs up to $t = 0.4$. 
		\Cref{fig:Ex-Sedov} displays contour plots of density, thermal pressure, magnetic pressure, and velocity magnitude. The simulation successfully captures the blast wave and its associated shock dynamics. Moreover, we observed that turning off the PP limiter would cause the simulation to crash due to negative pressure, underscoring the critical role of the PP limiter in ensuring robust simulations.
		\begin{figure}[!htb]
			\centering		
			\begin{subfigure}{0.48\textwidth}
				\includegraphics[width=\textwidth]{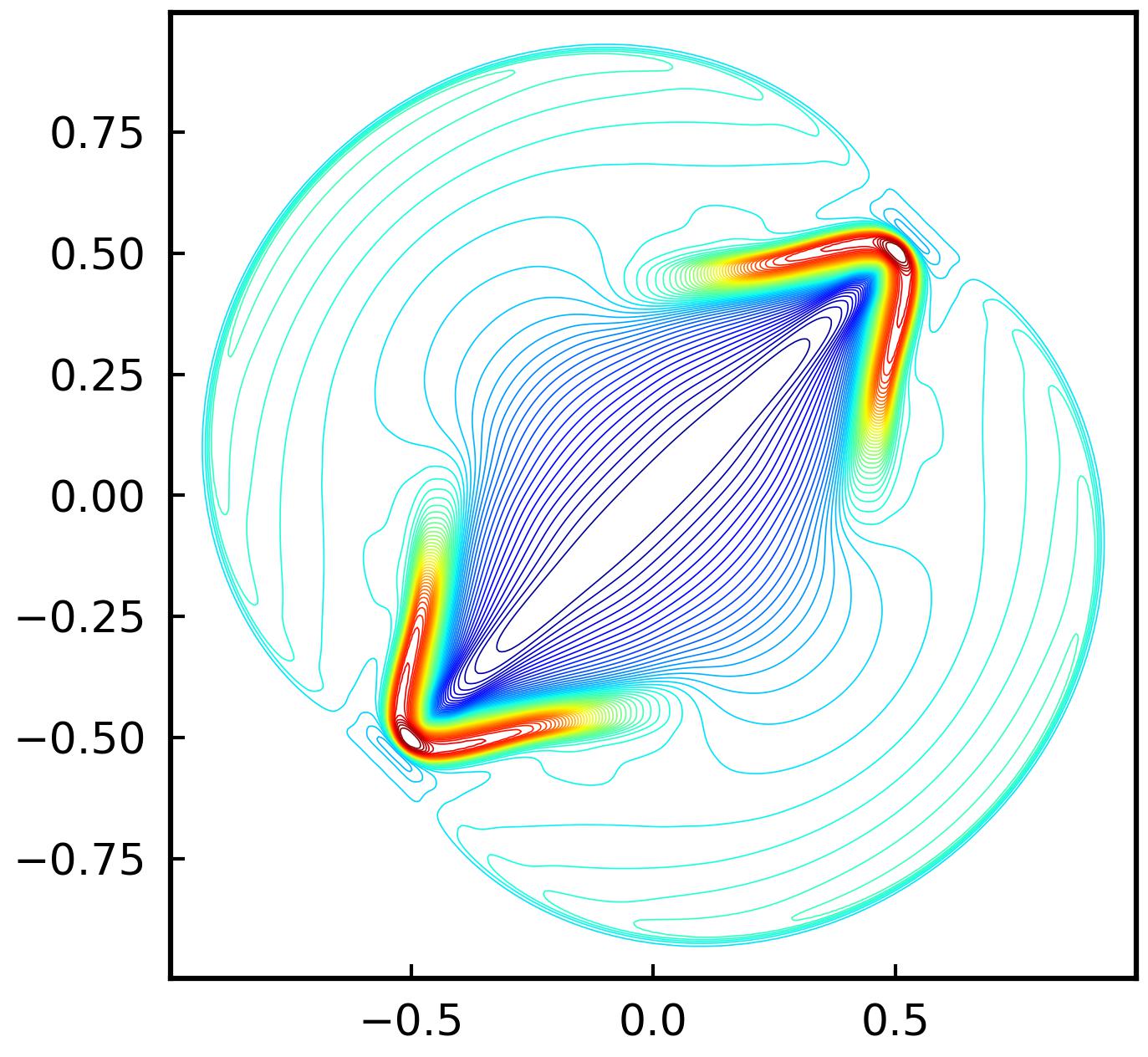}
			\end{subfigure}
			\hfill
			\begin{subfigure}{0.48\textwidth}
				\includegraphics[width=\textwidth]{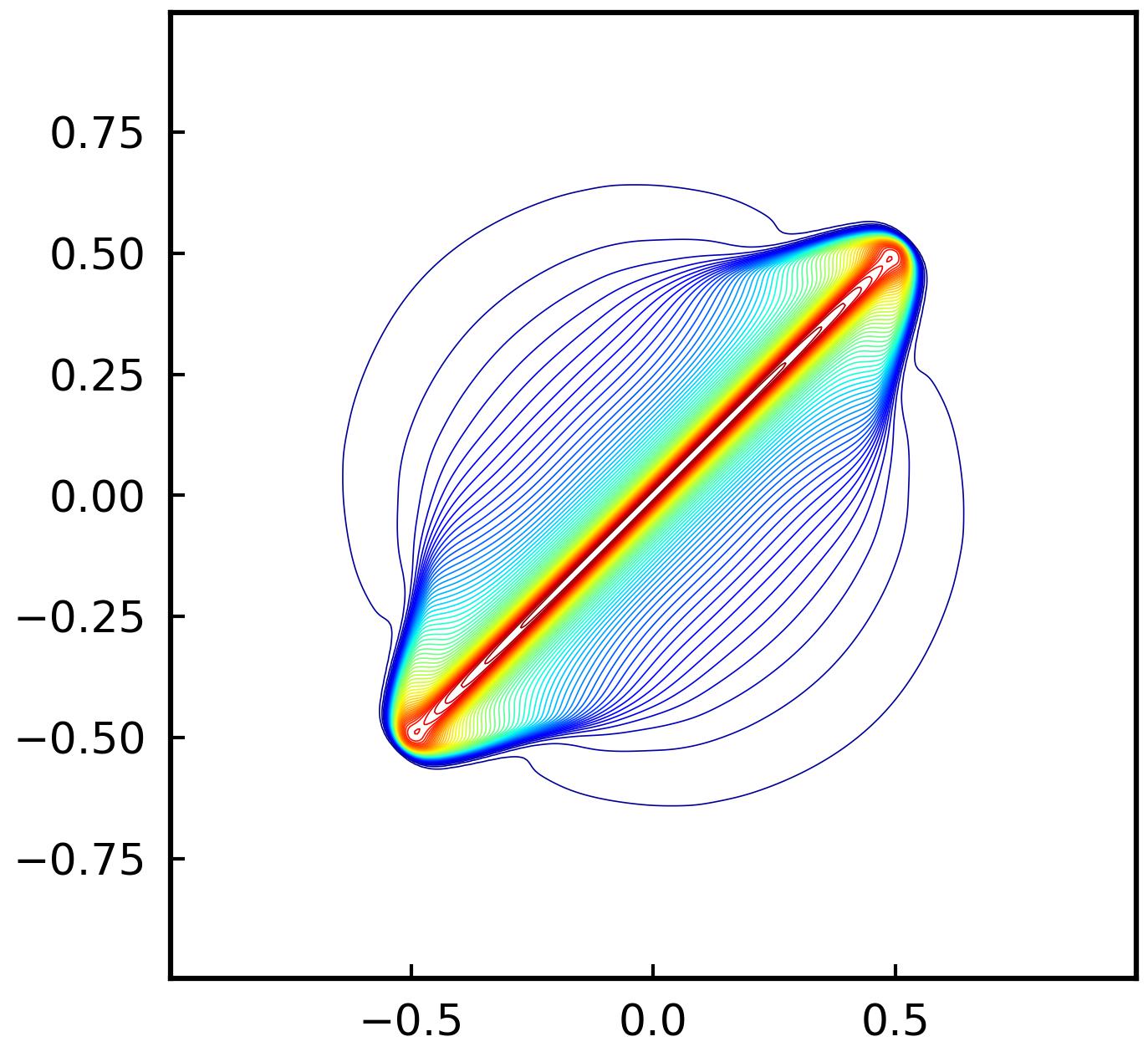}
			\end{subfigure}
			
			\begin{subfigure}{0.48\textwidth}
				\includegraphics[width=\textwidth]{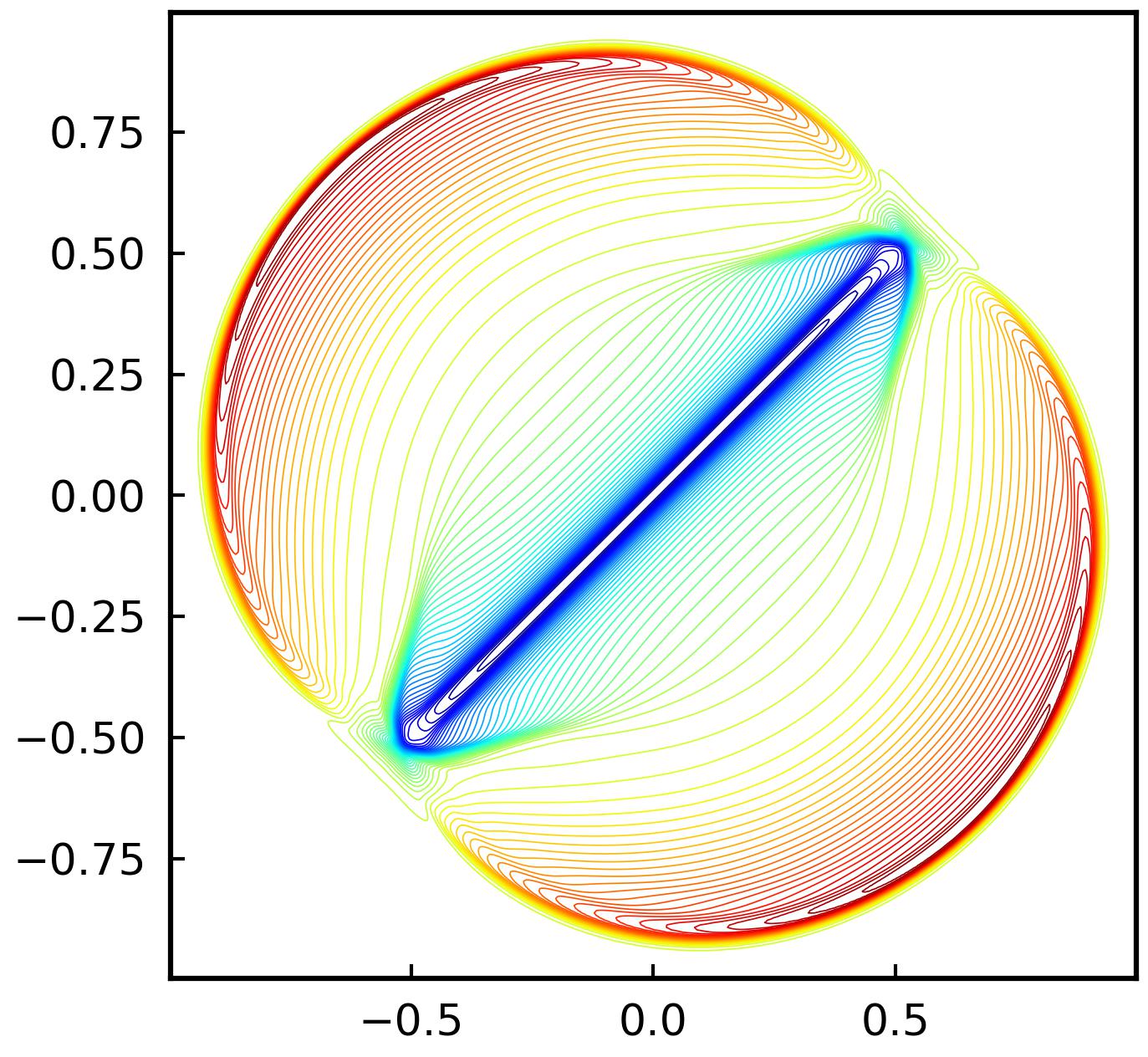}
			\end{subfigure}
			\hfill
			\begin{subfigure}{0.48\textwidth}
				\includegraphics[width=\textwidth]{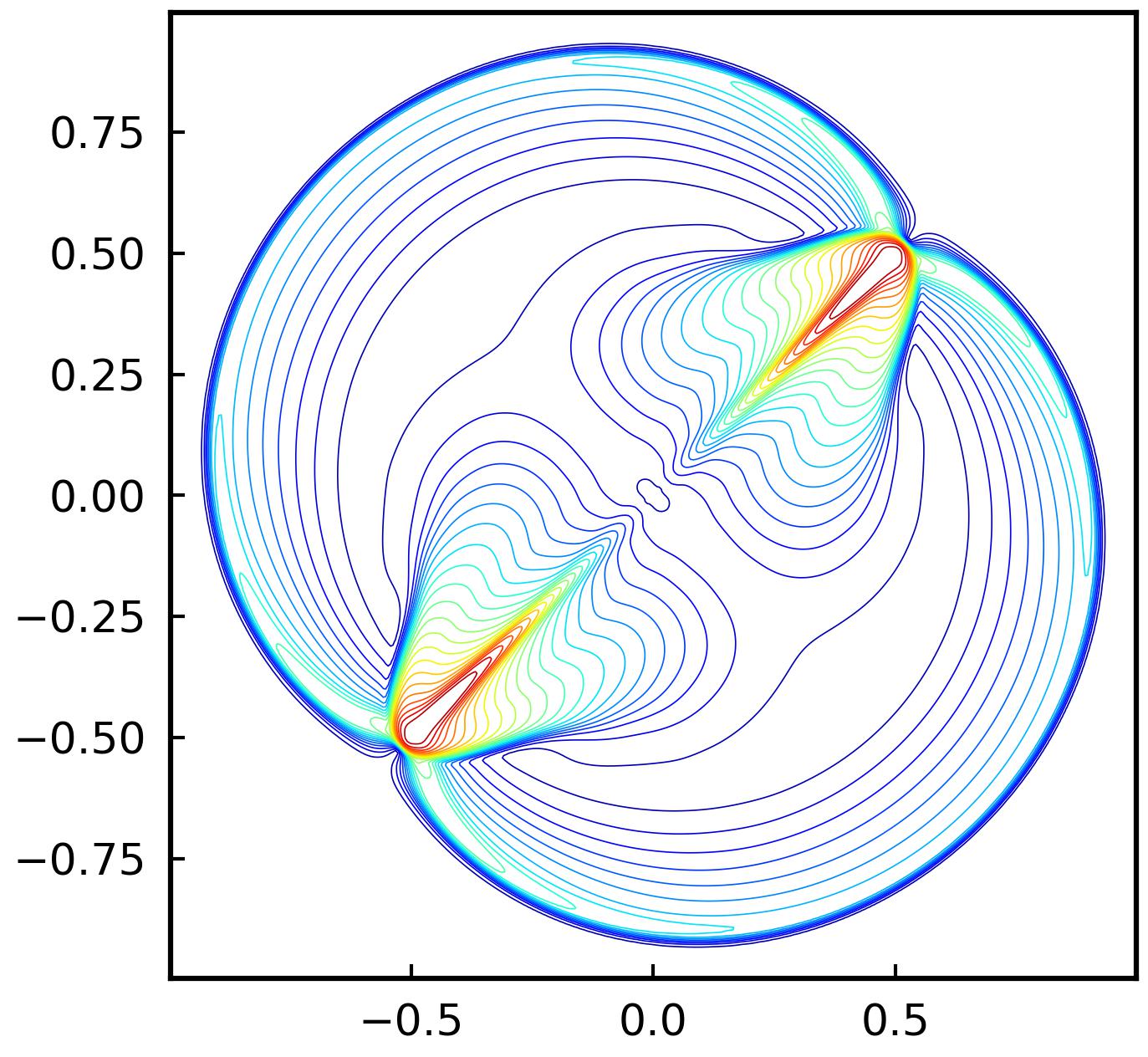}
			\end{subfigure}
			\caption{\Cref{Ex:Sedov}: Contour plots of density (top-left), thermal pressure (top-right), magnetic pressure (bottom-left), and velocity magnitude (bottom-right) for the MHD Sedov problem at $t = 0.4$.}
			\label{fig:Ex-Sedov}
		\end{figure} 
	\end{expl}

	\begin{figure}[!thb]
		\centering
		\begin{subfigure}{0.32\textwidth}
			\includegraphics[scale=0.32]{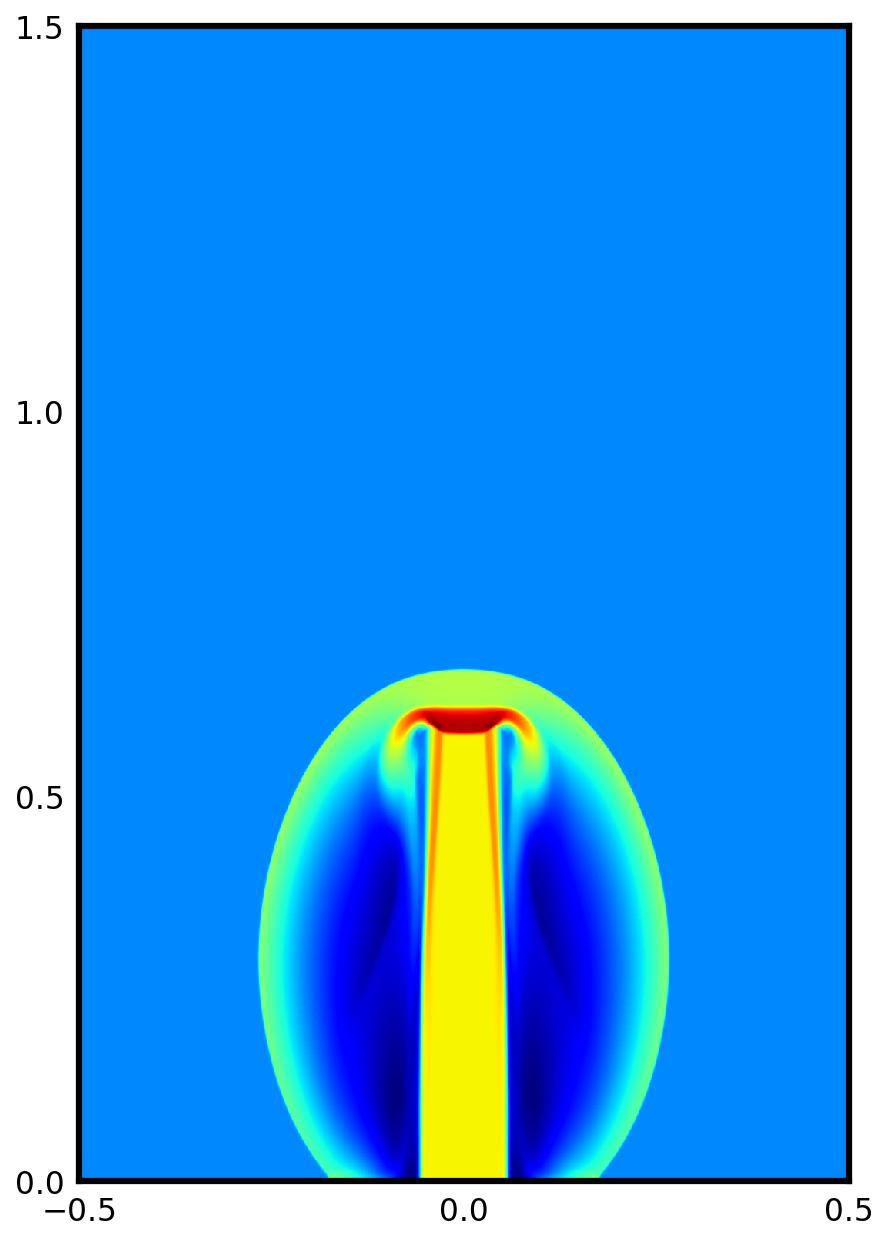}
		\end{subfigure}
		\hfill
		\begin{subfigure}{0.32\textwidth}
			\includegraphics[scale=0.32]{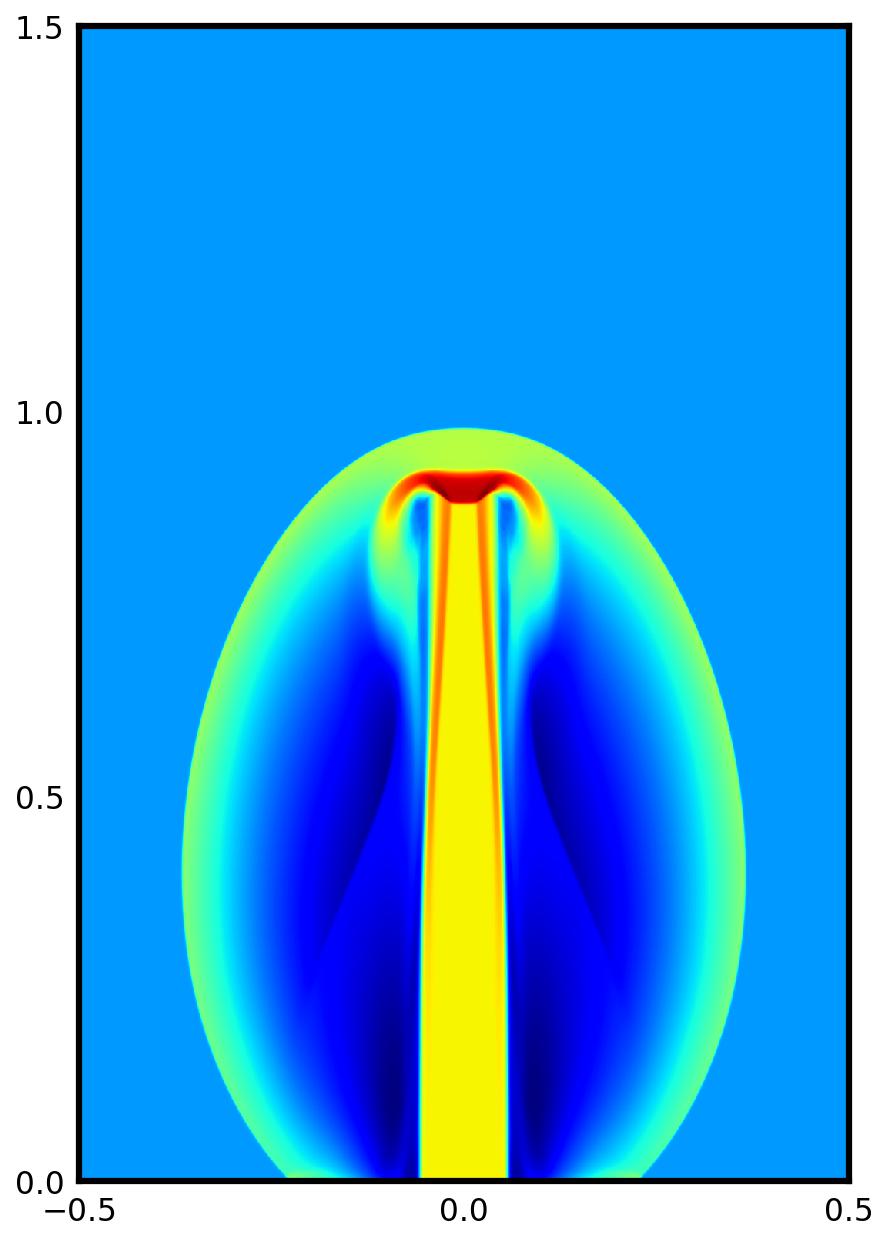}
		\end{subfigure}
		\hfill
		\begin{subfigure}{0.32\textwidth}
			\includegraphics[scale=0.32]{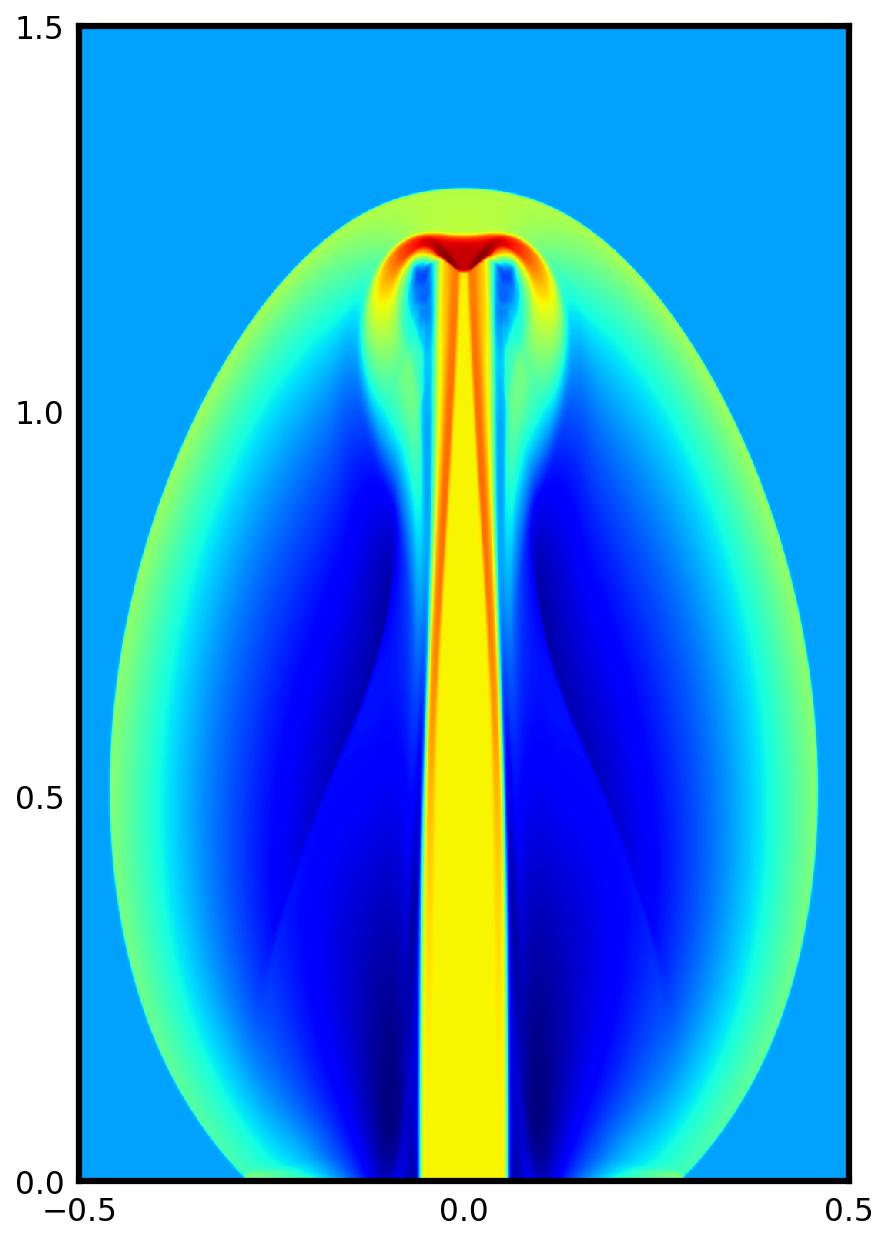}
		\end{subfigure}
		\caption{Mach 800 jet problem with $B_0 = \sqrt{200}$: Density logarithm at $t = 0.001, 0.0015$, and $0.002$(from left to right).
		}
		\label{fig:Ex-Jet_800_200}
	\end{figure} 
	
	\begin{figure}[!htb]
		\centering	
		\begin{subfigure}{0.32\textwidth}
			\includegraphics[scale=0.32]{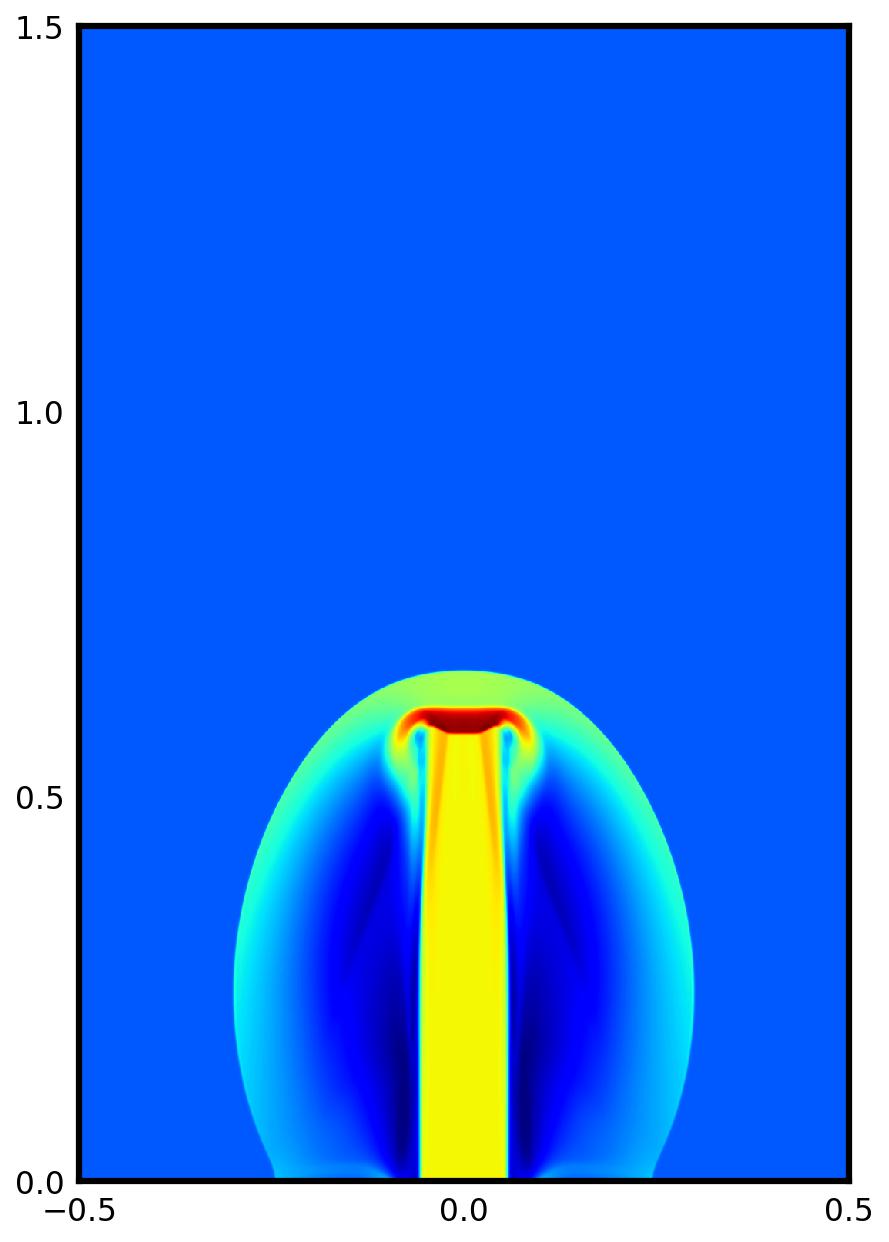}
		\end{subfigure}
		\hfill
		\begin{subfigure}{0.32\textwidth}
			\includegraphics[scale=0.32]{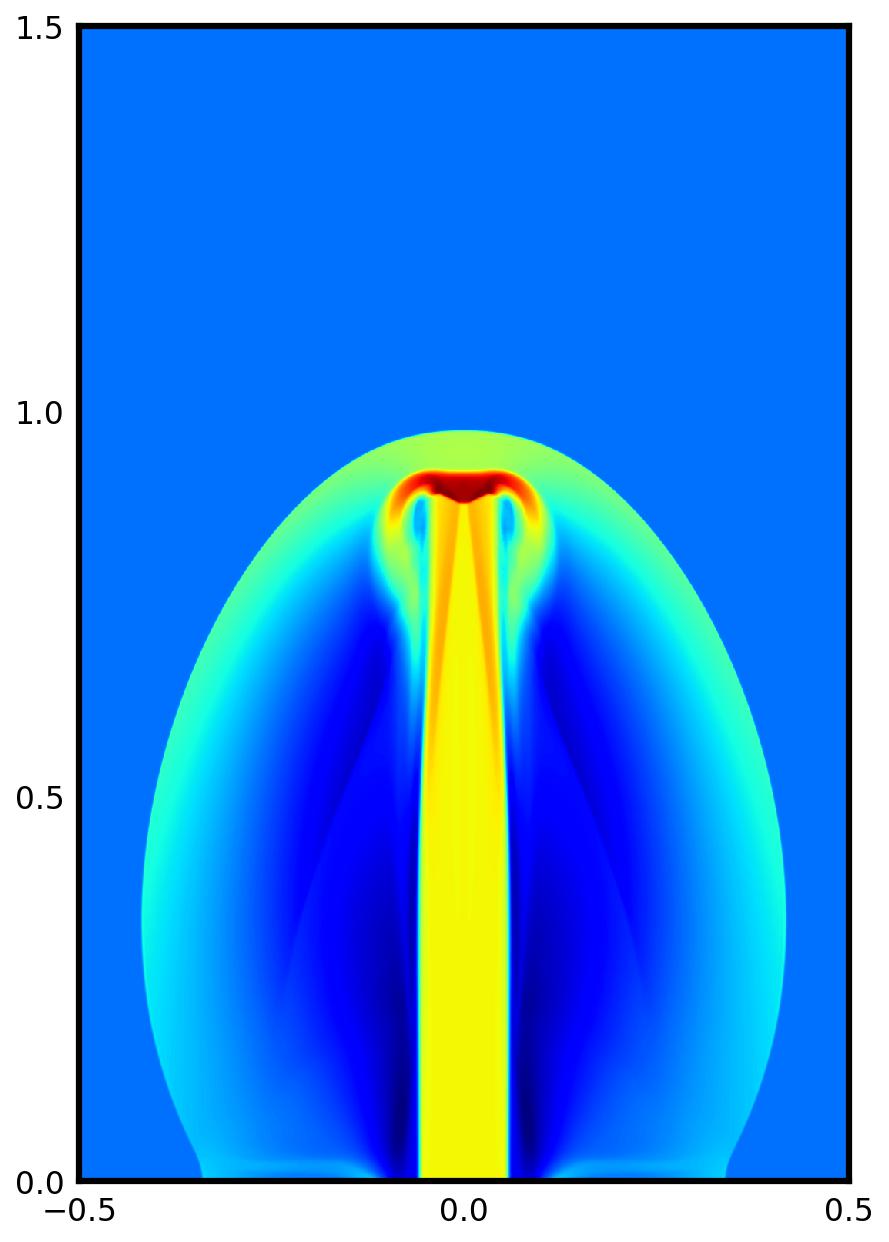}
		\end{subfigure}
		\hfill
		\begin{subfigure}{0.32\textwidth}
			\includegraphics[scale=0.32]{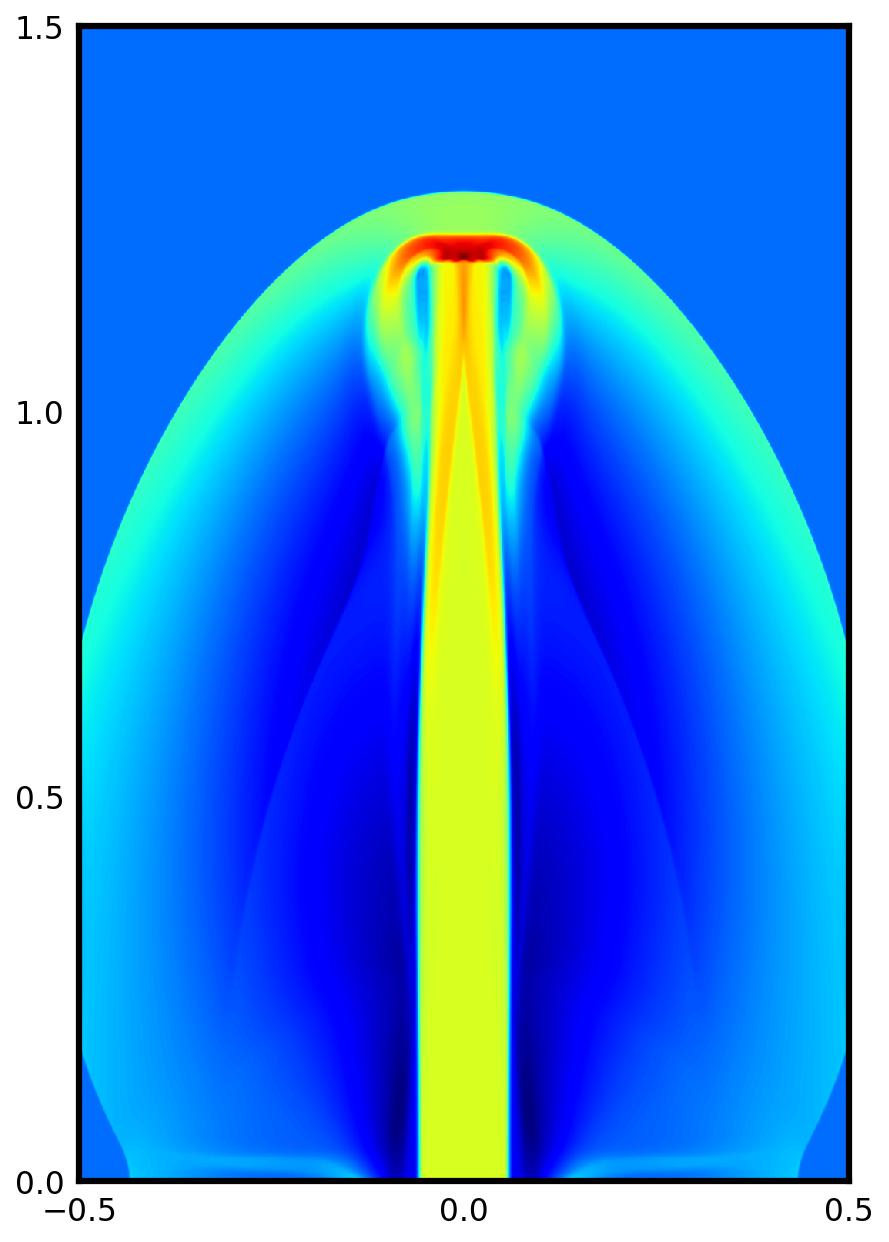}
		\end{subfigure}
		
		\begin{subfigure}{0.32\textwidth}
			\includegraphics[scale=0.32]{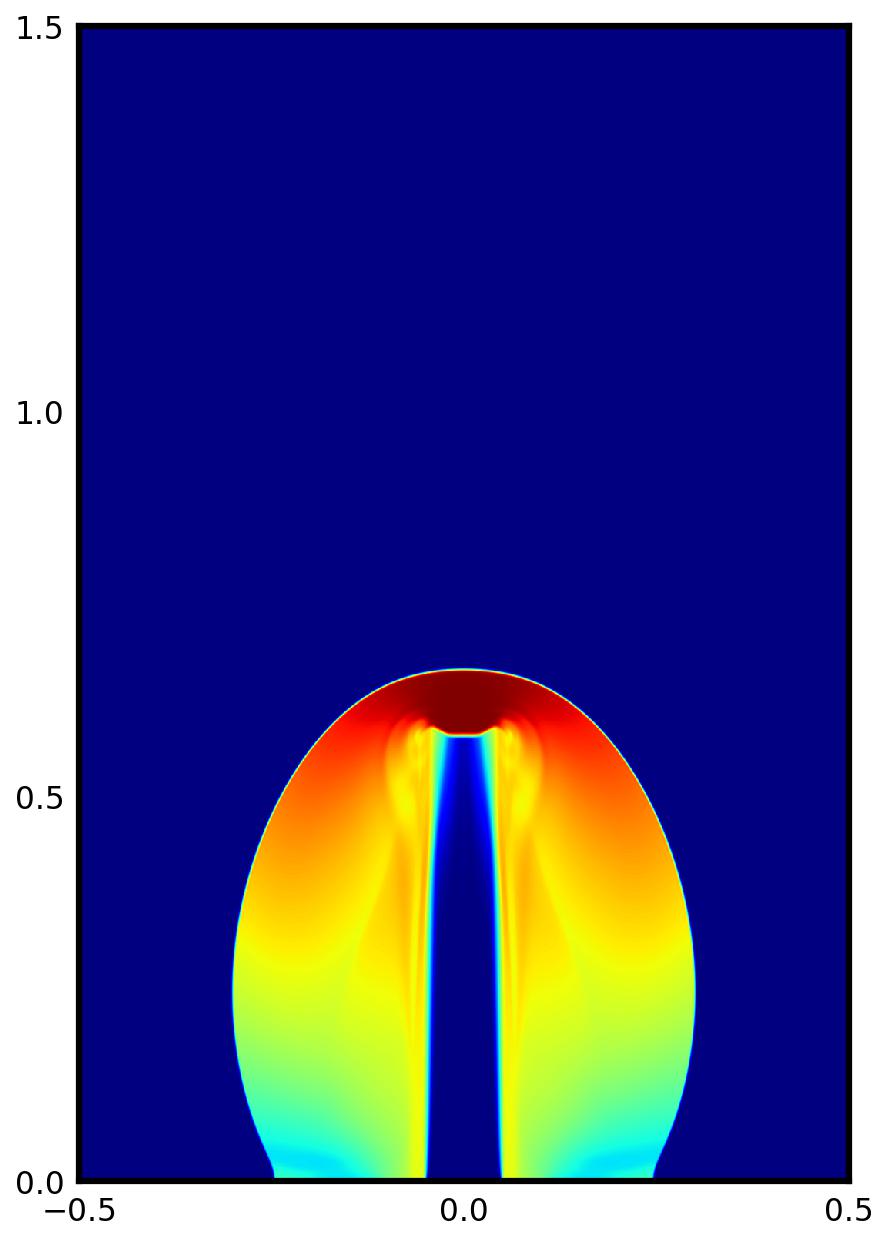}
		\end{subfigure}
		\hfill
		\begin{subfigure}{0.32\textwidth}
			\includegraphics[scale=0.32]{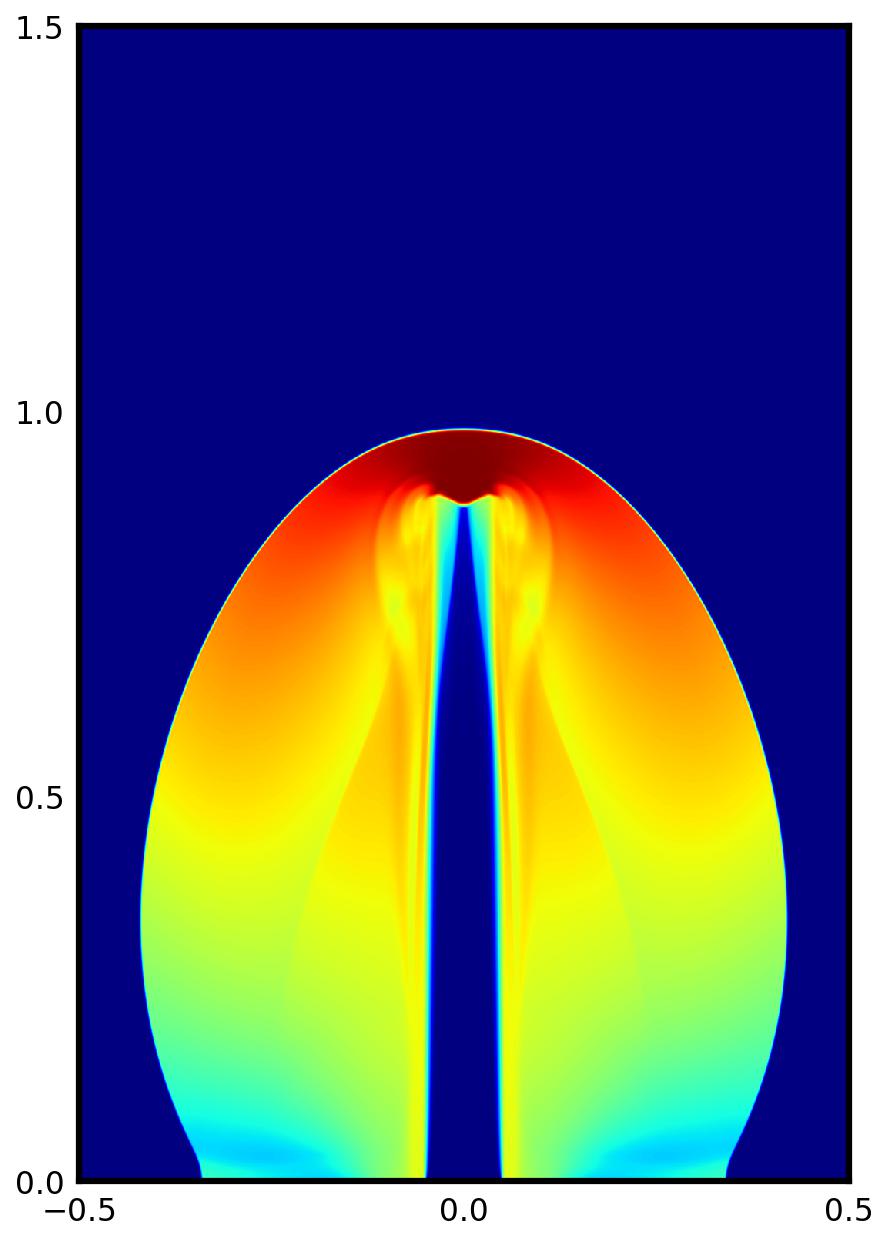}
		\end{subfigure}
		\hfill
		\begin{subfigure}{0.32\textwidth}
			\includegraphics[scale=0.32]{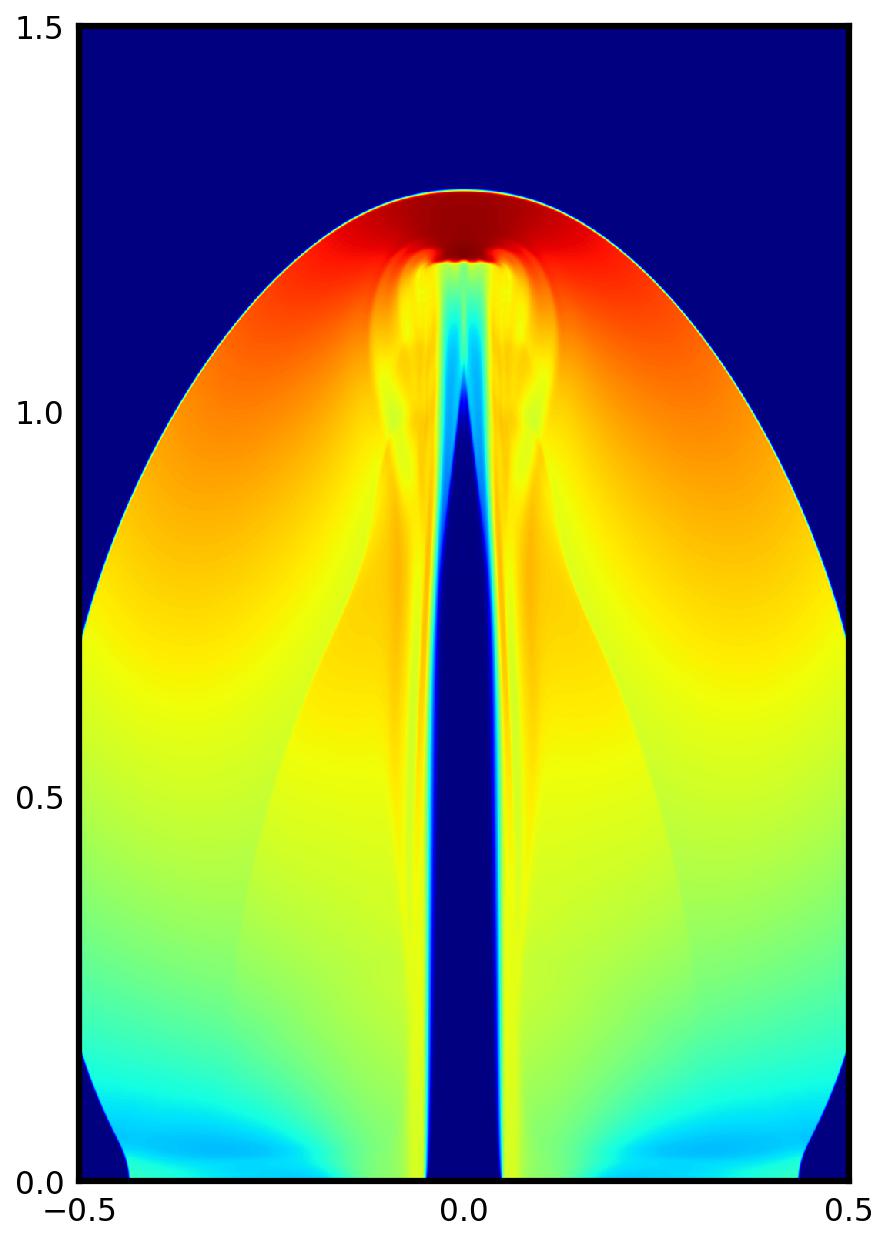}
		\end{subfigure}
		
		\caption{Mach 800 jet with $B_0 = \sqrt{2000}$: Density logarithm (top) and gas pressure logarithm (bottom) at $t = 0.001, 0.0015$, and $0.002$ (from left to right).
		}
		\label{fig:Ex-Jet_800_2000}
	\end{figure}

	\begin{expl}[Astrophysical Jets]\label{Ex:Jet}\rm
		Our final set of test cases involves several high Mach number MHD jet problems proposed in \cite{WuShu2018,WuShu2019}. These problems are particularly challenging due to the presence of high-speed jet flows, strong magnetic fields, and intense kinetic energy. The interactions between strong shock waves, shear flows, and interface instabilities in high Mach number jets often result in numerical difficulties such as negative pressure.
		
		We first consider the Mach 800 problem, set in the domain $[-0.5, 0.5] \times [0, 1.5]$ with the initial conditions defined by
		$$
		(\rho, {\bm v}, {\bm B}, p) = \left(0.1 \gamma, 0, 0, 0, 0, B_0, 0, 1\right),
		$$
		where $\gamma = 1.4$. The computational domain is restricted to $[0, 0.5] \times [0, 1.5]$, divided into $500 \times 1500$ uniform cells. A reflecting boundary condition is applied to the left boundary, while a section of the bottom boundary $(|x| < 0.05)$ is set as an inflow with conditions $(\rho, {\bm v}, {\bm B}, p) = \left(\gamma, 0, 800, 0, 0, B_0, 0, 1\right)$. The remaining boundaries are outflow conditions.
		
		Following \cite{WuShu2018,WuShu2019}, we test the numerical schemes under different values of $B_0$:
		(i) $B_0 = \sqrt{200}$, corresponding to a plasma-beta $\beta_0 = 10^{-2}$;
		(ii) $B_0 = \sqrt{2000}$, with $\beta_0 = 10^{-3}$;
		(iii) $B_0 = \sqrt{20000}$, with $\beta_0 = 10^{-4}$. 
		As $B_0$ increases, the problem becomes increasingly difficult. The schlieren plots at $t = 0.001$, $0.0015$, and $0.002$ are shown in \Cref{fig:Ex-Jet_800_200}, \Cref{fig:Ex-Jet_800_2000}, and \Cref{fig:Ex-Jet_800_20000}, respectively. The results indicate that the Mach shock waves are accurately captured, with no notable nonphysical oscillations. These tests demonstrate the robustness and high resolution of the proposed PPCT schemes, and no negative values for density or pressure are observed.
		
		To further highlight the robustness of the PPCT method, we consider higher Mach numbers. \Cref{fig:Ex-Jet_2000_20000} illustrates the Mach 2000 jet case with $B_0 = \sqrt{20000}$, while \Cref{fig:Ex-Jet_10000_20000} shows the solution for the Mach 10,000 jet with the same $B_0$. In the Mach 10,000 jet case, the solution appears narrower and taller compared to the Mach 2000 jet. The time evolution of the jets is clearly depicted, revealing distinct flow structures for different magnetization levels. Our PPCT method captures the Mach shock waves at the jet head and other discontinuities with high accuracy. Throughout all simulations, no negative density or pressure is encountered, emphasizing the robustness of our schemes under extreme conditions. These results are consistent with those reported in \cite{WuShu2018,WuShu2019} based on provably PP discontinuous Galerkin methods.

		\begin{figure}[!htb]
			\centering	
			\begin{subfigure}{0.32\textwidth}
				\includegraphics[scale=0.32]{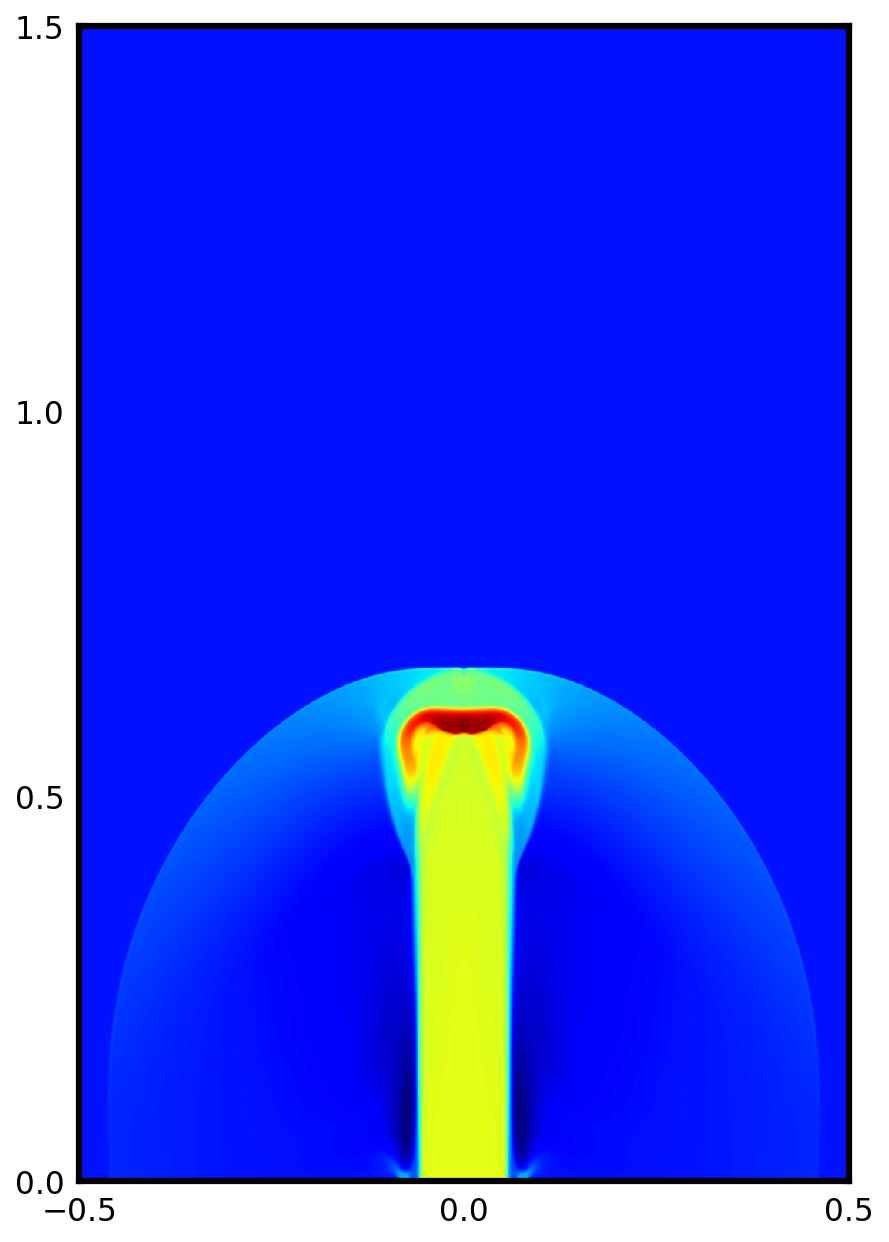}
			\end{subfigure}
			\hfill
			\begin{subfigure}{0.32\textwidth}
				\includegraphics[scale=0.32]{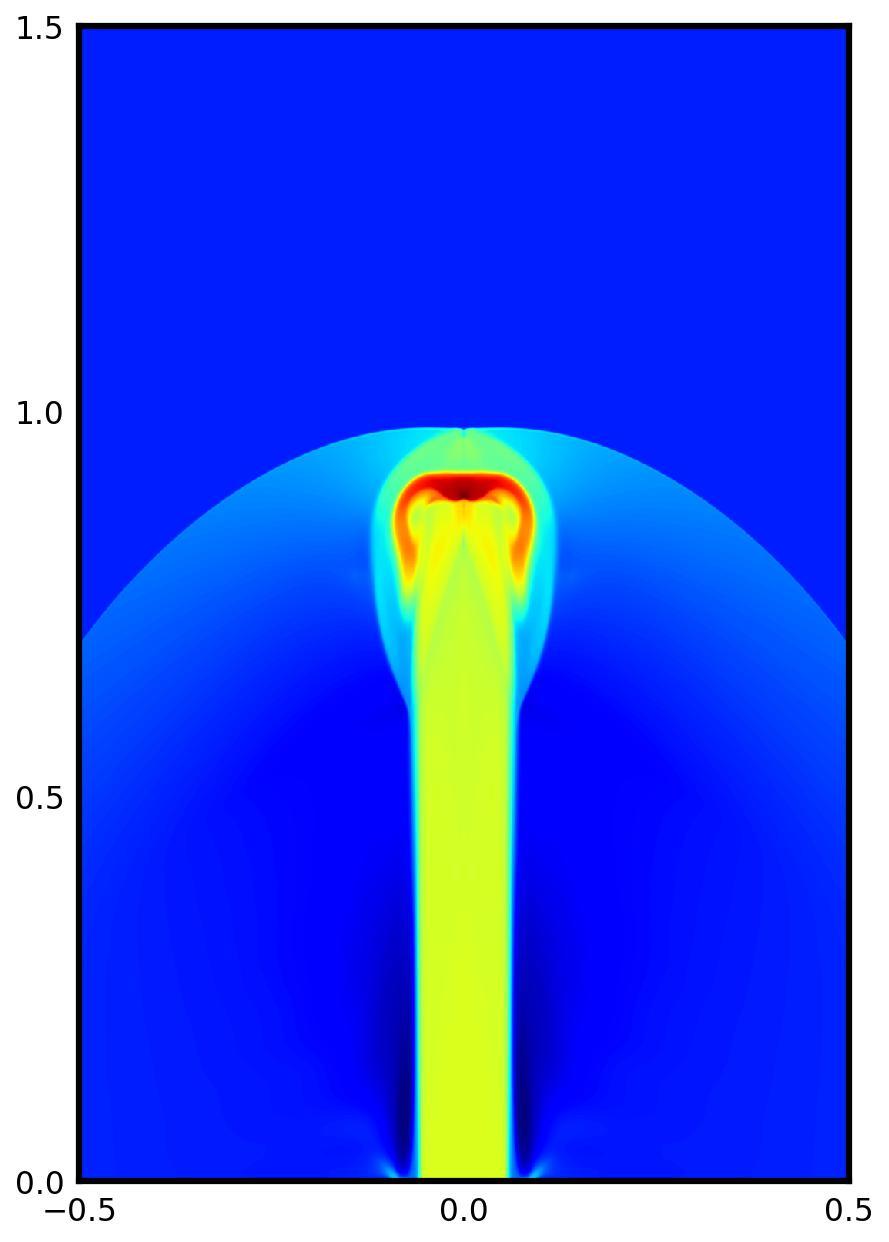}
			\end{subfigure}
			\hfill
			\begin{subfigure}{0.32\textwidth}
				\includegraphics[scale=0.32]{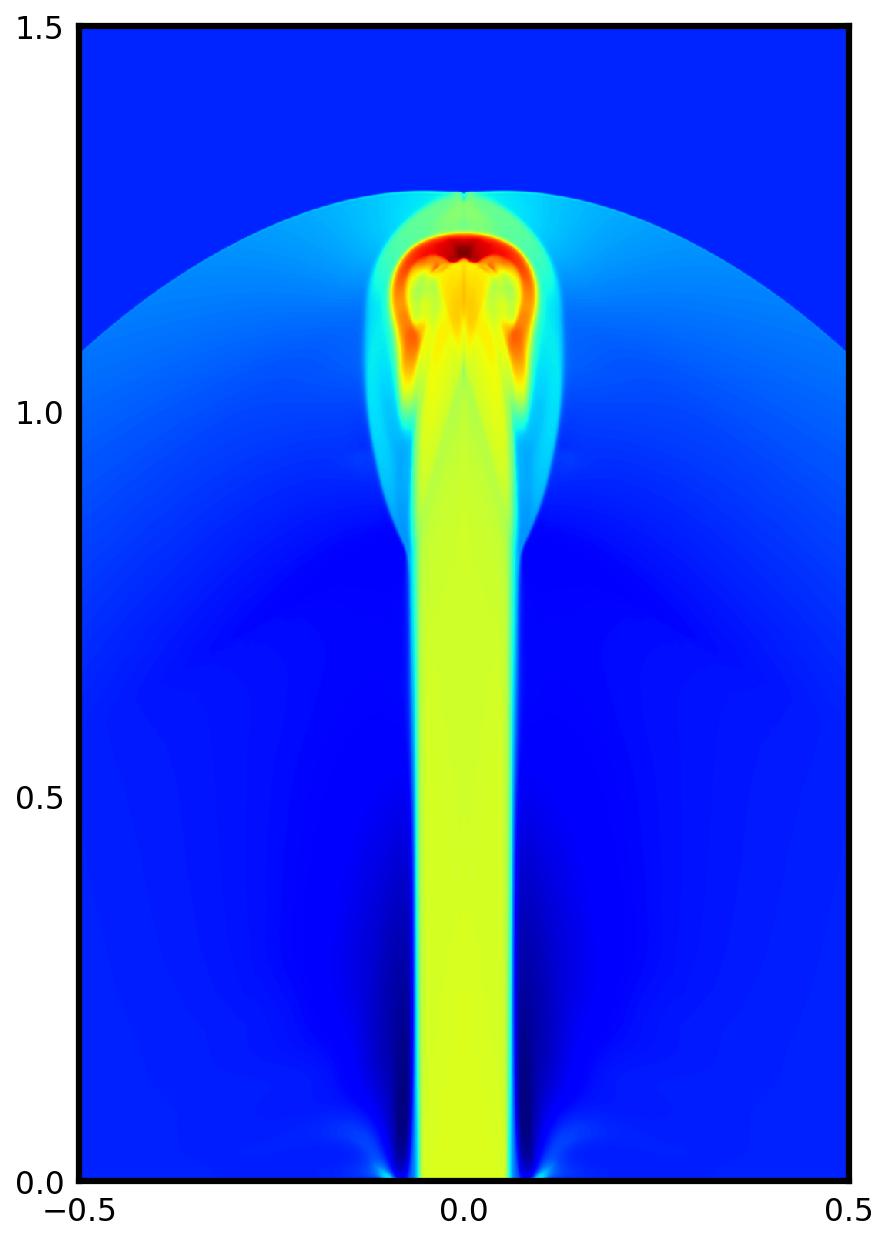}
			\end{subfigure}
			
			\begin{subfigure}{0.32\textwidth}
				\includegraphics[scale=0.32]{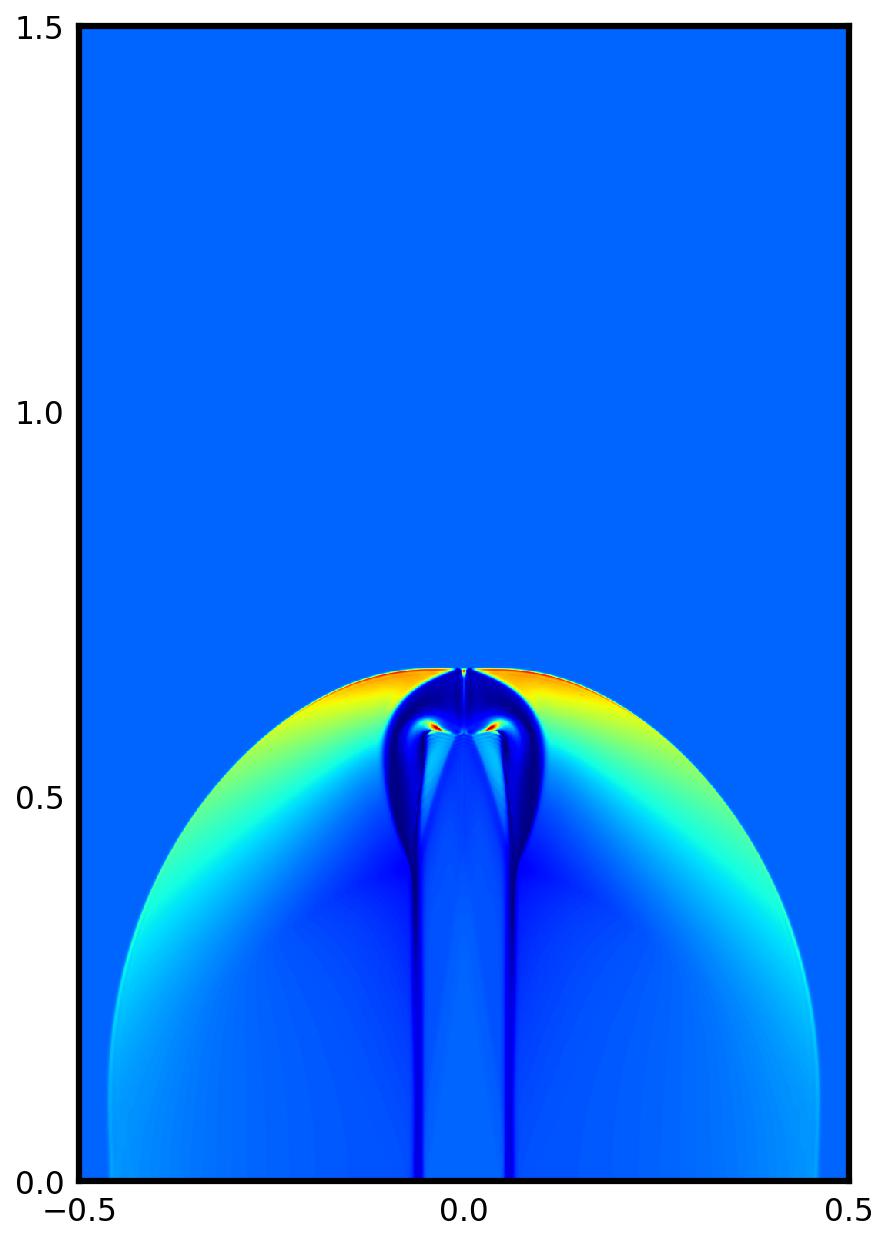}
			\end{subfigure}
			\hfill
			\begin{subfigure}{0.32\textwidth}
				\includegraphics[scale=0.32]{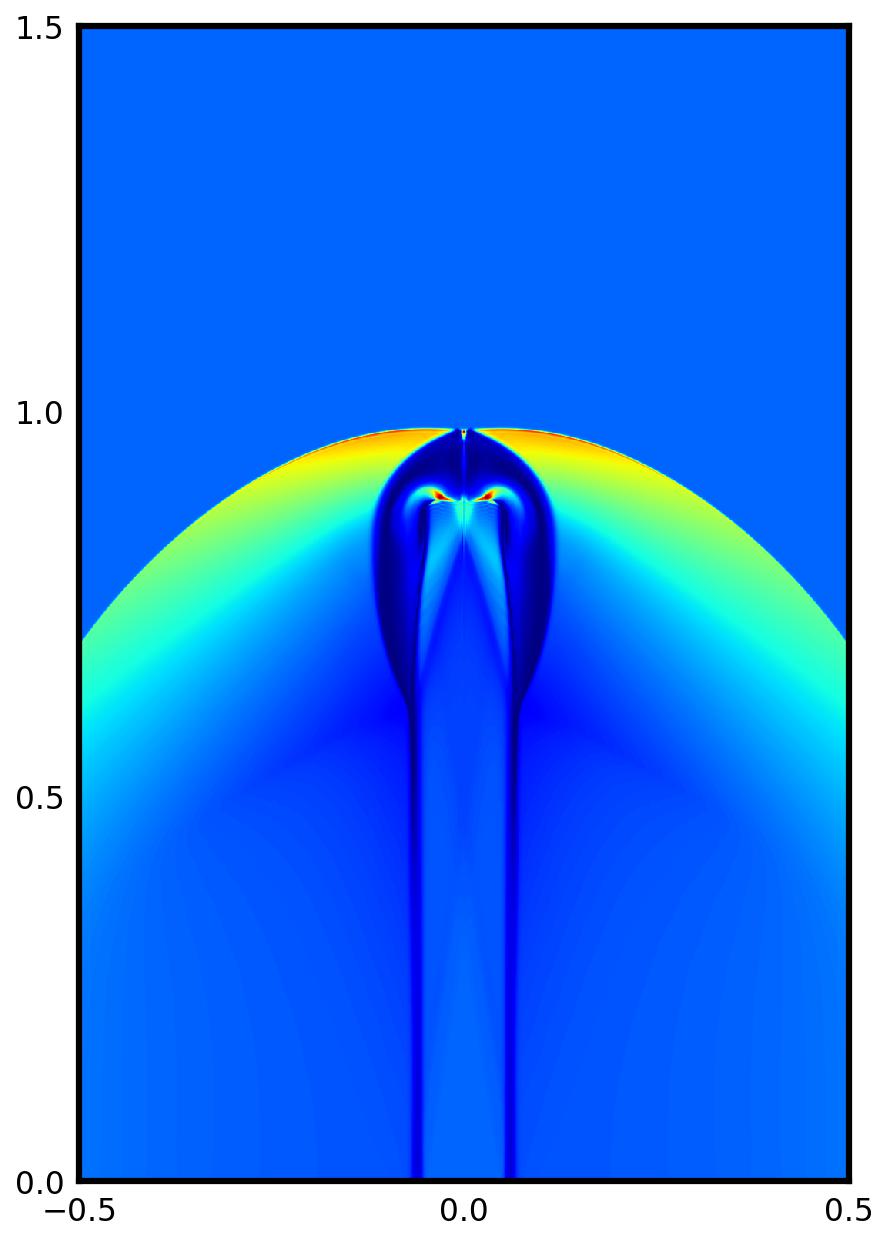}
			\end{subfigure}
			\hfill
			\begin{subfigure}{0.32\textwidth}
				\includegraphics[scale=0.32]{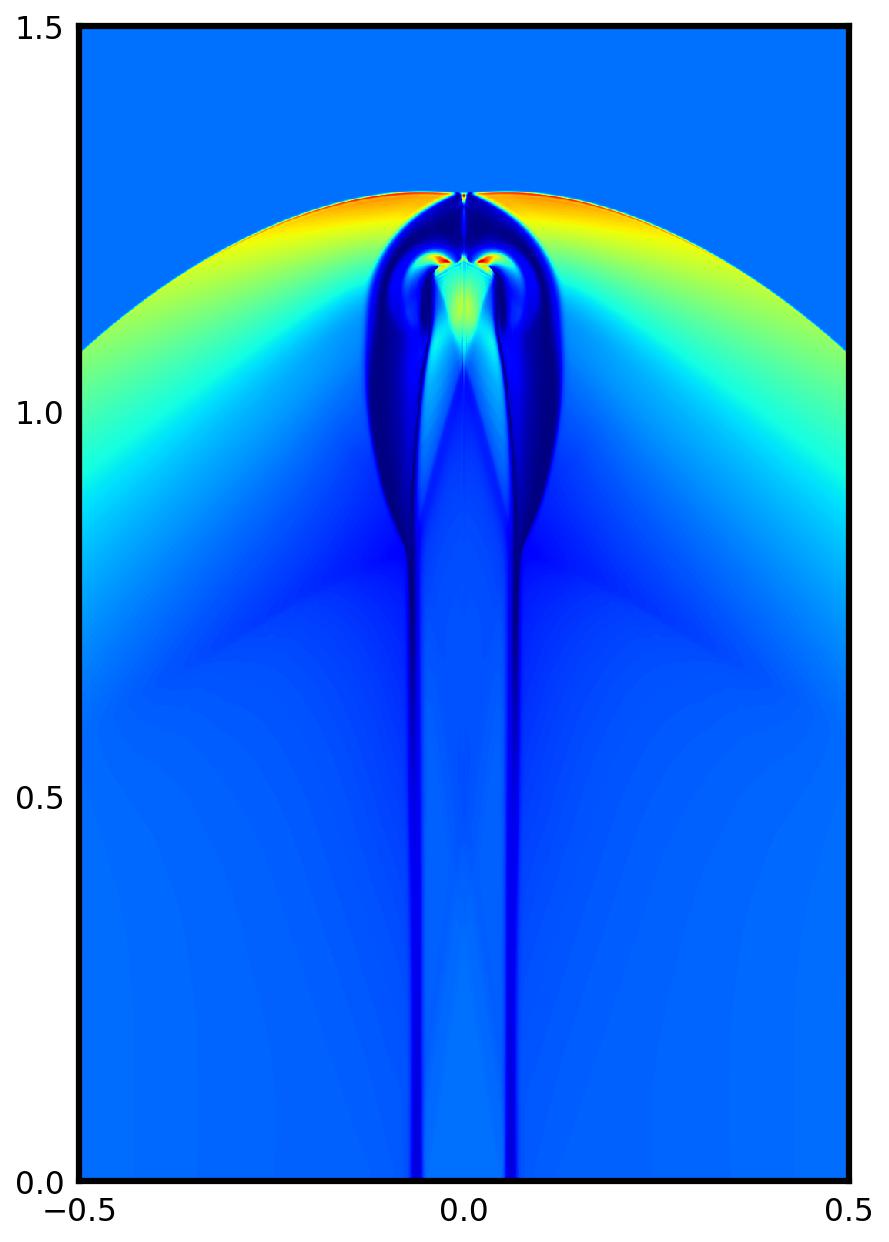}
			\end{subfigure}
			
			\caption{Mach 800 jet with $B_0 = \sqrt{20000}$: Density logarithm (top) and magnetic pressure (bottom) at $t = 0.001, 0.0015$, and $0.002$ (from left to right).
			}
			\label{fig:Ex-Jet_800_20000}
		\end{figure}

		\begin{figure}[!htb]
			\centering
			\begin{subfigure}{0.32\textwidth}
				\includegraphics[scale=0.32]{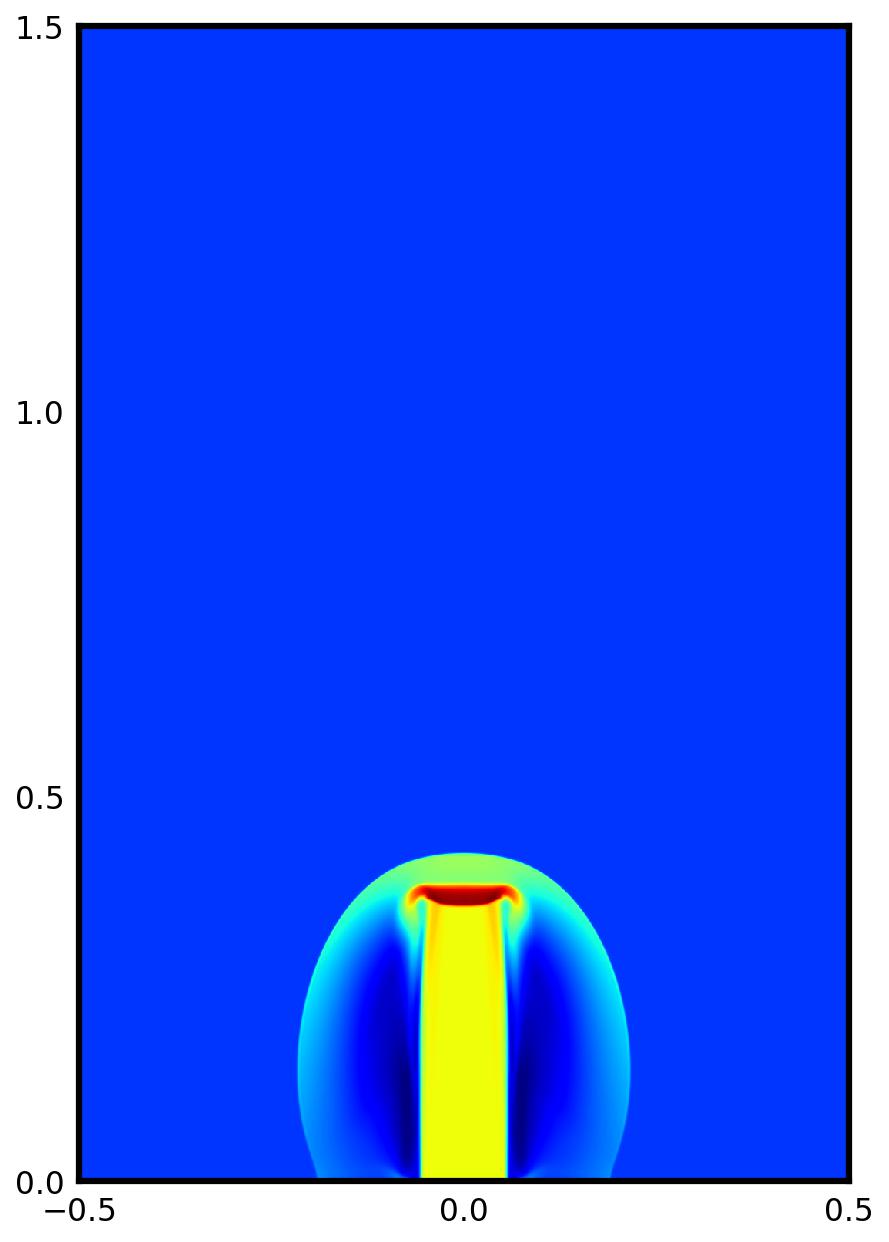}
			\end{subfigure}
			\hfill
			\begin{subfigure}{0.32\textwidth}
				\includegraphics[scale=0.32]{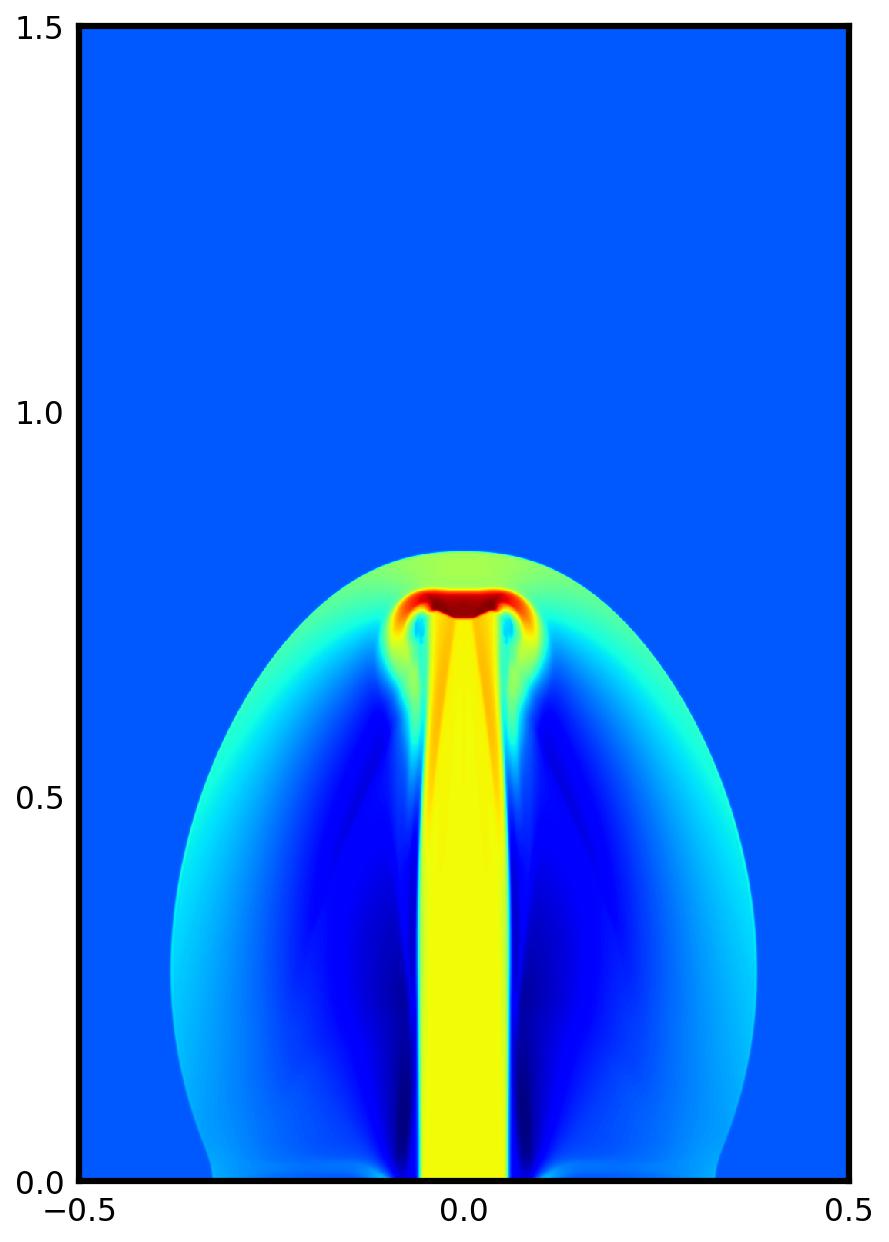}
			\end{subfigure}
			\hfill
			\begin{subfigure}{0.32\textwidth}
				\includegraphics[scale=0.32]{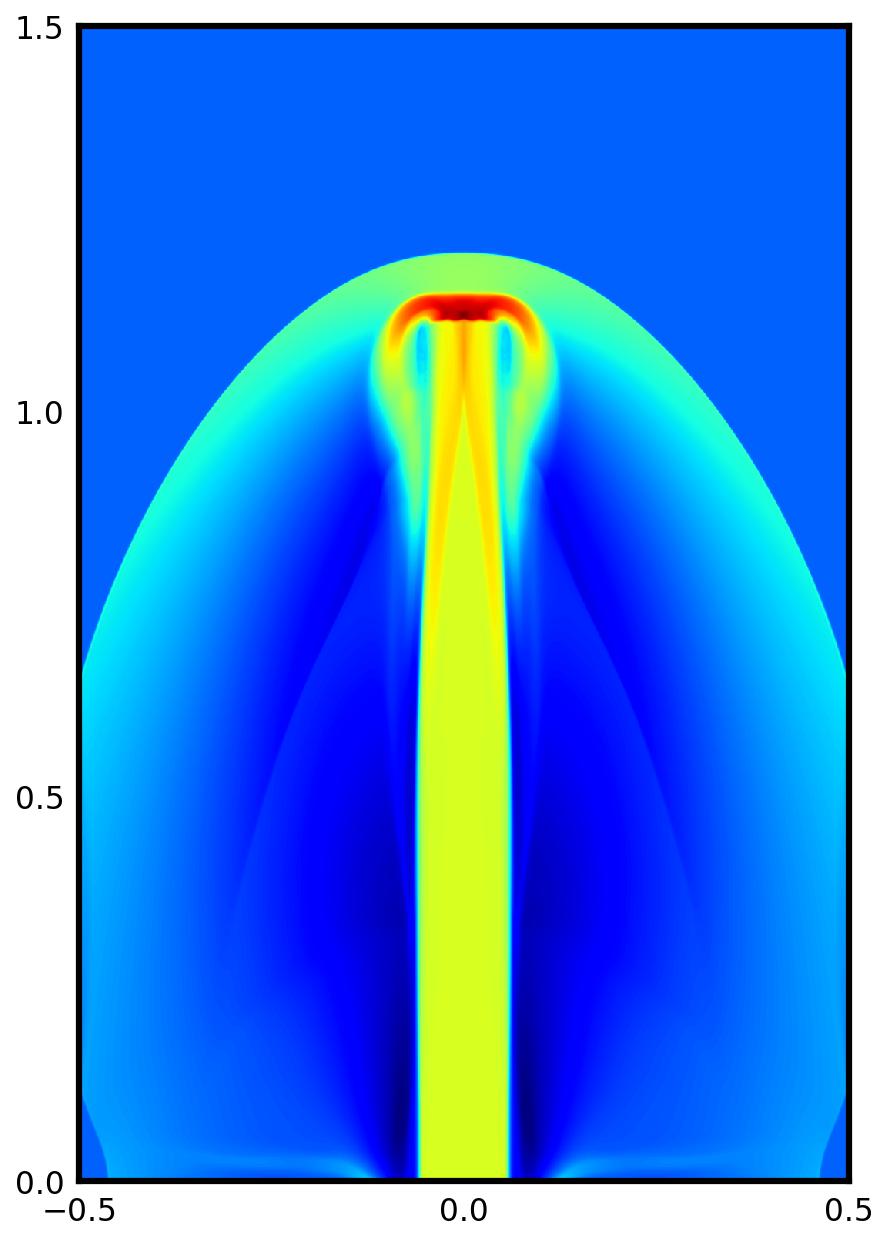}
			\end{subfigure}
			
			\caption{Mach 2000 jet problem with $B_0 = \sqrt{20000}$: Density logarithm at $t = 0.00025, 0.0005$, and $0.00075$ (from left to right).
			}
			\label{fig:Ex-Jet_2000_20000}
		\end{figure} 
		
		\begin{figure}[!htb]
			\centering
			\begin{subfigure}{0.32\textwidth}
				\includegraphics[scale=0.32]{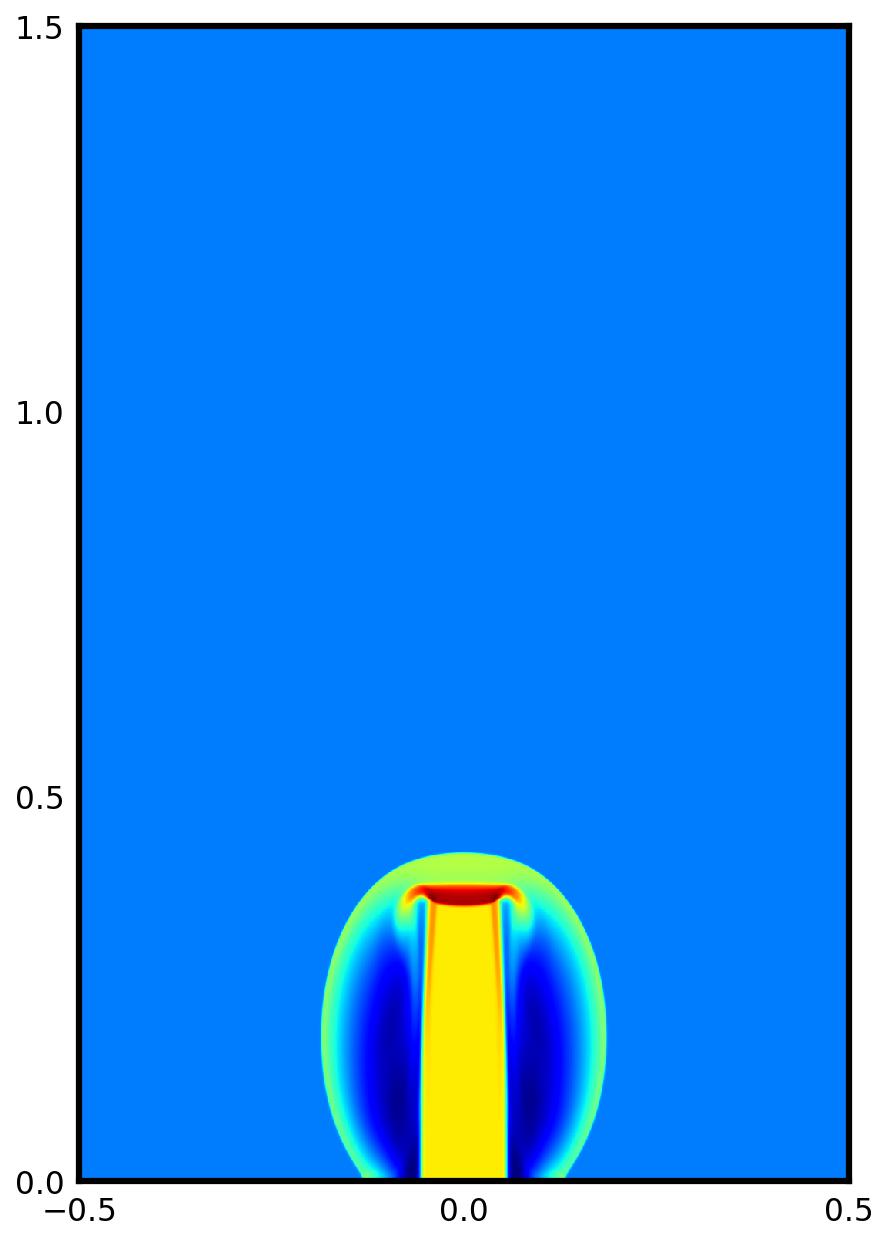}
			\end{subfigure}
			\hfill
			\begin{subfigure}{0.32\textwidth}
				\includegraphics[scale=0.32]{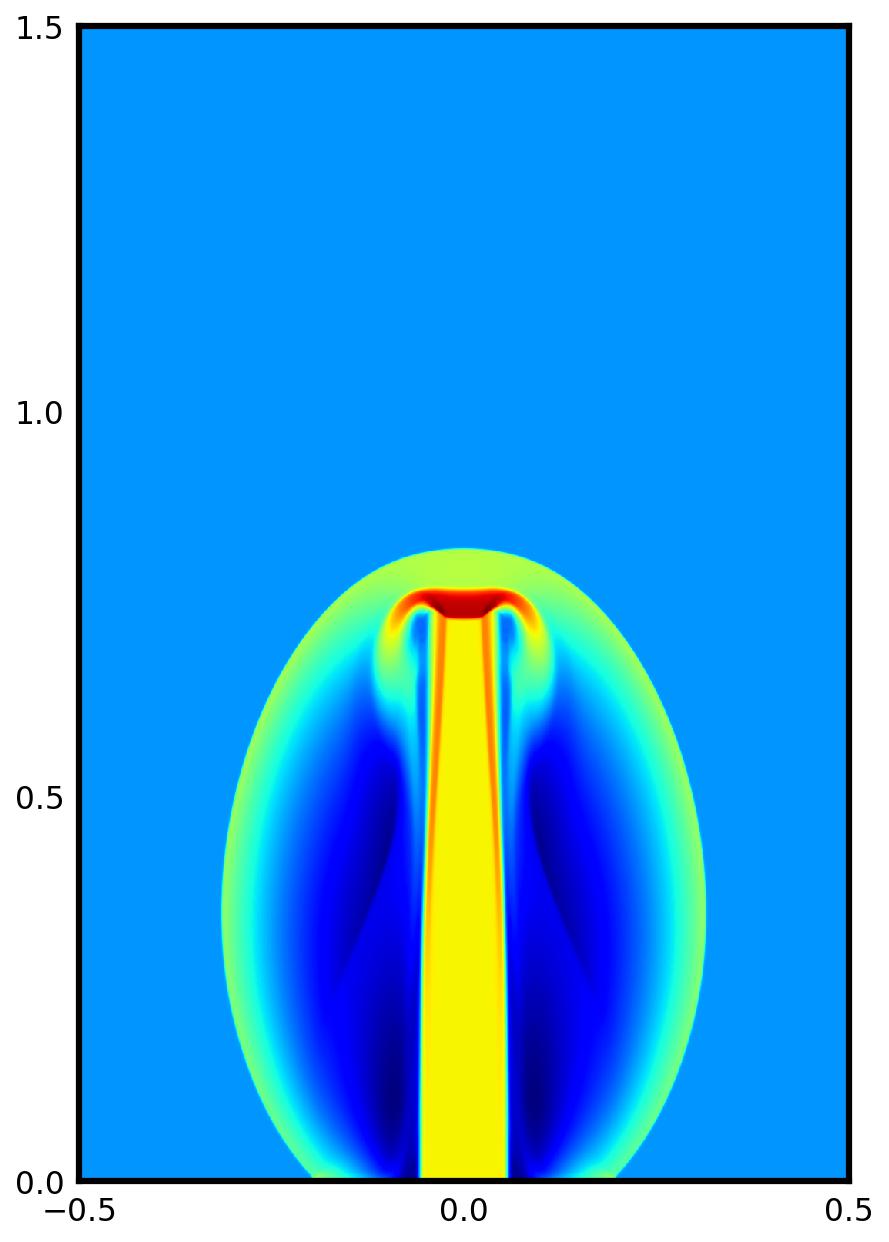}
			\end{subfigure}
			\hfill
			\begin{subfigure}{0.32\textwidth}
				\includegraphics[scale=0.32]{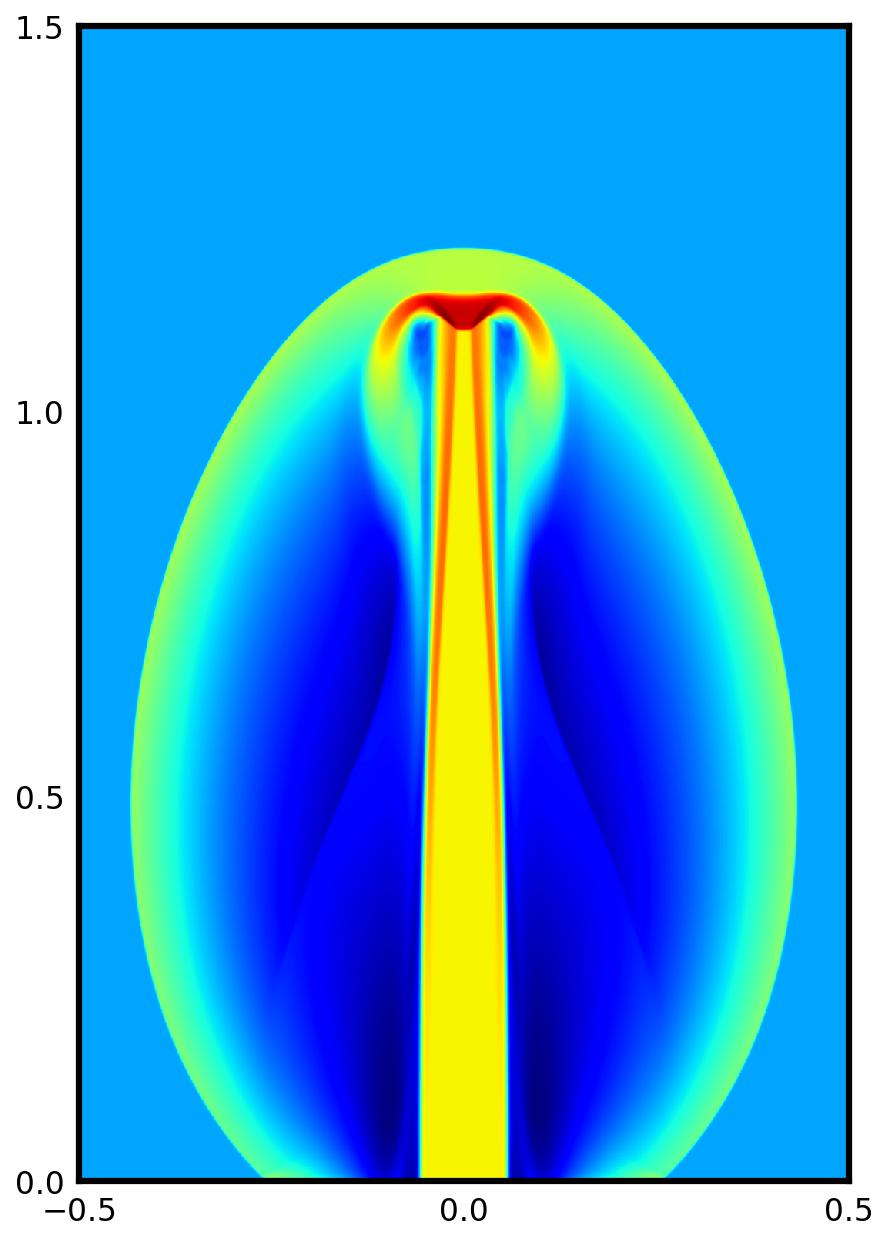}
			\end{subfigure}
			
			\caption{Mach 10000 jet problem with $B_0 = \sqrt{20000}$: Density logarithm at $t = 0.00005, 0.0001$, and $0.00015$ (from left to right).
			}
			\label{fig:Ex-Jet_10000_20000}
		\end{figure} 	 
	\end{expl}
	
	\section{Conclusion}\label{S.6}

	In this paper, we have developed and analyzed a novel, efficient, and robust second-order positivity-preserving constrained transport (PPCT) scheme for ideal magnetohydrodynamics (MHD) on non-staggered Cartesian meshes. This scheme achieves two essential physical properties: positivity of density and pressure, and a globally discrete divergence-free (DDF) condition for the magnetic field. The approach builds on the splitting strategy in \cite{Dao2024Structure}, which decomposes the MHD system into a compressible Euler subsystem with a steady magnetic field and a magnetic subsystem with steady density and internal energy. To handle these subsystems, we propose a hybrid finite volume-finite difference (FV-FD) method, combining a provably positivity-preserving (PP) finite volume method for the Euler subsystem with an unconditionally stable finite difference constrained transport (CT) method for the magnetic subsystem. A second-order Strang splitting technique is used to alternate between these two methods, ensuring both positivity and global DDF properties efficiently.
	
	The finite volume method for the Euler subsystem incorporates a second-order reconstruction based on primitive variables, which avoids the overshoots and nonphysical oscillations that tend to occur when reconstructing with conservative variables. A specially designed PP limiter ensures the positivity of reconstructed limiting values and updated cell averages during time-stepping. This limiter maintains second-order accuracy and includes a velocity slope suppression mechanism that provides an a priori condition for positivity of updated cell averages. Our rigorous theoretical proof is based on the geometric quasilinearization (GQL) approach \cite{WuShu2021GQL} to transform the nonlinear positivity constraint into a set of linear conditions, enabling us to overcome the challenges posed by nonlinearity.
	
	For the magnetic subsystem, we develop an implicit finite difference CT method that naturally enforces the globally DDF condition on non-staggered grids. Unlike the finite element method \cite{Dao2024Structure}, our method is computationally efficient and avoids matrix computations. We introduce an iterative algorithm to solve the resulting nonlinear system, which demonstrates exponential convergence. The residual error is typically reduced to machine precision in 5 to 9 iterations and never exceeds 20 steps. Unique solvability and convergence of this algorithm have been proven in theory under a CFL-like condition.  Furthermore, we rigorously establish that this CT scheme conserves energy. Since the finite difference method for the magnetic subsystem is unconditionally stable and preserves both density and internal energy, the time step for stability and the positivity-preserving property of the PPCT scheme is restricted by a mild CFL condition that applies to the Euler subsystem.
	
	The PPCT scheme has been extended to three dimensions while maintaining all of its structure-preserving properties, including positivity, global DDF enforcement, and energy conservation. To validate the scheme, we tested it on several benchmark and challenging problems. The numerical results demonstrate that the PPCT method achieves high resolution and robust performance across a variety of MHD scenarios.
	
	In conclusion, the PPCT scheme is an efficient, robust second-order FV-FD hybrid method with a mild time step restriction, making it easy to implement on non-staggered Cartesian meshes. It effectively preserves both positivity and the globally DDF property, and it ensures conservation of mass and total energy. Future work will focus on exploring the convergence analysis of fast solvers for the nonlinear algebraic system arising from the implicit CT discretization and extending the PPCT scheme to unstructured meshes and other MHD models.

\bibliographystyle{siamplain}
\bibliography{references}
\end{document}